\documentclass[11pt]{article}
\usepackage{lmodern}
\usepackage{microtype}
\usepackage[margin=1.2in]{geometry}
\usepackage{amsmath,amssymb,amsthm,mathtools,mathrsfs}
\usepackage{enumitem}
\usepackage{booktabs}
\usepackage{array}
\usepackage{xcolor}
\usepackage{hyperref}
\usepackage{aliascnt}
\usepackage{tocloft}
\usepackage{hyperref}
\hypersetup{
    hidelinks
}

\allowdisplaybreaks
\newtheorem{theorem}{Theorem}[section]

\newaliascnt{corollary}{theorem}
\newtheorem{corollary}[corollary]{Corollary}
\aliascntresetthe{corollary}

\newaliascnt{proposition}{theorem}
\newtheorem{proposition}[proposition]{Proposition}
\aliascntresetthe{proposition}

\newaliascnt{lemma}{theorem}
\newtheorem{lemma}[lemma]{Lemma}
\aliascntresetthe{lemma}

\newaliascnt{remark}{theorem}
\newtheorem{remark}[remark]{Remark}
\aliascntresetthe{remark}

\newaliascnt{example}{theorem}
\newtheorem{example}[example]{Example}
\aliascntresetthe{example}

\theoremstyle{definition}
\newaliascnt{definition}{theorem}
\newtheorem{definition}[definition]{Definition}
\aliascntresetthe{definition}
\newtheorem*{definition*}{Definition}
\newtheorem*{remark*}{Remark}

\usepackage[nameinlink,capitalise,noabbrev]{cleveref}
\crefname{theorem}{Theorem}{Theorems}
\crefname{corollary}{Corollary}{Corollaries}
\crefname{proposition}{Proposition}{Propositions}
\crefname{lemma}{Lemma}{Lemmas}
\crefname{remark}{Remark}{Remarks}
\crefname{example}{Example}{Examples}
\crefname{definition}{Definition}{Definitions}

\newcommand{\Isom}{\operatorname{Isom}}
\newcommand{\Ax}{\operatorname{Ax}}

\newcommand{\diam}{\operatorname{diam}}
\newcommand{\pack}{\operatorname{pack}}

\newcommand{\EdgeSum}{\mathcal W}
\newcommand{\Cay}{\operatorname{Cay}}
\newcommand{\coef}{\mathrm{coef}}
\newcommand{\BS}{\mathrm{BS}}
\newcommand{\ax}{\mathrm{ax}}
\newcommand{\rel}{\mathrm{rel}}
\newcommand{\red}{\mathrm{red}}

\newcommand{\one}{\mathbf 1}

\newcommand{\Z}{\mathbb Z}

\newcommand{\e}{\mathrm e}

\title{Accessible CAT$(-1)$ groups of  critical exponent less than one}
\author{Yong Hou}
\date{}

\begin{document}
\maketitle

\begin{abstract}
Let $X$ be proper CAT$(-1)$ and let $\Gamma\le\Isom(X)$ be finitely generated and discrete.
The sharp structural theorem states that, if $\Gamma$ is accessible over finite subgroups and
$\delta_X(\Gamma)<1$, then $\Gamma$ is geometrically finite and virtually free, has a finite
graph of groups with finite edge groups and virtually cyclic infinite vertex groups, and
$\partial\Gamma\to\Lambda_\Gamma$ collapses exactly their conjugate two point boundaries.
The result is hereditary, and below $1/2$ every finitely generated subgroup is
convex-cobounded (\cref{thm:accessible-main}).  Hence non-virtually-free accessible groups
have $\delta_X(\Gamma)\ge1$ (\cref{cor:accessible-gap}).  Consequences cover finitely
presented groups, groups with uniformly bounded finite-subgroup orders, characteristic-zero
linear groups, and Kleinian groups, also infinite parabolic-free Kleinian groups have finite-index
classical Schottky subgroups
(\cref{cor:accessibility-extension,cor:linear-groups,cor:kleinian-classical}). Hence we cover substantial larger class than \cite{LiuWang2023},\cite{Hou2001}, also see \cref{rem:strictness-sharpness}. Finally, we also state consequences for finite JSJ representatives and hierarchies (\cref{thm:JSJ,thm:hierarchy,cor:hierarchy-dimension}).

\end{abstract}
\setcounter{tocdepth}{1}

\tableofcontents

\section{Introduction}

Let $X$ be a metric space, let $o\in X$, and let a countable group $\Gamma$ act by
isometries.  The orbit Poincar\'e series and critical exponent are
\[
 P_{\Gamma,o}(s)=\sum_{g\in\Gamma}\e^{-s d(o,go)},
 \qquad
 \delta_X(\Gamma)=\inf\{s>0:P_{\Gamma,o}(s)<\infty\}.
\]
For a proper CAT$(-1)$ space, Bourdon's formula
$\rho_o(\xi,\eta)=\e^{-(\xi\mid\eta)_o}$ defines a metric on $\partial X$
\cite{Bourdon1995}.  The generalized Bishop--Jones theorem identifies
$\delta_X(\Gamma)$ with the Hausdorff dimension of the conical limit set for every
non-elementary discrete action \cite[Theorem~1.2.1]{DasSimmonsUrbanski2017}, see also
\cite{Cavallucci2025}.  For a convex-cobounded action the conical limit set is the full limit
set, and the orbit map is a quasi-isometric embedding
\cite[Theorems~12.2.7 and~12.2.12]{DasSimmonsUrbanski2017}.
 We study structures of accessible groups when the critical exponent is less than 1. Dunwoody proves accessibility for finitely presented
 groups \cite{Dunwoody1985}, while Linnell proves it for finitely generated groups with a
uniform bound on the orders of finite subgroups \cite{Linnell1983}.  Hence our statement
contains the torsion-free case and simultaneously covers large torsion classes. 

Two constructions provide the main tools.  A boundary-energy argument on the full Cayley
graph, using selected ideal endpoints and Poincar\'e-series tails, controls limit-set pieces
after deletion of any finite vertex set and gives an ultrametric on $E(\Gamma)$ whose
canonical map to $\Lambda_\Gamma$ is surjective, $1$-Lipschitz, and equivariant
(\cref{thm:finite-vertex-deletion-main}).  It yields vanishing Hausdorff measure, the Cantor
and dimension conclusions, and infinitely many ends (\cref{cor:boundary-dimension-ends}).
A finite-state Bass--Serre construction encodes unique normal forms by weighted
non-backtracking matrices.  Monotone finite-transition approximation and
Perron--Frobenius theory give endpoint entrywise convergence and spectral radius at most one
on each strongly connected diagonal block containing a directed closed walk, together with
the exact normal-form abscissa (\cref{thm:bass-serre-main,cor:resolvent-dimension-bounds}).
Axis-based comparison matrices give matrix and cycle lower bounds
(\cref{cor:finite-matrix-cycle}). For finite JSJ representatives and hierarchies these hold
simultaneously at every node (\cref{thm:JSJ,thm:hierarchy,cor:hierarchy-dimension}).

The boundary construction
uses ideal-triangle construction that appears in the critical-exponent$<1$ argument,
but it works on the full Cayley graph after deletion of any finite vertex set.  It also gives an
ultrametric defined by tails of the Poincar\'e series and a Lipschitz quotient from the end
space. For the matrix part, Hou's displacement inequality for free families
\cite[Theorem~1.1]{Hou2001} and its curvature-free form due to Balacheff--Merlin
\cite[Theorem~1]{BalacheffMerlin2023} are established results.  Exact growth-series formulas for
additive free and amalgamated products occur in \cite{AllenEtAl2011}, while finite pressure
formulas for tree metrics occur in \cite{HersonskyHubbard1997}. The theorem below allows arbitrary
symmetric subadditive orbit lengths and infinite vertex and edge groups.  On each strongly
connected diagonal block containing a directed closed walk, it proves that every relative-coset
sum is finite at $s=\delta_X(\Gamma)$.  It also applies to arbitrary families of subgroup
splittings and determines the critical exponent of the normal-form length from the coefficient
series and matrices obtained from finite sets of transitions.  Hou's classification is used only after the CAT$(-1)$
structure theorem, to obtain the Kleinian classical-Schottky consequence \cite{Hou2023}.

\begin{definition*}
A finite graph-of-groups decomposition is \emph{reduced} if no collapsible edge remains in
its Bass--Serre tree, in the standard sense of Serre
\cite[Chapter~I, Section~5.4]{Serre2003}.  A finitely generated group $G$ is
\emph{accessible over finite subgroups} if there is an integer $N_G$ such that every reduced
finite graph-of-groups decomposition of $G$ with finite edge groups has at most $N_G$
unoriented edges.  This is the usual complexity-bound definition of accessibility
\cite[Introduction, pp.~1805--1806]{LouderTouikan2017}.
\end{definition*}

\begin{definition*}
By Stallings' theorem on ends
\cite[Chapter~5]{Stallings1971}, accessibility is equivalent to the existence of a
\emph{terminal Stallings--Dunwoody decomposition}: a finite graph of groups with finite edge
groups in which every vertex group is finite or one-ended.  Equivalently, every sequence of
reduced nontrivial refinements obtained by splitting a current vertex group over a finite
subgroup terminates after finitely many steps. Each such refinement increases the number of
edges, while the accessibility bound forbids an infinite increasing sequence.  See
Dunwoody's accessibility theorem \cite{Dunwoody1985} and the explicit discussion of the
Grushko--Stallings--Dunwoody decomposition and terminating refinements in
\cite[Introduction, pp.~1805--1807]{LouderTouikan2017}.  This is what we mean below by the
\emph{Stallings--Dunwoody notion} of accessibility.
\end{definition*}
A finitely generated two-ended group is virtually cyclic by the Freudenthal--Hopf--Stallings
theory \cite[Chapter~5]{Stallings1971}, hence virtually free.  By the
Karrass--Pietrowski--Solitar characterization
\cite[Theorem~1]{KarrassPietrowskiSolitar1973}, it is the fundamental group of a finite graph
of finite groups.  Thus any two-ended terminal vertex can be replaced by such a finite graph
of finite groups, which explains the normalization ``finite or one-ended'' above.

Let $S=S^{-1}$ be a finite generating set of $\Gamma$.  The undirected Cayley graph
$\Cay(\Gamma,S)$ has vertex set $\Gamma$ and an edge $\{g,gs\}$ for every
$g\in\Gamma$ and $s\in S$, and each edge has length one.  Its graph distance from $1$ is the
word length $|g|_S$, and
\[
 B_S(n)=\{g\in\Gamma:|g|_S\le n\}\qquad(n\ge0).
\]
We set $B_S(-1)=\varnothing$ solely as an indexing convention.  The notation
$\Cay(\Gamma,S)\setminus B_S(n)$ means the induced subgraph on
$\Gamma\setminus B_S(n)$.  
\begin{definition*}
A complementary component is \emph{infinite} precisely when its
vertex set is infinite.  These are the standard Cayley-graph conventions, see
\cite[Chapter~I.8]{BridsonHaefliger1999}.
\end{definition*}

A \emph{ray} is a one-way infinite simple edge path in $\Cay(\Gamma,S)$.  Two rays are
equivalent if, for every finite vertex set $Q$, tails of the two rays lie in the same connected
component of $\Cay(\Gamma,S)\setminus Q$.  The set of equivalence classes is the end space
$\mathcal E(\Gamma)$.  If $U$ is an infinite component of
$\Cay(\Gamma,S)\setminus Q$, let
\[
 \mathcal E(U)=\{\epsilon\in\mathcal E(\Gamma):
 \text{some, equivalently every, ray representing $\epsilon$ has a tail in $U$}\}.
\]
The sets $\mathcal E(U)$, as $Q$ ranges over finite vertex sets, form the basis of the
\emph{end topology}.  We write
\[
 \widehat\Gamma=\Gamma\sqcup\mathcal E(\Gamma)
\]
for the \emph{end compactification of the vertex set}: vertices are isolated, and a basic
neighborhood of an end represented by an infinite component $U$ outside a finite vertex set
is $U\sqcup\mathcal E(U)$.  Equivalently, $\widehat\Gamma$ is the subspace
$\Gamma\sqcup\mathcal E(\Gamma)$ of the Freudenthal compactification
$|\Cay(\Gamma,S)|$ of the geometric realization of the Cayley graph.  The full Freudenthal
compactification also contains the interiors of the graph edges, those points are not used in
this paper.  For locally finite graphs, these constructions are described in
\cite[Section~8.5, especially the definition of $|G|$ and Theorem~8.5.2]{Diestel2010}.
The equivalence between the ray definition and the topological formulation is discussed in
\cite[Section~2]{DiestelKuhn2003}.

For the boundary and transfer-matrix theorems, let $X$ be proper CAT$(-1)$, let
$\Gamma\le\Isom(X)$ be finitely generated, non-elementary, and discrete, and assume
$0<\delta_X(\Gamma)<\infty$.  Fix a finite symmetric generating set $S$, choose distinct
$A,B\in\Lambda_\Gamma$, and take $o$ on the geodesic $(A,B)$.  Let $D$ be a
thin-triangle constant and set $L=\max_{s\in S}d(o,so)$.

Choose a map $g\mapsto\xi_g$ from $\Gamma$ to $\partial X$ such
that
\[
 \xi_g\in\{gA,gB\},\qquad d\bigl(go,[o,\xi_g)\bigr)\le D
 \quad(g\in\Gamma).
\]
The existence of such a map is proved in \cref{lem:boundary-choice}.  For a finite set $Q\subset\Gamma$, let
$\mathscr U(Q)$ be the connected components of $\Cay(\Gamma,S)\setminus Q$.  For
$U\in\mathscr U(Q)$, the standard cluster set of the selected boundary points in $U$ is
\begin{equation}\label{eq:intro-general-cluster}
 K_{Q,U}=\bigcap_{F\sqsubset U}\overline{\{\xi_g:g\in U\setminus F\}}
 \subseteq\Lambda_\Gamma,
\end{equation}
where $F\sqsubset U$ means that $F$ is finite and the closure is taken in the compact visual
boundary.  Equivalently, $K_{Q,U}$ is the set of limits of sequences $\xi_{g_n}$ for which
$g_n\in U$ eventually leaves every finite subset of $U$.  Thus $K_{Q,U}=\varnothing$ for a
finite component and is nonempty for an infinite component.  In every diameter sum below we
use the convention $\diam(\varnothing)=0$.
\begin{definition*}
For a compact metric space $(Z,d)$ and $\varepsilon>0$, its packing number is
\[
 \pack_\varepsilon(Z,d)=
 \sup\bigl\{|E|:E\subseteq Z,\ d(x,y)\ge\varepsilon
 \text{ for all distinct }x,y\in E\bigr\}.
\]
If $e=\{g,gs\}$ is an undirected Cayley edge, set
$w_o(e)=\rho_o(\xi_g,\xi_{gs})$.  For $0<q\le1$ define the edge-weight sum
\begin{equation}\label{eq:intro-boundary-edge-sum}
 \EdgeSum_{q,o}(Q;\xi)=
 \sum_{e\subset \Cay(\Gamma,S)\setminus Q}w_o(e)^q,
\end{equation}
where every undirected edge is counted once.  We refer to this quantity simply as the
\emph{edge-weight sum outside $Q$}.  It depends on the fixed generating set and on the
chosen map $g\mapsto\xi_g$, as the notation $(Q;\xi)$ collects.  Let $c_\infty(Q)$ be the number
of infinite components of $\Cay(\Gamma,S)\setminus Q$, and set 
$\varepsilon_m(Z,d)=\sup\{\varepsilon>0:\pack_\varepsilon(Z,d)\ge m\}$.  Every inequality below holds for every choice of $(S,o,A,B)$ satisfying the assumptions above and every map $g\mapsto\xi_g$ satisfying the displayed conditions.
\end{definition*}

Let $(Z,d)$ be a metric space, let $E\subseteq Z$, and let $q\ge0$.  
The $q$-dimensional Hausdorff measure and Hausdorff dimension are
$
 \mathcal H^q(E,d),
 \dim_H(E,d)=\inf\{q:\mathcal H^q(E,d)=0\}.
$
When the metric is clear we may suppress it, but statements
about the CAT$(-1)$ boundary below use the Bourdon metric $\rho_o.$

When $P_{\Gamma,o}(q)<\infty$, define the tail function
\[
 \Theta_{q,o,S}(n)=|S|\e^{q(L+2D)}
 \sum_{|g|_S>n}\e^{-q d(o,go)}\qquad(n\ge-1).
\]
For distinct ends $\epsilon,\eta\in\mathcal E(\Gamma)$, let
$N_S(\epsilon,\eta)$ be the largest $n\ge-1$ for which $\epsilon$ and $\eta$ belong to the
same basic end neighborhood $\mathcal E(U)$ for some component
$U$ of $\Cay(\Gamma,S)\setminus B_S(n)$. Equivalently, representative rays have tails in
that same component.  We set $N_S(\epsilon,\epsilon)=\infty$.

For an isometric action of a group $G$ on $X$ and $o\in X$, its quasiconvex core is
\[
 C_o(G)=\bigcup_{g,h\in G}[go,ho].
\]
The action is \emph{convex-cobounded} if there exists $R<\infty$ such that
$C_o(G)\subseteq\bigcup_{g\in G}gB_X(o,R)$, equivalently if $G$ acts coboundedly on
$C_o(G)$; see
\cite[Proposition~7.5.3 and Definition~12.2.5]{DasSimmonsUrbanski2017}.

For a word-hyperbolic group $G$, $\partial G$ denotes the Gromov boundary of a finite Cayley
graph.  It is independent, up to a natural $G$-equivariant homeomorphism, of the chosen finite
generating set by quasi-isometry invariance
\cite[Chapter~III.H, Theorem~3.9]{BridsonHaefliger1999}.  In
\cref{thm:accessible-main}, virtual freeness first implies that $\Gamma$ is
word-hyperbolic.  Each infinite virtually cyclic peripheral subgroup $P_i$ is two-ended and
quasiconvex; $\partial P_i$ denotes its two-point Gromov boundary, identified with its embedded
limit set in $\partial\Gamma$.  See
\cite[Chapter~III.$\Gamma$, Lemma~3.5 and Proposition~3.7; Chapter~III.H,
Theorem~3.9]{BridsonHaefliger1999}.

Let $I$ be an arbitrary nonempty index set, which may be finite or infinite.  For every
$\beta\in I$, let $H_\beta\le\Gamma$ and let $\mathcal G_\beta$ be a finite connected graph
of groups with fundamental group $H_\beta$, underlying finite graph $Y_\beta$, and
Bass--Serre tree $T_\beta$.  No nesting, uniform bound, or compatibility among different
$\beta$ is assumed.  The Bass--Serre correspondence and normal form are
\cite[Chapter~I, Section~5.4, Theorem~13]{Serre2003}. The coordinate construction
used here is given in \cref{sec:bass-serre}.

Choose a base vertex $v_{\beta,*}\in Y_\beta$, compatible lifts of all vertices and oriented
edges to $T_\beta$, and transport elements $\tau_{\beta,f}\in H_\beta$.  Write
$G_{\beta,v}$ and $G_{\beta,f}\le G_{\beta,o(f)}$ for the corresponding vertex and edge
stabilizers.  

\begin{definition*}
For a left relative coset $c=rG_{\beta,f}\in
G_{\beta,o(f)}/G_{\beta,f}$, we define
\[
 L_{\beta,f,o}(c)=
 \min_{h\in G_{\beta,f}}d(o,rh\tau_{\beta,f}o).
\]
\end{definition*}
Choose $h_{\beta,f}(c)$ attaining the minimum and set
$b_{\beta,f}(c)=rh_{\beta,f}(c)\tau_{\beta,f}$ These are representatives that realize the displayed minimum.  A transition from an oriented edge
$e$ to $f$ is allowed when $t(e)=o(f)$, except that the identity coset is omitted when
$f=\bar e$, thereby forbidding immediate backtracking.  The weighted non-backtracking matrix
has entries
\[
 K_{\beta,ef,o}(s)=
 \sum_{c\text{ allowed on }e\to f}\e^{-sL_{\beta,f,o}(c)}.
\]

Let $E_{\beta,*}$ be the oriented edge states occurring in a reduced coset labelled loop based
at $v_{\beta,*}$.  Restriction to these states is the \emph{trim} matrix
$K_{\beta,*,o}(s)$: each retained state is reachable from the initial support and can reach the
terminal support.  The coefficient series and terminal-state indicator are
\[
 V_{\beta,o}(s)=\sum_{h\in G_{\beta,v_{\beta,*}}}\e^{-sd(o,ho)},
\]
\[
 (u_{\beta,o}(s))_f=
 \begin{cases}
 \displaystyle\sum_{c\in G_{\beta,v_{\beta,*}}/G_{\beta,f}}
 \e^{-sL_{\beta,f,o}(c)},&o(f)=v_{\beta,*},\\[1ex]
 0,&o(f)\ne v_{\beta,*},
 \end{cases}
 \qquad
 (q_\beta)_f=\one_{\{t(f)=v_{\beta,*}\}}.
\]
Every $h\in H_\beta$ has a unique reduced Bass--Serre form
$h=b_{\beta,f_1}(c_1)\cdots b_{\beta,f_n}(c_n)h_*$ with
$h_*\in G_{\beta,v_{\beta,*}}$.  The associated Bass--Serre normal-form length and its critical exponent are
\[
 L^{\BS}_{\beta,o}(h)=
 \sum_{j=1}^nL_{\beta,f_j,o}(c_j)+d(o,h_*o),
 \qquad
 \delta^{\BS}_{\beta,o}=
 \inf\left\{s>0:\sum_{h\in H_\beta}\e^{-sL^{\BS}_{\beta,o}(h)}<\infty\right\}.
\]
Thus the exponent is the abscissa of convergence of the normal-form Poincar\'e series.  We
write $\delta_{\beta,\coef}(o)$ for the abscissa of simultaneous convergence of the
coefficient series, namely the infimum of the $s>0$ for which $V_{\beta,o}(s)$ and every
entry of $u_{\beta,o}(s)$ and $K_{\beta,*,o}(s)$ are finite.
See \cref{def:relative-length,def:relative-matrix,lem:bass-serre-normal-form,def:coef-abscissa,thm:bass-serre-resolvent}.

The matrix $K_{\beta,*,o}(s)$ is a \emph{weighted non-backtracking matrix}, and its states are
oriented edges of $Y_\beta$, a transition is allowed only when the terminal vertex of the
first edge is the initial vertex of the second, immediate reversal is forbidden, and the
matrix entry is the sum of the exponential weights of all allowed relative-coset choices.
This is the weighted version of the Hashimoto or non-backtracking edge matrix; compare
\cite[Section~1]{Hashimoto1989}.  Weighted transition matrices and their path-sum
interpretation are standard in weighted-automaton theory. See
\cite[Sections~1.1--1.2]{DrosteKuichVogler2009} and
\cite[Section~2]{Mohri2009}.

A state is \emph{accessible} if it can be reached from the initial support and
\emph{co-accessible} if a terminal state can be reached from it.  An automaton or transition
matrix is \emph{trim} when every state is both accessible and co-accessible. This is the
standard automata-theoretic usage
\cite[Definition~1.3.9]{BertheRigo2016}.  Thus $K_{\beta,*,o}(s)$ is obtained from the full
oriented-edge matrix by deleting states that occur in no based reduced normal form.

For a finite directed graph, a \emph{strongly connected component} is a maximal set of
states in which every state is reachable from every other state by a directed walk.  A finite
nonnegative matrix is irreducible precisely when its support digraph is strongly connected
\cite[Theorem~3.2.1]{BrualdiRyser1991}.  Simultaneous permutation of rows and columns puts
any finite nonnegative matrix into \emph{Frobenius normal form}, whose diagonal blocks are
the strongly connected components \cite[Theorem~3.2.4]{BrualdiRyser1991}.

We write $\mathscr C_\beta$ for the strongly connected diagonal blocks whose support contains
a directed closed walk of positive length.  An acyclic diagonal block is nilpotent and does
not contribute exponential return growth.  For $C\in\mathscr C_\beta$, let
$K_{\beta,C,o}(s)$ denote the corresponding diagonal block, with every allowed transition
retained.  Thus each entry is the full, possibly countable, sum over the allowed relative
cosets.  We call this the full block only for brevity.  If $F$ is a finite set of individual
allowed transitions in $C$, let $K^F_{\beta,C,o}(s)$ be the matrix obtained by retaining
exactly the transitions in $F$.  We call it a finite truncation.  It need not be a principal
submatrix.  At $s=0$ each retained term equals $1$, so
$K^F_{\beta,C,o}(0)$ keeps transition multiplicities.

For a finite nonnegative matrix $M$, $\rho(M)$ denotes its spectral radius.  On an
irreducible block it is the Perron root given by the Perron--Frobenius theorem
\cite[Chapter~1]{Seneta2006}.  For a finite set $F$ of transitions, define
\[
 \sigma^{\rel}_{\beta,C,F}(o)=
 \inf\{s>0:\rho(K^F_{\beta,C,o}(s))<1\}.
\]
We call this number the critical parameter of $K^F_{\beta,C,o}$.  The superscript $``\rel"$
distinguishes it from the corresponding critical parameter of the comparison matrix defined
below.  This is the finite-dimensional analogue of the convergence-parameter language for
nonnegative matrices, compare \cite[Sections~2--3]{VereJones1967}.  \Cref{lem:positive-cycle-length} proves that this number is $0$ when
$\rho(K^F_{\beta,C,o}(0))\le1$ and otherwise is the unique positive solution of
$\rho(K^F_{\beta,C,o}(s))=1$. See \cref{prop:finite-transition-variation}.  In every formula
below, the maximum over an empty collection of blocks in $\mathscr C_\beta$ and the
supremum over an empty collection of finite transition sets are understood to be $0$.
\begin{remark*}
The terms weighted non-backtracking matrix, accessible state, co-accessible state, trim
matrix, strongly connected component, irreducible matrix, and Frobenius normal form are used
in the standard senses cited below.  The expressions ``full block'' and ``finite truncation''
are only abbreviations for the two explicitly defined matrices in the next paragraph. They do
not name new classes of matrices.
\end{remark*}

\begin{definition*}
An isometry $g$ of a proper CAT$(-1)$ space is \emph{hyperbolic} in the CAT(0) terminology
of Bridson--Haefliger, equivalently \emph{axial}, when its positive translation length is
attained; it then preserves a unique geodesic axis $\Ax(g)$
\cite[Chapter~II.6, Definition~6.3 and Theorem~6.8]{BridsonHaefliger1999}.  For such $g$ and
$x\in X$, set
\begin{equation}\label{eq:intro-axis-bound}
 a_x(g)=\tau_X(g)+2d(x,\Ax(g)),
 \qquad \tau_X(g)=\inf_{z\in X}d(z,gz).
\end{equation}
The translation length, axis, and distance to the axis are standard.  Throughout the paper,
$a_x(g)$ denotes exactly the displayed quantity.

Fix $\beta$, a block $C\in\mathscr C_\beta$, and an allowed transition $e\to f$ in $C$.
Choose a finite set $\mathcal F_{ef}$ of distinct allowed relative cosets.  For each
$c=rG_{\beta,f}\in\mathcal F_{ef}$, choose one representative
$g_{ef,c}=rh\tau_{\beta,f}$ in that coordinate class that is hyperbolic.  Such a choice is
made only for cosets containing a hyperbolic representative.  We refer to the collection
$\mathcal F=(\mathcal F_{ef})$ simply as a finite choice of hyperbolic representatives.

For this choice, let
\[
 K^{\mathcal F}_{\beta,C,o}(s)_{ef}=
 \sum_{c\in\mathcal F_{ef}}\e^{-sL_{\beta,f,o}(c)}
 \quad\text{and}\quad
 A^{\mathcal F}_{\beta,C,o}(s)_{ef}=
 \sum_{c\in\mathcal F_{ef}}\e^{-sa_o(g_{ef,c})}.
\]
These matrices have the same state set and satisfy
$0\le A^{\mathcal F}_{\beta,C,o}(s)\le K^{\mathcal F}_{\beta,C,o}(s)$ entrywise.  Define
\[
 \sigma^{\ax}_{\beta,C,\mathcal F}(o)=
 \inf\{s>0:\rho(A^{\mathcal F}_{\beta,C,o}(s))<1\},
 \qquad
 \sigma^{\rel}_{\beta,C,\mathcal F}(o)=
 \inf\{s>0:\rho(K^{\mathcal F}_{\beta,C,o}(s))<1\}.
\]
These are the critical parameters of the two finite matrices.
The entrywise comparison and the root description are proved in
\cref{lem:relative-axis-comparison,prop:axis-matrix-root}.  The superscript $``\ax"$ signifies the use of the explicitly defined quantity $a_o(g)$. It does
not introduce a separate standard notion of length.
\end{definition*}

\begin{remark*}
Statements above about convergence ``at the group critical exponent'' mean evaluation at the
explicit parameter $s=\delta_X(\Gamma)$.  For a block $C\in\mathscr C_\beta$, entrywise finiteness there means that every entry of
$K_{\beta,C,o}(\delta_X(\Gamma))$, with all allowed transitions retained, is a finite real
number.  The assertion above the
critical exponent is the inequality $\rho(K_{\beta,C,o}(s))<1$ for every
$s>\delta_X(\Gamma)$.

In additon, each assertion is pointwise in $\beta$ and uses the same
number $\delta_X(\Gamma)$. No finiteness, nesting, of the family is assumed.  A
graph-of-groups hierarchy is a rooted tree of groups whose children are vertex groups of a
chosen nontrivial graph-of-groups decomposition, as in
\cite[Definition~2.1]{LouderTouikan2017}.  The hierarchies we used are finite.  At every
node $H\le\Gamma$, reduced Bass--Serre paths remain distinct as elements of $\Gamma$, so the
same group critical exponent applies.  The relevant results are the bounded-multiplicity
transfer-matrix theorem \cref{thm:critical-exponent-matrix}, the Bass--Serre normal form
\cref{lem:bass-serre-normal-form}, and \cref{thm:hierarchy}.
\end{remark*}

\begin{theorem}
\label{thm:accessible-main}
Let $X$ be a proper CAT$(-1)$ space and let $\Gamma\le\Isom(X)$ be finitely generated,
discrete, and accessible over finite subgroups.  If $\delta_X(\Gamma)<1$, then $\Gamma$ is
virtually free, the action on $\Lambda_\Gamma$ is geometrically finite, and there are finitely
many conjugacy classes of infinite maximal parabolic subgroups $P_1,\dots,P_k$, all virtually
cyclic.  More precisely, $\Gamma$ is the fundamental group of a finite graph of groups with
finite edge groups and with every infinite vertex group conjugate to one of the $P_i$.  There is
a $\Gamma$-equivariant quotient
\begin{equation}\label{eq:main-accessible-quotient}
 q_\Gamma:\partial\Gamma\twoheadrightarrow\Lambda_\Gamma,
 \qquad |q_\Gamma^{-1}(\xi)|>1
 \qquad\mbox{if and only if}\qquad
 q_\Gamma^{-1}(\xi)=g\partial P_i
 \text{ for some }i, g\in\Gamma
\end{equation}
Every finitely generated subgroup $H\le\Gamma$ satisfies the same conclusions for its induced
action.  In the torsion-free case the graph-of-groups decomposition reduces to
$\Gamma\cong F_r*P_1*\cdots*P_k$ with every $P_i\cong\mathbb Z$.  Moreover, the following are equivalent: (a) $k=0$; (b) $q_\Gamma$ is a homeomorphism; (c) the
action has no parabolic point; (d) every infinite-order element is hyperbolic (equivalently, axial); (e) and the action is
convex-cobounded.  In particular,
\begin{equation}\label{eq:main-below-half-hereditary}
 \delta_X(\Gamma)<\tfrac12\quad\mbox{implies}\quad :
 \begin{gathered}
  \text{every finitely generated }H\le\Gamma\text{ acts convex-coboundedly,}\\
  H\longrightarrow X,\quad h\longmapsto ho,\quad
  \text{is a quasi-isometric embedding.}
 \end{gathered}
\end{equation}
\end{theorem}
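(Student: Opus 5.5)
The plan is to run Stallings--Dunwoody accessibility down to the vertex groups and use the hypothesis $\delta_X(\Gamma)<1$ to rule out every one-ended vertex group. Take a terminal decomposition of $\Gamma$ as a finite graph of groups with finite edge groups and vertex groups that are finite or one-ended; it exists by the accessibility hypothesis and the Stallings--Dunwoody notion recalled above. Each vertex group $G_v$ is finitely generated, being a vertex group of a finite splitting of a finitely generated group over finite edge groups. It is also discrete in $\Isom(X)$, and monotonicity of the Poincar\'e series gives $\delta_X(G_v)\le\delta_X(\Gamma)<1$.

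\textbf{Key step: no one-ended vertex groups.} Suppose $G_v$ is one-ended.
\begin{itemize}
\item If $G_v$ is non-elementary, apply \cref{thm:finite-vertex-deletion-main} and \cref{cor:boundary-dimension-ends} to $G_v$ with $q=\delta_X(G_v)<1$ (or any $q\in(\delta_X(G_v),1)$). The edge-weight sum outside $B_S(n)$ is bounded by the tail $\Theta_{q,o,S}(n)\to0$. So the ultrametric on $\mathcal E(G_v)$ maps onto $\Lambda_{G_v}$. Since $\mathcal E(G_v)$ is a single point, $\Lambda_{G_v}$ is a single point, which contradicts non-elementarity. Equivalently, the corollary forces infinitely many ends.
\item If $G_v$ is elementary, it is either virtually cyclic (hence two-ended, not one-ended) or parabolic. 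A parabolic group with $\delta<1$ must be handled by a separate growth estimate. For a parabolic fixing $\xi$, horospherical displacement gives $d(o,go)\approx 2\log|g|$ in a CAT$(-1)$ space. Hence $\delta_X(P)\ge\tfrac12\cdot(\text{polynomial growth degree of }P)$. Since $\delta<1$, the growth degree is at most $1$, so $P$ is virtually cyclic and not one-ended.
\end{itemize}
Either way we reach a contradiction, so every vertex group is finite or two-ended. Replacing each two-ended vertex by its Karrass--Pietrowski--Solitar graph of finite groups shows that $\Gamma$ is virtually free.

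\textbf{Structure of the action.} Now $\Gamma$ is hyperbolic, and every infinite vertex group is virtually cyclic. Such a group contains an infinite-order element, which is either axial or parabolic. Collapse the axial vertex groups, which are again virtually free and finite-graph pieces, into the finite part. The remaining infinite vertex groups are the parabolic $P_i$. Maximality of the $P_i$ and the fact that there are finitely many conjugacy classes both come from the finiteness of the graph. Geometric finiteness comes next. Build a $\Gamma$-equivariant continuous map $\partial\Gamma\to\Lambda_\Gamma$, extending the orbit map on the free-product-with-finite-edges tree structure. Do this via the boundary map of \cref{thm:finite-vertex-deletion-main} composed with the natural map $\partial\Gamma\cong\mathcal E(\Gamma)$ modulo the two-ended pieces. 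Points of $\partial\Gamma$ that do not project into a conjugate of some $\partial P_i$ are conical, because the orbit along a normal-form geodesic is a quasi-geodesic: \cref{thm:bass-serre-main} and the matrix bounds with spectral radius below one give uniform exponential decay. It follows that the only identifications are $g\partial P_i\mapsto g\xi_i$, which gives the quotient description. The equivalences (a)--(e) then follow formally: no $P_i$ means $q_\Gamma$ is injective, every point is conical, and geometric finiteness without parabolics is convex-coboundedness by \cite{DasSimmonsUrbanski2017}. Heredity holds because a finitely generated $H\le\Gamma$ is virtually free, hence accessible, and satisfies $\delta_X(H)\le\delta_X(\Gamma)$, so the same argument applies to $H$. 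In the torsion-free case Kurosh gives $F_r*P_1*\dots*P_k$.

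\textbf{Below $\tfrac12$.} The growth estimate $\delta_X(P)\ge\tfrac12$ for any infinite parabolic subgroup of a CAT$(-1)$ group, which follows from $d(o,p^no)\le 2\log n+C$, shows there are no parabolics. So every finitely generated $H$ falls in case (a), acts convex-coboundedly, and its orbit map is a quasi-isometric embedding.

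\textbf{Main obstacle.} I expect the hardest step to be proving that non-parabolic limit points are conical, uniformly along normal forms when the vertex groups are infinite, so that $q_\Gamma$ has exactly the stated fibres. There I would first try to combine the spectral radius $<1$ bound with the thin-triangle constant $D$ to obtain linear progress of normal-form paths.
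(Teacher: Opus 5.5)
\textbf{Verdict: the first half (virtual freeness) is sound, but the second half (geometric finiteness, peripheral decomposition, fibres of $q_\Gamma$) has a genuine gap.}

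Your first half matches the paper's proof of \cref{prop:accessible-virtually-free}, with three small repairs needed.
\begin{itemize}
\item The boundary theorems assume $0<\delta_X(G_v)<\infty$; the paper checks positivity with a ping-pong free semigroup.
\item You must take $q\in(\delta_X(G_v),1)$, because the Poincar\'e series may diverge at $q=\delta_X(G_v)$.
\item The parabolic case needs Gromov's theorem and Bass--Guivarc'h to pass from growth degree below $2$ to virtually cyclic.
\end{itemize}
Your variant for non-elementary one-ended vertex groups is fine: a one-point end space surjects onto $\Lambda_{G_v}$ via $\Pi_o$, so the limit set would be a single point.

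The first gap in the structural half concerns the decomposition. After the key step, the vertex groups of your decomposition are finite, or two-ended pieces you then replace by graphs of finite groups. These are whatever the splitting process produced, so there is nothing to ``collapse''. Nothing you have proved puts a maximal parabolic subgroup inside a vertex group of any splitting over finite subgroups. This is real content, not bookkeeping: $\langle[a,b]\rangle\le F_2$ is elliptic in no nontrivial free splitting of $F_2$. So the following are all unproved:
\begin{itemize}
\item every infinite vertex group is conjugate to some $P_i$;
\item there are finitely many conjugacy classes of such $P_i$;
\item in the torsion-free case $\Gamma\cong F_r*P_1*\cdots*P_k$, which Kurosh alone does not give.
\end{itemize}

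The second gap is your route to conicality, which cannot work. \Cref{thm:bass-serre-main} gives only the upper bound $d(o,ho)\le L^{\BS}_o(h)$ and summability of $\e^{-sL^{\BS}_o}$. A spectral radius below one bounds weighted path counts from above. It yields no lower bound of the form $d(o,ho)\ge aL^{\BS}_o(h)-b$, and that lower bound is exactly what a quasi-geodesic or conical statement needs. The paper itself treats such a lower bound as an extra hypothesis in \cref{thm:axis-bound-cancellation}. Consequently the following remain open, as you acknowledge:
\begin{itemize}
\item geometric finiteness;
\item the exact fibres of $q_\Gamma$;
\item the equivalences (a)--(e);
\item convex-coboundedness below $\tfrac12$, which passes through (a)$\Leftrightarrow$(e).
\end{itemize}

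The paper does not use a matrix argument here. Its route is:
\begin{itemize}
\item \cref{cor:word-ball-dimension-ends} shows $\Lambda_\Gamma$ is a Cantor set.
\item Tukia's theorem \cite{Tukia1994} gives a minimal convergence action on $\Lambda_\Gamma$.
\item Bowditch's theorem on convergence actions on Cantor sets \cite{Bowditch2002} is invoked for geometric finiteness and for a finite splitting over finite groups whose infinite vertex groups are the maximal parabolics.
\item \cref{prop:parabolic-growth} makes those parabolics virtually cyclic.
\item Tran's identification of the Bowditch boundary with $\partial\Gamma$ collapsed along the translates $g\partial P_i$ \cite{Tran2013} gives the fibre description.
\end{itemize}
Your map, $\Pi_o$ composed with $\partial\Gamma\cong\mathcal E(\Gamma)$, is a legitimate continuous equivariant surjection for virtually free $\Gamma$. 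Computing its fibres is exactly the step that is missing.
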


\begin{theorem}
\label{thm:finite-vertex-deletion-main}
Let $X$ be a proper CAT$(-1)$ space and let $\Gamma\le\Isom(X)$ be finitely generated,
non-elementary, and discrete, with $0<\delta_X(\Gamma)<\infty$.  Fix a finite symmetric
generating set $S$, distinct points $A,B\in\Lambda_\Gamma$, a point $o\in(A,B)$ on the geodesic, the constants $D,L$, and a map $g\mapsto\xi_g$ satisfying the conditions defined earlier.  For every finite
$Q\subset\Gamma$ and every $0<q\le1$, the nonempty component cluster sets
$K_{Q,U}$ cover $\Lambda_\Gamma$ and
\begin{equation}\label{eq:main-finite-vertex-deletion-bound}
 \sum_{U\in\mathscr U(Q)}\diam(K_{Q,U})^q\le \EdgeSum_{q,o}(Q;\xi),
 \qquad
 \EdgeSum_{q,o}(Q;\xi)<\varepsilon^q
 \quad\mbox{implies}\quad
 c_\infty(Q)\ge\pack_\varepsilon(\Lambda_\Gamma,\rho_o).
\end{equation}
If $P_{\Gamma,o}(q)<\infty$, then
\begin{equation}\label{eq:main-tail-ultrametric}
 d^{\mathcal E}_{q,o,S}(\epsilon,\eta)=
 \begin{cases}
 0,&\epsilon=\eta,\\
 \Theta_{q,o,S}(N_S(\epsilon,\eta))^{1/q},&\epsilon\ne\eta,
 \end{cases}
 \qquad
 \rho_o(\Pi_o\epsilon,\Pi_o\eta)\le d^{\mathcal E}_{q,o,S}(\epsilon,\eta),
\end{equation}
where $d^{\mathcal E}_{q,o,S}$ is an ultrametric inducing the end topology and
$\Pi_o:(\mathcal E(\Gamma),d^{\mathcal E}_{q,o,S})\twoheadrightarrow
(\Lambda_\Gamma,\rho_o)$ is a surjective $1$-Lipschitz equivariant map and hence a topological quotient.
\end{theorem}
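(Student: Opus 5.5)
The plan has four steps: a local estimate for each Cayley edge, a covering argument, a packing argument, and a chaining argument for the tail ultrametric.

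\textbf{Step 1: local edge estimate.} Take an edge $\{g,gs\}$. We have $d(go,gso)=d(o,so)\le L$, and both $go$ and $gso$ lie within $D$ of the rays $[o,\xi_g)$ and $[o,\xi_{gs})$. A thin-triangle comparison then gives
\[
(\xi_g\mid\xi_{gs})_o\ge d(o,go)-L-2D,
\]
so $w_o(e)\le \e^{L+2D}\e^{-d(o,go)}$. The existence of $\xi_g$ is \cref{lem:boundary-choice}; the Gromov product estimate is the standard CAT$(-1)$ comparison.

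\textbf{Step 2: cluster sets cover $\Lambda_\Gamma$ and satisfy the diameter bound.} Covering holds for two reasons.
\begin{enumerate}[label=(\roman*)]
\item The selected points $\xi_g$ accumulate exactly on $\Lambda_\Gamma$. Indeed, $go\to\zeta$ forces $\xi_g\to\zeta$, because $go$ lies within $D$ of $[o,\xi_g)$.
\item Removing the finite set $Q$ leaves only finitely many components that meet a neighbourhood of $Q$. Any sequence escaping to infinity eventually lies in a single component along a subsequence.
\end{enumerate}
For the diameter inequality, fix an infinite component $U$ and two points $x,y\in K_{Q,U}$. They are limits of $\xi_{g_n}$ and $\xi_{h_n}$ with $g_n,h_n\in U$. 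Join $g_n$ to $h_n$ by an edge path inside $U$. Since $\rho_o$ is a metric (Bourdon), the triangle inequality gives
\[
\rho_o(\xi_{g_n},\xi_{h_n})\le\sum_{e\subset U}w_o(e).
\]
Hence $\diam K_{Q,U}\le\sum_{e\subset U}w_o(e)$. Because $0<q\le1$, subadditivity gives $(\sum a_i)^q\le\sum a_i^q$. Summing over components, which have disjoint edge sets, yields the first inequality in \eqref{eq:main-finite-vertex-deletion-bound}.

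\textbf{Step 3: the packing consequence.} Suppose $\EdgeSum<\varepsilon^q$. Then every cluster set has diameter less than $\varepsilon$. An $\varepsilon$-separated subset of $\Lambda_\Gamma$ therefore meets each $K_{Q,U}$ in at most one point. Only infinite components have nonempty cluster sets, and these cover $\Lambda_\Gamma$, so $c_\infty(Q)\ge\pack_\varepsilon$.

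\textbf{Step 4: the tail ultrametric and the map $\Pi_o$.} Take $Q=B_S(n)$. Each edge outside $Q$ has an endpoint $g$ with $|g|_S>n$, and each vertex is the chosen endpoint of at most $|S|$ edges. Step 1 then gives
\[
\EdgeSum_{q,o}(B_S(n);\xi)\le\Theta_{q,o,S}(n),
\]
so every component $U$ of $\Cay(\Gamma,S)\setminus B_S(n)$ has $\diam K_{Q,U}\le\Theta(n)^{1/q}$. I expect this constant bookkeeping to be the main, though routine, obstacle: making the factor $|S|\e^{q(L+2D)}$ match exactly through Step 1 and the edge-counting.

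\emph{Definition of $\Pi_o$.} For an end $\epsilon$, set $\Pi_o\epsilon=\bigcap_n K_{B_S(n),U_n(\epsilon)}$, where $U_n(\epsilon)$ is the component of $\Cay(\Gamma,S)\setminus B_S(n)$ containing $\epsilon$. This is a nested intersection of nonempty compact sets, and their diameters tend to $0$ because $\Theta(n)\to0$ whenever $P_{\Gamma,o}(q)<\infty$. So $\Pi_o\epsilon$ is a single point.

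\emph{The Lipschitz bound.} If $\epsilon\ne\eta$ and $N=N_S(\epsilon,\eta)$, both image points lie in the same cluster set at level $N$. Hence $\rho_o(\Pi_o\epsilon,\Pi_o\eta)\le\Theta(N)^{1/q}$, which is the inequality in \eqref{eq:main-tail-ultrametric}.

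\emph{Ultrametric and topology.} The ultrametric inequality follows from $N(\epsilon,\zeta)\ge\min(N(\epsilon,\eta),N(\eta,\zeta))$ together with the monotonicity of $\Theta$. Distinct ends separate at some finite level. I will assume $\Theta$ is strictly decreasing, or handle plateaus separately, since that is needed for $d^{\mathcal E}$ to induce the end topology. Given that, balls are exactly the basic sets $\mathcal E(U)$ for components of $\Cay(\Gamma,S)\setminus B_S(n)$.

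\emph{Surjectivity, equivariance, quotient.} Surjectivity follows from the covering in Step 2 plus compactness: each $\zeta\in\Lambda_\Gamma$ lies in a nested chain of cluster sets, and this chain determines an end. Equivariance holds up to the choice of $\xi$. The cleaner route is to note that $\Pi_o\epsilon$ is the limit of $g_no$ along any ray $g_n$ representing $\epsilon$, which is independent of $\xi$ and manifestly equivariant. Finally, $\Pi_o$ is a continuous surjection from a compact space to a Hausdorff space, hence a quotient map.
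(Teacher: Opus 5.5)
Your proposal is correct and follows essentially the same route as the paper: the adjacent-edge contraction estimate, the path-and-triangle-inequality bound on cluster-set diameters with $q$-subadditivity, the packing count, the word-ball overcount giving $\Theta_{q,o,S}(n)$, nested cluster sets shrinking to the limit of $g_jo$ along a representing ray, and a K\"onig-type nested-component selection for surjectivity. The one point you leave as an assumption is immediate, as the paper observes: since $\Gamma$ is infinite, every word sphere is nonempty, so $\Theta_{q,o,S}$ is strictly decreasing (and in fact positivity together with $\Theta_{q,o,S}(n)\to0$ already suffices for the ultrametric to induce the end topology).
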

Next theorem is about weighted non-backtracking matrices for Bass--Serre normal forms at the group critical exponent.
\begin{theorem}
\label{thm:bass-serre-main}
For every member of the arbitrary family
$\{(H_\beta,\mathcal G_\beta)\}_{\beta\in I}$, the critical exponent of the Bass--Serre normal-form length is determined by the abscissae
of convergence of the coefficient series and by matrices obtained from finite sets of
transitions:
\begin{equation}\label{eq:main-finite-recovery}
 \delta^{\BS}_{\beta,o}=
 \max\left\{\delta_{\beta,\coef}(o),
 \max_{C\in\mathscr C_\beta}\sup_F
 \sigma^{\rel}_{\beta,C,F}(o)\right\}.
\end{equation}
For every $C\in\mathscr C_\beta$, every entry of
$K_{\beta,C,o}(\delta_X(\Gamma))$ is finite, and
$\rho(K_{\beta,C,o}(s))<1$ for every $s>\delta_X(\Gamma)$.  Every comparison matrix is
dominated entrywise by the corresponding matrix $K^{\mathcal F}_{\beta,C,o}(s)$.  The same assertions apply to each subgroup splitting in the
family and at every node of each chosen finite graph-of-groups hierarchy:
\begin{equation}\label{eq:main-transfer-comparison}
 \rho\!\left(K_{\beta,C,o}(\delta_X(\Gamma))\right)\le1,
 \qquad
 0\le A^{\mathcal F}_{\beta,C,o}(s)\le K^{\mathcal F}_{\beta,C,o}(s),
 \qquad
 \sigma^{\ax}_{\beta,C,\mathcal F}(o)
 \le\sigma^{\rel}_{\beta,C,\mathcal F}(o)
 \le\delta_X(\Gamma).
\end{equation}
\end{theorem}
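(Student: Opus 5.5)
The proof will rest on one identity: the unique reduced Bass--Serre normal form (\cref{lem:bass-serre-normal-form}) turns the normal-form Poincar\'e series into a weighted path sum over the trim transfer matrix. Since every $h\in H_\beta$ has a unique expression $h=b_{\beta,f_1}(c_1)\cdots b_{\beta,f_n}(c_n)h_*$ whose length is additive, the exponential weight factors along the path. Reduced coset words based at $v_{\beta,*}$ are exactly the directed walks through the trim states. Summing over $n$ therefore gives
\[
 \sum_{h\in H_\beta}\e^{-sL^{\BS}_{\beta,o}(h)}
 =V_{\beta,o}(s)\Bigl(1+u_{\beta,o}(s)^{\top}\sum_{n\ge0}K_{\beta,*,o}(s)^n q_\beta\Bigr),
\]
with the usual convention on the first step. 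All terms are nonnegative, so this is an identity in $[0,\infty]$.

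For \eqref{eq:main-finite-recovery} I will prove the two inequalities separately.

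\textbf{Lower bound.} Each coefficient entry is a sub-sum of the series above. Indeed, $d(o,h_*o)$ for $h_*\in G_{\beta,v_{\beta,*}}$ is itself a normal-form length, and each trim state extends to a based loop, so each entry appears in the series up to a bounded multiplicative factor. Hence the series diverges for $s<\delta_{\beta,\coef}(o)$. For a block $C$ and a finite truncation $F$, trimness lets us prefix and suffix fixed words to closed walks in $C$ using only transitions in $F$. This gives $\sum_n\rho(K^F_{\beta,C,o}(s))^n\lesssim$ the series, so divergence follows whenever $\rho\ge1$, i.e.\ for $s<\sigma^{\rel}_{\beta,C,F}(o)$.

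\textbf{Upper bound.} Take $s$ above the right-hand side of \eqref{eq:main-finite-recovery}. All entries are then finite, and by dominated convergence $K^F\uparrow K_{\beta,C,o}(s)$ entrywise as $F$ exhausts the transitions. Lower semicontinuity and monotonicity of the spectral radius on finite nonnegative matrices then give $\rho(K_{\beta,C,o}(s))=\sup_F\rho(K^F_{\beta,C,o}(s))$. Here I use \cref{prop:finite-transition-variation}: the map $s\mapsto\rho(K^F(s))$ is strictly decreasing, since each retained term decays exponentially in $s$ provided the closed walks have positive length (\cref{lem:positive-cycle-length}). With this, choosing $s'\in(\text{RHS},s)$ shows $\rho(K_{\beta,C,o}(s))<1$ with a uniform margin.

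\textbf{Main obstacle.} Passing from a uniform-in-$F$ bound at $s'$ to a strict inequality for the full countable block at $s$ is the step I expect to be hardest. I would get it from the factor $\e^{-(s-s')\ell_{\min}}$, where $\ell_{\min}$ is a positive lower bound for $L_{\beta,f,o}(c)$, available away from finitely many cosets by discreteness. Once every block in $\mathscr C_\beta$ has $\rho<1$, the Frobenius normal form makes $\sum_n K_{\beta,*,o}^n$ finite: it is block triangular, and the acyclic blocks are nilpotent. So the series converges.

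For the statements at $\delta_X(\Gamma)$, the elements $b_{\beta,f_1}(c_1)\cdots b_{\beta,f_n}(c_n)$ are distinct elements of $\Gamma$. In CAT$(-1)$, minimality of the representatives together with thin triangles gives a comparison $d(o,ho)\ge L^{\BS}_{\beta,o}(h)-Cn$. The intended route is the bounded-multiplicity transfer-matrix theorem \cref{thm:critical-exponent-matrix}, which handles this defect by absorbing it into the matrix rather than the series. It should yield that closed-walk sums in $C$ are dominated by $P_{\Gamma,o}(s)$ for every $s>\delta_X(\Gamma)$, giving $\rho(K_{\beta,C,o}(s))<1$. Finiteness of the entries at $s=\delta_X(\Gamma)$ comes from the same comparison: each entry is a sum over distinct cosets of $\e^{-sL}$ with $L$ at most a bounded distance from $d(o,go)$ for distinct $g$. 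Since $\rho$ of the block is $<1$ just above $\delta_X(\Gamma)$, the entries are bounded uniformly as $s\downarrow\delta_X(\Gamma)$, and monotone convergence then gives finiteness of the entries and $\rho(K_{\beta,C,o}(\delta_X(\Gamma)))\le1$. In particular every $\sigma^{\rel}_{\beta,C,F}(o)\le\delta_X(\Gamma)$.

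Finally, the comparison $A^{\mathcal F}\le K^{\mathcal F}$ is \cref{lem:relative-axis-comparison}. Its content is that $d(o,go)\le\tau_X(g)+2d(o,\Ax(g))=a_o(g)$ for axial $g$ (use the triangle inequality through the projection to the axis), together with the fact that $L_{\beta,f,o}(c)$ minimizes over the coset containing $g_{ef,c}$, which gives $L\le a_o$ and hence $\e^{-sa_o}\le\e^{-sL}$. Monotonicity of the Perron root then gives $\sigma^{\ax}\le\sigma^{\rel}$ (\cref{prop:axis-matrix-root}). The hierarchy and subgroup assertions hold because the argument is pointwise in $\beta$ and uses only distinctness in $\Gamma$, as \cref{thm:hierarchy} records.
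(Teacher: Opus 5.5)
The gap is in the entrywise finiteness of $K_{\beta,C,o}$ at $s=\delta:=\delta_X(\Gamma)$. This is the one assertion that is not inherited formally from $s>\delta$. The rest of your outline follows the paper: the path-sum identity, subseries through fixed trim prefixes and suffixes, monotone exhaustion, \cref{thm:critical-exponent-matrix} above $\delta$, and \cref{lem:relative-axis-comparison}.

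Your justification for finiteness at $\delta$ does not work. Comparing an entry with $\e^{-sd(o,go)}$ summed over distinct $g$ only gives $K_{\beta,C,o}(s)_{uw}\le P_{\Gamma,o}(s)$. That bound is infinite at $s=\delta$ whenever $\Gamma$ is of divergence type (for instance for convex-cocompact actions), which is exactly the case in which the assertion has content. Nor does $\rho<1$ just above $\delta$ bound individual entries by itself: $\begin{pmatrix}0&a\\ b&0\end{pmatrix}$ has spectral radius $\sqrt{ab}$ with $a$ arbitrarily large.

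The missing idea is irreducibility of $C$ combined with a fixed return walk. For $u,w\in C$, choose a directed walk $\gamma$ in $C$ from $w$ to $u$. It uses finitely many transitions, so its product weight $c_s$ is positive and nonincreasing in $s$. Hence $c_s\ge c_{s_1}>0$ for $s\in(\delta,s_1]$. Apply \cref{lem:closed-walk} to the closed walk formed by the $(u,w)$ entry followed by $\gamma$, of length $q$. This gives $c_{s_1}K_{\beta,C,o}(s)_{uw}\le\rho(K_{\beta,C,o}(s))^q<1$. Only now does monotone convergence give finiteness at $\delta$ and $\rho\le1$.

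The paper runs the same argument directly at $s=\delta$. Every finite truncation has $\rho(K^F_{\beta,C,o}(\delta))\le1$ by continuity from the right. A truncation containing $\gamma$ and any finite set $F_0$ of parallel $u\to w$ transitions then gives $c_\delta\sum_{F_0}\e^{-\delta L}\le1$.

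Two further corrections.

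First, the inequality $d(o,ho)\ge L^{\BS}_{\beta,o}(h)-Cn$ is not justified for the arbitrary splittings allowed here. Minimality of $b_{\beta,f}(c)$ is a condition inside a single coset of $G_{\beta,f}$ and says nothing about cancellation between consecutive syllables. It is also not what \cref{thm:critical-exponent-matrix} uses. \Cref{lem:closed-path-comparison} needs only subadditivity, $\ell_o(b\lambda(p))\le\ell_o(b)+L_o(p)$, together with injectivity of $p\mapsto b\lambda(p)$, which is \cref{lem:bass-serre-closed-injective}. Drop that inequality.

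Second, your $\ell_{\min}$ argument for strictness in the upper bound must deal with the finitely many short transitions, some of which may have length $0$ (elements fixing $o$). By \cref{lem:positive-cycle-length}, whose hypothesis is again \cref{lem:bass-serre-closed-injective}, the zero-length transitions contain no directed cycle. So every walk of length $|C|$ in $C$ uses a transition of length at least $\ell_0>0$. Here $\ell_0$ is the infimum of the positive transition lengths; it is positive because only finitely many cosets have length below $1$. Hence $K_{\beta,C,o}(s)^{|C|}\le\e^{-(s-s')\ell_0}K_{\beta,C,o}(s')^{|C|}$ entrywise, and therefore $\rho(K_{\beta,C,o}(s))<1$. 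The paper gets the same strictness from strict Perron--Frobenius comparison on the irreducible full block.
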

First we have result for finitely presented and bounded-torsion extension.
\begin{corollary}
\label{cor:accessibility-extension}
Let $X$ be proper CAT$(-1)$ and let $\Gamma\le\Isom(X)$ be finitely generated and discrete
with $\delta_X(\Gamma)<1$.  The conclusions of
\cref{thm:accessible-main} hold whenever either
\begin{enumerate}[label=\textup{(\roman*)}]
\item $\Gamma$ is finitely presented, or
\item the orders of the finite subgroups of $\Gamma$ are uniformly bounded.
\end{enumerate}
\end{corollary}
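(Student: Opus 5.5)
The plan is to reduce both cases to the single hypothesis of \cref{thm:accessible-main}, namely that $\Gamma$ is accessible over finite subgroups. The hypotheses on $X$, on discreteness, on finite generation and on $\delta_X(\Gamma)<1$ are the same in the corollary and in the theorem. So once accessibility is checked, the theorem applies verbatim and gives every listed conclusion. This includes the hereditary statement for finitely generated subgroups and the implication \eqref{eq:main-below-half-hereditary}, because those are conclusions of the theorem applied to $\Gamma$ itself. We never need to check accessibility for the subgroups.

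For case (i), we invoke Dunwoody's accessibility theorem \cite{Dunwoody1985}: every finitely presented group is accessible over finite subgroups. The one point to verify is that Dunwoody's formulation matches the definition used here. His theorem bounds the number of edges in a reduced finite graph-of-groups decomposition with finite edge groups by a constant depending on a presentation. That is exactly the complexity bound $N_\Gamma$ in our definition. The equivalence with the Stallings--Dunwoody terminal-decomposition formulation was already recorded in the definitions above, citing \cite{LouderTouikan2017}.

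For case (ii), we invoke Linnell's theorem \cite{Linnell1983}: a finitely generated group whose finite subgroups have uniformly bounded order is accessible. Again Linnell states a bound on the number of edges of reduced decompositions with finite edge groups, which matches our definition. Here the bound depends on the generating set and on the torsion bound.

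The only potential obstacle is this bookkeeping of definitions. Dunwoody and Linnell phrase accessibility either as termination of the splitting process or as an edge bound for reduced decompositions. The introduction already explains via \cite{LouderTouikan2017} that these are equivalent, so I expect no real difficulty. The proof is then a two-line citation followed by an application of \cref{thm:accessible-main}.
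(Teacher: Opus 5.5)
Your proposal is correct and matches the paper's proof: Dunwoody's theorem gives accessibility in case (i), Linnell's theorem gives it in case (ii), and \cref{thm:accessible-main} then applies verbatim (the paper also remarks that torsion-free groups satisfy (ii) with bound one). You are also right that the hereditary and below-$\tfrac12$ conclusions need no separate accessibility check for subgroups, since the theorem's own proof derives them from virtual freeness of $\Gamma$.
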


\begin{remark*}
Every torsion-free group satisfies part~\textup{(ii)} of
\cref{cor:accessibility-extension}, because its only finite subgroup is trivial.  
The two hypotheses of the corollary cover genuinely larger classes with torsion. The explicit
scaled Bass--Serre tree example $C_2*C_3$ is given in
\cref{rem:strictness-sharpness}.
\end{remark*}
Next we have the critical exponent gap for accessible groups.
\begin{corollary}
\label{cor:accessible-gap}
Let $\Gamma\le\Isom(X)$ be finitely generated, discrete, and accessible over finite subgroups.
If $\Gamma$ is not virtually free, then $\delta_X(\Gamma)\ge1$.  The same conclusion holds,
in particular, for every finitely presented group and every finitely generated group with
uniformly bounded finite-subgroup orders.  If such a group has a non-virtually-cyclic maximal
parabolic subgroup, then again $\delta_X(\Gamma)\ge1$.
\end{corollary}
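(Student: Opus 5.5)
The corollary is the contrapositive of \cref{thm:accessible-main}, combined with the accessibility theorems quoted in the introduction. I would argue by contradiction.

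\emph{Main statement.} Suppose $\Gamma$ is finitely generated, discrete and accessible over finite subgroups, and suppose $\delta_X(\Gamma)<1$. Then \cref{thm:accessible-main} applies directly and gives that $\Gamma$ is virtually free. This contradicts the hypothesis, so $\delta_X(\Gamma)\ge1$. No hypothesis of non-elementarity or $\delta_X(\Gamma)>0$ is needed here, because \cref{thm:accessible-main} is stated without them. The only point to check is the degenerate case $\delta_X(\Gamma)=0$, where the theorem's hypothesis $\delta_X(\Gamma)<1$ still holds.

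\emph{The ``in particular'' clause.} Every finitely presented group is accessible over finite subgroups by Dunwoody \cite{Dunwoody1985}. Every finitely generated group with uniformly bounded finite-subgroup orders is accessible by Linnell \cite{Linnell1983}. So both classes fall under the first sentence. Equivalently, one can invoke \cref{cor:accessibility-extension}, which already packages these two cases: if $\delta_X(\Gamma)<1$, the conclusions of \cref{thm:accessible-main} hold, in particular virtual freeness, and the contrapositive gives the claim.

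\emph{The parabolic clause.} Assume $\Gamma$ (accessible, or in one of the two classes above) has a maximal parabolic subgroup $P$ that is not virtually cyclic. If $\delta_X(\Gamma)<1$, then \cref{thm:accessible-main} says that every infinite maximal parabolic subgroup is conjugate to some $P_i$, and all $P_i$ are virtually cyclic. A finite $P$ is virtually cyclic (virtually trivial), so $P$ must be infinite, and then it is conjugate to a virtually cyclic $P_i$. This is a contradiction, hence $\delta_X(\Gamma)\ge1$.

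The only delicate step is matching the notion of maximal parabolic subgroup used in the corollary with the one in \cref{thm:accessible-main}, namely the stabilizer of a parabolic limit point. Once these agree, the argument is purely formal.
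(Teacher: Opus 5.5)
Your proposal is correct and matches the paper's proof. The first assertion is the contrapositive of the virtual-freeness conclusion; the paper cites \cref{prop:accessible-virtually-free} directly, and that is exactly what \cref{thm:accessible-main} packages. Dunwoody and Linnell supply the two named classes, and the parabolic clause follows from virtual cyclicity of infinite maximal parabolics. The only cosmetic difference is in that last step: the paper rederives it from \cref{prop:parabolic-growth} together with finite generation of maximal parabolics in the geometrically finite conclusion, whereas you read it off the statement of \cref{thm:accessible-main}.
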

In terms of Hausdorff measure, dimension, and ends we have:
\begin{corollary}
\label{cor:boundary-dimension-ends}
Let $X$ be a proper CAT$(-1)$ space and let $\Gamma\le\Isom(X)$ be finitely generated,
non-elementary, and discrete, with $0<\delta_X(\Gamma)<\infty$.  Fix the boundary notation in the introduction.
For every $q\in(0,1]$ with $P_{\Gamma,o}(q)<\infty$, the orbit map extends
continuously and $\Gamma$-equivariantly to the end compactification of the vertex set, its end
restriction is the $1$-Lipschitz quotient in \eqref{eq:main-tail-ultrametric}, and
$\mathcal H^q(\Lambda_\Gamma,\rho_o)=0$.  In particular, if
$\delta_X(\Gamma)<1$, then we have
\begin{equation}\label{eq:full-limit-dimension}
 \dim_H(\Lambda_\Gamma,\rho_o)=\delta_X(\Gamma),\qquad
 \Lambda_\Gamma\text{ is Cantor},\qquad e(\Gamma)=\infty.
\end{equation}
For every $m\ge2$, the edge-weight sums also satisfy
\begin{equation}\label{eq:main-edge-sum-bound}
 \inf_{\substack{Q\sqsubset\Gamma\\c_\infty(Q)<m}}
 \EdgeSum_{q,o}(Q;\xi)\ge\varepsilon_m(\Lambda_\Gamma,\rho_o)^q.
\end{equation}
\end{corollary}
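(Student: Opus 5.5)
We derive \cref{cor:boundary-dimension-ends} from \cref{thm:finite-vertex-deletion-main}, using convergence of $P_{\Gamma,o}(q)$ to make the edge-weight sums small.

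\textbf{Step 1: edge weights are summable.} For an edge $e=\{g,gs\}$ we bound $w_o(e)=\rho_o(\xi_g,\xi_{gs})$. Since $\xi_g$ lies within $D$ of $[o,\xi_g)$ through $go$, and $d(go,gso)\le L$, thin triangles give
\[
(\xi_g\mid\xi_{gs})_o\ge d(o,go)-L-2D.
\]
Hence $w_o(e)^q\le \e^{q(L+2D)}\e^{-qd(o,go)}$. Charging each edge to an endpoint $g\notin Q$, each $g$ carries at most $|S|$ edges. So if $Q\supseteq B_S(n)$, then $\EdgeSum_{q,o}(Q;\xi)\le\Theta_{q,o,S}(n)$, and this tends to $0$ as $n\to\infty$ when $P_{\Gamma,o}(q)<\infty$.

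\textbf{Step 2: Hausdorff measure zero.} By \cref{thm:finite-vertex-deletion-main}, the sets $K_{B_S(n),U}$ cover $\Lambda_\Gamma$ and satisfy $\sum_U\diam^q\le\Theta(n)\to0$. Their diameters are each at most $\Theta(n)^{1/q}\to0$. These covers therefore witness $\mathcal H^q(\Lambda_\Gamma,\rho_o)=0$.

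\textbf{Step 3: continuous extension to the end compactification.} The end restriction is the map $\Pi_o$ of \eqref{eq:main-tail-ultrametric}. Define the extension $\widehat\Gamma\to X\cup\partial X$ by $g\mapsto go$ and $\epsilon\mapsto\Pi_o\epsilon$. Equivariance is inherited from $\Pi_o$. For continuity at an end $\epsilon$, take a basic neighbourhood $U\sqcup\mathcal E(U)$ with $U$ a component outside $B_S(n)$. For $g\in U$, we have $go$ within $D$ of $[o,\xi_g)$ with $d(o,go)\to\infty$, and $\xi_g\in$ a neighbourhood of $K_{B_S(n),U}$ of diameter at most $\Theta(n)^{1/q}$. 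The image of $\mathcal E(U)$ lies in $K_{B_S(n),U}$. Hence $go$ converges in the cone topology to $\Pi_o\epsilon$. Identifying $\Pi_o\epsilon\in K_{B_S(n),U}$ by construction is the one point requiring care.

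\textbf{Step 4: the case $\delta_X(\Gamma)<1$.} For every $q\in(\delta,1]$ we have $P(q)<\infty$, so $\mathcal H^q=0$ and $\dim_H\le\delta$. The reverse inequality follows from generalized Bishop--Jones, since $\dim_H\Lambda_\Gamma\ge\dim_H\Lambda_c=\delta$.

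For the Cantor property, $\Lambda_\Gamma$ is compact and perfect because $\Gamma$ is non-elementary. It is totally disconnected because $\mathcal H^q=0$ with $q\le1$ forces topological dimension $0$. Alternatively, it is a continuous image of the ends under the small-cluster covers, and the sets $K_{Q,U}$ are clopen-separating.

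For $e(\Gamma)=\infty$, choose $m$ large. Since $\Lambda_\Gamma$ is infinite, $\pack_\varepsilon\ge m$ for some $\varepsilon>0$. Pick $n$ with $\Theta(n)<\varepsilon^q$. By \eqref{eq:main-finite-vertex-deletion-bound}, $c_\infty(B_S(n))\ge m$.

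\textbf{Step 5: the edge-sum bound \eqref{eq:main-edge-sum-bound}.} This is the contrapositive of the implication in \eqref{eq:main-finite-vertex-deletion-bound}. Suppose $c_\infty(Q)<m$ and $\EdgeSum<\varepsilon^q$ for some $\varepsilon<\varepsilon_m$ with $\pack_\varepsilon\ge m$. Then $c_\infty(Q)\ge m$, a contradiction. So $\EdgeSum\ge\varepsilon^q$ for all such $\varepsilon$, and letting $\varepsilon\uparrow\varepsilon_m$ gives the bound.

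The main obstacle is the topological part of Step 4. Total disconnectedness from $\mathcal H^1=0$ needs the standard fact that a connected set containing two points at distance $r$ has $\mathcal H^1\ge r$. Apply it with $q\le1$, noting that $\mathcal H^q=0$ implies $\mathcal H^1=0$.
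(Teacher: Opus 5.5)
Your proposal is correct and follows essentially the paper's route: the adjacent-orbit contraction bounds the edge-weight sum outside $B_S(n)$ by the Poincar\'e tail, and the word-ball cluster covers give $\mathcal H^q(\Lambda_\Gamma,\rho_o)=0$. The extension is continuous because $\Pi_o\epsilon$ is the unique point of the nested intersection $\bigcap_n K_{B_S(n),U_n}$; this is how $\Pi_o$ is constructed in \cref{thm:end-boundary-map}, and it settles the point you flag in Step~3. The dimension, Cantor (via $\mathcal H^1=0$ and the connected-set lemma), ends, and edge-sum statements then go exactly as in \cref{cor:word-ball-dimension-ends,thm:finite-vertex-deletion-boundary}.

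Drop the ``clopen-separating'' alternative in Step~4, because cluster sets of distinct components can share points. For example, for $\langle p\rangle*\langle a\rangle$ with $p$ parabolic, the fixed point of $p$ lies in the cluster sets of both components of $\Cay(\Gamma,S)\setminus\{1\}$ containing $p^{n}$ and $p^{-n}$. Only the $\mathcal H^1$ argument is valid.
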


\begin{corollary}[Exact resolvent and Hausdorff-dimension lower bounds]
\label{cor:resolvent-dimension-bounds}
If the trim state set $E_{\beta,*}$ is empty, then the normal-form Poincar\'e series is
$V_{\beta,o}(s)$, as in \eqref{eq:bass-serre-edge-free}.  If
$E_{\beta,*}\ne\varnothing$, then whenever the trim coefficient series are finite and
$\rho(K_{\beta,*,o}(s))<1$, the normal-form Poincar\'e series is
\begin{equation}\label{eq:main-resolvent}
 \sum_{h\in H_\beta}\e^{-sL^{\BS}_{\beta,o}(h)}
 =V_{\beta,o}(s)
 \Bigl(1+u_{\beta,o}(s)^T(I-K_{\beta,*,o}(s))^{-1}q_\beta\Bigr).
\end{equation}
For $E_{\beta,*}\ne\varnothing$, these convergence conditions are necessary.  If $\delta_X(\Gamma)<1$ or the action is
convex-cobounded, then every finite comparison matrix at every basepoint and every hierarchy
level satisfies
\begin{equation}\label{eq:main-dimension-lower-bound}
 \sup_{y\in X}\sup_{\beta,C,\mathcal F}
 \sigma^{\ax}_{\beta,C,\mathcal F}(y)
 \le\delta_X(\Gamma)=\dim_H(\Lambda_\Gamma,\rho_o).
\end{equation}
\end{corollary}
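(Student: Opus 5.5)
The corollary has two parts, and I would prove them separately: first the resolvent identity \eqref{eq:main-resolvent}, then the lower bound \eqref{eq:main-dimension-lower-bound}.

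\emph{The resolvent identity.} I would start from the unique reduced Bass--Serre form of \cref{lem:bass-serre-normal-form}. Each $h\in H_\beta$ factors uniquely as $b_{\beta,f_1}(c_1)\cdots b_{\beta,f_n}(c_n)h_*$, and $L^{\BS}_{\beta,o}$ is additive over this factorisation. So the normal-form Poincar\'e series factors as $V_{\beta,o}(s)$ times a sum over reduced coset-labelled loops at $v_{\beta,*}$.
\begin{itemize}
\item For $n=0$ the loop sum contributes $1$; if $E_{\beta,*}=\varnothing$ this is the only term, which gives the edge-free case.
\item For $n\ge1$, the first letter contributes the entries of $u_{\beta,o}(s)$, each later letter contributes one entry of $K_{\beta,*,o}(s)$, and the terminal indicator $q_\beta$ records the return to $v_{\beta,*}$.
\end{itemize}
Only trim states can occur in a based reduced loop, so the loop sum equals $1+\sum_{n\ge0}u^T K_*^n q$.

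All terms are nonnegative, so Tonelli's theorem justifies the regrouping. When $\rho(K_*)<1$, the Neumann series gives $(I-K_*)^{-1}$, and the identity follows. For necessity, suppose the left side is finite.
\begin{itemize}
\item The coefficient series are finite, since every trim entry occurs as a factor of some term with a positive prefactor.
\item Suppose $\rho(K_*(s))\ge1$. Some block $C$ with a cycle is then reached and left by trim walks, and $\sum_n K_C^n$ diverges, which contradicts finiteness.
\end{itemize}

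\emph{The lower bound.} Assume $\delta_X(\Gamma)<1$ or that the action is convex-cobounded. By \cref{thm:bass-serre-main}, specifically \eqref{eq:main-transfer-comparison}, we have $\sigma^{\ax}_{\beta,C,\mathcal F}(y)\le\sigma^{\rel}_{\beta,C,\mathcal F}(y)\le\delta_X(\Gamma)$ at every basepoint $y$. That theorem's proof never uses a special choice of basepoint, so it can be rerun with $o$ replaced by $y$. Since $\delta_X(\Gamma)$ does not depend on the basepoint, taking suprema over $y,\beta,C,\mathcal F$ gives the first inequality; the hierarchy levels are covered by the same theorem.

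For the equality $\delta_X(\Gamma)=\dim_H(\Lambda_\Gamma,\rho_o)$ I would split into two cases.
\begin{itemize}
\item If $\delta_X(\Gamma)<1$, the equality is \eqref{eq:full-limit-dimension} of \cref{cor:boundary-dimension-ends}. The elementary case is handled directly: the limit set is then finite, so both quantities vanish.
\item If the action is convex-cobounded, the conical limit set is all of $\Lambda_\Gamma$, and the Bishop--Jones theorem \cite[Theorem~1.2.1]{DasSimmonsUrbanski2017} identifies its dimension with $\delta_X(\Gamma)$.
\end{itemize}

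The main obstacle is the necessity direction. Divergence of the series when $\rho\ge1$ has to be localised to a cyclic strongly connected block that is both reachable from the initial support and able to reach the terminal support. This needs the Frobenius normal form together with trimness, and it also has to cover the case where some entries are infinite.
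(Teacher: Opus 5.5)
\textbf{Verdict.} Your proposal is correct and follows the paper's own route. The resolvent identity and the necessity of the convergence conditions are exactly \cref{thm:bass-serre-resolvent}: the unique normal form of \cref{lem:bass-serre-normal-form}, Tonelli, the Neumann series, and the trimness and Frobenius-block divergence argument of \cref{thm:normal-form-resolvent}. The lower bound is \eqref{eq:main-transfer-comparison} applied at an arbitrary basepoint, combined with \eqref{eq:full-limit-dimension} or \cref{thm:bourdon-coornaert}.

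\textbf{One correction.} Your aside on the elementary case is false. Take the elementary group generated by a parabolic, for example $\langle z\mapsto z+1\rangle$ acting on $\mathbb H^2$. By \cref{prop:parabolic-half} it has $\delta_X=1/2<1$, yet its limit set is a single point. So the two quantities do not both vanish, and the identity $\delta_X(\Gamma)=\dim_H(\Lambda_\Gamma,\rho_o)$ genuinely fails there. The axial elementary case has $\delta_X=0$, which is also excluded.

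\textbf{How to fix it.} The corollary is not rescued by a direct check of elementary groups. It holds because of the standing hypotheses of the transfer-matrix results: $\Gamma$ is non-elementary and $0<\delta_X(\Gamma)<\infty$. These are precisely the hypotheses under which \eqref{eq:full-limit-dimension} and \cref{thm:bourdon-coornaert} are available. Drop the aside and invoke those hypotheses instead.
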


\begin{corollary}
\label{cor:linear-groups}
Let $X$ be proper CAT$(-1)$ and let $\Gamma\le\Isom(X)$ be finitely generated and
discrete.  Suppose $\Gamma$ admits a faithful finite-dimensional linear representation over a
field of characteristic zero.  If $\delta_X(\Gamma)<1$, then every conclusion of
\cref{thm:accessible-main} holds.  Consequently, if such a group is not virtually
free, then $\delta_X(\Gamma)\ge1$. If $\delta_X(\Gamma)<1/2$, every finitely generated
subgroup acts convex-coboundedly.
\end{corollary}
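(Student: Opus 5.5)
The plan is to reduce \cref{cor:linear-groups} to \cref{cor:accessibility-extension}(ii): we show that a finitely generated group with a faithful characteristic-zero linear representation has uniformly bounded finite-subgroup orders, at least after passing to a finite-index subgroup.

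\emph{Selberg step.} Write $\Gamma\le GL_n(K)$ with $\operatorname{char}K=0$. Since $\Gamma$ is finitely generated, all matrix entries of a finite generating set lie in a finitely generated subring $R\subset K$. By Selberg's lemma, in the form valid over any finitely generated domain of characteristic zero (Alperin's version), $\Gamma\le GL_n(R)$ has a torsion-free normal subgroup $\Gamma_0$ of finite index $m$. Any finite subgroup $F\le\Gamma$ then satisfies $F\cap\Gamma_0=1$, so $F$ injects into $\Gamma/\Gamma_0$ and $|F|\le m$. Thus the orders of finite subgroups of $\Gamma$ are uniformly bounded.

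\emph{Applying earlier results.} Since $\Gamma$ is finitely generated and discrete with $\delta_X(\Gamma)<1$, \cref{cor:accessibility-extension}(ii) (Linnell's accessibility) yields every conclusion of \cref{thm:accessible-main}. The gap statement follows in the same way. If $\Gamma$ is not virtually free, then $\Gamma$ is accessible by Linnell, so \cref{cor:accessible-gap} gives $\delta_X(\Gamma)\ge1$. Equivalently, it is the contrapositive of the first part.

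\emph{Below $1/2$.} This is the hereditary clause \eqref{eq:main-below-half-hereditary} of \cref{thm:accessible-main}, which is already among the conclusions just obtained, since $\delta_X(\Gamma)<1/2<1$. One can also note that any finitely generated $H\le\Gamma$ is itself characteristic-zero linear, and that $\delta_X(H)\le\delta_X(\Gamma)$ because the Poincar\'e series of $H$ is a subseries of that of $\Gamma$. So the argument applies directly to $H$ as well.

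\emph{Main obstacle.} The only nontrivial input is Selberg's lemma over a general characteristic-zero field rather than over $\mathbb C$. It is handled by restricting to the finitely generated ring $R$ and reducing modulo suitable maximal ideals. If this is not quoted directly, I would embed $R$ into a number field via a specialization that is injective on a given finite set. However, a uniform bound needs more than that, so I would cite Alperin's theorem.
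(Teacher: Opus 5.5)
Your proposal is correct and follows the paper's proof essentially verbatim: Selberg's lemma gives a torsion-free normal subgroup of finite index, so every finite subgroup injects into the finite quotient and has uniformly bounded order, and then Linnell's theorem with \cref{thm:accessible-main} and \cref{cor:accessible-gap} yields all three conclusions. Your ``main obstacle'' is milder than you suggest: the finitely generated field spanned by the matrix entries of a generating set embeds in $\mathbb{C}$, so the classical form of Selberg's lemma already suffices without Alperin's version.
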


\begin{corollary}[Kleinian groups and the parabolic-free case]
\label{cor:kleinian-classical}
Let $\Gamma\le\mathrm{PSL}_2(\mathbb C)$ be finitely generated and discrete with
$\delta_{\mathbb H^3}(\Gamma)<1$.  Then $\Gamma$ is virtually free and geometrically finite, and every
infinite maximal parabolic subgroup is virtually cyclic.  If $\Gamma$ is infinite and the action has
no parabolic point---in particular, if $\Gamma$ is infinite and
$\delta_{\mathbb H^3}(\Gamma)<1/2$---then $\Gamma$ contains a positive-rank classical Schottky subgroup
of finite index.  If $\Gamma$ is finite, the trivial subgroup has finite index.  If, in addition,
$\Gamma$ is nontrivial and torsion-free, then under the same no-parabolic hypothesis $\Gamma$ itself
is classical Schottky.
\end{corollary}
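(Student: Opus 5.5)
The plan is to reduce \cref{cor:kleinian-classical} to \cref{thm:accessible-main} and then invoke Hou's classification \cite{Hou2023} of Kleinian groups with a finite-index classical Schottky subgroup.

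First, accessibility. A finitely generated Kleinian group $\Gamma\le\mathrm{PSL}_2(\mathbb C)$ is linear over $\mathbb C$, so \cref{cor:linear-groups} applies with $X=\mathbb H^3$. Alternatively, Selberg's lemma gives a torsion-free finite-index subgroup, so the finite-subgroup orders are bounded and \cref{cor:accessibility-extension}(ii) applies. The trivial case of finite $\Gamma$ is settled by the statement itself, so assume $\Gamma$ is infinite. Then $\delta_{\mathbb H^3}(\Gamma)<1$ and \cref{thm:accessible-main} give that $\Gamma$ is virtually free and geometrically finite, and that every infinite maximal parabolic subgroup is virtually cyclic. This settles the first sentence.

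Next, assume there is no parabolic point. By the equivalence (a)$\Leftrightarrow$(c)$\Leftrightarrow$(e) in \cref{thm:accessible-main}, the action is convex-cobounded, i.e.\ $\Gamma$ is convex cocompact. If $\delta<1/2$, we reach the same conclusion from \eqref{eq:main-below-half-hereditary} applied to $H=\Gamma$. Now choose a torsion-free finite-index subgroup $\Gamma_0$ by Selberg's lemma. It is free, since it is a torsion-free virtually free group (Stallings), and it is convex cocompact with $\delta(\Gamma_0)=\delta(\Gamma)<1$. Being infinite, it has positive rank. We then apply Hou's theorem: a free convex cocompact Kleinian group with limit set of Hausdorff dimension less than one (we could also use the sharper dimension-threshold form in \cite{Hou2023}) is classical Schottky. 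This makes $\Gamma_0$ a positive-rank classical Schottky subgroup of finite index.

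For the torsion-free nontrivial case, $\Gamma$ itself is free by the structure theorem (torsion-free case with $k=0$ gives $\Gamma\cong F_r$), and it is convex cocompact as above. Hou's theorem then applies directly to $\Gamma$. Here $r\ge1$, because $\Gamma$ is nontrivial and torsion-free and hence infinite.

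The main obstacle is matching hypotheses with the exact statement of \cite{Hou2023}. That result is phrased for free, purely loxodromic, geometrically finite groups with $\dim_H\Lambda<1$ (or for Schottky groups). I would check that convex cocompactness plus freeness gives a Schottky group in Maskit's sense, which holds by Maskit's characterization since a free convex cocompact group is a handlebody group. I would also check that $\dim_H\Lambda_\Gamma=\delta$, which follows from \cref{cor:boundary-dimension-ends} or Patterson--Sullivan. If Hou's theorem requires the dimension to be below some explicit constant rather than one, I would use the $\delta<1$ hypothesis together with the quoted statement of \cite{Hou2023} as it appears in the paper's introduction.
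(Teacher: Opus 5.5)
Your proposal is correct and follows the paper's proof essentially step for step. The paper also uses linearity (or Selberg) to get accessibility, and \cref{thm:accessible-main} for the structure and for convex-coboundedness when there are no parabolic points. It then takes a torsion-free finite-index $\Gamma_0$ from Selberg's lemma with $\delta_{\mathbb H^3}(\Gamma_0)=\delta_{\mathbb H^3}(\Gamma)$, applies Hou's classification, and sets $\Gamma_0=\Gamma$ in the torsion-free case.

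The one point to tidy concerns the case where $\Gamma_0$ is infinite cyclic, generated by a loxodromic element. The paper invokes \cite[Theorem~1.1]{Hou2023} and the dimension identity of \cref{cor:boundary-dimension-ends} only for non-elementary $\Gamma_0$. It handles the cyclic case directly, since such a group is trivially a rank-one classical Schottky group. You should split off that case in the same way rather than feeding it to Hou's theorem.
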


\begin{corollary}
\label{cor:free-family-axis-bound}
Let $a_1,\dots,a_k\in\Gamma$ be axial isometries freely generating a rank-$k$ subgroup,
$k\ge2$.  We set
\[
 \tau_i=\tau_X(a_i),\qquad r_i(o)=d(o,\Ax(a_i)),\qquad \delta=\delta_X(\Gamma).
\]
Then
\begin{equation}\label{eq:free-family-axis-bound}
 \sum_{i=1}^k
 \frac{2\e^{-2\delta r_i(o)}}
 {\e^{\delta\tau_i}-1+2\e^{-2\delta r_i(o)}}\le1.
\end{equation}
If $\tau_i\le T$ and one point lies within distance $R$ of every axis, then
\begin{equation}\label{eq:free-family-uniform-bound}
 \delta(T+2R)\ge\log(2k-1).
\end{equation}
The same conclusion holds with $R=\inf_y\max_i d(y,\Ax(a_i))$.
\end{corollary}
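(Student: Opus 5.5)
Both inequalities will come from \cref{thm:bass-serre-main}, applied to the one-vertex, $k$-loop graph of groups with trivial groups whose fundamental group is $H=\langle a_1,\dots,a_k\rangle\cong F_k$. We take the loops $e_i$ with transport elements $\tau_{e_i}=a_i$; the reversed edges $\bar e_i$ then carry $a_i^{-1}$. The states are the $2k$ oriented edges, and every coset is trivial, so each state $e_i^{\pm}$ has exactly one relative coset. A transition $e\to f$ is allowed exactly when $f\ne\bar e$. The Bass--Serre normal form is then the reduced word in the free basis. The trim state set is all $2k$ states, and the non-backtracking graph is strongly connected for $k\ge2$, so it forms a single block $C\in\mathscr C$.

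The comparison matrix uses the hyperbolic representatives $g=a_i^{\pm1}$, whose weights are $a_o(a_i^{\pm1})=\tau_i+2r_i(o)$. By \eqref{eq:main-transfer-comparison}, $\sigma^{\ax}\le\delta$. We cannot take $\mathcal F$ to be all powers $a_i^n$ directly, since each state has only one coset. Instead we refine the splitting so that this becomes possible: the same group $F_k$ is the fundamental group of a graph of groups with vertex group $\langle a_1\rangle*\cdots$. The simpler route is to apply the theorem to the one-vertex graph with vertex group $G_v=F_k$ trivial-edge free product, i.e.\ $F_k=\langle a_1\rangle*\cdots*\langle a_k\rangle$ as a graph of groups with $k$ infinite cyclic vertex groups joined by trivial edges in a tree. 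In that encoding each syllable $a_i^n$ ($n\ne0$) is a separate coset. We choose representatives $a_i^n$, which are hyperbolic with $a_o(a_i^n)=|n|\tau_i+2r_i(o)$. The paper's setup with transport elements will need a small adaptation here: the base vertex coefficient and edge lengths may add bounded terms. To avoid this, we argue through the spectral radius directly, as follows.

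The comparison matrix has states $i\in\{1,\dots,k\}$, meaning ``the last syllable lies in $\langle a_i\rangle$''. Its entries are $A_{ij}(s)=w_j(s)$ for $j\ne i$ and $0$ otherwise, where
\[
 w_j(s)=\sum_{n\ne0}\e^{-s(|n|\tau_j+2r_j)}=\frac{2\e^{-2sr_j}}{\e^{s\tau_j}-1}.
\]
For $s>\delta$, \eqref{eq:main-transfer-comparison} together with entrywise domination gives $\rho(A(s))<1$. A direct Perron computation shows that for $M_{ij}=w_j\mathbf 1_{i\ne j}$ the condition $\rho(M)<1$ is equivalent to
\[
 \sum_j \frac{w_j}{1+w_j}<1 .
\]
Indeed, $\rho(M)<1$ iff $I-M$ is an invertible M-matrix. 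Testing with the vector $x_j=w_j/(1+w_j)$ gives $(Mx)_i=\sum_jx_j-x_i$, and the resolvent $\mathbf 1^T(I-M)^{-1}$ has the closed form with denominator $1-\sum_j w_j/(1+w_j)$. We also have
\[
 \frac{w_j}{1+w_j}=\frac{2\e^{-2sr_j}}{\e^{s\tau_j}-1+2\e^{-2sr_j}} .
\]
This gives \eqref{eq:free-family-axis-bound} for all $s>\delta$, and letting $s\downarrow\delta$ gives it at $s=\delta$ by continuity. (If $\delta=0$ the sum is $k\ge2>1$. This contradicts the bound at small $s>0$, so $\delta>0$ is automatic.)

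For \eqref{eq:free-family-uniform-bound} we take $o$ to be the point within $R$ of every axis. Each term of the sum is increasing in the quantity $\e^{-\delta(\tau_i+2r_i)}$, so each term is at least
\[
 \frac{2x}{1-x+2x},\qquad x=\e^{-\delta(T+2R)} .
\]
More precisely, we bound each summand below using $\e^{\delta\tau_i}\le \e^{\delta T}$ and $\e^{-2\delta r_i}\ge \e^{-2\delta R}$, then multiply numerator and denominator by $\e^{-\delta T}$. Summing, $k\cdot\frac{2x}{1+x}\le1$, which gives $x\le 1/(2k-1)$, i.e.\ $\delta(T+2R)\ge\log(2k-1)$. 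Since the basepoint enters only through $r_i$ and $\delta$ does not depend on the basepoint, we may take the infimum over $y$ to get the final claim; this is where the $\sup_y$ in \eqref{eq:main-dimension-lower-bound} is used.

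The main obstacle is justifying that the free-product splitting with cyclic vertex groups fits the hypotheses of \cref{thm:bass-serre-main}, so that its block and its comparison matrix are exactly $A(s)$ above, with transport elements trivial and all syllables $a_i^n$ counted as hyperbolic coset representatives. It must also be checked that reduced words in the $a_i$ are distinct elements of $\Gamma$, which holds by freeness. If the matrix formalism resists this identification, we fall back on the direct argument: the orbit sum over reduced words bounds $\sum_{h\in H}\e^{-s d(o,ho)}$ below via $d(o,go)\le a_o(g)$ and the triangle inequality, and this expands as the resolvent series above, which diverges unless $\sum_j w_j/(1+w_j)<1$.
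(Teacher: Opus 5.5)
Your argument is correct and is essentially the paper's proof: the paper proves $\sum_i Z_i(\delta)/(1+Z_i(\delta))\le1$ for the $k$-state free-product factor matrix (\cref{thm:factor-series}, via \cref{thm:critical-exponent-matrix} and \cref{lem:factor-perron}), then inserts the lower bound $Z_i(\delta)\ge2\e^{-2\delta r_i}/(\e^{\delta\tau_i}-1)$ from \cref{lem:axis-power} and uses essentially the same algebra for \eqref{eq:free-family-uniform-bound}; you instead insert the axis weights first and let $s\downarrow\delta$. Two small repairs are needed: for $k\ge3$ your factor-indexed matrix is not one of the Bass--Serre blocks covered by \eqref{eq:main-transfer-comparison}, so $\rho(A(s))<1$ should rest on your fallback orbit-series comparison (or on \cref{thm:critical-exponent-matrix} applied to the factor automaton, as the paper does), and the Perron test vector should be $v_j=1/(1+w_j)$, which gives $Mv=v+(S-1)\one$ with $S=\sum_j w_j/(1+w_j)$.
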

Finally we state our result for finite matrix and cycle inequalities which will be used for Grushko-JSJ hierarchy study as follows:
\begin{corollary}
\label{cor:finite-matrix-cycle}
Fix $C\in\mathscr C_\beta$ and a finite choice of hyperbolic representatives as in the
preceding definition.  Recall from \eqref{eq:intro-axis-bound} that
$a_o(g)=\tau_X(g)+2d(o,\Ax(g))$.  Suppose that, for a transition $e\to f$, there are
$N_{ef}$ selected distinct cosets and that every chosen representative $g_{ef,c}$ satisfies
$a_o(g_{ef,c})\le R_{ef}$.  Set $B_{ef}(s)=N_{ef}\e^{-sR_{ef}}$.  Then
$\rho(B(\delta_X(\Gamma)))\le1$.  If $R_{ef}\le R$ for every retained transition and
$N=(N_{ef})$, then
\begin{equation}\label{eq:matrix-dimension-bound}
 \delta_X(\Gamma)R\ge\log\rho(N)
\end{equation}
whenever $\rho(N)>1$.  Along a directed cycle whose successive retained transition
multiplicities are $N_j$ and whose corresponding upper bounds for $a_o(g)$ are $R_j$,
\begin{equation}\label{eq:cycle-axis-bound}
 \delta_X(\Gamma)\sum_jR_j\ge\sum_j\log N_j.
\end{equation}
If $\delta_X(\Gamma)<1$, or if the action is convex-cobounded, the right sides give lower
bounds for $\dim_H(\Lambda_\Gamma)$ at every level of every chosen finite Grushko--JSJ
hierarchy.
\end{corollary}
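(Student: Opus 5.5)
We prove \cref{cor:finite-matrix-cycle} by a monotone comparison with the axis comparison matrix $A^{\mathcal F}_{\beta,C,o}$, and then by elementary Perron--Frobenius estimates.

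\emph{Step 1: $\rho(B(\delta_X(\Gamma)))\le 1$.} Fix $s\ge0$. For every retained transition $e\to f$ and every selected coset $c\in\mathcal F_{ef}$, the hypothesis $a_o(g_{ef,c})\le R_{ef}$ gives $\e^{-sa_o(g_{ef,c})}\ge \e^{-sR_{ef}}$. Summing over the $N_{ef}$ selected cosets gives
\[
 0\le B(s)\le A^{\mathcal F}_{\beta,C,o}(s)\le K^{\mathcal F}_{\beta,C,o}(s)
\]
entrywise, where the second inequality is \eqref{eq:main-transfer-comparison}. Monotonicity of the spectral radius for nonnegative matrices then yields $\rho(B(s))\le\rho(K^{\mathcal F}_{\beta,C,o}(s))$.

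\cref{thm:bass-serre-main} gives $\rho(K_{\beta,C,o}(s))<1$ for every $s>\delta_X(\Gamma)$. Since $K^{\mathcal F}\le K_{\beta,C,o}$ entrywise, we get $\rho(B(s))<1$ for $s>\delta_X(\Gamma)$. The matrix $B(s)$ is finite with entries continuous in $s$, so letting $s\downarrow\delta_X(\Gamma)$ gives $\rho(B(\delta_X(\Gamma)))\le1$. Alternatively, one can use $\rho(K_{\beta,C,o}(\delta_X(\Gamma)))\le 1$ from \eqref{eq:main-transfer-comparison} directly.

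\emph{Step 2: the matrix and cycle inequalities.} If $R_{ef}\le R$ for every retained transition, then $B(\delta)\ge \e^{-\delta R}N$ entrywise, where $\delta=\delta_X(\Gamma)$. Hence
\[
 \e^{-\delta R}\rho(N)\le \rho(B(\delta))\le 1,
\]
which gives $\delta R\ge\log\rho(N)$ whenever $\rho(N)>1$.

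For a directed cycle $e_1\to e_2\to\dots\to e_m\to e_1$, let $N_j$ and $R_j$ be the multiplicity and bound on the $j$-th transition. The diagonal entry $(B(\delta)^m)_{e_1e_1}$ is at least the product $\prod_j N_j\e^{-\delta R_j}$. For any nonnegative matrix $M$ one has $\rho(M)^m=\rho(M^m)\ge (M^m)_{ii}$. Therefore
\[
 \prod_j N_j\e^{-\delta R_j}\le \rho(B(\delta))^m\le1,
\]
and taking logarithms gives \eqref{eq:cycle-axis-bound}.

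\emph{Step 3: the dimension statement.} If $\delta_X(\Gamma)<1$ or the action is convex-cobounded, then $\delta_X(\Gamma)=\dim_H(\Lambda_\Gamma,\rho_o)$. In the first case this is \eqref{eq:full-limit-dimension}. In the second case the conical limit set is the full limit set, so the generalized Bishop--Jones theorem applies (as recorded in \eqref{eq:main-dimension-lower-bound}). The hierarchy clause follows because \cref{thm:bass-serre-main} applies at every node of each chosen finite hierarchy with the same number $\delta_X(\Gamma)$: reduced Bass--Serre paths remain distinct elements of $\Gamma$. We then apply Steps 1--2 node by node.

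The main point to check is Step 1's use of $\rho(K^{\mathcal F}(s))<1$ for $s>\delta_X(\Gamma)$, which comes from the full block being dominated by $\delta_X(\Gamma)$ through \cref{thm:bass-serre-main}. Everything else is monotonicity of the spectral radius.
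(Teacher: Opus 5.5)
Your proposal is correct and follows the paper's proof. Both arguments dominate $B(\delta_X(\Gamma))$ entrywise by the comparison matrix $A^{\mathcal F}_{\beta,C,o}$ (hence by $K^{\mathcal F}_{\beta,C,o}$), apply spectral-radius monotonicity with the bound from \cref{thm:bass-serre-main}, then use $B(\delta)\ge \e^{-\delta R}N$ and a closed-walk product bound, and finish with \eqref{eq:full-limit-dimension} or \cref{thm:bourdon-coornaert}; the only cosmetic differences are that you pass through the full block $K_{\beta,C,o}$ (or the limit $s\downarrow\delta_X(\Gamma)$) rather than citing $\rho(A^{\mathcal F}_{\beta,C,o}(\delta_X(\Gamma)))\le1$ directly, and you use $\rho(M^m)\ge(M^m)_{ii}$ in place of \cref{lem:closed-walk}.
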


The main structural theorem establishes a sharp gap for accessible CAT$(-1)$ groups: critical
exponent less than one forces virtual freeness.  The argument identifies the convergence
action, not only the abstract group.  It gives geometric finiteness, virtually cyclic maximal
parabolic subgroups, the exact two-point peripheral fibers of the equivariant quotient map
$\partial\Gamma\to\Lambda_\Gamma$, the same classification for all finitely generated
subgroups, and convex-coboundedness for all such subgroups below one half.  Through Dunwoody
and Linnell, the theorem applies to finitely presented groups and to finitely generated groups
with uniformly bounded finite-subgroup orders; Selberg's lemma adds finitely generated linear
groups in characteristic zero.  These classes strictly extend the Liu--Wang torsion-free
statement \cite{LiuWang2023}.

The boundary theorem gives a quantitative estimate after deletion of any finite vertex set.
The edge-weight sum in the remaining graph dominates the total $q$-diameter of the cluster
sets carried by its components.  For convergent Poincar\'e series, the word-ball tails define
an ultrametric on the end space, and the map from the end space to the visual limit set is a
surjective $1$-Lipschitz equivariant map and hence a topological quotient.  This strengthens
the Hausdorff-measure conclusion by giving an explicit metric quotient and lower bounds that
relate finite vertex deletions to packing numbers.

The Bass--Serre matrix theorem supplies independent finite-state lower bounds.  Trimness
makes the resolvent exact, Bass--Serre uniqueness gives bounded multiplicity for evaluated
closed walks, and a fixed return path proves entrywise finiteness at
$s=\delta_X(\Gamma)$ on every strongly connected diagonal block containing a directed closed
walk.  Matrices obtained from finite transition sets, together with the coefficient series, determine
the abscissa of convergence of the normal-form series.  Distinct relative cosets represented by hyperbolic isometries
yield matrix, cycle, amalgam, HNN, Hashimoto, and hierarchy lower bounds for
$\delta_X(\Gamma)$ and, when $\delta_X(\Gamma)<1$ or the action is convex-cobounded, for the
Hausdorff dimension of the full limit set.

In the Kleinian setting, Selberg's lemma and the accessible theorem give virtual freeness with
virtually cyclic cusps for every finitely generated group of critical exponent less than one.
When parabolics are absent and the group is infinite, a torsion-free finite-index subgroup is
either rank-one classical Schottky or is classical Schottky by Hou's classification.  

The paper is organized as follows:
\Cref{sec:cat-geometry} collect some basic on the CAT$(-1)$ boundary and the displacement estimate involving $a_x(g)$.
\Cref{sec:word-ball} proves \cref{thm:finite-vertex-deletion-main,cor:boundary-dimension-ends}
and the ultrametric theorem for the end space.  \Cref{sec:small-exponent} proves
\cref{thm:accessible-main,cor:accessibility-extension,cor:accessible-gap,cor:linear-groups,cor:kleinian-classical}
using the sharp parabolic growth estimate and accessibility theory.
\Cref{sec:finite-state,sec:bass-serre,sec:main-proof} prove
\cref{thm:bass-serre-main,cor:resolvent-dimension-bounds}.  The remaining sections
develop free-product, amalgam, HNN, boundary-dimension, hierarchy, and bounded-cancellation
consequences without weaking the supporting arguments.

\section{\texorpdfstring{CAT$(-1)$}{CAT(-1)} orbit growth and hyperbolic isometries}
\label{sec:cat-geometry}

\begin{definition}
Let a countable group $\Gamma$ act isometrically on a metric space $X$, and fix $x\in X$.
The orbit length is
\[
 \ell_x(g)=d(x,gx).
\]
Its Poincar\'e series and critical exponent are
\[
 P_{\Gamma,x}(s)=\sum_{g\in\Gamma}\e^{-s\ell_x(g)},
 \qquad
 \delta_X(\Gamma)=\inf\{s>0:P_{\Gamma,x}(s)<\infty\}.
\]
\end{definition}

\begin{lemma}
\label{lem:orbit-basic}
For all $g,h\in\Gamma$ and $x,y\in X$, we have
\begin{align*}
 \ell_x(1)&=0, & \ell_x(g^{-1})&=\ell_x(g),
 & \ell_x(gh)&\le\ell_x(g)+\ell_x(h),\\
 |\ell_x(g)-\ell_y(g)|&\le2d(x,y).
\end{align*}
Consequently $\delta_X(\Gamma)$ is independent of $x$.  If
$0<\delta_X(\Gamma)<\infty$, then every orbit-length ball
$\{g:\ell_x(g)\le R\}$ is finite, and every nonempty subset of $\Gamma$ contains an element
of least orbit length.
\end{lemma}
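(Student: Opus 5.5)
The plan is to verify the four identities directly from the isometry axioms and the triangle inequality, and then to read off the consequences from the Poincaré series. First, $\ell_x(1)=d(x,x)=0$. For inversion, since $g$ is an isometry, $d(x,g^{-1}x)=d(gx,gg^{-1}x)=d(gx,x)=\ell_x(g)$. For subadditivity we use the triangle inequality through the intermediate point $gx$:
\[
 d(x,ghx)\le d(x,gx)+d(gx,ghx)=\ell_x(g)+d(x,hx).
\]
For basepoint change, the triangle inequality gives $d(x,gx)\le d(x,y)+d(y,gy)+d(gy,gx)=\ell_y(g)+2d(x,y)$. Exchanging the roles of $x$ and $y$ yields the absolute-value bound $|\ell_x(g)-\ell_y(g)|\le 2d(x,y)$.

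Independence of $x$ follows termwise from the basepoint estimate. For every $s>0$ we have
\[
 \e^{-2sd(x,y)}P_{\Gamma,y}(s)\le P_{\Gamma,x}(s)\le \e^{2sd(x,y)}P_{\Gamma,y}(s),
\]
so the two series converge for exactly the same $s$. Hence the infima defining $\delta_X(\Gamma)$ agree.

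For finiteness of balls, assume $\delta_X(\Gamma)<\infty$ and pick any $s>\delta_X(\Gamma)$. We first note that convergence at one $s_0$ implies convergence for all $s\ge s_0$, by termwise monotonicity since $\ell_x\ge0$; this makes the infimum a genuine threshold, so $P_{\Gamma,x}(s)<\infty$. Every $g$ with $\ell_x(g)\le R$ contributes at least $\e^{-sR}$ to the series. It follows that
\[
 |\{g:\ell_x(g)\le R\}|\le \e^{sR}P_{\Gamma,x}(s)<\infty.
\]
The hypothesis $\delta_X(\Gamma)>0$ is not actually needed here; it is only a standing normalization. For the least-element claim, take a nonempty $E\subseteq\Gamma$ and pick $g_0\in E$. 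The set $E\cap\{g:\ell_x(g)\le\ell_x(g_0)\}$ is finite and nonempty, so $\ell_x$ attains a minimum on it. That minimum is also the minimum over all of $E$, because elements outside this set have strictly larger orbit length.

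There is no real obstacle here. The only point needing care is the monotonicity of convergence in $s$, which justifies choosing $s$ strictly above $\delta_X(\Gamma)$.
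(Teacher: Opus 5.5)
Your proposal is correct and follows the paper's proof essentially step for step: the same triangle-inequality estimates, the termwise comparison $\e^{-2sd(x,y)}P_{\Gamma,y}(s)\le P_{\Gamma,x}(s)\le\e^{2sd(x,y)}P_{\Gamma,y}(s)$, finiteness of orbit-length balls from convergence of the Poincar\'e series, and the least element obtained by intersecting with a finite ball. Your explicit count $|\{g:\ell_x(g)\le R\}|\le\e^{sR}P_{\Gamma,x}(s)$ and your remark that $\delta_X(\Gamma)>0$ is not needed just spell out the paper's one-line contrapositive in more detail.
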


\begin{proof}
Symmetry and subadditivity follow from invariance of the metric and the triangle inequality.
For the basepoint estimate,
\[
 d(x,gx)\le d(x,y)+d(y,gy)+d(gy,gx)=d(y,gy)+2d(x,y),
\]
and interchange $x,y$.  The two Poincar\'e series therefore differ by a fixed multiplicative
factor at every $s$, so their abscissae coincide.  If infinitely many elements had length at
most $R$, every Poincar\'e series would diverge.  The minimum statement follows by
intersecting a nonempty subset with a finite ball containing one of its elements.
\end{proof}

\begin{lemma}
\label{lem:subgroup-finite-index}
Let $H\le\Gamma$.
\begin{enumerate}[label=\textup{(\roman*)}]
\item $\delta_X(H)\le\delta_X(\Gamma)$.
\item If $H$ has finite index in $\Gamma$, then $\delta_X(H)=\delta_X(\Gamma)$.
\item If two nonnegative lengths $L_1,L_2$ on a countable set satisfy
$L_1\le L_2+C$, then the critical exponent of $L_1$ is at least that of $L_2$.
\end{enumerate}
\end{lemma}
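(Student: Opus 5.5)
The three parts are elementary comparisons of Dirichlet-type series; I would prove them in the order (i), (iii), (ii), since (ii) rests on a length comparison of the kind in (iii).

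For (i), restrict the orbit Poincar\'e series to the subgroup. For every $s>0$,
\[
 P_{H,x}(s)=\sum_{h\in H}\e^{-s\ell_x(h)}\le\sum_{g\in\Gamma}\e^{-s\ell_x(g)}=P_{\Gamma,x}(s),
\]
because all terms are nonnegative. Hence $P_{\Gamma,x}(s)<\infty$ implies $P_{H,x}(s)<\infty$. Taking infima over such $s$ gives $\delta_X(H)\le\delta_X(\Gamma)$. By \cref{lem:orbit-basic} the exponents do not depend on the basepoint, so one fixed $x$ suffices.

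For (iii), suppose $L_1\le L_2+C$. For each $s>0$ we have $\e^{-sL_2}\le\e^{sC}\e^{-sL_1}$ termwise. Summing shows that convergence of $\sum\e^{-sL_1}$ forces convergence of $\sum\e^{-sL_2}$. So the set of convergent parameters for $L_1$ is contained in that for $L_2$, and the infima satisfy $\delta(L_1)\ge\delta(L_2)$. I would note that the infimum is taken over $s>0$ with the convention that an empty set has infimum $+\infty$, and the inclusion respects this.

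For (ii), the inequality $\le$ is (i), so only $\delta_X(\Gamma)\le\delta_X(H)$ needs proof. Choose right coset representatives $r_1,\dots,r_m$, so that $\Gamma=\bigsqcup_j Hr_j$. For $g=hr_j$, subadditivity and symmetry from \cref{lem:orbit-basic} give
\[
 \ell_x(hr_j)\ge\ell_x(h)-\ell_x(r_j)\ge\ell_x(h)-C,\qquad C=\max_j\ell_x(r_j).
\]
Consequently
\[
 P_{\Gamma,x}(s)=\sum_{j=1}^m\sum_{h\in H}\e^{-s\ell_x(hr_j)}\le m\,\e^{sC}P_{H,x}(s).
\]
Thus convergence for $H$ implies convergence for $\Gamma$ at the same $s$, which yields $\delta_X(\Gamma)\le\delta_X(H)$. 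Equivalently, one can apply (iii) to the finitely many cosets. None of these steps is a real obstacle. The only care needed is to use the lower bound $\ell_x(hr_j)\ge\ell_x(h)-C$ in the right direction and to check that the coset decomposition is a disjoint union, so no term is counted twice.
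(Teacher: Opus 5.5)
Your proposal is correct and follows the paper's proof: termwise comparison for (i) and (iii), and for (ii) a finite right transversal with $\ell_x(h)\le\ell_x(hr_j)+\ell_x(r_j)$, giving $P_{\Gamma,x}(s)\le m\,\e^{sC}P_{H,x}(s)$. The paper uses the constant $\sum_{f\in F}\e^{s\ell_x(f)}$ where you use $m\,\e^{sC}$, which is an immaterial difference.
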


\begin{proof}
The first and third assertions are termwise comparisons.  For finite index, choose a finite
right transversal $F$ for $H$ in $\Gamma$.  Every $g$ is uniquely $hf$, and
$\ell_x(h)\le\ell_x(hf)+\ell_x(f^{-1})$.  Thus
\[
 P_{\Gamma,x}(s)
 \le \left(\sum_{f\in F}\e^{s\ell_x(f)}\right)P_{H,x}(s),
\]
which, together with subgroup monotonicity, proves equality.
\end{proof}

Assume now that $X$ is proper CAT$(-1)$.  Its Gromov boundary is denoted $\partial X$.
For $\xi,\eta\in\partial X$, the boundary Gromov product is the limit of the ordinary
Gromov products along the unique geodesic rays from $o$.
Next result can be seen as an generalized Bishop--Jones theorem and Bourdon metric.
\begin{theorem}
\label{thm:bishop-jones}
For every $o\in X$,
\[
 \rho_o(\xi,\eta)=\e^{-(\xi\mid\eta)_o}
\]
is a metric on $\partial X$.  If $\Gamma\le\Isom(X)$ is non-elementary and discrete, then
\begin{equation}\label{eq:bishop-jones}
 \dim_H(\Lambda_{\Gamma,c},\rho_o)=\delta_X(\Gamma),
\end{equation}
where $\Lambda_{\Gamma,c}$ is the conical limit set.  If $R=d(o,o')$, then
\begin{equation}\label{eq:bourdon-basepoint}
 \e^{-R}\rho_o\le\rho_{o'}\le\e^R\rho_o.
\end{equation}
\end{theorem}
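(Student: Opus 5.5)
The plan is to assemble \cref{thm:bishop-jones} from three ingredients: the CAT$(-1)$ comparison for Gromov products (which gives the metric property), the generalized Bishop--Jones theorem (which gives \eqref{eq:bishop-jones}), and an elementary basepoint estimate (which gives \eqref{eq:bourdon-basepoint}). Only the last requires an argument of our own.

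\emph{Metric property.} First check that $(\xi\mid\eta)_o$ is well defined and takes values in $[0,\infty]$, with the value $\infty$ exactly when $\xi=\eta$. Take the geodesic rays $c_\xi,c_\eta$ from $o$. In CAT$(-1)$ the function $t\mapsto (c_\xi(t)\mid c_\eta(t))_o$ is nondecreasing; this follows by comparing with the hyperbolic plane, where monotonicity is explicit. Hence the limit defining $(\xi\mid\eta)_o$ exists. It is finite when $\xi\ne\eta$, since distinct rays diverge, and it is infinite when $\xi=\eta$. Symmetry of $\rho_o$ is immediate. The real content is the triangle inequality
\[
\e^{-(\xi\mid\zeta)_o}\le \e^{-(\xi\mid\eta)_o}+\e^{-(\eta\mid\zeta)_o}.
\]
This is Bourdon's theorem \cite{Bourdon1995}, and I would cite it. If a sketch is wanted, the idea is as follows. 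Approximate the boundary points by points at distance $t$ along the rays. Apply the CAT$(-1)$ comparison to the geodesic triangles with vertex $o$ to reduce to $\mathbb H^2$. In $\mathbb H^2$ the quantity $\e^{-(x\mid y)_o}$ for points at distance $t$ from $o$ equals $\sinh$ of half the distance, rescaled by $\e^{-t}$, up to a factor tending to $1$. That is a metric on the sphere of radius $t$ (half-angle chord), and the inequality passes to the limit $t\to\infty$. This passage through $\mathbb H^2$ is the only delicate step in the whole proof, and it is exactly the step that Bourdon's paper carries out.

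\emph{Bishop--Jones formula.} For \eqref{eq:bishop-jones} there is nothing to prove. Since $\Gamma$ is non-elementary and discrete, \eqref{eq:bishop-jones} is the statement of \cite[Theorem~1.2.1]{DasSimmonsUrbanski2017}. That theorem applies to proper CAT$(-1)$ spaces equipped with the visual metric $\rho_o$, whose visual parameter is $1$, and we invoke it directly. \Cref{lem:orbit-basic} supplies the fact that the exponent does not depend on $o$.

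\emph{Basepoint change.} For \eqref{eq:bourdon-basepoint} I would use finite points first. For $x,y\in X$ the triangle inequality gives
\[
|(x\mid y)_o-(x\mid y)_{o'}|\le d(o,o')=R,
\]
because $(x\mid y)_o=\tfrac12(d(o,x)+d(o,y)-d(x,y))$ and each of the two $d(o,\cdot)$ terms moves by at most $R$. Now pass to boundary points. Take sequences $x_n\to\xi$ and $y_n\to\eta$ along the rays from $o$. In CAT$(-1)$ the boundary Gromov product is a genuine limit of the finite Gromov products, and this remains true for any sequences converging to $\xi,\eta$, in particular with respect to the basepoint $o'$: rays from $o$ and from $o'$ to the same boundary point are asymptotic, so the Gromov products converge. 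The inequality therefore survives the limit and gives
\[
|(\xi\mid\eta)_o-(\xi\mid\eta)_{o'}|\le R.
\]
Exponentiating yields \eqref{eq:bourdon-basepoint}. The case $\xi=\eta$ is trivial, since both sides vanish.
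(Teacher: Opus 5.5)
Your proposal follows the paper's proof. The metric property is cited from Bourdon, the dimension identity from \cite[Theorem~1.2.1]{DasSimmonsUrbanski2017}, and \eqref{eq:bourdon-basepoint} comes from exponentiating $|(\xi\mid\eta)_o-(\xi\mid\eta)_{o'}|\le d(o,o')$; the paper simply asserts this inequality, whereas you justify it by passing the finite-point inequality to the limit. In that limit step, the convergence of $(x_n\mid y_n)_{o'}$ to $(\xi\mid\eta)_{o'}$ needs the CAT$(-1)$ fact that asymptotic rays become arbitrarily close after a shift of parameter (equivalently, continuity of the Gromov product up to the boundary); mere bounded distance between asymptotic rays would lose an extra multiple of $R$.
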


\begin{proof}
The metric assertion is Bourdon's CAT$(-1)$ theorem \cite{Bourdon1995}.  The dimension
identity is the generalized Bishop--Jones theorem
\cite[Theorem~1.2.1]{DasSimmonsUrbanski2017}. A proper-space formulation also follows from
\cite{Cavallucci2025}.  Finally,
$|(\xi\mid\eta)_o-(\xi\mid\eta)_{o'}|\le d(o,o')$, which gives
\eqref{eq:bourdon-basepoint}.
\end{proof}

\begin{theorem}
\label{thm:bourdon-coornaert}
If $\Gamma$ acts properly discontinuously and non-elementarily on $X$, and the action is
convex-cobounded,
then
\begin{equation}\label{eq:coornaert-dimension}
 \dim_H(\Lambda_\Gamma,\rho_o)=\delta_X(\Gamma).
\end{equation}
\end{theorem}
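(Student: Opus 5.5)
The statement to prove is \cref{thm:bourdon-coornaert}: for a convex-cobounded, properly discontinuous, non-elementary action, $\dim_H(\Lambda_\Gamma,\rho_o)=\delta_X(\Gamma)$. The plan is to reduce it to the generalized Bishop--Jones identity \eqref{eq:bishop-jones} of \cref{thm:bishop-jones}. That reduction needs one fact: for a convex-cobounded action, every limit point is conical, so $\Lambda_{\Gamma,c}=\Lambda_\Gamma$. Once this holds, \eqref{eq:bishop-jones} gives the equality immediately. Independence of the basepoint is automatic, because by \eqref{eq:bourdon-basepoint} the metrics $\rho_o$ and $\rho_{o'}$ are bi-Lipschitz equivalent, so Hausdorff dimension does not depend on $o$.

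For the conical statement, fix $R$ with $C_o(\Gamma)\subseteq\bigcup_{g}gB_X(o,R)$, and let $\xi\in\Lambda_\Gamma$.
\begin{enumerate}[label=\textup{(\arabic*)}]
\item Choose $g_n\in\Gamma$ with $g_no\to\xi$. The segments $[o,g_no]$ lie in $C_o(\Gamma)$, since $o=1\cdot o$.
\item By properness and Arzel\`a--Ascoli, these segments converge, uniformly on compacta, to the geodesic ray $[o,\xi)$. CAT$(-1)$ convexity of the distance function makes this convergence controlled on initial subsegments.
\item Hence every point $x$ of $[o,\xi)$ is a limit of points $x_n\in[o,g_no]$. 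Each $x_n$ lies within $R$ of some orbit point $h_no$, so $x$ lies within $R+1$ of the orbit.
\item Taking $x$ to infinity along the ray produces infinitely many orbit points $h o$ within $R+1$ of $[o,\xi)$ and tending to $\xi$. This is the definition of a conical limit point.
\end{enumerate}
Alternatively, one can cite \cite[Theorems~12.2.7 and~12.2.12]{DasSimmonsUrbanski2017}, which state that convex-cobounded actions have conical limit set equal to the full limit set. That citation is already invoked in the introduction.

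The only point needing care, and the main obstacle, is the limiting argument in step (2): one must ensure that the ray $[o,\xi)$ is genuinely contained in the closure of $C_o(\Gamma)$, and not merely approximated at bounded distance that degrades along the ray. In a proper CAT$(-1)$ space this follows from uniqueness of geodesic rays from $o$ to $\xi$ together with visual convergence $g_no\to\xi$. Convexity of $t\mapsto d(c_n(t),c(t))$ bounds the discrepancy on $[0,T]$ by a quantity tending to $0$ for each fixed $T$.

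Non-elementarity is needed only to invoke \cref{thm:bishop-jones}. Proper discontinuity supplies the discreteness hypothesis there. Finiteness of $\delta_X(\Gamma)$ is not assumed separately; it is part of the content of \eqref{eq:bishop-jones}.
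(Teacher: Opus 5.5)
Your proposal is correct and follows the paper's route: the paper also reduces the statement to the generalized Bishop--Jones identity \eqref{eq:bishop-jones} via $\Lambda_{\Gamma,c}=\Lambda_\Gamma$, which it obtains simply by citing \cite[Theorem~12.2.7]{DasSimmonsUrbanski2017}. Your direct conicality argument is a valid self-contained substitute for that citation: the segments $[o,g_no]\subseteq C_o(\Gamma)$ converge on compacta to $[o,\xi)$, so the whole ray lies within $R+1$ of the orbit.
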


\begin{proof}
Convex-coboundedness gives
$\Lambda_\Gamma=\Lambda_{\Gamma,c}$ by
\cite[Theorem~12.2.7]{DasSimmonsUrbanski2017}.  The conclusion is therefore the generalized
Bishop--Jones identity \eqref{eq:bishop-jones}.
\end{proof}

\begin{definition}
\label{def:hyperbolic-axis-bound}
An isometry $g$ of $X$ is \emph{hyperbolic} in the CAT$(0)$ terminology of
Bridson--Haefliger if its translation length is positive and attained; equivalently, it preserves
a geodesic line and translates along it
\cite[Chapter~II.6, Definition~6.3 and Theorem~6.8]{BridsonHaefliger1999}.
Such an isometry is also called \emph{axial} (and, in boundary dynamics, loxodromic).
In a CAT$(-1)$ space the axis is unique. We write it $\Ax(g)$ and write
\[
 \tau_X(g)=\inf_{z\in X}d(z,gz)>0.
\]
For $x\in X$, use the shorthand
\begin{equation}\label{eq:axis-displacement-bound}
 a_x(g)=\tau_X(g)+2d(x,\Ax(g)).
\end{equation}
This notation agrees with \eqref{eq:intro-axis-bound}.  It denotes the displayed upper bound
and is not a standard name for a new geometric invariant.
\end{definition}

\begin{lemma}
\label{lem:axis-power}
If $g$ is axial, then for every $n\in\Z$,
\begin{equation}\label{eq:axis-power}
 d(x,g^n x)\le |n|\tau_X(g)+2d(x,\Ax(g)).
\end{equation}
In particular $d(x,gx)\le a_x(g)$.  Also
\begin{equation}\label{eq:axial-basepoint}
 |a_x(g)-a_y(g)|\le2d(x,y).
\end{equation}
\end{lemma}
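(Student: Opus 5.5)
The estimate comes from two elementary CAT$(0)$ facts: the nearest-point projection onto the convex axis $\Ax(g)$ is $1$-Lipschitz, and $g$ restricted to $\Ax(g)$ is a translation by $\tau_X(g)$. Together with the triangle inequality these give all three assertions.

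\textbf{Power estimate.} Let $p$ be the nearest-point projection of $x$ onto $\Ax(g)$, so $d(x,p)=d(x,\Ax(g))$. Since $g$ preserves $\Ax(g)$ and translates along it by $\tau_X(g)$, the power $g^n$ translates it by $|n|\tau_X(g)$, so
\[
 d(p,g^np)=|n|\tau_X(g).
\]
Because $g^n$ is an isometry, $d(g^np,g^nx)=d(p,x)$. The triangle inequality along $x\to p\to g^np\to g^nx$ then gives
\[
 d(x,g^nx)\le d(x,p)+d(p,g^np)+d(g^np,g^nx)=|n|\tau_X(g)+2d(x,\Ax(g)),
\]
which is \eqref{eq:axis-power}. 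The bound $d(x,gx)\le a_x(g)$ is the case $n=1$.

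\textbf{Basepoint estimate.} The translation length $\tau_X(g)$ does not depend on $x$. The function $x\mapsto d(x,\Ax(g))$ is $1$-Lipschitz, as is the distance to any nonempty set. Hence
\[
 |a_x(g)-a_y(g)|=2\,|d(x,\Ax(g))-d(y,\Ax(g))|\le 2d(x,y),
\]
which is \eqref{eq:axial-basepoint}.

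\textbf{Main point to verify.} The only nontrivial input is that $g$ acts on $\Ax(g)$ as a translation by exactly $\tau_X(g)$. This is \cite[Chapter~II.6, Theorem~6.8]{BridsonHaefliger1999}: for a hyperbolic (axial) isometry, the axes are exactly the $g$-invariant geodesic lines on which $g$ translates by $\tau_X(g)$. It follows that $g^n$ translates the same line by $|n|\tau_X(g)$. For $n$ negative, apply this to $g^{-1}$, which has the same axis and the same translation length. The case $n=0$ is trivial, since both sides reduce to $0\le 2d(x,\Ax(g))$.
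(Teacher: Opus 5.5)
Your proposal is correct and follows the paper's proof essentially verbatim: you project $x$ to the axis, chain the triangle inequality through $p$ and $g^np$ using $d(p,g^np)=|n|\tau_X(g)$, and get the basepoint estimate from $|d(x,\Ax(g))-d(y,\Ax(g))|\le d(x,y)$. The $1$-Lipschitz property of the nearest-point projection that you mention at the outset is never actually used, but it does no harm.
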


\begin{proof}
Let $p$ be the nearest-point projection of $x$ to the closed convex geodesic $\Ax(g)$.  Since
$g^np$ lies on the same axis and $d(p,g^np)=|n|\tau_X(g)$,
\[
 d(x,g^nx)
 \le d(x,p)+d(p,g^np)+d(g^np,g^nx)
 =2d(x,\Ax(g))+|n|\tau_X(g).
\]
The basepoint estimate follows from
$|d(x,A)-d(y,A)|\le d(x,y)$.
\end{proof}

\begin{remark}
The word \emph{axial} is the standard CAT$(0)$ synonym for a hyperbolic isometry cited in
\cref{def:hyperbolic-axis-bound}.  The proof of \cref{lem:axis-power} uses only CAT$(0)$ projection
and axis properties.  Strictly negative curvature is used later for the Bourdon metric and the
limit-set dimension identity.  The notation $a_x(g)$ is only the abbreviation in
\eqref{eq:axis-displacement-bound}. Later statements refer directly to that number.
\end{remark}

\label{sec:finite-vertex-deletion}
\label{sec:word-ball}
Throughout this section, $X$ is a proper CAT$(-1)$ space and
$\Gamma\le\Isom(X)$ is finitely generated, non-elementary, and discrete, with
$0<\delta_X(\Gamma)<\infty$.  The set $S=S^{-1}$ is the fixed finite generating set,
$A,B\in\Lambda_\Gamma$ are distinct, $o\in(A,B)$, $D$ is the fixed ideal-triangle
thinness constant, $L=\max_{s\in S}d(o,so)$, and $g\mapsto\xi_g\in\{gA,gB\}$ is the
fixed map $g\mapsto\xi_g$ satisfying $d(go,[o,\xi_g))\le D$.

\begin{lemma}
\label{lem:boundary-choice}
For every $g\in\Gamma$ there is a point $p_g\in[o,\xi_g)$ such that
\begin{equation}\label{eq:boundary-choice-shadow}
 d(go,p_g)\le D.
\end{equation}
If $g_i o\to\lambda\in\Lambda_\Gamma$ in the visual compactification, then
$\xi_{g_i}\to\lambda$ in $(\partial X,\rho_o)$.
\end{lemma}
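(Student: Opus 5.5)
The plan is to use the thinness of a geodesic triangle with one finite vertex and two ideal vertices, and then to pass from "close to the ray" to "large Gromov product" in order to get the convergence statement.

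\emph{Existence of $p_g$.} Since $o\in(A,B)$ and $g$ is an isometry, $go$ lies on the bi-infinite geodesic $(gA,gB)$. Consider the geodesic triangle with vertices $o$, $gA$, $gB$, whose sides are the rays $[o,gA)$, $[o,gB)$ and the line $(gA,gB)$. In a CAT$(-1)$ space such triangles with ideal vertices are uniformly thin. I take $D$ to be a constant for which every side lies in the $D$-neighbourhood of the union of the other two; this is how the fixed thinness constant $D$ is to be read. Applying this to the point $go$ on the side $(gA,gB)$ gives a point $p_g$ of $[o,gA)\cup[o,gB)$ with $d(go,p_g)\le D$. I then set $\xi_g=gA$ or $\xi_g=gB$ according to which ray contains $p_g$. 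This produces both the required map $g\mapsto\xi_g$ and the estimate \eqref{eq:boundary-choice-shadow}.

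\emph{Convergence.} Suppose $g_io\to\lambda\in\Lambda_\Gamma$. It suffices to show that $(\xi_{g_i}\mid\lambda)_o\to\infty$, since $\rho_o=\e^{-(\cdot\mid\cdot)_o}$.
\begin{enumerate}
\item Because $p:=p_{g_i}$ lies on the ray $[o,\xi_{g_i})$, we have $(\xi_{g_i}\mid p)_o=d(o,p)$.
\item From $d(p,g_io)\le D$ and the triangle inequality, $(p\mid g_io)_o\ge d(o,p)-D$.
\item Since $g_io\to\lambda$ leaves every compact set and $d(o,p)\ge d(o,g_io)-D$, the distance $d(o,p)$ tends to $\infty$.
\end{enumerate}
Let $\delta_0$ be the hyperbolicity constant of $X$, extended to $X\cup\partial X$ with the usual bounded error of a few $\delta_0$. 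The Gromov product inequality then gives
\[
(\xi_{g_i}\mid\lambda)_o\ge\min\{(\xi_{g_i}\mid p)_o,\ (p\mid g_io)_o,\ (g_io\mid\lambda)_o\}-2\delta_0'.
\]
The first two terms are at least $d(o,p)-D\to\infty$ by steps 1--3. The third tends to $\infty$ because $g_io\to\lambda$ in the visual compactification. Hence $(\xi_{g_i}\mid\lambda)_o\to\infty$, and so $\rho_o(\xi_{g_i},\lambda)\to0$.

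\emph{Main obstacle.} The main technical point is justifying the Gromov product inequality with mixed interior and boundary points, with a uniform additive constant. I would handle it by approximating $\xi_{g_i}$ and $\lambda$ by points on their rays from $o$ and passing to the limit; in a proper CAT$(-1)$ space these boundary Gromov products are genuine limits, as recalled before \cref{thm:bishop-jones}. The remaining steps are routine.
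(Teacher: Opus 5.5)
Your proposal is correct. The existence of $p_g$ and $\xi_g$ is exactly the paper's argument: $go\in(gA,gB)$, and the thin ideal triangle with vertices $o,gA,gB$ puts $go$ within $D$ of one of the two rays from $o$.

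For the convergence you take a different route. You chain the four-point hyperbolicity inequality through $p_{g_i}$ and $g_io$. You then have to extend that inequality to mixed interior and boundary points, at the cost of an additive $2\delta_0'$; this is the step you flagged as the main obstacle. The paper avoids hyperbolicity constants altogether and uses two elementary facts instead:
\[
 (p_{g_i}\mid\lambda)_o\ge(g_io\mid\lambda)_o-D,
 \qquad
 (\xi_{g_i}\mid\lambda)_o\ge(p_{g_i}\mid\lambda)_o .
\]
The first holds because the Gromov product based at $o$ is $1$-Lipschitz in each argument. The second holds because, along the ray $c$ from $o$ to $\xi_{g_i}$, the function $t\mapsto(c(t)\mid y)_o$ is nondecreasing for every $y$, since $d(c(t'),y)\le d(c(t),y)+(t'-t)$. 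One then passes to the limit using that boundary Gromov products in CAT$(-1)$ are genuine limits along rays.

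The paper's route is shorter and loses no constant. It also removes your obstacle, since it needs only that products involving boundary points are limits along rays, not a four-point inequality with mixed points. Your argument is the generic one for proper Gromov hyperbolic spaces. Your justification of it, approximating $\xi_{g_i}$ and $\lambda$ along their rays and passing to the limit, is valid here.
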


\begin{proof}
The point $go$ lies on $(gA,gB)$.  The ideal triangle with vertices $o,gA,gB$ is $D$-thin,
so $go$ lies within $D$ of one of the two rays from $o$.  For the second assertion,
\[
 (p_{g_i}\mid\lambda)_o\ge(g_i o\mid\lambda)_o-D,
 \qquad
 (\xi_{g_i}\mid\lambda)_o\ge(p_{g_i}\mid\lambda)_o.
\]
The right side tends to infinity, hence $\rho_o(\xi_{g_i},\lambda)\to0$.
\end{proof}

\begin{lemma}[Adjacent-orbit visual contraction]
\label{lem:adjacent-contraction}
For every $g\in\Gamma$ and $s\in S$,
\begin{equation}\label{eq:adjacent-contraction}
 \rho_o(\xi_g,\xi_{gs})\le \e^{L+2D}\e^{-d(o,go)}.
\end{equation}
\end{lemma}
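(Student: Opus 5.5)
The plan is to bound the Gromov product $(\xi_g\mid\xi_{gs})_o$ from below by roughly $d(o,go)-L-2D$, since $\rho_o=\e^{-(\cdot\mid\cdot)_o}$. By \cref{lem:boundary-choice} there are points $p_g\in[o,\xi_g)$ and $p_{gs}\in[o,\xi_{gs})$ with $d(go,p_g)\le D$ and $d(gso,p_{gs})\le D$. Moreover $d(go,gso)=d(o,so)\le L$, so $d(p_g,p_{gs})\le L+2D$.

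First I will use the CAT$(-1)$ boundary Gromov product along rays, via the standard monotonicity: if $p\in[o,\xi)$ and $p'\in[o,\eta)$, then
\[
(\xi\mid\eta)_o\ge(p\mid p')_o.
\]
This holds because the Gromov product is nondecreasing as the points move outward along geodesic rays from $o$ (in a CAT$(0)$, hence CAT$(-1)$, space, comparison with Euclidean triangles gives this monotonicity, and the boundary product is the limit along the rays). This is the same inequality $(\xi_{g_i}\mid\lambda)_o\ge(p_{g_i}\mid\lambda)_o$ already used in \cref{lem:boundary-choice}. If a clean monotonicity statement is not available, I will fall back on the inequality $(\xi\mid\eta)_o\ge\liminf(x_n\mid y_n)_o$ for points $x_n,y_n$ running out the rays, together with $(x_n\mid y_m)_o\ge\min\{(x_n\mid p)_o,(p\mid p')_o,(p'\mid y_m)_o\}$ up to a fixed hyperbolicity constant. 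In that route the constant could be absorbed only if the statement permitted it, so the monotone version is the one to aim for.

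Next I compute
\[
(p_g\mid p_{gs})_o=\tfrac12\bigl(d(o,p_g)+d(o,p_{gs})-d(p_g,p_{gs})\bigr).
\]
Using $d(o,p_g)\ge d(o,go)-D$ and $d(o,p_{gs})\ge d(o,gso)-D\ge d(o,go)-L-D$ gives
\[
(p_g\mid p_{gs})_o\ge\tfrac12\bigl(2d(o,go)-L-2D-(L+2D)\bigr)=d(o,go)-L-2D.
\]
Exponentiating yields $\rho_o(\xi_g,\xi_{gs})\le\e^{L+2D}\e^{-d(o,go)}$. When $\xi_g=\xi_{gs}$ the bound is trivial.

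The main obstacle is the monotonicity step $(\xi\mid\eta)_o\ge(p\mid p')_o$ in the first paragraph, which must hold with no additive constant for the stated constant $\e^{L+2D}$ to come out exactly. I will justify it via the CAT$(0)$ comparison: for $x$ further along $[o,\xi)$ than $p$, the quantity $d(o,x)-d(x,y)$ is nondecreasing in $x$ by the triangle inequality $d(x,y)\le d(x,p)+d(p,y)=d(o,x)-d(o,p)+d(p,y)$. Applying this in each variable and passing to the limit defining the boundary product gives the claim.
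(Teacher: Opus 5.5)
Your proposal is correct and is essentially the paper's proof: pick points on $[o,\xi_g)$ and $[o,\xi_{gs})$ within $D$ of $go$ and $gso$, bound their Gromov product below by $d(o,go)-L-2D$, and pass to the boundary product. Your triangle-inequality argument, showing that the Gromov product is nondecreasing along rays from $o$, supplies the monotonicity $(\xi\mid\eta)_o\ge(p\mid p')_o$ that the paper uses without comment. It does so with no additive constant, which is exactly what the stated constant $\e^{L+2D}$ requires.
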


\begin{proof}
Choose $p\in[o,\xi_g)$ and $q\in[o,\xi_{gs})$ within $D$ of $go$ and $gso$.
Then
\[
 d(o,p)\ge d(o,go)-D,
 \quad d(o,q)\ge d(o,go)-L-D,
 \quad d(p,q)\le L+2D.
\]
Thus $(\xi_g\mid\xi_{gs})_o\ge d(o,go)-L-2D$, which is equivalent to the claim.
\end{proof}

\subsection{Deletion of an arbitrary finite vertex set}
If $Q=\varnothing$, then $\Cay(\Gamma,S)\setminus Q$ is connected and has one component.
If $Q\subset\Gamma$ is finite and nonempty, every component of
$\Cay(\Gamma,S)\setminus Q$ is incident to an edge with its other endpoint in $Q$.  Since
the Cayley graph is locally finite, only finitely many such edges exist, and hence the
complement has only finitely many components.

\begin{theorem}[Boundary estimate after deleting a finite vertex set]
\label{thm:finite-vertex-deletion-boundary}
Let $0<q\le1$ and let $Q\sqsubset\Gamma$.  The nonempty cluster sets $K_{Q,U}$ form a finite
cover of $\Lambda_\Gamma$ and
\begin{equation}\label{eq:finite-vertex-deletion-estimate}
 \sum_{U\in\mathscr U(Q)}\diam(K_{Q,U})^q
 \le \sum_{e\subset\Cay(\Gamma,S)\setminus Q}w_o(e)^q
 =\EdgeSum_{q,o}(Q;\xi).
\end{equation}
Consequently,
\begin{equation}\label{eq:cut-packing}
 \EdgeSum_{q,o}(Q;\xi)<\varepsilon^q
 \qquad\mbox{implies}\qquad
 c_\infty(Q)\ge\pack_\varepsilon(\Lambda_\Gamma,\rho_o),
\end{equation}
and the lower bound obtained by deleting finite vertex sets is \eqref{eq:main-edge-sum-bound}.
\end{theorem}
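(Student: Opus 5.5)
The plan has three parts: show the cluster sets cover $\Lambda_\Gamma$, bound the diameter of each cluster set by the edge weights inside its component, and read off the packing consequence.

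\emph{Covering.} Let $\lambda\in\Lambda_\Gamma$. Since the action is discrete and $\Gamma$ is infinite, there are pairwise distinct $g_i\in\Gamma$ with $g_io\to\lambda$. By \cref{lem:boundary-choice}, $\xi_{g_i}\to\lambda$. Only finitely many $g_i$ lie in the finite set $Q$. By the remark preceding the theorem, $\Cay(\Gamma,S)\setminus Q$ has finitely many components, so after passing to a subsequence all $g_i$ lie in a single component $U$. That component is then infinite, and the $g_i$ eventually leave every finite subset of $U$. Hence $\lambda\in K_{Q,U}$ by the sequential description of \eqref{eq:intro-general-cluster}. The same argument shows $K_{Q,U}\subseteq\Lambda_\Gamma$: limits of $\xi_{g_n}$ along sequences leaving finite sets are limits of the corresponding $g_no$. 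The cover is finite because $\mathscr U(Q)$ is finite.

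\emph{Diameter bound.} The bound is componentwise, and finite components contribute $0$. The edge sets of distinct components are disjoint subsets of the edges of $\Cay(\Gamma,S)\setminus Q$. It therefore suffices to show, for each infinite $U$,
\[
\diam(K_{Q,U})^q\le\sum_{e\subset U}w_o(e)^q .
\]
Take $\lambda,\mu\in K_{Q,U}$, with $\xi_{g_n}\to\lambda$ and $\xi_{h_n}\to\mu$ for $g_n,h_n\in U$. Since $U$ is connected, join $g_n$ to $h_n$ by an edge path $g_n=x_0,\dots,x_m=h_n$ inside $U$, which we may take simple. The triangle inequality for $\rho_o$, subadditivity of $t\mapsto t^q$ for $0<q\le1$, and the fact that a simple path uses each edge at most once give
\[
\rho_o(\xi_{g_n},\xi_{h_n})^q\le\sum_j\rho_o(\xi_{x_j},\xi_{x_{j+1}})^q\le\sum_{e\subset U}w_o(e)^q .
\]
Letting $n\to\infty$ and using continuity of $\rho_o$ yields $\rho_o(\lambda,\mu)^q\le\sum_{e\subset U}w_o(e)^q$. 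Taking the supremum over $\lambda,\mu$ gives the componentwise bound, and summing over components gives \eqref{eq:finite-vertex-deletion-estimate}. If the edge sum is infinite there is nothing to prove. Convergence is not actually needed here: when $P_{\Gamma,o}(q)<\infty$ the sum is finite by \cref{lem:adjacent-contraction}, but the inequality holds either way.

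\emph{Packing consequence.} Suppose $\EdgeSum_{q,o}(Q;\xi)<\varepsilon^q$. Then every $K_{Q,U}$ has diameter $<\varepsilon$. Let $E\subseteq\Lambda_\Gamma$ be $\varepsilon$-separated. Each point of $E$ lies in some nonempty $K_{Q,U}$, and no two points of $E$ can share one, so $|E|\le c_\infty(Q)$. Taking the supremum over $E$ gives $\pack_\varepsilon(\Lambda_\Gamma,\rho_o)\le c_\infty(Q)$.

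For \eqref{eq:main-edge-sum-bound}, suppose $c_\infty(Q)<m$ and $\EdgeSum_{q,o}(Q;\xi)<\varepsilon_m^q$. Choose $\varepsilon<\varepsilon_m$ with $\pack_\varepsilon\ge m$ and $\EdgeSum_{q,o}(Q;\xi)<\varepsilon^q$. Such $\varepsilon$ exists because $\pack_\varepsilon$ is nonincreasing in $\varepsilon$, so every $\varepsilon<\varepsilon_m$ has $\pack_\varepsilon\ge m$. This contradicts the packing consequence.

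The main point needing care is the covering step, specifically that $g_io\to\lambda$ transfers to $\xi_{g_i}\to\lambda$. This is exactly \cref{lem:boundary-choice}, so the remaining work is the bookkeeping of subsequences and finite components.
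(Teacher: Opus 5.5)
Your proposal is correct and matches the paper's proof step for step. The covering uses the boundary-choice lemma together with the finiteness of the set of components. The diameter of each cluster set is bounded by a triangle inequality along a path inside its component, and these bounds are summed over the disjoint edge sets of the components. The packing and infimum bounds then follow by the same contrapositive argument.

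The only differences are cosmetic. You apply $q$-subadditivity along the path rather than to the whole component sum. You also explicitly take the path simple, so no edge is counted twice; the paper's phrase ``any finite path'' leaves this implicit.
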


\begin{proof}
Given $\lambda\in\Lambda_\Gamma$, choose distinct $g_i$ with $g_i o\to\lambda$.  After
discarding finitely many terms and passing to a subsequence, all $g_i$ lie in one component
$U$ of the complement of $Q$.  By \cref{lem:boundary-choice}, $\xi_{g_i}\to\lambda$, so
$\lambda\in K_{Q,U}$.

Fix an infinite component $U$.  For $g,h\in U$, any finite path in $U$ from $g$ to $h$ and
the triangle inequality give
\[
 \rho_o(\xi_g,\xi_h)\le\sum_{e\in E(U)}w_o(e).
\]
Passing to escaping sequences gives
$\diam(K_{Q,U})\le\sum_{e\in E(U)}w_o(e)$.  Since $0<q\le1$,
$(\sum a_i)^q\le\sum a_i^q$.  The edge sets of distinct components are disjoint, proving
\eqref{eq:finite-vertex-deletion-estimate}.

If the edge-weight sum is less than $\varepsilon^q$, every cluster set has diameter less than
$\varepsilon$.  An $\varepsilon$-separated subset of the limit set therefore meets distinct
cluster sets, proving \eqref{eq:cut-packing}.  If $c_\infty(Q)<m$ and
$0<\varepsilon<\varepsilon_m(\Lambda_\Gamma,\rho_o)$, then
$\pack_\varepsilon\ge m$, so the contrapositive gives
$\EdgeSum_{q,o}(Q;\xi)\ge\varepsilon^q$.  Let $\varepsilon\uparrow\varepsilon_m$ and take the
infimum over $Q$.
\end{proof}
The above theorem gives our component cover outside a word ball.
\begin{corollary}
\label{thm:word-ball-cover}
For $Q=B_S(n)$ we have,
\begin{equation}\label{eq:word-ball-cover-estimate}
 \sum_{U\in\mathscr U(B_S(n))}\diam(K_{B_S(n),U})^q
 \le\EdgeSum_{q,o}(B_S(n);\xi)
 \le |S|\e^{q(L+2D)}
 \sum_{|g|_S>n}\e^{-q d(o,go)}.
\end{equation}
If $P_{\Gamma,o}(q)<\infty$, then the covers have mesh and total $q$-diameter tending to zero.
Hence
\begin{equation}\label{eq:word-ball-Hq-zero}
 \mathcal H^q(\Lambda_\Gamma,\rho_o)=0.
\end{equation}
\end{corollary}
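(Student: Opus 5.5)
The plan is to specialize \cref{thm:finite-vertex-deletion-boundary} to $Q=B_S(n)$. That gives the first inequality in \eqref{eq:word-ball-cover-estimate} immediately. The second inequality comes from bounding each edge weight with \cref{lem:adjacent-contraction}.

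For an undirected edge $e=\{g,gs\}$ lying in $\Cay(\Gamma,S)\setminus B_S(n)$, both endpoints satisfy $|g|_S>n$. We orient the edge from the endpoint $g$, writing it as $\{g,gs\}$ with $s\in S$. \Cref{lem:adjacent-contraction} then gives
\[
 w_o(e)^q=\rho_o(\xi_g,\xi_{gs})^q\le \e^{q(L+2D)}\e^{-qd(o,go)}.
\]
Assigning each undirected edge to one of its endpoints $g$ with $|g|_S>n$ counts every pair $(g,s)$ at most once. At most $|S|$ edges are assigned to each vertex, so summing gives
\[
 \EdgeSum_{q,o}(B_S(n);\xi)\le|S|\e^{q(L+2D)}\sum_{|g|_S>n}\e^{-qd(o,go)}=\Theta_{q,o,S}(n).
\]
Symmetry of the weight, $w_o(\{g,gs\})=w_o(\{gs,(gs)s^{-1}\})$, ensures the bound is valid whichever endpoint is used.

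Now assume $P_{\Gamma,o}(q)<\infty$. Then $\Theta_{q,o,S}(n)\to0$ as $n\to\infty$, because it is a constant times the tail of a convergent series; each word ball is finite, so the tails exhaust $\Gamma$. By \cref{thm:finite-vertex-deletion-boundary}, the nonempty cluster sets $K_{B_S(n),U}$ form a finite cover of $\Lambda_\Gamma$. Since each $\diam(K)^q$ is bounded by the total sum, each diameter is at most $\Theta_{q,o,S}(n)^{1/q}$, and so the mesh tends to $0$. The total $q$-diameter of the cover is at most $\Theta_{q,o,S}(n)$.

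For Hausdorff measure, fix $\delta>0$ and choose $n$ with mesh less than $\delta$. Then $\mathcal H^q_\delta(\Lambda_\Gamma)\le\Theta_{q,o,S}(n)$. Letting $n\to\infty$ gives $\mathcal H^q_\delta(\Lambda_\Gamma)=0$ for every $\delta>0$, hence $\mathcal H^q(\Lambda_\Gamma,\rho_o)=0$.

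The only delicate step is the edge bookkeeping that yields the factor $|S|$: an edge whose endpoints both lie outside the ball must not be double-counted, and this is handled by assigning each edge to one chosen endpoint. Empty cluster sets coming from finite components contribute nothing, by the convention $\diam(\varnothing)=0$.
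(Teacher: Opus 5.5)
Your proposal is correct and follows essentially the same route as the paper. The paper also specializes \cref{thm:finite-vertex-deletion-boundary} to $Q=B_S(n)$, bounds each edge weight by \cref{lem:adjacent-contraction} using an orientation that dominates the edges by pairs $(g,s)$ with $|g|_S>n$, and concludes from the vanishing Poincar\'e tail that the mesh and the total $q$-diameter of the cluster-set covers tend to zero, which gives vanishing Hausdorff $q$-measure.
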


\begin{proof}
Orient each undirected edge of $\Cay(\Gamma,S)\setminus B_S(n)$ once and overcount it by all
pairs $(g,s)$ with $|g|_S>n$.  Apply \cref{lem:adjacent-contraction} and then
\cref{thm:finite-vertex-deletion-boundary}.  If the Poincar\'e series converges, its tail tends to zero.  The
right side controls both the sum of the $q$th powers of the diameters and their maximum, so
the Hausdorff $q$-contents tend to zero.
\end{proof}

\subsection{Extension to the end compactification}
Let $\widehat\Gamma=\Gamma\sqcup\mathcal E(\Gamma)$ denote the end compactification of the vertex set
of the locally finite Cayley graph.

\begin{theorem}[Extension of the orbit map to the end compactification]
\label{thm:end-boundary-map}
Assume $P_{\Gamma,o}(q)<\infty$ for some $q\in(0,1]$.  Let
\[
 \iota_o:\Gamma\longrightarrow X,\qquad \iota_o(g)=go,
\]
be the orbit map on the vertex set of the Cayley graph.  There is a unique continuous
$\Gamma$-equivariant extension
\[
 \overline\iota_o:\widehat\Gamma=\Gamma\sqcup\mathcal E(\Gamma)
 \longrightarrow X\sqcup\Lambda_\Gamma
\]
whose restriction to $\Gamma$ is $\iota_o$.  Its end restriction
$\Pi_o=\overline\iota_o|_{\mathcal E(\Gamma)}$ is onto $\Lambda_\Gamma$.  If a basic end neighborhood is represented by an infinite component $U$ of
$\Cay(\Gamma,S)\setminus Q$, where $Q\subset\Gamma$ is finite, then
\begin{equation}\label{eq:basic-end-neighborhood}
 \Pi_o(\mathcal E(U))\subseteq K_{Q,U},\qquad
 \diam\Pi_o(\mathcal E(U))^q\le
 \sum_{e\in E(U)}w_o(e)^q.
\end{equation}
\end{theorem}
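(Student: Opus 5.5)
The plan is to define $\Pi_o$ on each end as the unique point of a nested intersection of cluster sets, and then check continuity, equivariance, surjectivity and \eqref{eq:basic-end-neighborhood} directly.

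\emph{Definition of $\Pi_o$.} Fix an end $\epsilon$. For each $n$, let $U_n(\epsilon)$ be the unique infinite component of $\Cay(\Gamma,S)\setminus B_S(n)$ with $\epsilon\in\mathcal E(U_n(\epsilon))$. These components are nested, so the cluster sets $K_{B_S(n),U_n(\epsilon)}$ are nested, nonempty and compact, since each is an intersection of closures in the compact boundary. By \cref{thm:word-ball-cover} their diameters are bounded by $\bigl(|S|\e^{q(L+2D)}\sum_{|g|_S>n}\e^{-qd(o,go)}\bigr)^{1/q}$, which tends to $0$ because $P_{\Gamma,o}(q)<\infty$. Hence the intersection is a single point of $\Lambda_\Gamma$, and we set $\Pi_o(\epsilon)$ equal to it.

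\emph{Basic neighborhoods and \eqref{eq:basic-end-neighborhood}.} Let $U$ be an infinite component outside a finite $Q$ and let $\epsilon\in\mathcal E(U)$. Choose $n$ with $Q\subseteq B_S(n)$. Then $U_n(\epsilon)\subseteq U$, so $K_{B_S(n),U_n(\epsilon)}\subseteq K_{Q,U}$; the cluster set only shrinks when we pass to a subset with escaping sequences. This gives $\Pi_o(\mathcal E(U))\subseteq K_{Q,U}$. The diameter bound then follows from the path estimate in the proof of \cref{thm:finite-vertex-deletion-boundary}, which gives $\diam K_{Q,U}\le\sum_{e\in E(U)}w_o(e)$, combined with $(\sum a_i)^q\le\sum a_i^q$.

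\emph{Continuity.} At a vertex, continuity is trivial since vertices are isolated. At an end $\epsilon$, a basic neighborhood of $\epsilon$ is $U_n\sqcup\mathcal E(U_n)$. For $g\in U_n$, the point $go$ lies within $D$ of $[o,\xi_g)$ with $d(o,go)$ large, because only finitely many $g$ have bounded orbit length (\cref{lem:orbit-basic}). Hence $go$ is visually close to $\xi_g$, as in \cref{lem:boundary-choice}. Moreover $\xi_g$ lies within the $\sup_{h\in U_n}$ version of the diameter bound of $\Pi_o(\epsilon)$: take a path in $U_n$ from $g$ toward $\epsilon$ and use the tail edge sum, which tends to $0$.

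The main obstacle is that this step mixes the cone topology on $X\sqcup\partial X$ with the Bourdon metric. I would handle it by showing that $(go\mid\Pi_o(\epsilon))_o\to\infty$ uniformly for $g\in U_n$ as $n\to\infty$. This uses $(go\mid\xi_g)_o\ge d(o,go)-D$ together with $\rho_o(\xi_g,\Pi_o\epsilon)\le$ the tail sum, and $\min\{(go\mid\xi_g)_o,(\xi_g\mid\Pi_o\epsilon)_o\}$ up to a CAT$(-1)$ constant.

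\emph{Uniqueness, equivariance and surjectivity.} Uniqueness holds because $\Gamma$ is dense in $\widehat\Gamma$: every end is a limit of vertices along a ray. Equivariance follows from uniqueness: $x\mapsto g^{-1}\overline\iota_o(gx)$ is another continuous extension of $\iota_o$, since $\iota_o$ is equivariant and $\Gamma$ acts by homeomorphisms on both compactifications. For surjectivity, take $\lambda\in\Lambda_\Gamma$ and distinct $g_i$ with $g_io\to\lambda$. By compactness of $\widehat\Gamma$, pass to a subsequence with $g_i\to\epsilon$. The limit is an end, since the $g_i$ are distinct vertices and vertices are isolated. Continuity then gives $\Pi_o(\epsilon)=\lim g_io=\lambda$.
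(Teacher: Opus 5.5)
Your proposal is correct, but it reaches the result by a somewhat different route than the paper.

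\emph{Definition of $\Pi_o$.} You define $\Pi_o(\epsilon)$ directly as the unique point of $\bigcap_n K_{B_S(n),U_n(\epsilon)}$. Well-definedness is then automatic, and convergence of $g_jo$ along a ray only appears later as a consequence of continuity. The paper instead defines $\Pi_o(\epsilon)$ as the limit of $g_jo$ along a representing ray. It proves convergence by showing that every subsequential limit lies in all the nested cluster sets, and then checks independence of the ray.

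\emph{Continuity.} At an end you give a uniform quantitative estimate: for all $g\in U_n$, $(go\mid\Pi_o\epsilon)_o$ is at least $\min\{d(o,go)-D,\,-\tfrac1q\log\Theta_{q,o,S}(n)\}$ minus a hyperbolicity constant. This uses $\rho_o(\xi_g,\Pi_o\epsilon)\le\sum_{e\in E(U_n)}w_o(e)$, obtained from paths in $U_n$ to escaping $h_k$ with $\xi_{h_k}\to\Pi_o\epsilon$. The paper's argument is softer: a second subsequential limit $\mu$ would, by \cref{lem:boundary-choice} and closedness, lie in every $K_{B_S(n),U_n}$ and hence equal $\Pi_o\epsilon$.

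Your version buys an explicit rate. It costs two facts the paper never needs: the inequality $(x\mid z)_o\ge\min\{(x\mid y)_o,(y\mid z)_o\}-\delta'$ for mixed interior and boundary points, and the Gromov-product description of convergence in the visual compactification. You also only wrote out the vertex half of continuity. The end half does follow from your own inclusion $\Pi_o(\mathcal E(U_n))\subseteq K_{B_S(n),U_n}$ together with the diameter bound, but it should be stated.

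\emph{Equivariance and surjectivity.} You derive equivariance from uniqueness via $z\mapsto\gamma^{-1}\overline\iota_o(\gamma z)$, and surjectivity from sequential compactness of $\widehat\Gamma$ plus continuity. Both are cleaner than the paper's translated-ray and K\"onig-lemma arguments. However, the paper's nested-component extraction is exactly the diagonal argument underlying compactness and second countability of the end compactification. You are taking that as known rather than proving it.
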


\begin{proof}
We first define the image of an end.  Let
$r=(g_0,g_1,\ldots)$ be a ray representing $\epsilon\in\mathcal E(\Gamma)$.  For every
$n\ge0$, a tail of $r$ lies in a unique infinite component $U_n$ of
$\Cay(\Gamma,S)\setminus B_S(n)$, and these components are nested, i.e.
$U_{n+1}\subseteq U_n$.  Convergence of $P_{\Gamma,o}(q)$ implies that every orbit ball
$\{g:d(o,go)\le R\}$ is finite.  Since the vertices $g_j$ on a ray are distinct, it follows
that $d(o,g_jo)\to\infty$.

The visual compactification $\overline X=X\sqcup\partial X$ of a proper CAT$(-1)$ space is
compact, so $(g_jo)$ has a subsequence converging to some $\lambda\in\partial X$.  By
\cref{lem:boundary-choice}, the corresponding chosen boundary points converge to the same $\lambda$.
For each fixed $n$, the subsequence is eventually in $U_n$ and leaves every finite subset of
$U_n$, hence $\lambda\in K_{B_S(n),U_n}$.  The sets
$K_{B_S(n),U_n}$ are nonempty, compact, and nested.  By
\cref{thm:word-ball-cover}, their diameters tend to zero, so their intersection is a singleton.
Every convergent subsequence of $(g_jo)$ has its limit in that intersection. Compactness of
$\overline X$ then implies that the whole sequence converges to its unique point.  Define
$\Pi_o(\epsilon)$ to be this limit.

If two rays represent the same end, then outside every word ball their tails lie in the same
component.  The preceding nested cluster sets are therefore the same, so the limit is
independent of the chosen ray.  This proves that $\Pi_o$ is well defined.  We now define
$\overline\iota_o(g)=go$ for $g\in\Gamma$ and
$\overline\iota_o(\epsilon)=\Pi_o(\epsilon)$ for an end.

We state the inclusion for basic end neighborhoods before proving continuity.  Let $Q\subset\Gamma$ be finite,
let $U$ be an infinite component of $\Cay(\Gamma,S)\setminus Q$, and let
$\eta\in\mathcal E(U)$.  A representing ray has a tail in $U$. Along that tail the chosen boundary points converge to $\Pi_o(\eta)$ and leave every finite subset of $U$.  Hence
$\Pi_o(\eta)\in K_{Q,U}$.  The component estimate in
\cref{thm:finite-vertex-deletion-boundary} now gives
\[
 \diam\Pi_o(\mathcal E(U))^q
 \le \diam(K_{Q,U})^q
 \le \sum_{e\in E(U)}w_o(e)^q,
\]
which proves \eqref{eq:basic-end-neighborhood}.

Continuity at a vertex is immediate because vertices are isolated in the end compactification of the vertex set.  Let $z_j\to\epsilon$ in $\widehat\Gamma$ and set
$\lambda=\Pi_o(\epsilon)$.  Suppose that
$\overline\iota_o(z_j)$ does not converge to $\lambda$.  Passing to a subsequence in the
compact visual compactification, assume
$\overline\iota_o(z_j)\to\mu\ne\lambda$.  For each fixed $n$, all sufficiently large $z_j$
belong to $U_n\cup\mathcal E(U_n)$.  If infinitely many of these $z_j$ are ends, the inclusion for basic end neighborhoods and closedness of $K_{B_S(n),U_n}$ give
$\mu\in K_{B_S(n),U_n}$.  Otherwise a tail consists of vertices $g_j\in U_n$.  Convergence to
an end forces the vertices to leave every finite subset of the Cayley graph. The finiteness of
orbit balls then gives $d(o,g_jo)\to\infty$.  By \cref{lem:boundary-choice}, any boundary limit of
$g_jo$ is also the limit of $\xi_{g_j}$, and therefore again lies in
$K_{B_S(n),U_n}$.  Thus $\mu$ belongs to every member of the nested family
$K_{B_S(n),U_n}$, whose intersection is $\{\lambda\}$.  This contradiction proves continuity
at $\epsilon$.

For $\gamma\in\Gamma$, the translated ray $(\gamma g_j)$ represents $\gamma\epsilon$ and
$\gamma g_jo\to\gamma\Pi_o(\epsilon)$.  Hence
$\Pi_o(\gamma\epsilon)=\gamma\Pi_o(\epsilon)$, and the extension is equivariant.  Uniqueness
follows because every end is the limit of each of its representing rays and a continuous
extension must preserve that limit.

To prove surjectivity, let $\lambda\in\Lambda_\Gamma$ and choose distinct $h_j$ with
$h_jo\to\lambda$.  For each $n$, infinitely many $h_j$ lie in one component outside
$B_S(n)$.  Repeated subsequence selection produces nested infinite components
$U_{n+1}\subseteq U_n$ such that
$\lambda\in K_{B_S(n),U_n}$ for every $n$.  By the standard nested-component
characterization of ends of a locally finite graph
\cite[Section~8.5]{Diestel2010}---equivalently, by K\"onig's infinity lemma---there is an end
$\epsilon$ with $\epsilon\in\mathcal E(U_n)$ for all $n$.  The singleton-intersection argument
then gives $\Pi_o(\epsilon)=\lambda$.
\end{proof}

Next result is about ultrametric on ends from Poincar\'e-series tails.

\begin{theorem}
\label{thm:end-ultrametric}
Assume $P_{\Gamma,o}(q)<\infty$ for some $q\in(0,1]$.  The function
$d^{\mathcal E}_{q,o,S}$ defined in \eqref{eq:main-tail-ultrametric} is an ultrametric on
$\mathcal E(\Gamma)$ and induces the end topology.  The map
$\Pi_o=\overline\iota_o|_{\mathcal E(\Gamma)}$ from
\cref{thm:end-boundary-map} is a surjective, $\Gamma$-equivariant, $1$-Lipschitz map
\[
 (\mathcal E(\Gamma),d^{\mathcal E}_{q,o,S})
 \longrightarrow(\Lambda_\Gamma,\rho_o).
\]
Consequently it is a quotient map and
$\dim_H(\Lambda_\Gamma,\rho_o)\le
\dim_H(\mathcal E(\Gamma),d^{\mathcal E}_{q,o,S})$.
\end{theorem}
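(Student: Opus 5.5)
We verify the ultrametric axioms, compare the topology with the end topology, and then read off the $1$-Lipschitz bound from \eqref{eq:basic-end-neighborhood} together with the word-ball estimate \eqref{eq:word-ball-cover-estimate}.

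\emph{Ultrametric.} Symmetry is clear, since $N_S(\epsilon,\eta)$ is symmetric. The tail function $\Theta:=\Theta_{q,o,S}$ is nonincreasing in $n$ and tends to $0$, because $P_{\Gamma,o}(q)<\infty$.

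For distinct ends, $N_S(\epsilon,\eta)$ is finite. Indeed, distinct ends are separated by some finite vertex set, hence by some ball $B_S(n)$. Moreover $\Theta(n)>0$ for every $n$, because $\Gamma$ is infinite and each term of the tail sum is positive. Hence $d^{\mathcal E}_{q,o,S}(\epsilon,\eta)>0$ whenever $\epsilon\ne\eta$.

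For the strong triangle inequality, set $m=\min\{N_S(\epsilon,\zeta),N_S(\zeta,\eta)\}$. Then $\epsilon$ and $\zeta$ share a component of the complement of $B_S(m)$, and so do $\zeta$ and $\eta$. Components outside $B_S(m)$ are determined uniquely by an end, so all three ends lie in one component. This gives $N_S(\epsilon,\eta)\ge m$. Since $\Theta$ is monotone,
\[
d^{\mathcal E}_{q,o,S}(\epsilon,\eta)\le\max\{d^{\mathcal E}_{q,o,S}(\epsilon,\zeta),d^{\mathcal E}_{q,o,S}(\zeta,\eta)\}.
\]
Here one uses that if two ends share a component outside $B_S(n)$, they also share one outside $B_S(n')$ for every $n'<n$, so $N_S$ is well defined as a largest index.

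\emph{Topology.} The ball of radius $r$ about $\epsilon$ is a union of sets $\mathcal E(U)$ with $U$ a component outside $B_S(n)$. Here $n$ is the least index with $\Theta(n)^{1/q}<r$, and the relation $N_S\ge n$ is the relevant one. Since $\Theta(n)\to0$, every set $\mathcal E(U_n(\epsilon))$ contains a metric ball. Conversely, each basic neighborhood $\mathcal E(U)$ for an arbitrary finite $Q$ contains some $\mathcal E(U_n(\epsilon))$: choose $n$ with $Q\subseteq B_S(n)$. Each $\mathcal E(U_n(\epsilon))$ in turn contains a metric ball of radius $\Theta(n)^{1/q}$. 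Hence the two topologies agree. The one subtle point is that $\Theta$ need not be strictly decreasing. This causes no trouble, because balls are then simply unions of the same cylinder sets.

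\emph{Lipschitz estimate.} Let $\epsilon\ne\eta$ with $n=N_S(\epsilon,\eta)$. First suppose $n\ge0$. Then both ends lie in $\mathcal E(U)$ for a single component $U$ of $\Cay(\Gamma,S)\setminus B_S(n)$. By \eqref{eq:basic-end-neighborhood},
\[
\rho_o(\Pi_o\epsilon,\Pi_o\eta)^q\le\sum_{e\in E(U)}w_o(e)^q\le\EdgeSum_{q,o}(B_S(n);\xi).
\]
By \eqref{eq:word-ball-cover-estimate} the right-hand side is at most $\Theta(n)$. This is exactly the claimed bound.

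Now suppose $n=-1$. Then $U$ is the whole Cayley graph and $Q=\varnothing$. The same argument applies through \cref{thm:finite-vertex-deletion-boundary} together with the adjacent-contraction count over all $g$, which gives $\Theta(-1)$.

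\emph{Conclusion.} Equivariance and surjectivity were established in \cref{thm:end-boundary-map}. The end space is compact, being closed in the compact $\widehat\Gamma$, and $\Lambda_\Gamma$ is Hausdorff. A continuous surjection from a compact space onto a Hausdorff space is closed, hence a quotient map. Finally, Lipschitz maps do not increase Hausdorff dimension, which yields the stated dimension inequality. I expect the main obstacle to be the topology comparison, specifically checking that metric balls are unions of cylinders at the right level.
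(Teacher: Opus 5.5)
Your proposal is correct and follows the paper's proof essentially step for step. Nesting of word-ball components gives the strong triangle inequality, the cylinders $\mathcal E(U_n)$ are compared with metric balls, and the $1$-Lipschitz bound comes from \eqref{eq:basic-end-neighborhood} combined with the word-ball tail estimate \eqref{eq:word-ball-cover-estimate}. One small correction: $\Theta_{q,o,S}$ is in fact strictly decreasing, because every word sphere of an infinite finitely generated group is nonempty. The paper uses this to identify the closed ball of radius $\Theta_{q,o,S}(n)^{1/q}$ exactly with $\mathcal E(U_n)$, but your open-ball argument needs only monotonicity, so your caveat is unnecessary though harmless.
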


\begin{proof}
For distinct ends $\epsilon,\eta$, the integer $N_S(\epsilon,\eta)$ is finite because two
distinct ends are separated by the complement of some finite vertex set, and that finite set
is contained in a sufficiently large word ball.  Every value
$\Theta_{q,o,S}(n)$ is positive.  Because the non-elementary finitely generated group is
infinite and every word sphere is nonempty, the tail is strictly decreasing in $n$, and convergence
of the Poincar\'e series gives $\Theta_{q,o,S}(n)\to0$.

Let $\epsilon,\eta,\zeta$ be ends.  If $\epsilon$ and $\eta$ have tails in the same component
outside $B_S(n)$, and the same is true for $\eta$ and $\zeta$, then all three tails lie in that
one component.  Therefore
\[
 N_S(\epsilon,\zeta)\ge
 \min\{N_S(\epsilon,\eta),N_S(\eta,\zeta)\}.
\]
Since $\Theta_{q,o,S}$ is nonincreasing, taking its $q$th root gives
\[
 d^{\mathcal E}_{q,o,S}(\epsilon,\zeta)
 \le\max\{d^{\mathcal E}_{q,o,S}(\epsilon,\eta),
 d^{\mathcal E}_{q,o,S}(\eta,\zeta)\}.
\]
The function is symmetric and separates distinct ends, so it is an ultrametric.

We next compare its metric balls with the basic sets $\mathcal E(U_n)$.  Fix an end $\epsilon$ and let $U_n$ be its
component outside $B_S(n)$.  Every end in $\mathcal E(U_n)$ agrees with $\epsilon$ through
level $n$, and hence has distance at most $\Theta_{q,o,S}(n)^{1/q}$ from $\epsilon$.
Conversely, if an end does not lie in $\mathcal E(U_n)$, then it separates from $\epsilon$
at a level at most $n-1$ and has distance at least
$\Theta_{q,o,S}(n-1)^{1/q}>\Theta_{q,o,S}(n)^{1/q}$.  Therefore the closed metric ball
of radius $\Theta_{q,o,S}(n)^{1/q}$ about $\epsilon$ is exactly the basic end neighborhood $\mathcal E(U_n)$.  Thus the basic end neighborhoods
and metric balls determine the same neighborhood basis.  Since the sets $\mathcal E(U)$ form
a basis for the end topology, the ultrametric
induces exactly that topology.  In particular the end space remains compact.

Let $N=N_S(\epsilon,\eta)$ and let $U$ be the common component outside $B_S(N)$.  By
\cref{thm:end-boundary-map}, both images lie in $K_{B_S(N),U}$.  The word-ball estimate gives
\[
 \rho_o(\Pi_o\epsilon,\Pi_o\eta)^q
 \le\diam(K_{B_S(N),U})^q
 \le\Theta_{q,o,S}(N),
\]
which is the required $1$-Lipschitz inequality.  Surjectivity and equivariance were proved in
\cref{thm:end-boundary-map}.  A continuous surjection from a compact space to a Hausdorff
space is a quotient map.  Finally, a Lipschitz map does not increase Hausdorff dimension.
This follows directly by pushing forward the covers in the definition of Hausdorff measure,
or from \cite[Chapter~8]{Heinonen2001}.
\end{proof}

\begin{lemma}[Connected sets and one-dimensional Hausdorff measure]
\label{lem:connected-H1}
Let $C$ be a connected subset of a metric space.  With the Hausdorff-measure normalization
used above,
\[
 \mathcal H^1(C)\ge \diam(C).
\]
Consequently, a metric space of zero $1$-dimensional Hausdorff measure is totally
disconnected.
\end{lemma}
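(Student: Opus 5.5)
The plan is to prove the inequality $\mathcal H^1(C)\ge\diam(C)$ by projecting onto a line via a distance function, and then to deduce total disconnectedness by contraposition. I will work with the normalization in which $\mathcal H^1$ is the limit as $\delta\to0$ of the contents $\mathcal H^1_\delta(C)=\inf\{\sum_i\diam(E_i)\}$, the infimum taken over countable covers of $C$ by sets of diameter at most $\delta$.

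If $C$ is empty or a point, the inequality is trivial. Otherwise fix $x\in C$, and define $f\colon C\to[0,\infty)$ by $f(y)=d(x,y)$.

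1. The map $f$ is $1$-Lipschitz. Hence any cover $\{E_i\}$ of $C$ yields a cover $\{f(E_i\cap C)\}$ of $f(C)$ with $\diam f(E_i\cap C)\le\diam E_i$. This gives $\mathcal H^1_\delta(C)\ge\mathcal H^1_\delta(f(C))$.
2. The image $f(C)$ is connected, being a continuous image of a connected set. It contains $0=f(x)$ and $f(y)$ for every $y\in C$, so it contains the interval $[0,\sup_{y\in C}d(x,y))$.
3. On $\mathbb R$, any countable cover of an interval $J$ by sets $E_i$ satisfies $\sum\diam E_i\ge|J|$. Indeed, each $E_i$ lies in a closed interval of the same length, and Lebesgue outer measure is countably subadditive.
4. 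Combining these steps gives $\mathcal H^1(C)\ge\sup_{y\in C}d(x,y)$ for every $x\in C$.

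Step 4 alone yields only $\mathcal H^1(C)\ge\tfrac12\diam(C)$. To reach the full diameter I would replace $f$ by $g(y)=d(x,y)-d(x',y)$ for a pair $x,x'\in C$. This $g$ is $2$-Lipschitz, and its connected image contains $[-d(x,x'),d(x,x')]$, so it gives $2\mathcal H^1\ge 2d(x,x')$. Taking the supremum over pairs then gives $\mathcal H^1(C)\ge\diam(C)$.

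This constant is the main obstacle. If the paper's normalization carries a factor $2$ or $1/2$, or if the $2$-Lipschitz comparison does not line up exactly, I would settle for a positive multiple $c\,\diam(C)$. Any such multiple suffices for the consequence below.

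For the consequence, suppose $\mathcal H^1(Z)=0$ and let $C\subseteq Z$ be connected. Monotonicity gives $\mathcal H^1(C)=0$, so $\diam C=0$ and $C$ is at most a point. Hence every connected component of $Z$ is a singleton, which means $Z$ is totally disconnected.
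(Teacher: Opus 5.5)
Your proposal is correct and follows the paper's proof: push $C$ forward under the $1$-Lipschitz map $z\mapsto d(x,z)$, use that the connected image contains an interval of the right length, and take suprema. One slip: Step~4 holds for every $x\in C$, so taking the supremum over $x$ as well already gives $\mathcal H^1(C)\ge\sup_{x,y\in C}d(x,y)=\diam(C)$, which is exactly how the paper concludes; your remark that it yields only $\tfrac12\diam(C)$ is therefore mistaken, and both the (valid but redundant) $2$-Lipschitz detour via $d(x,\cdot)-d(x',\cdot)$ and the normalization hedge can be dropped---the paper uses the unnormalized convention with $\mathcal H^1([0,r])=r$, matching yours.
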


\begin{proof}
Fix $x,y\in C$.  The function $z\mapsto d(x,z)$ is $1$-Lipschitz.  Its image of $C$ is a
connected subset of $\mathbb R$ containing $0$ and $d(x,y)$, and hence contains the interval
$[0,d(x,y)]$.  Hausdorff measure does not increase under a $1$-Lipschitz map, while
$\mathcal H^1([0,d(x,y)])=d(x,y)$.  Thus
$\mathcal H^1(C)\ge d(x,y)$.  Taking the supremum over $x,y$ proves the displayed
inequality.  If the ambient space has zero $\mathcal H^1$-measure, every connected subset has
diameter zero and is therefore a singleton.
\end{proof}

\begin{corollary}[Full limit-set dimension and ends]
\label{cor:word-ball-dimension-ends}
Let $X$ be a proper CAT$(-1)$ space and let $\Gamma\le\Isom(X)$ be finitely generated,
non-elementary, and discrete, with $0<\delta_X(\Gamma)<\infty$.  Fix the boundary
notation in the introduction.  If $\delta_X(\Gamma)<1$, then
\eqref{eq:full-limit-dimension} holds.  More generally, every
finite vertex deletion satisfies \eqref{eq:cut-packing}, and word-ball cuts satisfy
\[
 |S|\e^{q(L+2D)}\sum_{|g|_S>n}\e^{-q d(o,go)}<\varepsilon^q
 \Longrightarrow
 c_\infty(B_S(n))\ge\pack_\varepsilon(\Lambda_\Gamma,\rho_o).
\]
\end{corollary}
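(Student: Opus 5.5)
Since $\delta_X(\Gamma)<1$, I pick $q$ with $\delta_X(\Gamma)<q\le1$, so that $P_{\Gamma,o}(q)<\infty$. The assertions \eqref{eq:cut-packing} and the word-ball implication are just \cref{thm:finite-vertex-deletion-boundary} together with the tail bound in \cref{thm:word-ball-cover}, and they hold for any admissible $q$. The real content is \eqref{eq:full-limit-dimension}, which has three parts.

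\emph{Dimension.} Taking $q=1$ in \cref{thm:word-ball-cover} gives $\mathcal H^1(\Lambda_\Gamma,\rho_o)=0$. More generally, every $q\in(\delta_X(\Gamma),1]$ gives $\mathcal H^q(\Lambda_\Gamma)=0$, so $\dim_H\Lambda_\Gamma\le\delta_X(\Gamma)$. For the reverse inequality, I use $\Lambda_{\Gamma,c}\subseteq\Lambda_\Gamma$, monotonicity of Hausdorff dimension, and the generalized Bishop--Jones identity \eqref{eq:bishop-jones}, which applies because $\Gamma$ is non-elementary and discrete. Together these give equality.

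\emph{Cantor.} The limit set is compact, being closed in the compact space $\partial X$. It is perfect because the action is non-elementary: the limit set of a non-elementary discrete group has no isolated points, as a standard minimality argument shows (every orbit in $\Lambda_\Gamma$ is dense and $\Lambda_\Gamma$ is infinite). It is metrizable via $\rho_o$. By \cref{lem:connected-H1}, since $\mathcal H^1(\Lambda_\Gamma)=0$, it is totally disconnected. A nonempty compact, perfect, totally disconnected metrizable space is homeomorphic to the Cantor set by Brouwer's theorem.

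\emph{Ends.} By \cref{thm:end-ultrametric} there is a continuous surjection $\Pi_o:\mathcal E(\Gamma)\to\Lambda_\Gamma$. Since $\Lambda_\Gamma$ is uncountable, $\mathcal E(\Gamma)$ is infinite. By Freudenthal--Hopf a finitely generated group has $0$, $1$, $2$ or infinitely many ends, so $e(\Gamma)=\infty$. As an alternative that avoids this citation, I can use \eqref{eq:cut-packing}: for each $m$, choose $\varepsilon$ with $\pack_\varepsilon(\Lambda_\Gamma)\ge m$, which is possible since $\Lambda_\Gamma$ is infinite. Then choose $n$ so large that the tail $\Theta_{q,o,S}(n)<\varepsilon^q$. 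This gives $c_\infty(B_S(n))\ge m$, so the number of ends is at least $m$ for every $m$.

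I expect the main obstacle to be perfectness of $\Lambda_\Gamma$. This is standard for non-elementary discrete groups in CAT$(-1)$ spaces, but the paper has not stated it explicitly, so I will cite the standard limit-set facts, for instance from Das--Simmons--Urba\'nski. Everything else follows directly from the results already proved.
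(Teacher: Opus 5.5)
Your proposal is correct and follows the paper's proof essentially step by step. You use the word-ball cover bound for $q\in(\delta_X(\Gamma),1]$ together with Bishop--Jones on the conical limit set for the dimension, \cref{lem:connected-H1} plus compactness and perfectness (from non-elementarity) for the Cantor conclusion, and the cut-packing implication with vanishing Poincar\'e tails for $e(\Gamma)=\infty$. The deviations are cosmetic: you take $q=1$ directly (legitimate, since $P_{\Gamma,o}(1)<\infty$ when $\delta_X(\Gamma)<1$) instead of using $r\le r^q$, and in your surjection route for the ends the appeal to Freudenthal--Hopf is superfluous, because an infinite end space already means $e(\Gamma)=\infty$.
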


\begin{proof}
For $\delta_X(\Gamma)<q<1$, \cref{thm:word-ball-cover} gives
$\dim_H\Lambda_\Gamma\le q$.  Let $q\downarrow\delta_X(\Gamma)$. The reverse inequality
follows from the conical limit set and the generalized Bishop--Jones theorem.  Taking one
$q<1$ also gives $\mathcal H^1(\Lambda_\Gamma,\rho_o)=0$, because the Bourdon metric has
diameter at most one and $r\le r^q$ for $0\le r\le1$.  By
\cref{lem:connected-H1}, the limit set is totally disconnected.  It is compact and perfect
because the action is non-elementary, and therefore it is homeomorphic to the Cantor set.

For completeness, fix $m\ge2$ and choose $m$ distinct points of the perfect compact limit
set.  Their minimum pairwise distance is some $\varepsilon>0$, so
$\pack_\varepsilon(\Lambda_\Gamma,\rho_o)\ge m$.  The Poincar\'e tail in
\eqref{eq:word-ball-cover-estimate} tends to zero; hence for all sufficiently large $n$,
\eqref{eq:cut-packing} gives $c_\infty(B_S(n))\ge m$.  Since the number of ends of a locally
finite graph is the supremum of the numbers of infinite components obtained by deleting
finite vertex sets, and $m$ was arbitrary, $e(\Gamma)=\infty$.
\end{proof}

\section{Finite-state normal forms at the group critical exponent}
\label{sec:finite-state}

Let $\mathcal A=(V,E)$ be a directed multigraph with a finite vertex set $V$ and at most
countably many parallel edges.  A vertex of $V$ is called a \emph{state}.  Each individual
edge $e\in E$ is a transition, and a \emph{group label} is the value
$\lambda(e)\in\Gamma$ assigned by a fixed map $\lambda:E\to\Gamma$.  The set of transition
symbols may be regarded as a countable alphabet, but ``label'' and ``alphabet'' are not
synonyms: the alphabet consists of the transition symbols, whereas $\lambda$ evaluates each
symbol in the group.

A \emph{directed walk} in the multigraph is a finite sequence of specified edges
$p=e_1\cdots e_n$ with compatible initial and terminal states; repetitions are allowed.  It is
closed when its initial and terminal states agree.  Because the graph is a multigraph, the individual edges---not merely the sequence of states---are part of the walk.  We set
\[
 \lambda(p)=\lambda(e_1)\cdots\lambda(e_n),
 \qquad
 L_x(p)=\sum_{j=1}^n\ell_x(\lambda(e_j)).
\]
Subadditivity gives $\ell_x(\lambda(p))\le L_x(p)$.

For $s\ge0$, give a transition the weight
$w_s(e)=\exp(-s\ell_x(\lambda(e)))$.  The \emph{weighted adjacency matrix} is the finite
matrix, possibly with infinite entries,
\begin{equation}\label{eq:finite-state-matrix}
 M_{uv}(s)=\sum_{e:u\to v}w_s(e)
 =\sum_{e:u\to v}\e^{-s\ell_x(\lambda(e))}\in[0,\infty].
\end{equation}
Thus $M_{uv}(s)$ is the total weight of all one-step transitions from state $u$ to state $v$.
When the relevant entries are finite, $(M(s)^n)_{uv}$ is the total product weight of all
$n$-step directed walks from $u$ to $v$.  This is the standard path-sum
interpretation of a weighted transition matrix, see
\cite[Sections~1.1--1.2]{DrosteKuichVogler2009} and
\cite[Section~2]{Mohri2009}.  For a finite subset $F\subset E$, let $M^F(s)$ be the matrix
obtained by retaining only the transitions in $F$.

A state $u$ \emph{reaches} $v$ if a directed walk goes from $u$ to $v$.  A subset of states
is strongly connected when every two states reach each other, and a \emph{strongly connected
component} is a maximal strongly connected subset.  The support digraph of a nonnegative
matrix has an arrow $u\to v$ precisely when its $(u,v)$ entry is positive.  A finite
nonnegative matrix is irreducible exactly when its support digraph is strongly connected
\cite[Theorem~3.2.1]{BrualdiRyser1991}.  Simultaneous permutation of rows and columns gives
Frobenius normal form, whose diagonal blocks correspond to the strongly connected components
\cite[Theorem~3.2.4]{BrualdiRyser1991}.  A directed closed walk of positive length will not be
called a ``loop'' below, because that word is also commonly reserved for a single edge from a
state to itself.

\begin{lemma}
\label{lem:closed-path-comparison}
Let $C$ be a strongly connected component whose support contains a directed closed walk of
positive length.  Suppose there are a state $v\in C$, an element $b\in\Gamma$, and an integer
$Q\ge1$ such that, for every $g\in\Gamma$, at most $Q$ directed closed walks $p$
in $C$ based at $v$ satisfy $b\lambda(p)=g$.  If $P_{\Gamma,x}(s)<\infty$, then
\begin{equation}\label{eq:closed-path-comparison}
 \sum_{n\ge0}(M_C(s)^n)_{vv}
 \le Q\e^{s\ell_x(b)}P_{\Gamma,x}(s).
\end{equation}
If entries of the full matrix have not yet been proved finite, then the left side is defined by
monotone exhaustion over finite transition sets.
\end{lemma}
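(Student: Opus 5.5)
The plan is to expand the left side of \eqref{eq:closed-path-comparison} as a sum over closed walks, bound each walk's weight by the orbit length of its group label, and then count walks per group element using the multiplicity hypothesis. The key tool is subadditivity, $\ell_x(\lambda(p))\le L_x(p)$, together with \cref{lem:orbit-basic}.

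First, work with a finite transition set $F\subset E$ consisting of transitions inside $C$. Then every entry of $M^F_C(s)$ is finite, and the path-sum interpretation gives
\[
 \sum_{n\ge0}(M^F_C(s)^n)_{vv}=\sum_{p}\e^{-sL_x(p)},
\]
where $p$ ranges over directed closed walks in $C$ based at $v$ that use only transitions in $F$. The empty walk is included and contributes the $n=0$ term $1$. Since each walk is a specified sequence of edges, distinct walks are counted separately. This counting is exactly what the multiplicity bound $Q$ controls.

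Second, compare each term with an orbit term. Subadditivity and the triangle inequality give
\[
 \ell_x(b\lambda(p))\le\ell_x(b)+\ell_x(\lambda(p))\le \ell_x(b)+L_x(p).
\]
Hence $\e^{-sL_x(p)}\le\e^{s\ell_x(b)}\e^{-s\ell_x(b\lambda(p))}$ for $s\ge0$. Now group the walks according to the value $g=b\lambda(p)\in\Gamma$. By hypothesis at most $Q$ closed walks based at $v$ give each $g$, and this still holds after restricting to $F$. Therefore
\[
 \sum_{p}\e^{-sL_x(p)}\le \e^{s\ell_x(b)}\sum_{g\in\Gamma}Q\,\e^{-s\ell_x(g)}=Q\e^{s\ell_x(b)}P_{\Gamma,x}(s).
\]
This bound is uniform in $F$.

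Finally, let $F$ increase through an exhaustion of the countable set of transitions in $C$. The partial sums $\sum_n(M^F_C(s)^n)_{vv}$ are nondecreasing in $F$, since every term is a nonnegative sum over walks. By monotone convergence their supremum equals the walk-sum over all closed walks at $v$. This is the left side as defined in the statement, and it is also the genuine matrix power series once the entries of $M_C(s)$ are known to be finite. The inequality survives the limit, which proves \eqref{eq:closed-path-comparison}.

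The main point needing care is this monotone-exhaustion step. One must check that the path-sum identity for powers of $M^F_C(s)$ holds in $[0,\infty]$ and commutes with the increasing limit. This follows from Tonelli's theorem for nonnegative series, since each $n$-step walk-sum is a countable sum of nonnegative terms. No spectral information about $M_C(s)$ is needed at this stage.
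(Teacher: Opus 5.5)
Your proposal is correct and follows essentially the same argument as the paper: identify the return series by monotone exhaustion over finite transition sets, apply the subadditivity bound $\e^{-sL_x(p)}\le\e^{s\ell_x(b)}\e^{-s\ell_x(b\lambda(p))}$, and regroup by the fibres of $p\mapsto b\lambda(p)$, which have size at most $Q$, via Tonelli. The only difference is cosmetic: you bound each truncation uniformly and then pass to the supremum, whereas the paper first identifies the supremum with the full walk-sum and then bounds it.
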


\begin{proof}
For a finite transition set $F\subset E(C)$, the quantity
$\sum_{n\ge0}((M_C^F(s))^n)_{vv}$ is the sum of $\e^{-sL_x(p)}$ over all directed
closed walks based at $v$ that use only transitions in $F$, including the empty walk.  As
$F$ increases, these sums increase.  We define
\[
 \sum_{n\ge0}(M_C(s)^n)_{vv}
 :=\sup_{F\sqsubset E(C)}\sum_{n\ge0}((M_C^F(s))^n)_{vv}.
\]
Every finite walk uses only finitely many transitions, so this supremum is exactly the
nonnegative sum over all based closed walks.  This is the monotone-exhaustion
interpretation in the statement.

For each such walk,
\[
 \ell_x(b\lambda(p))\le\ell_x(b)+L_x(p),
 \qquad
 \e^{-sL_x(p)}\le
 \e^{s\ell_x(b)}\e^{-s\ell_x(b\lambda(p))}.
\]
All summands are nonnegative.  Tonelli's theorem, applied to
counting measure on the countable set of based closed walks and on
\(\Gamma\)
\cite[Theorem~2.37, p.~67]{FollandRealAnalysis},
allows the sum to be rearranged according to
\(g=b\lambda(p)\).  Since every fibre has cardinality at most \(Q\),
we obtain\[
 \sum_p\e^{-sL_x(p)}
 \le Q\e^{s\ell_x(b)}\sum_{g\in\Gamma}\e^{-s\ell_x(g)},
\]
which is \eqref{eq:closed-path-comparison}.
\end{proof}

\begin{lemma}
\label{lem:loop-series}
Let $M$ be a finite irreducible nonnegative matrix, and let $v$ be one of its states.  Then
\[
 \sum_{n\ge0}(M^n)_{vv}<\infty
 \qquad\mbox{if and only if}\qquad
 \rho(M)<1.
\]
\end{lemma}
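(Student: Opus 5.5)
The proof will go through Perron--Frobenius theory for the irreducible matrix $M$, together with the elementary Neumann-series criterion for finite matrices. Since $M$ is a finite nonnegative matrix, the series $\sum_n M^n$ converges entrywise if and only if $\rho(M)<1$. One direction of this criterion follows from Gelfand's formula: if $\rho(M)<1$, then $\|M^n\|\le C r^n$ for some $r<1$. The other direction uses the Perron eigenvector: if $\rho(M)\ge1$, the entrywise nonnegative eigenvector forces divergence. The real content of the lemma is therefore that finiteness of the single diagonal series at $v$ already forces $\rho(M)<1$, and irreducibility is what lets us pass from one diagonal entry to the whole matrix.

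\emph{($\Leftarrow$)} Assume $\rho(M)<1$. Fix any submultiplicative norm. Gelfand's formula gives some $r$ with $\rho(M)<r<1$ and $n_0$ such that $\|M^n\|\le r^n$ for $n\ge n_0$. Hence $(M^n)_{vv}\le\|M^n\|$ is summable, since only finitely many early terms precede the geometric tail.

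\emph{($\Rightarrow$)} Assume $\sum_n(M^n)_{vv}<\infty$. The first step transfers finiteness to every entry using irreducibility. For states $u,w$, choose $a,b\ge0$ with $(M^a)_{vu}>0$ and $(M^b)_{wv}>0$; these exist because the support digraph is strongly connected. Nonnegativity then gives
\[
(M^{a+n+b})_{vv}\ge (M^a)_{vu}(M^n)_{uw}(M^b)_{wv}.
\]
Summing over $n$ shows $\sum_n(M^n)_{uw}<\infty$ for every pair $u,w$.

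Now let $\phi\ge0$, $\phi\ne0$, be a Perron eigenvector for $\rho=\rho(M)$; for an irreducible $M$ it is strictly positive. From $M^n\phi=\rho^n\phi$ we get, at any coordinate $u$,
\[
\rho^n\phi_u=\sum_w(M^n)_{uw}\phi_w.
\]
The right-hand side is summable in $n$ by the previous step, so $\sum_n\rho^n<\infty$ because $\phi_u>0$. Therefore $\rho<1$.

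One degenerate case needs checking: the $1\times1$ zero matrix. It is irreducible under the convention of \cite{BrualdiRyser1991}, and there the equivalence holds trivially, since $\rho=0$ and the series equals $1$. I will state this convention explicitly.

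The main obstacle is making sure that the strict positivity of the Perron vector, which is what requires irreducibility, is quoted correctly from \cite[Chapter~1]{Seneta2006}. Given that, the rest is bookkeeping.
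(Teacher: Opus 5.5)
Your proof is correct and is essentially the paper's: the same concatenation inequality $(M^{a+n+b})_{vv}\ge (M^a)_{vu}(M^n)_{uw}(M^b)_{wv}$ transfers summability from the $(v,v)$ entry to every entry, and the converse is the standard Gelfand/Neumann-series estimate (use a concrete norm such as $\|\cdot\|_\infty$, since $(M^n)_{vv}\le\|M^n\|$ is not literally true for every submultiplicative norm). The only difference is the last step of ($\Rightarrow$): the paper notes that entrywise summability gives $M^n\to0$, hence $\rho(M)<1$ by \cite[Theorem~5.6.12]{HornJohnson2013}, so the Perron eigenvector is not needed, and your worry about its strict positivity is moot anyway, because any nonnegative eigenvector for $\rho(M)$ with one positive coordinate suffices.
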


\begin{proof}
Assume first that the diagonal return series at $v$ converges.  For arbitrary states $i,j$,
irreducibility supplies a directed walk from $v$ to $i$ and one from $j$ to $v$.  Let their
lengths be $a,b$ and the products of the corresponding positive matrix entries be
$\alpha,\beta>0$.  Concatenation gives, for every $n\ge0$,
$
 (M^{a+n+b})_{vv}\ge \alpha(M^n)_{ij}\beta.
$
Therefore every scalar series $\sum_n(M^n)_{ij}$ converges.  In particular $M^n\to0$
entrywise and hence in any matrix norm.  By
\cite[Theorem~5.6.12]{HornJohnson2013}, this is equivalent to $\rho(M)<1$.

Conversely, assume that \(\rho(M)<1\).  By
\cite[Theorem~5.6.12]{HornJohnson2013},
$
  M^N\longrightarrow 0
  \quad\mbox{for}\quad N\to\infty.
$
Since \(\rho(M)<1\), one has \(1\notin\sigma(M)\), and hence
\(I-M\) is invertible.  For
$
  S_N=\sum_{n=0}^{N}M^n,
$
the finite geometric-series identity gives
$
  (I-M)S_N=S_N(I-M)=I-M^{N+1}.
$
Letting \(N\to\infty\) yields
$
  \sum_{n=0}^{\infty}M^n=(I-M)^{-1}.
$
In particular, the \((v,v)\)-entry is summable. See also
\cite[Chapter~1]{Seneta2006} for the irreducible nonnegative-matrix background.
\end{proof}

\begin{lemma}[Lower bound from a directed closed walk]
\label{lem:closed-walk}
Let $M$ be a finite nonnegative matrix and let
$i_0,i_1,\ldots,i_q=i_0$ be a directed closed walk of positive length $q$.  Then
\begin{equation}\label{eq:closed-walk}
 \rho(M)^q\ge\prod_{j=0}^{q-1}M_{i_ji_{j+1}}.
\end{equation}
\end{lemma}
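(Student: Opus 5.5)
The plan is to compare the diagonal entry of $M^{qk}$ with the product weight of the given closed walk traversed $k$ times, and then use Gelfand's formula. Set
\[
 \pi=\prod_{j=0}^{q-1}M_{i_ji_{j+1}}.
\]
If $\pi=0$ there is nothing to prove, since $\rho(M)\ge0$. So assume $\pi>0$.

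First, by nonnegativity, for every $k\ge1$ the entry $(M^{qk})_{i_0i_0}$ is a sum of nonnegative products along all walks of length $qk$ from $i_0$ to itself. One of these walks is the given closed walk repeated $k$ times. Dropping all the other terms gives
\[
 (M^{qk})_{i_0i_0}\ge\pi^k.
\]

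Second, I bound a single entry by a matrix norm. For any submultiplicative norm one could use $\|\cdot\|_\infty$; more simply, $(M^{qk})_{i_0i_0}\le\|M^{qk}\|$ for the max-row-sum norm. Hence $\|M^{qk}\|^{1/(qk)}\ge\pi^{1/q}$.

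Third, Gelfand's formula $\rho(M)=\lim_n\|M^n\|^{1/n}$ applies along the subsequence $n=qk$. This yields $\rho(M)\ge\pi^{1/q}$, which is \eqref{eq:closed-walk}.

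An alternative route avoids norms: if $\rho(M)^q<\pi$, then $\rho(M^q)<\pi$, so $\sum_k (M^q/\pi)^k$ converges and $M^{qk}/\pi^k\to0$ entrywise (by \cite[Theorem~5.6.12]{HornJohnson2013}). This contradicts $(M^{qk})_{i_0i_0}/\pi^k\ge1$.

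There is no serious obstacle. The only care needed is to use only nonnegativity, since $M$ need not be irreducible, and to keep the walk's entries as matrix entries of the finite matrix $M$ itself.
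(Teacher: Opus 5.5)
Your proof is correct and is essentially the paper's argument: you bound the $(i_0,i_0)$ entry of $M^{qk}$ below by $\pi^k$ and above by the $\ell^\infty$-induced operator norm, then apply Gelfand's formula along $n=qk$. Your alternative route, using $\rho(M^q)=\rho(M)^q$ and $M^{qk}/\pi^k\to0$, is also valid.
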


\begin{proof}
Repeating the prescribed closed walk $n$ times gives one summand in the $(i_0,i_0)$ entry of
$M^{nq}$, and hence
\[
 (M^{nq})_{i_0i_0}
 \ge\left(\prod_{j=0}^{q-1}M_{i_ji_{j+1}}\right)^n.
\]
Use the operator norm induced by the \(\ell^\infty\)-norm. Then $(M^{nq})_{i_0i_0}\le\|M^{nq}\|_\infty$.  Take $(nq)$th roots and let
$n\to\infty$.  Gelfand's spectral-radius formula
$\rho(M)=\lim_{m\to\infty}\|M^m\|^{1/m}$, cited as
\cite[Corollary~5.6.14]{HornJohnson2013}, yields \eqref{eq:closed-walk}.
\end{proof}
Next we have are convergence at the group critical exponent.
\begin{theorem}
\label{thm:critical-exponent-matrix}
Assume $0<\delta_X(\Gamma)<\infty$.  Let $C$ be a strongly connected component whose
support contains a directed closed walk of positive length.  Assume explicitly that there are
$v\in C$, $b\in\Gamma$, and $Q\ge1$ such that the map
\[
 p\longmapsto b\lambda(p)
\]
from directed closed walks in $C$ based at $v$ to $\Gamma$ has fibers of
cardinality at most $Q$.  Then
\begin{enumerate}[label=\textup{(\roman*)}]
\item every entry of $M_C(s)$ is finite and $\rho(M_C(s))<1$ for
$s>\delta_X(\Gamma)$,
\item every entry of $M_C(\delta_X(\Gamma))$ is finite and
$\rho(M_C(\delta_X(\Gamma)))\le1$,
\item every finite truncation $F$ satisfies
$\rho(M_C^F(\delta_X(\Gamma)))\le1$.
\end{enumerate}
\end{theorem}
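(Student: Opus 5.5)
The plan is to prove (i) by comparison with $P_{\Gamma,x}(s)$, then obtain (iii) and (ii) at $s=\delta_X(\Gamma)$ from finite truncations and a fixed return path, without ever using convergence of the Poincar\'e series at $\delta_X(\Gamma)$ itself.

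\emph{Step 1: part (i).} Fix $s>\delta_X(\Gamma)$, so $P_{\Gamma,x}(s)<\infty$. The bounded-fibre hypothesis is exactly the one in \cref{lem:closed-path-comparison}, so the return series $\sum_n (M_C(s)^n)_{vv}$, defined by monotone exhaustion, is finite.

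To see that the entries are finite, fix an ordered pair of states $u,w\in C$. Strong connectivity supplies a directed walk $p_1$ from $v$ to $u$ and a directed walk $p_2$ from $w$ to $v$. These are fixed finite sequences of individual transitions, and their weights $\alpha=\e^{-sL_x(p_1)}$ and $\beta=\e^{-sL_x(p_2)}$ are positive. Every transition $e\colon u\to w$ gives a distinct based closed walk $p_1ep_2$. Hence, for every finite $F$,
\[
 \alpha\, M^F_{C,uw}(s)\,\beta\le\sum_n (M_C(s)^n)_{vv}<\infty,
\]
and letting $F$ exhaust the transitions shows that $M_C(s)$ has finite entries. The matrix $M_C(s)$ is then a finite irreducible nonnegative matrix with convergent diagonal return series at $v$. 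By \cref{lem:loop-series}, $\rho(M_C(s))<1$.

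\emph{Step 2: part (iii).} Let $F$ be a finite set of transitions. The entries of $M^F_C(s)$ are finite sums of exponentials, hence continuous in $s$. The spectral radius of a finite matrix is continuous in its entries. Entrywise monotonicity gives $\rho(M^F_C(s))\le\rho(M_C(s))<1$ for all $s>\delta_X(\Gamma)$. Letting $s\downarrow\delta_X(\Gamma)$ yields $\rho(M^F_C(\delta_X(\Gamma)))\le1$.

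\emph{Step 3: entry finiteness at $\delta=\delta_X(\Gamma)$.} This is the main obstacle, because $P_{\Gamma,x}(\delta)$ may diverge and \cref{lem:closed-path-comparison} is therefore unavailable. Instead I will bound entries through spectral radii of truncations using \cref{lem:closed-walk}.

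Fix $u\to w$ and the fixed return walks $p_1,p_2$ from Step 1. Let $F_0$ be the finite set of transitions used by $p_1$ and $p_2$. For any finite $F\supseteq F_0$, the state sequence of $p_1$, one step $u\to w$, and then $p_2$ is a directed closed walk of some length $m$ in the support of $M^F_C(\delta)$, with $m$ independent of $F$. In the product of entries along this walk, each entry coming from $p_1$ or $p_2$ is at least the weight of the chosen transition. Combining \cref{lem:closed-walk} with Step 2 gives
\[
 \alpha(\delta)\beta(\delta)\, M^F_{C,uw}(\delta)\le\rho(M^F_C(\delta))^m\le1.
\]
Taking the supremum over $F$ gives $M_{C,uw}(\delta)\le(\alpha(\delta)\beta(\delta))^{-1}<\infty$.

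\emph{Step 4: part (ii).} The entries of $M_C(\delta)$ are now finite, and along an increasing exhausting sequence of finite sets $F_k$ the matrices $M^{F_k}_C(\delta)$ converge to $M_C(\delta)$ entrywise. Continuity of the spectral radius on finite matrices then gives
\[
 \rho(M_C(\delta))=\lim_{k}\rho(M^{F_k}_C(\delta))\le1.
\]
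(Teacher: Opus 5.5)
Your proposal is correct and follows the paper's proof essentially step for step. You compare with the Poincar\'e series via \cref{lem:closed-path-comparison} and \cref{lem:loop-series} for $s>\delta_X(\Gamma)$, pass to the limit in finite truncations for (iii), use the closed-walk lemma with fixed connecting walks to get entrywise finiteness at $\delta_X(\Gamma)$, and finish (ii) by exhaustion and continuity of the spectral radius. The only cosmetic difference is in Step 3: you route the closed walk through $v$ using $p_1$ and $p_2$, whereas the paper uses a single fixed return walk from $w$ to $u$; either works.
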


\begin{proof}
Write $\delta=\delta_X(\Gamma)$.  If $s>\delta$, the right side of
\eqref{eq:closed-path-comparison} is finite.  Fix states $u,w\in C$.  Strong connectivity
provides a fixed directed walk $\alpha$ from $v$ to $u$ and a fixed directed walk $\beta$
from $w$ to $v$.  Let $a_s,b_s>0$ be their product weights.  For every one-step transition
$e:u\to w$, the concatenation $\alpha e\beta$ is a based closed walk at $v$.  Summing over
all parallel transitions gives
\[
 a_s M_{uw}(s)b_s
 \le\sum_{n\ge0}(M_C(s)^n)_{vv}<\infty.
\]
Thus every entry of the full block is finite for $s>\delta$.  The bounded-multiplicity comparison and the
diagonal return-series criterion now imply $\rho(M_C(s))<1$.

For a finite transition set $F$, entrywise comparison gives
$0\le M_C^F(s)\le M_C(s)$.  Monotonicity of the spectral radius for finite nonnegative
matrices, a standard consequence of Perron--Frobenius comparison
\cite[Chapter~1]{Seneta2006}, yields $\rho(M_C^F(s))<1$ for $s>\delta$.  The entries of the
finite matrix $M_C^F(s)$ are continuous on $[0,\infty)$, and the spectral radius is continuous
in finite dimension.  Letting $s\downarrow\delta$ proves part~\textup{(iii)}.

We next prove finiteness of an entry of the full block at $s=\delta$.  Fix $u,w\in C$ and choose a fixed
directed return walk $\gamma$ from $w$ to $u$; when $u=w$, the empty return walk is
allowed.  Retain its finitely many transitions, and let $c_\delta>0$ be their product weight
(with $c_\delta=1$ for the empty walk).  If $F_0$ is any finite set of parallel transitions
from $u$ to $w$, form the finite truncation containing $F_0$ and the return walk.  The state
sequence obtained by using the entry from $u$ to $w$ and then following $\gamma$ is a
directed closed walk.  Part~\textup{(iii)} and \cref{lem:closed-walk} give
\[
 c_\delta\sum_{e\in F_0}\e^{-\delta\ell_x(\lambda(e))}\le1.
\]
Taking the supremum over all finite $F_0$ proves $M_{uw}(\delta)<\infty$.

Finally choose increasing finite transition sets $F_n$ that exhaust all transitions in $C$.
The already established entrywise finiteness gives
$M_C^{F_n}(\delta)\to M_C(\delta)$ entrywise.  By continuity of the spectral radius in the
fixed finite matrix dimension and part~\textup{(iii)},
\[
 \rho(M_C(\delta))=\lim_n\rho(M_C^{F_n}(\delta))\le1.
\]
This proves part~\textup{(ii)} and completes the proof.
\end{proof}

\begin{corollary}
\label{cor:finite-state-subgroup}
Let $H\le\Gamma$, and assume every transition label $\lambda(e)$ lies in $H$.  Let $C$ be a
strongly connected component containing a directed closed walk of positive length.  Suppose
there are $v\in C$, $b\in H$, and $Q\ge1$ such that, for every $h\in H$, at most $Q$
directed closed walks $p$ in $C$ based at $v$ satisfy $b\lambda(p)=h$.  Then every
entry of $M_C(s)$ is finite for $s\ge\delta_X(\Gamma)$, and
\[
 \rho(M_C(s))<1\quad(s>\delta_X(\Gamma)),
 \qquad
 \rho(M_C(\delta_X(\Gamma)))\le1.
\]
Every finite truncation also has spectral radius at most one at
$s=\delta_X(\Gamma)$.
\end{corollary}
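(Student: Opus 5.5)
The plan is to reduce \cref{cor:finite-state-subgroup} to \cref{thm:critical-exponent-matrix}, applied to the ambient group $\Gamma$ with the same multigraph, labels, state $v$, element $b$, and constant $Q$. There are two hypotheses to check. First, $0<\delta_X(\Gamma)<\infty$ is already standing in this section. Second, we need the fibre bound over all of $\Gamma$.

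The fibre bound is the one point needing care. Take $g\in\Gamma$ and a based closed walk $p$ with $b\lambda(p)=g$. Every label $\lambda(e)$ lies in $H$, and $H$ is closed under products, so $\lambda(p)\in H$. Since $b\in H$, we get $g=b\lambda(p)\in H$. Hence the fibre over any $g\notin H$ is empty. The fibre over any $g\in H$ has at most $Q$ elements by hypothesis. So the map $p\mapsto b\lambda(p)$, viewed as a map into $\Gamma$, has all fibres of cardinality at most $Q$.

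Once this is checked, \cref{thm:critical-exponent-matrix}(i) gives finiteness of every entry of $M_C(s)$ and $\rho(M_C(s))<1$ for $s>\delta_X(\Gamma)$. Part (ii) gives finiteness of every entry at $s=\delta_X(\Gamma)$ and $\rho(M_C(\delta_X(\Gamma)))\le1$. Together these cover all $s\ge\delta_X(\Gamma)$. Part (iii) gives the statement about finite truncations.

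There is no real obstacle here. The only point worth recording is that the comparison must be with $P_{\Gamma,x}$ and not with $P_{H,x}$. Lemma~\ref{lem:closed-path-comparison} sums over $\Gamma$, and the corollary asserts the threshold $\delta_X(\Gamma)$ rather than $\delta_X(H)$, so this is what the statement needs. Since $\delta_X(H)\le\delta_X(\Gamma)$ by Lemma~\ref{lem:subgroup-finite-index}(i), one could obtain the sharper threshold $\delta_X(H)$ by rerunning the argument with $H$ in place of $\Gamma$. That would require $0<\delta_X(H)$, which may fail, so I will not pursue it.
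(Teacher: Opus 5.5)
Your proposal is correct and is essentially the paper's argument: both apply \cref{thm:critical-exponent-matrix} to the ambient group $\Gamma$. The fibre bound over $\Gamma$ holds because every label and $b$ lie in $H$, so every value $b\lambda(p)$ lies in $H$ and the fibres over $\Gamma\setminus H$ are empty. Your explicit handling of those empty fibres spells out the paper's one-line remark that two walks with equal values in $\Gamma$ already have equal values in $H$.
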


\begin{proof}
The word ``label'' here means the group element attached to a transition. It is not being
replaced by ``alphabet'', which denotes the set of transition symbols.  Regard the same
labelled walks as taking values in $\Gamma$.  If two of them have the same value in
$\Gamma$, they already have the same value in the subgroup $H$, so the fiber bound remains
$Q$.  Moreover the Poincar\'e series of $\Gamma$ converges for every
$s>\delta_X(\Gamma)$.  All hypotheses of \cref{thm:critical-exponent-matrix} are therefore
satisfied with the stated parameter, and every conclusion follows directly.
\end{proof}

\subsection{Finite transition sets}
The basic background that we use in this subsection is standard in finite-matrix theory.  Strongly connected
components, irreducibility, and Frobenius normal form are as in
\cite[Theorems~3.2.1 and~3.2.4]{BrualdiRyser1991}; Perron--Frobenius comparison is as in
\cite[Chapter~1]{Seneta2006}.  The classical theory of convergence parameters for countable
nonnegative matrices begins with \cite[Sections~2--3]{VereJones1967}.  Our matrices have a
finite state set but countably many parallel group-labelled transitions, and the statements
below concern approximation of the entries of their full blocks by finite transition sets.

Let $C$ be a strongly connected component whose support contains a directed closed walk of
positive length.  Define
\[
 \sigma_C=
 \inf\{s>0:M_C(s)\text{ is finite entrywise and }\rho(M_C(s))<1\},
\]
and, for a finite transition set $F$,
\[
 \sigma_{C,F}=\inf\{s>0:\rho(M_C^F(s))<1\}.
\]
For $s=0$, every retained transition has weight
$\exp(-0\cdot\ell_x(\lambda(e)))=1$.  Thus $M_C^F(0)$ is the nonnegative integer matrix
whose $(u,w)$ entry counts the retained parallel transitions from $u$ to $w$. This convention
extends the finite matrix continuously to $s=0$.

\begin{lemma}
\label{lem:positive-cycle-length}
Assume the bounded-multiplicity hypothesis of \cref{thm:critical-exponent-matrix}: there are
$v\in C$, $b\in\Gamma$, and $Q\ge1$ such that the map
$p\mapsto b\lambda(p)$ on directed closed walks based at $v$ has fibers of size at
most $Q$.  Then every nonempty directed closed walk $p$ in $C$, based at any state,
has
\[
 L_x(p)=\sum_{e\in p}\ell_x(\lambda(e))>0.
\]
\end{lemma}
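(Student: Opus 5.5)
We argue by contradiction, exploiting the fact that a closed walk of total length zero can be iterated indefinitely without any length cost. Suppose $p$ is a nonempty directed closed walk in $C$, based at some state $u$, with $L_x(p)=0$. Since every summand $\ell_x(\lambda(e))$ is nonnegative, each transition label of $p$ satisfies $\ell_x(\lambda(e))=0$. Hence $\lambda(e)x=x$ for every $e\in p$, and therefore also $\lambda(p^k)x=x$ for every power $p^k$ of $p$.

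The first step is to transport $p$ to the base state $v$. By strong connectivity of $C$, fix a directed walk $\alpha$ from $v$ to $u$ and a directed walk $\beta$ from $u$ to $v$. For $k\ge0$ put
\[
 p_k=\alpha\,p^k\,\beta .
\]
Each $p_k$ is a directed closed walk in $C$ based at $v$. The walks $p_k$ are pairwise distinct as edge sequences, because their lengths $|\alpha|+k|p|+|\beta|$ are distinct. Their evaluated values are
\[
 b\lambda(p_k)=b\lambda(\alpha)\lambda(p)^k\lambda(\beta).
\]

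The second step is to show that these infinitely many values fall into a finite subset of $\Gamma$. By subadditivity,
\[
 \ell_x(b\lambda(p_k))\le\ell_x(b)+L_x(\alpha)+k\,L_x(p)+L_x(\beta)=\ell_x(b)+L_x(\alpha)+L_x(\beta),
\]
which is a constant independent of $k$. Under the standing assumption $0<\delta_X(\Gamma)<\infty$, \cref{lem:orbit-basic} says orbit-length balls are finite, so the set $\{b\lambda(p_k):k\ge0\}$ is finite.

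The contradiction now follows from the pigeonhole principle: infinitely many distinct based closed walks $p_k$ map into finitely many group elements. Some fiber of $p\mapsto b\lambda(p)$ is therefore infinite, contradicting the hypothesis that every fiber has cardinality at most $Q$. Hence $L_x(p)>0$.

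The only delicate point is the finiteness of orbit balls. It relies on $\delta_X(\Gamma)<\infty$, which is the hypothesis of \cref{thm:critical-exponent-matrix} invoked in the statement. If one wants to avoid it, discreteness of the action also gives finiteness of $\{g:d(x,gx)\le R\}$ in a proper space. Distinctness of the $p_k$ as multigraph walks is automatic from their lengths, so no further care is needed.
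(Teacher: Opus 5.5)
Your proof is correct and matches the paper's argument step for step. You transport the zero-length cycle to $v$ through fixed walks $\alpha,\beta$, note that the walks $\alpha p^k\beta$ are pairwise distinct but have total length bounded independently of $k$, and use finiteness of orbit-length balls from \cref{lem:orbit-basic} to contradict the fiber bound $Q$.
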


\begin{proof}
Suppose a nonempty closed walk $p$ based at a state $u$ had $L_x(p)=0$.  Choose fixed
directed walks $\alpha:v\leadsto u$ and $\beta:u\leadsto v$.  For every $n\ge1$,
$\alpha p^n\beta$ is a directed closed walk based at $v$, and these walks are
pairwise distinct because they contain different numbers of occurrences of the nonempty
transition word $p$.  Their total transition lengths are all the same number
$L_x(\alpha)+L_x(\beta)$.  The fiber bound $Q$ implies that the group elements
$b\lambda(\alpha p^n\beta)$ assume infinitely many distinct values.  Subadditivity places all
of those values in the orbit-length ball of radius
$\ell_x(b)+L_x(\alpha)+L_x(\beta)$, contradicting the finiteness of orbit-length balls from
\cref{lem:orbit-basic}.
\end{proof}

\begin{proposition}
\label{prop:finite-transition-variation}
Assume $0<\delta_X(\Gamma)<\infty$, let $C$ be a strongly connected component containing a
directed closed walk of positive length, and assume the bounded-multiplicity hypothesis stated
in \cref{lem:positive-cycle-length}.  Then
\begin{equation}\label{eq:finite-transition-variation}
 \sigma_C=\sup_{F\sqsubset E(C)}\sigma_{C,F}.
\end{equation}
Whenever $M_C(s)$ is finite entrywise we have,
\begin{equation}\label{eq:spectral-monotone-exhaustion}
 \rho(M_C(s))=
 \sup_{F\sqsubset E(C)}\rho(M_C^F(s)).
\end{equation}
For each finite $F$, if $\rho(M_C^F(0))\le1$, then $\sigma_{C,F}=0$. Otherwise
$\sigma_{C,F}$ is the unique positive solution of $\rho(M_C^F(s))=1$.
\end{proposition}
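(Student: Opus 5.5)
The plan is to treat the three assertions in the order finite truncations first, then exhaustion, then the identity \eqref{eq:finite-transition-variation}. The tool throughout is \cref{lem:positive-cycle-length}.

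\emph{Finite truncations.} Fix a finite $F$. The entries of $M_C^F(s)$ are finite sums of terms $\e^{-s\ell_x(\lambda(e))}$, hence continuous and nonincreasing in $s$. So $s\mapsto\rho(M_C^F(s))$ is continuous and nonincreasing. To get \emph{strict} decrease, I would use closed walks rather than entrywise comparison, because individual transitions may have length $0$.

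Only finitely many simple directed cycles use transitions of $F$. By \cref{lem:positive-cycle-length} each of them has positive length, so let $c_0>0$ be the minimum of these lengths. Every closed walk of combinatorial length $n$ in $F$ decomposes into simple cycles, each of length at most $|C|$. Hence its length satisfies $L_x(p)\ge (c_0/|C|)\,n=:c_F n$. Since $\operatorname{tr}M_C^F(t)^n$ is the sum of $\e^{-tL_x(p)}$ over closed walks of length $n$, we get
\[
\operatorname{tr}M_C^F(t)^n\le \e^{-(t-s)c_Fn}\operatorname{tr}M_C^F(s)^n \qquad (t>s).
\]
Using $\rho(M)=\limsup_n(\operatorname{tr}M^n)^{1/n}$ for nonnegative matrices, this gives $\rho(M_C^F(t))\le \e^{-(t-s)c_F}\rho(M_C^F(s))$. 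Therefore $\rho$ is strictly decreasing wherever it is positive, and tends to $0$ as $s\to\infty$. If $\rho(M_C^F(0))\le1$, then $\rho<1$ for every $s>0$, so $\sigma_{C,F}=0$. Otherwise continuity and strict decrease give a unique positive root of $\rho=1$, and this root equals $\sigma_{C,F}$.

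\emph{Exhaustion \eqref{eq:spectral-monotone-exhaustion}.} Since $M_C^F(s)\le M_C(s)$ entrywise, Perron--Frobenius monotonicity gives $\sup_F\rho(M_C^F(s))\le\rho(M_C(s))$. For the reverse inequality, take an increasing exhaustion $F_n$. Because the entries of $M_C(s)$ are finite, $M_C^{F_n}(s)\to M_C(s)$ entrywise in fixed finite dimension. Continuity of the spectral radius then gives equality.

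\emph{Identity \eqref{eq:finite-transition-variation}.} The set of admissible $s$ in the definition of $\sigma_C$ is upward closed, since entries decrease in $s$.

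For $\sup_F\sigma_{C,F}\le\sigma_C$: if $s>\sigma_C$, then $\rho(M_C^F(s))\le\rho(M_C(s))<1$ for every $F$, so $\sigma_{C,F}\le s$.

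For the converse, let $s>s'>\sup_F\sigma_{C,F}$. Then $\rho(M_C^F(s'))<1$ for every finite $F$.
\begin{enumerate}[label=\textup{(\alph*)}]
\item Finiteness of $M_C(s')$ follows exactly as in the proof of \cref{thm:critical-exponent-matrix}(ii). For any $u,w\in C$, fix a return walk from $w$ to $u$ with product weight $c>0$. \Cref{lem:closed-walk} then bounds $c$ times any finite partial sum of the $(u,w)$ entry by $1$.
\item By exhaustion, $\rho(M_C(s'))\le1$.
\item The main obstacle is upgrading this to $\rho(M_C(s))<1$, because a uniform $c_F$ is not available for the infinite block. I would use $M_C(s)\le M_C(s')$ entrywise together with strict inequality in at least one entry. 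Such an entry exists because some transition in $C$ has positive length: otherwise a closed walk in $C$ would have length $0$, contradicting \cref{lem:positive-cycle-length}.
\item $M_C(s')$ is irreducible, since $C$ is strongly connected and its support is unchanged for finite $s$. The strict Perron--Frobenius comparison (if $A\le B$, $A\ne B$, and $B$ is irreducible, then $\rho(A)<\rho(B)$) yields $\rho(M_C(s))<\rho(M_C(s'))\le1$.
\end{enumerate}
Thus $s\ge\sigma_C$ for every $s>\sup_F\sigma_{C,F}$, which proves $\sigma_C\le\sup_F\sigma_{C,F}$ and completes the identity.
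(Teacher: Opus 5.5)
Your proof is correct. For the finite truncations, though, it follows a different route from the paper.

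\textbf{Finite truncations.} The paper puts $M_C^F(s)$ in Frobenius normal form. It notes that every irreducible diagonal block carrying a closed walk must contain a transition of positive length, so some entry strictly decreases. It then applies strict Perron--Frobenius comparison block by block, and gets $\rho\to0$ from nilpotency of the zero-length support.

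You instead decompose closed walks into simple cycles to get the linear bound $L_x(p)\ge c_F|p|$. Feeding this into $\rho(M)=\limsup_n(\operatorname{tr}M^n)^{1/n}$ gives the contraction $\rho(M_C^F(t))\le\e^{-(t-s)c_F}\rho(M_C^F(s))$.

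\textbf{What each route buys.} Your bound yields strict decrease, decay to zero and the root description at once, with no block decomposition. It also gives an explicit rate: $\log\rho(M_C^F(s))$ has slope at most $-c_F$. The price is the trace formula for the spectral radius. It holds for any square matrix with $|\operatorname{tr}M^n|$, since $\sum_n\operatorname{tr}(M^n)z^n=\sum_i m_i/(1-\lambda_i z)$ has radius of convergence $1/\rho(M)$. It deserves that one-line justification or a citation.

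\textbf{Edge case.} Treat separately the case where $F$ carries no closed walk. There $c_0$ is a minimum over the empty set, but $M_C^F$ is nilpotent, so $\rho\equiv0$ and $\sigma_{C,F}=0$.

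\textbf{Full block and the identity.} As you observe, no uniform rate is available for the full countable block, so there you fall back on strict Perron--Frobenius comparison, which is the paper's tool.

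For \eqref{eq:finite-transition-variation}, your direct argument with $s>s'>\sup_F\sigma_{C,F}$ is the contrapositive of the paper's case analysis at $r<\sigma_C$. The paper splits into an infinite entry, or finite entries with $\rho\ge1$ upgraded to $\rho>1$. Both use the same ingredients: the return-walk bound via \cref{lem:closed-walk}, monotone exhaustion, and strict comparison on the irreducible block. Establishing finiteness first makes your version slightly cleaner, because it removes the separate infinite-entry case.
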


\begin{proof}
Fix a finite transition set $F$ and set $M_F(s)=M_C^F(s)$.  Set this finite matrix in
Frobenius normal form.  On every irreducible diagonal block containing a directed closed
walk, \cref{lem:positive-cycle-length} implies that every closed walk has positive total transition length.  Hence for $0\le r<t$, at least one entry on each such block decreases strictly from
$M_F(r)$ to $M_F(t)$, while no entry increases and the support is unchanged.  Strict
Perron--Frobenius comparison therefore makes the spectral radius of each strongly connected diagonal block containing a directed closed walk strictly
decreasing.  The spectral radius of $M_F(s)$ is the maximum of the radius of its finitely many
strongly connected diagonal blocks containing directed closed walks.  Hence it too is strictly
decreasing whenever such a block is present.
It is continuous because its entries are continuous.

As $s\to\infty$, transitions of positive length have weights tending to zero.  The
support formed by the retained zero-length transitions contains no directed closed walk, by
\cref{lem:positive-cycle-length}, and is therefore acyclic, and its matrix is nilpotent.  Consequently
$\rho(M_F(s))\to0$.  The intermediate value theorem now gives the stated root description:
if $\rho(M_F(0))\le1$, the infimum is $0$, while otherwise there is exactly one positive
parameter at which the spectral radius equals one.

Suppose next that $M_C(s)$ is finite entrywise.  Choose increasing finite transition sets
$F_n$ exhausting $E(C)$.  Then $M_C^{F_n}(s)\to M_C(s)$ entrywise.  Because the state set
is finite, spectral-radius continuity yields
\[
 \rho(M_C(s))=\lim_n\rho(M_C^{F_n}(s)).
\]
Every finite truncation is dominated by the full matrix, so the limit equals the supremum over
all finite $F$, proving \eqref{eq:spectral-monotone-exhaustion}.  It also follows immediately
that $\sup_F\sigma_{C,F}\le\sigma_C$.

For the reverse inequality, take $0<r<\sigma_C$.  If some entry $M_{uw}(r)$ is infinite,
choose a fixed directed return walk from $w$ to $u$ and retain its transitions.  Its product
weight is a positive constant $c$.  Choose finitely many transitions from $u$ to $w$ whose
total weight is greater than $1/c$.  The resulting finite truncation contains a directed closed
walk whose entry product is greater than one, so \cref{lem:closed-walk} gives spectral radius
greater than one at $r$.

If all entries of $M_C(r)$ are finite, then the definition of $\sigma_C$ gives
$\rho(M_C(r))\ge1$.  Equality is impossible.  Indeed, for every $t>r$, the matrix
$M_C(t)$ is finite and is entrywise no larger than $M_C(r)$, with at least one strict decrease
on the irreducible component $C$.  Strict Perron--Frobenius comparison would give
$\rho(M_C(t))<1$, forcing $\sigma_C\le r$, a contradiction.  Thus
$\rho(M_C(r))>1$, and \eqref{eq:spectral-monotone-exhaustion} supplies a finite $F$ with
$\rho(M_C^F(r))>1$.

In either case the root description for that finite truncation gives
$\sigma_{C,F}>r$.  Letting $r\uparrow\sigma_C$ proves
\eqref{eq:finite-transition-variation}.
\end{proof}

\subsection{Poincar\'e series of trim unambiguous normal forms}
We use basic ideas in weighted-automaton theory.  A finite automaton has a finite set of states,
initial transitions, internal transitions, and terminal states. A path is accepted when it
begins with an initial transition and ends at a terminal state.  A state is accessible when it
occurs after some initial path and coaccessible when a terminal state can be reached from it.
The automaton is trim when every state has both properties
\cite[Definition~1.3.9]{BertheRigo2016}.  Weighted automata assign weights to initial,
transition, and terminal symbols and sum products over accepted paths, see
\cite[Section~2]{Mohri2009}.  We now add a group-evaluation map and state the exact
nonnegative-series form needed below.

Fix a group $\Gamma$ acting isometrically on $X$, with orbit length $\ell_x$.  Let $J$ be a
finite state set.  For each $j\in J$, let $\mathcal I_j$ be an at most countable set of
initial transition symbols ending at $j$, and for $i,j\in J$ let $\mathcal E_{ij}$ be an at
most countable set of internal transition symbols from $i$ to $j$.  Let
$q=(q_j)_{j\in J}\in\{0,1\}^J$ be a nonzero terminal-state indicator.  The disjoint union
\[
 \Sigma=\bigsqcup_j\mathcal I_j\sqcup
 \bigsqcup_{i,j}\mathcal E_{ij}
\]
is the transition alphabet.  The maps
\[
 \lambda_{\mathrm I}:\bigsqcup_j\mathcal I_j\to\Gamma,
 \qquad
 \lambda_{\mathrm E}:\bigsqcup_{i,j}\mathcal E_{ij}\to\Gamma
\]
are the group label maps.

An \emph{accepted transition sequence} is a tuple
\[
 \omega=(a,e_1,\ldots,e_n),
 \quad
 a\in\mathcal I_{j_0},\quad
 e_r\in\mathcal E_{j_{r-1}j_r},\quad q_{j_n}=1,
\]
where $n\ge0$.  Thus the initial symbol enters state $j_0$, the internal transitions follow
compatible states, and the final state is terminal.  We also use a separate empty sequence
$\varnothing$.  Let $\mathcal B$ be a nonempty at most countable set of coefficient symbols
with a group-label map $\lambda_{\mathcal B}:\mathcal B\to\Gamma$.  A normal-form
representation is a pair $(b,\omega)$, where $b\in\mathcal B$ and $\omega$ is empty or an
accepted transition sequence.  Its group evaluation is
\[
 \operatorname{ev}(b,\varnothing)=\lambda_{\mathcal B}(b),
\]
\[
 \operatorname{ev}(b,\omega)=
 \lambda_{\mathrm I}(a)\lambda_{\mathrm E}(e_1)\cdots
 \lambda_{\mathrm E}(e_n)\lambda_{\mathcal B}(b).
\]
Assume that evaluation is a bijection from the set $\mathcal L$ of normal-form
representations onto a countable subset $\mathscr S\subseteq\Gamma$.  This is the
unambiguous normal-form hypothesis.

Assign to a representation the nonnegative length
\[
 L_{\mathcal N}(b,\varnothing)=\ell_x(\lambda_{\mathcal B}(b)),
\]
\[
 L_{\mathcal N}(b,\omega)=
 \ell_x(\lambda_{\mathcal B}(b))+
 \ell_x(\lambda_{\mathrm I}(a))+
 \sum_{r=1}^n\ell_x(\lambda_{\mathrm E}(e_r)).
\]
Delete every state that is not accessible or not coaccessible, then the resulting automaton is
trim.  For $s>0$,  we define
\begin{equation}\label{eq:abstract-coefficients}
 V(s)=\sum_{b\in\mathcal B}\e^{-s\ell_x(\lambda_{\mathcal B}(b))},
 \qquad
 u_j(s)=\sum_{a\in\mathcal I_j}\e^{-s\ell_x(\lambda_{\mathrm I}(a))},
\end{equation}
\[
 K_{ij}(s)=\sum_{e\in\mathcal E_{ij}}
 \e^{-s\ell_x(\lambda_{\mathrm E}(e))}.
\]
All sums take values in $[0,\infty]$.  The abscissa of convergence of these coefficient series is
\[
 \delta_{\mathcal N,\coef}=
 \inf\{s>0:V(s)\text{ and every entry of }u(s)\text{ and }K(s)\text{ are finite}\}.
\]

\begin{theorem}
\label{thm:normal-form-resolvent}
Assume $0<\delta_X(\Gamma)<\infty$.  For every $s>0$, as an identity of extended
nonnegative real numbers,
\begin{equation}\label{eq:abstract-resolvent-expanded}
 \sum_{(b,\omega)\in\mathcal L}\e^{-sL_{\mathcal N}(b,\omega)}
 =V(s)\left(1+\sum_{n\ge0}u(s)^TK(s)^nq\right).
\end{equation}
The series on the left is finite if and only if $V(s)$ and every entry of $u(s)$ and $K(s)$
are finite and $\rho(K(s))<1$.  On that set of parameters we have,
\begin{equation}\label{eq:abstract-resolvent}
 \sum_{(b,\omega)\in\mathcal L}\e^{-sL_{\mathcal N}(b,\omega)}
 =V(s)\bigl(1+u(s)^T(I-K(s))^{-1}q\bigr).
\end{equation}
Its abscissa of convergence is the maximum of $\delta_{\mathcal N,\coef}$ and the
critical parameters $\sigma_C$ of the diagonal blocks in Frobenius normal form that contain
directed closed walks.  For every such block,
$\sigma_C=\sup_{F\sqsubset E(C)}\sigma_{C,F}$.
\end{theorem}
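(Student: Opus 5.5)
The plan is to prove the expanded identity \eqref{eq:abstract-resolvent-expanded} first, as an identity in $[0,\infty]$, and to read every other assertion off it. For a representation $(b,\omega)$ with $\omega=(a,e_1,\dots,e_n)$, the weight factors as
\[
 \e^{-sL_{\mathcal N}(b,\omega)}=\e^{-s\ell_x(\lambda_{\mathcal B}(b))}\,\e^{-s\ell_x(\lambda_{\mathrm I}(a))}\prod_{r}\e^{-s\ell_x(\lambda_{\mathrm E}(e_r))}.
\]
The set $\mathcal L$ is the product of $\mathcal B$ with the set $\{\varnothing\}\sqcup\{\text{accepted sequences}\}$. Tonelli's theorem for nonnegative terms (as in \cref{lem:closed-path-comparison}) then splits the left side as $V(s)$ times the sum over the second factor, with the convention $0\cdot\infty=0$.

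Next I sort the accepted sequences by length $n$ and by their state sequence $j_0,\dots,j_n$. Summing over the parallel symbols independently gives $u_{j_0}(s)K_{j_0j_1}(s)\cdots K_{j_{n-1}j_n}(s)q_{j_n}$. This is exactly the path-sum expansion of $u(s)^TK(s)^nq$, and it remains valid with infinite entries because everything is nonnegative. Bijectivity of evaluation is not needed for this identity; it is used only later.

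For the finiteness criterion, sufficiency is routine. If the coefficients are finite and $\rho(K(s))<1$, then by \cref{lem:loop-series} (equivalently by the Neumann series argument in its proof) $\sum_nK(s)^n=(I-K(s))^{-1}$, which yields \eqref{eq:abstract-resolvent}. For necessity, assume the left side is finite.
\begin{itemize}
 \item Since $\mathcal B\ne\varnothing$, the expansion gives $V(s)>0$, so $1+\sum_n u^TK^nq<\infty$, and $V(s)$ itself is finite because it is bounded by the left side.
 \item Trimness supplies, for each state $j$, a fixed walk from $j$ to a terminal state with positive product weight $c_j$. This gives $u_j(s)c_j\le\sum_nu^TK^nq$.
 \item For each pair $i,j$, fix an initial symbol $a$ entering a state from which $i$ is reachable, together with a fixed walk to $i$. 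Combined with the coaccessibility of $j$, this bounds $K_{ij}(s)$ by a constant times the same series.
 \item The same prefix and suffix argument bounds $\sum_n(K_C(s)^n)_{vv}$ for every block $C$ containing a closed walk. \Cref{lem:loop-series} then gives $\rho(K_C(s))<1$.
 \item Acyclic diagonal blocks are nilpotent, and the spectral radius of a matrix in Frobenius normal form is the maximum over its diagonal blocks. Hence $\rho(K(s))<1$.
\end{itemize}

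For the abscissa, the convergence set is an up-ray in $s$, because all weights decrease in $s$. By the criterion, $s$ lies in it iff $s$ exceeds (or equals) $\delta_{\mathcal N,\coef}$-type finiteness and every cyclic block has finite entries and $\rho<1$. Since entries are monotone in $s$, strict monotonicity gives $\rho(K_C(t))<1$ for $t>\sigma_C$. Therefore the infimum of the convergence set is $\max\{\delta_{\mathcal N,\coef},\max_C\sigma_C\}$.

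The last claim, $\sigma_C=\sup_F\sigma_{C,F}$, is \cref{prop:finite-transition-variation}. To invoke it I must verify its bounded-multiplicity hypothesis, and this is the one place unambiguity enters. Fix $v\in C$, a fixed initial prefix $\alpha$ reaching $v$ (starting with an initial symbol), a fixed suffix $\beta$ from $v$ to a terminal state, and some $b_0\in\mathcal B$. Distinct closed walks $p$ at $v$ then give distinct normal-form representations $(b_0,\alpha p\beta)$, hence distinct elements $\lambda(\alpha)\lambda(p)\lambda(\beta)\lambda_{\mathcal B}(b_0)$. The hypothesis in \cref{thm:critical-exponent-matrix} is phrased with a left multiplier $b$ only, so I would conjugate to move the right factor: right multiplication is injective, so the fiber of $p\mapsto\lambda(\alpha)\lambda(p)$ has size at most one, and $Q=1$ with $b=\lambda(\alpha)$ works.

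The main obstacle I expect is bookkeeping rather than ideas. It lies in the necessity direction when some entries are infinite, and in checking that the prefix/suffix constants are strictly positive; they are, since exponential weights never vanish.
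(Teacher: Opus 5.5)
Your proposal is correct and follows the paper's proof essentially step for step. The shared steps are the Tonelli path-sum expansion, the Neumann series for sufficiency, and fixed prefix/suffix subseries combined with \cref{lem:loop-series} and the Frobenius normal form for necessity. Likewise, the fixed entrance and exit paths, together with injectivity of evaluation, give $Q=1$ so that \cref{prop:finite-transition-variation} applies. Only two cosmetic points need adjusting: the abscissa step needs just ordinary (not strict) spectral-radius monotonicity under entrywise domination, and you cancel, rather than conjugate away, the fixed right factor $\lambda(\beta)\lambda_{\mathcal B}(b_0)$.
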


\begin{proof}
The representations with empty transition sequence contribute $V(s)$.  For $n\ge0$, the
sum over accepted transition sequences having exactly $n$ internal transitions is
$u(s)^TK(s)^nq$: matrix multiplication sums the products of the weights over all compatible
state sequences and all choices of parallel transition symbols.  The coefficient symbol \(b\in B\) is chosen independently.  All
path weights are nonnegative.  Tonelli's theorem for counting
measures
\cite[Theorem~2.37, p.~67]{FollandRealAnalysis}
therefore permits the countable summations over path length, state
sequences, initial symbols, internal transition symbols, and
coefficient symbols to be interchanged.  This proves \((37)\) as
an identity in \([0,\infty]\), without a prior convergence
assumption.

Assume now that all displayed coefficient entries are finite and
that \(\rho(K(s))<1\).  By
\cite[Theorem~5.6.12]{HornJohnson2013},
$
  K(s)^N\longrightarrow0.
$
Moreover \(I-K(s)\) is invertible, since
\(1\notin\sigma(K(s))\).  The finite geometric-series identity
$
  (I-K(s))\sum_{n=0}^{N}K(s)^n
  =
  I-K(s)^{N+1}
$
therefore gives
$
  \sum_{n=0}^{\infty}K(s)^n
  =
  (I-K(s))^{-1}.
$
Substitution in \((37)\) proves \((38)\).
Conversely, suppose the total normal-form series is finite.  The subseries with empty
transition sequence gives $V(s)<\infty$.  Choose one coefficient symbol
$b_0\in\mathcal B$.  If $\mathcal I_j$ is nonempty, coaccessibility of $j$ gives a fixed
internal suffix from $j$ to a terminal state.  Keep that suffix and $b_0$ fixed and vary only
the initial symbol in $\mathcal I_j$.  The resulting terms form a fixed positive multiple of
$u_j(s)$, so $u_j(s)<\infty$; if $\mathcal I_j$ is empty, this is automatic.  To control
$K_{ij}(s)$, accessibility of $i$ gives a fixed accepted prefix ending at $i$, and
coaccessibility of $j$ gives a fixed suffix from $j$ to a terminal state.  Keeping the prefix,
suffix, and $b_0$ fixed while varying the transition symbol in $\mathcal E_{ij}$ gives a fixed
positive multiple of $K_{ij}(s)$ as a subseries.  Hence every matrix entry is finite.

Set $K(s)$ in Frobenius normal form.  Suppose a diagonal block $C$ containing a directed closed walk had spectral
radius at least one.  Choose a state $v\in C$.  Trimness supplies a fixed accepted prefix
ending at $v$ and a fixed suffix beginning at $v$.  By
\cref{lem:loop-series}, the weighted return series of closed walks in $C$ based at $v$
diverges.  Inserting each such walk between the fixed prefix and suffix, and adjoining
$b_0$, produces a divergent subseries of the total normal-form series.  This contradiction
shows that every block containing a directed closed walk has spectral radius below one.  Acyclic diagonal blocks are
nilpotent, so $\rho(K(s))<1$.

It remains to identify the abscissa.  Fix a strongly connected diagonal block containing a directed closed walk and fixed accepted
entrance and exit paths through it.  If two based closed walks in the block had the same group
evaluation after the fixed entrance, appending the same exit and coefficient symbol would
produce two different normal-form representations with the same group evaluation.  The
bijection hypothesis forbids this.  Thus the bounded-multiplicity hypothesis holds with
$Q=1$, and \cref{prop:finite-transition-variation} applies.  Frobenius normal form says that
$\rho(K(s))<1$ exactly when every diagonal block containing a directed closed walk has spectral radius less than one.  Combining this
with coefficient finiteness gives the stated maximum formula and the supremum over finite truncations.
\end{proof}

\begin{theorem}
\label{thm:assigned-length-resolvent}
Keep the finite trim automaton, the coefficient symbols, and the unambiguous set of normal-form
representations from \cref{thm:normal-form-resolvent}.  Prescribe nonnegative functions
\[
 c_{\mathcal B}:\mathcal B\to[0,\infty),\qquad
 c_{\mathrm I}:\bigsqcup_j\mathcal I_j\to[0,\infty),\qquad
 c_{\mathrm E}:\bigsqcup_{i,j}\mathcal E_{ij}\to[0,\infty).
\]
For a representation set
\[
 L_c(b,\varnothing)=c_{\mathcal B}(b),
\]
\[
 L_c(b,\omega)=c_{\mathcal B}(b)+c_{\mathrm I}(a)
 +\sum_{r=1}^n c_{\mathrm E}(e_r),
 \qquad \omega=(a,e_1,\ldots,e_n),
\]
and define
\[
 V_c(s)=\sum_{b\in\mathcal B}\e^{-s c_{\mathcal B}(b)},\qquad
 (u_c(s))_j=\sum_{a\in\mathcal I_j}\e^{-s c_{\mathrm I}(a)},
\]
\[
 (K_c(s))_{ij}=\sum_{e\in\mathcal E_{ij}}\e^{-s c_{\mathrm E}(e)}.
\]
Define
\[
 \delta_{c,\coef}=\inf\{s>0:V_c(s),\ u_c(s),\text{ and }K_c(s)
 \text{ are entrywise finite}\}.
\]
Assume that every nonempty directed closed walk in every strongly connected diagonal block has
positive total $c_{\mathrm E}$-length.  For each strongly connected diagonal block $C$ whose
support contains a directed closed walk, define
\[
 \sigma_{c,C}=\inf\{s>0:K_{c,C}(s)\text{ is entrywise finite and }
 \rho(K_{c,C}(s))<1\},
\]
and, for a finite set $F$ of transition symbols in $C$,
\[
 \sigma_{c,C,F}=\inf\{s>0:\rho(K^F_{c,C}(s))<1\}.
\]
Then
\begin{equation}\label{eq:assigned-length-resolvent}
 \sum_{(b,\omega)\in\mathcal L}\e^{-sL_c(b,\omega)}
 =V_c(s)\left(1+\sum_{n\ge0}u_c(s)^T K_c(s)^nq\right)
\end{equation}
as an identity in $[0,\infty]$.  This series is finite exactly when all displayed coefficient
entries are finite and $\rho(K_c(s))<1$. On that domain the sum in
\eqref{eq:assigned-length-resolvent} equals
\[
 V_c(s)\bigl(1+u_c(s)^T(I-K_c(s))^{-1}q\bigr).
\]
Its abscissa of convergence is
\[
 \max\left\{\delta_{c,\coef},\ \max_C\sigma_{c,C}\right\},
 \qquad
 \sigma_{c,C}=\sup_F\sigma_{c,C,F},
\]
with the value $0$ assigned to a maximum or supremum over an empty collection.
\end{theorem}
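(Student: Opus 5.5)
The plan is to repeat the proof of \cref{thm:normal-form-resolvent} with the orbit lengths $\ell_x(\lambda(\cdot))$ replaced throughout by the prescribed costs $c_{\mathcal B},c_{\mathrm I},c_{\mathrm E}$. The only genuinely new point is that the earlier argument obtained positivity of closed-walk lengths from bounded multiplicity via \cref{lem:positive-cycle-length}. Here that positivity is simply assumed. The bijection onto $\mathscr S$ is now used only combinatorially: distinct representations are distinct summands.

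\textbf{Step 1: the expanded identity.} Split $\mathcal L$ by the number $n$ of internal transitions. The empty sequences contribute $V_c(s)$. For fixed $n$, matrix multiplication sums $\e^{-s(c_{\mathrm I}(a)+\sum c_{\mathrm E}(e_r))}$ over all compatible state sequences, parallel symbols and terminal states, giving $u_c(s)^TK_c(s)^nq$. The coefficient symbol is chosen independently and contributes the factor $V_c(s)$. Since everything is nonnegative, Tonelli's theorem justifies all rearrangements in $[0,\infty]$, which proves \eqref{eq:assigned-length-resolvent}.

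\textbf{Step 2: the finiteness criterion and the resolvent.}
\begin{itemize}
\item \emph{Sufficiency.} If all entries are finite and $\rho(K_c(s))<1$, the geometric-series identity gives $\sum_n K_c(s)^n=(I-K_c(s))^{-1}$, as in \cref{lem:loop-series}. This yields the closed form.
\item \emph{Necessity, coefficients.} Fix $b_0$. Trimness (accessibility and coaccessibility) supplies fixed prefixes and suffixes. Varying a single initial or internal symbol with everything else held fixed exhibits positive multiples of $u_{c,j}(s)$ and $K_{c,ij}(s)$ as subseries, so these entries are finite.
\item \emph{Necessity, spectral radius.} Suppose a cyclic diagonal block $C$ had $\rho\ge1$. \cref{lem:loop-series} applied to the irreducible block gives a divergent return series at some $v\in C$. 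Sandwiching these closed walks between a fixed prefix and a fixed suffix produces a divergent subseries. Acyclic blocks are nilpotent, so $\rho(K_c(s))<1$ follows from the Frobenius normal form.
\end{itemize}

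\textbf{Step 3: the abscissa.} This follows from Step 2 together with the fact that $\rho(K_c(s))<1$ holds exactly when every cyclic block has radius below one. The abscissa is therefore $\max\{\delta_{c,\coef},\max_C\sigma_{c,C}\}$, because:
\begin{itemize}
\item each $\sigma_{c,C}\le$ the abscissa;
\item above the maximum, entries are finite and the radii are below one. The sets are up-closed, since entries decrease in $s$ and Perron--Frobenius monotonicity applies.
\end{itemize}

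\textbf{Step 4: $\sigma_{c,C}=\sup_F\sigma_{c,C,F}$.} I expect this to be the main obstacle. I will redo the proof of \cref{prop:finite-transition-variation} in this setting.
\begin{itemize}
\item \emph{Finite truncations.} For finite $F$, the hypothesis that every nonempty closed walk in $C$ has positive $c_{\mathrm E}$-length plays the role of \cref{lem:positive-cycle-length}. It gives strict monotonic decrease of $\rho(K^F_{c,C}(s))$ on each cyclic sub-block by strict Perron--Frobenius comparison. It also gives $\rho\to0$ as $s\to\infty$, since the zero-cost support is acyclic and hence nilpotent. Hence $\sigma_{c,C,F}$ is $0$ or the unique root of $\rho=1$.
\item \emph{Upper bound.} Finite truncations are dominated by the full block, so $\sup_F\sigma_{c,C,F}\le\sigma_{c,C}$.
\item \emph{Lower bound.} Take $r<\sigma_{c,C}$. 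If some entry at $r$ is infinite, choose finitely many parallel transitions with total weight exceeding $1/c$, where $c$ is the weight of a fixed return walk; \cref{lem:closed-walk} then gives a finite $F$ with $\rho>1$. If all entries are finite, then $\rho(K_{c,C}(r))\ge1$. Equality is excluded by strict comparison with $t>r$, which would give $\rho<1$ at every $t>r$ and hence $\sigma_{c,C}\le r$. Then monotone exhaustion, using spectral-radius continuity in fixed dimension, gives a finite $F$ with $\rho(K^F_{c,C}(r))>1$, so $\sigma_{c,C,F}>r$.
\end{itemize}

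The delicate point is the strict comparison in Step 4. It needs that some entry of the irreducible block strictly decreases, which holds because every cycle has positive total cost and therefore some transition on it has positive cost. Empty collections of blocks are assigned the value $0$ by convention.
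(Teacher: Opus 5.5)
Your proposal is correct and follows the paper's proof essentially step for step. The paper likewise reruns the Tonelli path-sum argument and the trimness-based converse from \cref{thm:normal-form-resolvent}, and reruns \cref{prop:finite-transition-variation} with the assumed positivity of closed-walk $c_{\mathrm E}$-lengths replacing \cref{lem:positive-cycle-length}; your version only makes explicit some points the paper leaves implicit, such as the up-closedness of the convergence sets and the fact that the bijection onto $\mathscr S$ is not actually used.
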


\begin{proof}
The path-sum identity follows from Tonelli's theorem for counting
measures
\cite[Theorem~2.37, p.~67]{FollandRealAnalysis},
exactly as in the proof of Theorem~3.8.  If all coefficient
entries are finite and \(\rho(K_c(s))<1\), then
\cite[Theorem~5.6.12]{HornJohnson2013}
gives \(K_c(s)^N\to0\).  Applying the finite geometric-series
identity gives
$
  \sum_{n=0}^{\infty}K_c(s)^n
  =
  (I-K_c(s))^{-1}.
$
These arguments use only the nonnegative multiplicative weights
attached to the transition symbols.  For a finite transition set, the positivity assumption implies
that every directed closed walk has positive total length.  Consequently the spectral radius
of each irreducible diagonal block is continuous and strictly decreasing until it is below one,
and it tends to zero as $s\to\infty$.  The proof of
\cref{prop:finite-transition-variation} now applies with $c_{\mathrm E}$ in place of the orbit
length: if an entry of $K_{c,C}(s)$ is infinite, finitely many parallel transitions together
with a fixed return path give a finite truncation of spectral radius greater than one. If all
entries are finite, monotone exhaustion and strict Perron--Frobenius comparison give the same
conclusion below the critical parameter $\sigma_{c,C}$.  This proves the stated supremum
formula.

Conversely, suppose that the total prescribed-length normal-form series is finite. The proof of the converse implication in Theorem 3.8 depends only on nonnegativity of the symbol weights and trimness of the automaton, and therefore applies without change. The empty-transition subseries gives \(V_c(s)<\infty\). Fixing a coefficient symbol and suitable entrance and exit paths while varying only an initial or internal transition symbol proves that every entry of \(u_c(s)\) and \(K_c(s)\) is finite. If a strongly connected diagonal block containing a directed closed walk had spectral radius at least one, Lemma 3.2 would make its diagonal return series divergent; inserting those return walks between fixed entrance and exit paths would then produce a divergent subseries of the total normal-form series. Therefore every such block has spectral radius less than one, and hence \(\rho(K_c(s))<1\).
\end{proof}

\begin{theorem}
\label{thm:two-sided-comparison}
Let $\mathcal L$ be a countable set, let
$\operatorname{ev}:\mathcal L\to\mathscr S\subseteq\Gamma$ be a bijection, and let
$L_{\mathcal N}:\mathcal L\to[0,\infty)$ be a specified nonnegative length function.  Define
\[
 \delta_{\mathcal N}=
 \inf\left\{s>0:\sum_{p\in\mathcal L}\e^{-sL_{\mathcal N}(p)}<\infty\right\},
\]
\[
 \delta_x(\mathscr S)=
 \inf\left\{s>0:\sum_{g\in\mathscr S}\e^{-s\ell_x(g)}<\infty\right\}.
\]
Suppose there are $a\in(0,1]$ and $b\ge0$ such that, for every $p\in\mathcal L$,
\begin{equation}\label{eq:two-sided-comparison}
 aL_{\mathcal N}(p)-b\le
 \ell_x(\operatorname{ev}(p))\le L_{\mathcal N}(p).
\end{equation}
Then
\begin{equation}\label{eq:two-sided-exponents}
 \delta_{\mathcal N}\le\delta_x(\mathscr S)\le a^{-1}\delta_{\mathcal N}.
\end{equation}
In particular, bounded additive error with $a=1$ gives equality of the two abscissae.
\end{theorem}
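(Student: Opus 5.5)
The plan is to reindex the Poincar\'e series of $\mathscr S$ over $\mathcal L$ and compare the two series term by term, using the two sides of \eqref{eq:two-sided-comparison} separately.

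\emph{Reindexing.} Because $\operatorname{ev}:\mathcal L\to\mathscr S$ is a bijection, for every $s>0$ we have
\[
 \sum_{g\in\mathscr S}\e^{-s\ell_x(g)}=\sum_{p\in\mathcal L}\e^{-s\ell_x(\operatorname{ev}(p))}
\]
as an identity in $[0,\infty]$. Since all terms are nonnegative, no convergence assumption is needed.

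\emph{Left inequality.} The upper bound $\ell_x(\operatorname{ev}(p))\le L_{\mathcal N}(p)$ gives, termwise,
\[
 \e^{-sL_{\mathcal N}(p)}\le\e^{-s\ell_x(\operatorname{ev}(p))}.
\]
So if the $\mathscr S$-series converges at some $s$, the $\mathcal L$-series converges there too. Taking infima over such $s$ yields $\delta_{\mathcal N}\le\delta_x(\mathscr S)$. This is the comparison in \cref{lem:subgroup-finite-index}(iii) with $C=0$.

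\emph{Right inequality.} Take any $s>a^{-1}\delta_{\mathcal N}$, so that $as>\delta_{\mathcal N}$ and the $\mathcal L$-series converges at $as$. The lower bound gives
\[
 \e^{-s\ell_x(\operatorname{ev}(p))}\le\e^{sb}\,\e^{-(as)L_{\mathcal N}(p)}.
\]
Summing over $p$ shows $\sum_{g\in\mathscr S}\e^{-s\ell_x(g)}\le\e^{sb}\sum_{p\in\mathcal L}\e^{-asL_{\mathcal N}(p)}<\infty$. Hence $\delta_x(\mathscr S)\le s$, and letting $s\downarrow a^{-1}\delta_{\mathcal N}$ gives the upper bound.

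\emph{Edge cases.} Three points need a brief check, and this bookkeeping is the only real obstacle.
\begin{itemize}
\item Both abscissae are infima over $s>0$, so a value $0$ or $+\infty$ causes no trouble. If $\delta_{\mathcal N}=\infty$, the upper bound is vacuous.
\item The argument never uses convergence exactly at the abscissa. It uses only open conditions $s>\cdot$, so no boundary behaviour enters.
\item When $a=1$ the two bounds coincide, which gives the stated equality under bounded additive error.
\end{itemize}
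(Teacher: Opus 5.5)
Your proposal is correct and follows the paper's proof: you reindex through the bijection, use the upper bound termwise to get $\delta_{\mathcal N}\le\delta_x(\mathscr S)$, and use $\e^{-s\ell_x(\operatorname{ev}(p))}\le\e^{sb}\e^{-asL_{\mathcal N}(p)}$ for $as>\delta_{\mathcal N}$ to get the other inequality. The only step you, like the paper, leave implicit is that $as>\delta_{\mathcal N}$ yields convergence at $as$ itself; this holds because the convergence set is upward closed, since the lengths are nonnegative.
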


\begin{proof}
The upper bound on the orbit length gives, term by term and using the bijection,
\[
 \sum_{p\in\mathcal L}\e^{-sL_{\mathcal N}(p)}
 \le
 \sum_{g\in\mathscr S}\e^{-s\ell_x(g)}.
\]
Thus convergence of the orbit-length series implies convergence of the normal-form series,
and $\delta_{\mathcal N}\le\delta_x(\mathscr S)$.  The lower bound in
\eqref{eq:two-sided-comparison} gives
\[
 \sum_{g\in\mathscr S}\e^{-s\ell_x(g)}
 \le\e^{sb}\sum_{p\in\mathcal L}\e^{-saL_{\mathcal N}(p)}.
\]
If $as>\delta_{\mathcal N}$, the right side is finite.  Hence
$\delta_x(\mathscr S)\le a^{-1}\delta_{\mathcal N}$.
\end{proof}

\section{Bass--Serre coordinates and weighted non-backtracking matrices}
\label{sec:bass-serre}

Let $\mathcal G$ be a finite connected graph of groups with fundamental group $H$, Bass--Serre
tree $T$, and finite quotient graph $Y=H\backslash T$.  We use the action without inversion.
For each vertex $v\in V(Y)$ choose a lift $\widetilde v\in T$.  For each oriented edge $f$,
choose a lift $\widetilde f$ with origin $\widetilde{o(f)}$.  There is $\tau_f\in H$ such that
\[
 t(\widetilde f)=\tau_f\widetilde{t(f)}.
\]
Choose reverse lifts compatibly, so
\begin{equation}\label{eq:reverse-edge-lifts}
 \widetilde{\bar f}=\tau_f^{-1}\overline{\widetilde f},
 \qquad
 \tau_{\bar f}=\tau_f^{-1}.
\end{equation}
Set
\[
 G_v=\operatorname{Stab}_H(\widetilde v),
 \qquad
 G_f=\operatorname{Stab}_H(\widetilde f)\le G_{o(f)}.
\]
Reversing orientation does not change an edge stabilizer, and
\eqref{eq:reverse-edge-lifts} therefore gives
\begin{equation}\label{eq:reverse-edge-stabilizer}
 G_{\bar f}=\tau_f^{-1}G_f\tau_f.
\end{equation}
The lifts of $f$ issuing from $\widetilde{o(f)}$ are $r\widetilde f$, indexed by left cosets
$rG_f\in G_{o(f)}/G_f$.

\begin{definition}[Relative orbit length]
\label{def:relative-length}
For $c=rG_f\in G_{o(f)}/G_f$, define
\begin{equation}\label{eq:relative-length}
 L_{f,x}(c)=\min_{h\in G_f}d(x,rh\tau_f x).
\end{equation}
The minimum exists by \cref{lem:orbit-basic} and is independent of the representative $r$.
\end{definition}

A transition from an oriented edge $e$ to an oriented edge $f$ is allowed when
$t(e)=o(f)$.  If $f=\bar e$, omit the identity coset $G_f$. It is precisely the immediate
reverse of the edge just traversed.

\begin{definition}[Weighted non-backtracking matrix]
\label{def:relative-matrix}
For $s>0$, define the matrix on the oriented-edge states $E^{\pm}(Y)$ by
\begin{equation}\label{eq:relative-matrix}
 K_{ef,x}(s)=
 \begin{cases}
 \displaystyle
 \sum_{c\in G_{o(f)}/G_f\setminus E_{ef}}
 \e^{-sL_{f,x}(c)},&t(e)=o(f),\\[1.2ex]
 0,&t(e)\ne o(f),
 \end{cases}
\end{equation}
where $E_{ef}=\{G_f\}$ if $f=\bar e$ and $E_{ef}=\varnothing$ otherwise.  A finite truncation
retains finitely many relative cosets in each entry.
\end{definition}

For every relative coset $c=rG_f$, choose $h_f(c)\in G_f$ realising the minimum in
\eqref{eq:relative-length}, and set
\begin{equation}\label{eq:minimising-label}
 b_f(c)=rh_f(c)\tau_f.
\end{equation}
Thus $d(x,b_f(c)x)=L_{f,x}(c)$.

Fix a base vertex $v_*$ of $Y$.  A reduced coset-labelled loop based at $v_*$ is a sequence
\[
 \omega=(f_1,c_1;\dots;f_n,c_n)
\]
with $o(f_1)=v_*$, $t(f_n)=v_*$, $t(f_j)=o(f_{j+1})$, and
$c_{j+1}\ne G_{f_{j+1}}$ whenever $f_{j+1}=\bar f_j$.  Set
\[
 b(\omega)=b_{f_1}(c_1)\cdots b_{f_n}(c_n),
 \qquad b(\varnothing)=1.
\]

\begin{lemma}
\label{lem:bass-serre-normal-form}
Every $g\in H$ has a unique representation
\begin{equation}\label{eq:bass-serre-normal-form}
 g=b(\omega)h,
 \qquad h\in G_{v_*},
\end{equation}
where $\omega$ is a reduced coset-labelled loop based at $v_*$.  Consequently
\begin{equation}\label{eq:bass-serre-length}
 L^{\BS}_x(g)=\sum_{j=1}^nL_{f_j,x}(c_j)+d(x,hx)
\end{equation}
defines a normal form length and
\begin{equation}\label{eq:bass-serre-subadditive-upper}
 d(x,gx)\le L^{\BS}_x(g).
\end{equation}
\end{lemma}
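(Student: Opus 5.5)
The plan is to prove the statement by working directly in the Bass--Serre tree $T$, comparing each element $g\in H$ with the geodesic from $\widetilde{v_*}$ to $g\widetilde{v_*}$. The claimed bijection is between reduced coset-labelled loops, paired with elements of $G_{v_*}$, and elements of $H$. The underlying fact is Serre's normal form \cite[Chapter~I, Section~5.4, Theorem~13]{Serre2003}. Since our labels $b_f(c)$ include the chosen minimisers $h_f(c)$, we redo the argument in the present coordinates rather than quote it verbatim.

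\emph{Existence.} We induct on $n=d_T(\widetilde{v_*},g\widetilde{v_*})$. If $n=0$, then $g\in G_{v_*}$, and we take $\omega=\varnothing$. Otherwise, let $\widetilde{v_*}=x_0,x_1,\dots,x_n=g\widetilde{v_*}$ be the tree geodesic.
\begin{enumerate}[label=\textup{(\roman*)}]
\item The first edge issues from $\widetilde{v_*}=\widetilde{o(f_1)}$ and projects to some oriented edge $f_1$ of $Y$. By the coset description of lifts it equals $r\widetilde{f_1}$ for a unique coset $c_1=rG_{f_1}\in G_{v_*}/G_{f_1}$.
\item Since $h_{f_1}(c_1)\in G_{f_1}$ fixes $\widetilde{f_1}$, we get $b_{f_1}(c_1)\widetilde{t(f_1)}=rh_{f_1}(c_1)\tau_{f_1}\widetilde{t(f_1)}=r\,t(\widetilde{f_1})=x_1$.
\item Apply the induction hypothesis to $g'=b_{f_1}(c_1)^{-1}g$. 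This requires a version of the statement based at an arbitrary vertex $\widetilde v$ of $Y$, not only at $v_*$, so we prove the statement for loops and paths starting at any base vertex, ending at $v_*$.
\item The translated geodesic from $\widetilde{t(f_1)}$ to $g'\widetilde{v_*}$ has length $n-1$.
\item Reducedness follows because the geodesic does not backtrack. The only way the next edge could be the reverse of the previous one is the identity coset when $f_2=\bar f_1$, and that edge is $b_{f_1}(c_1)^{-1}\overline{r\widetilde{f_1}}$. By \eqref{eq:reverse-edge-lifts} this equals $\widetilde{\bar f_1}\cdot$(identity coset), which is excluded.
\end{enumerate}

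\emph{Uniqueness.} Take a reduced sequence $\omega$. Its partial products $b_{f_1}(c_1)\cdots b_{f_j}(c_j)$ carry $\widetilde{v_*}$ along an edge path $x_0,\dots,x_n$ in $T$, because each step maps a lifted edge to a lifted edge, as in step (ii). The reducedness condition says consecutive edges are not inverse to each other: two different lifts of $f_{j+1}$ at the same vertex, or a non-reverse edge. A non-backtracking edge path in a tree is the geodesic. Hence $\omega$ is determined by the geodesic to $g\widetilde{v_*}$, via the bijection between lifts of $f$ at a vertex and the cosets $G_{o(f)}/G_f$. Once $\omega$ is determined, $h=b(\omega)^{-1}g$ is determined as well.

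\emph{Consequences.} The length formula \eqref{eq:bass-serre-length} is then well defined. The inequality \eqref{eq:bass-serre-subadditive-upper} follows from subadditivity in \cref{lem:orbit-basic} together with $d(x,b_f(c)x)=L_{f,x}(c)$.

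The main obstacle is the bookkeeping in the uniqueness step. We must check that the excluded identity coset for $f=\bar e$ corresponds exactly to immediate backtracking in $T$, which uses the compatible reverse lifts \eqref{eq:reverse-edge-lifts} and \eqref{eq:reverse-edge-stabilizer}. We also need care with the case where a loop edge $f$ has $o(f)=t(f)$.
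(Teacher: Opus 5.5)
Your proposal is correct and follows essentially the same route as the paper. You lift the tree geodesic from $\widetilde{v_*}$ to $g\widetilde{v_*}$ edge by edge through the coset--lift correspondence, use the compatible reverse lifts \eqref{eq:reverse-edge-lifts} to show that the excluded identity coset is exactly immediate backtracking, and obtain uniqueness from uniqueness of tree geodesics together with the fixed choice of representatives $b_f(c)$. The only difference is organizational. You translate by $b_{f_1}(c_1)^{-1}$ and induct on distance, which requires the auxiliary statement for an arbitrary starting vertex; that inductive hypothesis should record that the constructed path is the geodesic, so the junction check in (v) applies. The paper instead runs a forward induction with the partial products $p_j=p_{j-1}a_j\tau_{f_j}$ and needs no auxiliary statement.
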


\begin{proof}
Write $x_*=\widetilde v_*$.  For a relative coset
$c_j=r_jG_{f_j}$ set
\[
 a_j=r_jh_{f_j}(c_j)\in G_{o(f_j)},
 \qquad
 p_0=1,
 \qquad
 p_j=b_{f_1}(c_1)\cdots b_{f_j}(c_j)
      =p_{j-1}a_j\tau_{f_j}.
\]
The representative $a_j$ belongs to the coset $c_j$, and multiplication by an element of
$G_{f_j}$ does not change the actual lift of $f_j$.  Hence, whenever the current vertex is
$p_{j-1}\widetilde{o(f_j)}$, the coset $c_j$ determines the actual oriented edge
\begin{equation}\label{eq:actual-bass-serre-edge}
 E_j=p_{j-1}a_j\widetilde f_j.
\end{equation}
Its terminal vertex is
\begin{equation}\label{eq:actual-bass-serre-terminal}
 t(E_j)=p_{j-1}a_j\tau_{f_j}\widetilde{t(f_j)}
       =p_j\widetilde{t(f_j)}.
\end{equation}
These two identities are the induction that relates the coordinate word to the path in $T$.

We next verify the non-backtracking rule exactly.  Suppose
$f_{j+1}=\bar f_j$.  By \eqref{eq:reverse-edge-lifts}, the next actual edge is
\[
 p_j a_{j+1}\widetilde{\bar f_j}
 =p_{j-1}a_j\tau_{f_j}a_{j+1}\tau_{f_j}^{-1}
   \overline{\widetilde f_j}.
\]
This equals the reverse
$\overline{E_j}=p_{j-1}a_j\overline{\widetilde f_j}$ if and only if
$\tau_{f_j}a_{j+1}\tau_{f_j}^{-1}\in G_{f_j}$, or equivalently, by
\eqref{eq:reverse-edge-stabilizer}, if and only if
$a_{j+1}\in G_{\bar f_j}$.  This is precisely the identity relative coset in
$G_{o(\bar f_j)}/G_{\bar f_j}$.  Thus omitting that coset is exactly the condition that the
actual tree path have no immediate reversal.

Now let $g\in H$.  The geodesic edge path from $x_*$ to $gx_*$ projects to an edge loop
$f_1\cdots f_n$ based at $v_*$.  Inductively, after the first $j-1$ edges have been encoded,
the $j$th actual outgoing edge is uniquely of the form
$p_{j-1}r\widetilde f_j$ for one left coset
$rG_{f_j}\in G_{o(f_j)}/G_{f_j}$.  Choose this coset as $c_j$ and use its fixed minimising
representative $a_j$. Equations \eqref{eq:actual-bass-serre-edge} and
\eqref{eq:actual-bass-serre-terminal} show that the coordinate word follows the same actual
edge and reaches the same terminal vertex.  The geodesic has no immediate reversal, so the
coset-labelled loop is reduced by the preceding calculation.  At the endpoint,
$p_nx_*=gx_*$, and consequently
$h=p_n^{-1}g$ belongs to $G_{v_*}$.  This proves existence of
\eqref{eq:bass-serre-normal-form}.

Conversely, equations \eqref{eq:actual-bass-serre-edge} and
\eqref{eq:actual-bass-serre-terminal} associate to every reduced coset-labelled loop a path
in $T$ from $x_*$ to $p_nx_*$.  The calculation above shows that no two consecutive edges
are reverses, so this path is reduced and hence geodesic.  Suppose
$p_nh=p'_mh'$ are two representations of the same element.  Since $h,h'$ fix $x_*$, the two
reduced paths have the same endpoints.  A tree has a unique geodesic between two vertices,
so the actual edge paths agree.  Their projections give $n=m$ and
$f_j=f'_j$ for every $j$.  Starting with $p_0=p'_0=1$, equality of the $j$th actual outgoing
edges gives equality of the corresponding left cosets, hence $c_j=c'_j$; because the
representative $b_{f_j}(c_j)$ was fixed once and for all, it then gives $p_j=p'_j$.
Induction recovers all cosets and finally $h=h'$.  This proves uniqueness.

Finally, subadditivity of orbit length and the minimising property
$d(x,b_{f_j}(c_j)x)=L_{f_j,x}(c_j)$ give
$d(x,gx)\le L_x^{\BS}(g)$.
\end{proof}

\subsection{Coefficient series and the resolvent}
Define
\[
 V_x(s)=\sum_{h\in G_{v_*}}\e^{-sd(x,hx)}.
\]
Let $E_*$ be the oriented-edge states occurring in some reduced coset-labelled loop based at
$v_*$.  Restrict $K_x(s)$ to $E_*$ and denote the restriction $K_{*,x}(s)$.  Define
\[
 (q_*)_f=\begin{cases}1,&t(f)=v_*,\\0,&t(f)\ne v_*,\end{cases}
\]
and
\[
 (u_x(s))_f=
 \begin{cases}
 \displaystyle\sum_{c\in G_{v_*}/G_f}\e^{-sL_{f,x}(c)},&o(f)=v_*,\\[1.2ex]
 0,&o(f)\ne v_*.
 \end{cases}
\]

\begin{definition}
\label{def:coef-abscissa}
Let $\delta_{\coef}(x)$ be the infimum of the $s>0$ for which $V_x(s)$, every entry of
$u_x(s)$, and every entry of $K_{*,x}(s)$ are finite.  Let
$\delta^{\BS}_x$ be the critical exponent of
$\sum_{g\in H}\e^{-sL^{\BS}_x(g)}$.
\end{definition}

\begin{theorem}[Exact Bass--Serre Poincar\'e series]
\label{thm:bass-serre-resolvent}
If $E_*=\varnothing$, then every reduced based normal form has empty edge word,
$H=G_{v_*}$, and
\begin{equation}\label{eq:bass-serre-edge-free}
 \sum_{g\in H}\e^{-sL^{\BS}_x(g)}=V_x(s)\qquad(s>0).
\end{equation}
In this case convergence is equivalent to $V_x(s)<\infty$ and
$\delta_x^{\BS}=\delta_{\coef}(x)$.

If $E_*\ne\varnothing$, then for every $s>0$,
\begin{equation}\label{eq:bass-serre-expanded}
 \sum_{g\in H}\e^{-sL^{\BS}_x(g)}
 =V_x(s)\left(1+\sum_{n\ge0}u_x(s)^TK_{*,x}(s)^nq_*\right)
\end{equation}
as an identity of extended nonnegative real numbers.  The series converges if and only if all
coefficient entries are finite and $\rho(K_{*,x}(s))<1$.  Under those conditions,
\begin{equation}\label{eq:bass-serre-resolvent}
 \sum_{g\in H}\e^{-sL^{\BS}_x(g)}
 =V_x(s)\left(1+u_x(s)^T(I-K_{*,x}(s))^{-1}q_*\right).
\end{equation}
In either case, if $\mathscr C$ denotes the strongly connected diagonal blocks whose supports
contain directed closed walks, then
\begin{equation}\label{eq:bass-serre-critical}
 \delta^{\BS}_x
 =\max\left\{\delta_{\coef}(x),
 \max_{C\in\mathscr C}\sup_F\sigma^{\rel}_{C,F}(x)\right\},
\end{equation}
where the maximum and supremum over the empty collection are $0$.
\end{theorem}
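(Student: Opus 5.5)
The plan is to realise the Bass--Serre normal forms of \cref{lem:bass-serre-normal-form} as the accepted representations of a finite trim weighted automaton. The identity \eqref{eq:bass-serre-expanded}, the convergence criterion, the resolvent \eqref{eq:bass-serre-resolvent} and the abscissa formula \eqref{eq:bass-serre-critical} should then follow from \cref{thm:normal-form-resolvent} (or its length-prescribed version \cref{thm:assigned-length-resolvent}) with $Q=1$.

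\textbf{Edge-free case.} If $E_*=\varnothing$, no reduced based loop uses any edge, so only the empty loop occurs. By the uniqueness in \cref{lem:bass-serre-normal-form}, every $g\in H$ equals $h\in G_{v_*}$. Hence $H=G_{v_*}$, $L^{\BS}_x(g)=d(x,gx)$, and \eqref{eq:bass-serre-edge-free} is immediate. Since $K_{*,x}$ and $u_x$ are then empty, $\delta_{\coef}(x)$ is exactly the abscissa of $V_x$, and \eqref{eq:bass-serre-critical} holds with the empty-collection convention.

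\textbf{Case $E_*\ne\varnothing$: dictionary.} I set up the automaton as follows.
\begin{itemize}
\item The state set is $J=E_*$.
\item The initial symbols into $f$ are the cosets $c\in G_{v_*}/G_f$ with $o(f)=v_*$, with label $b_f(c)$.
\item The internal symbols $e\to f$ are the allowed relative cosets of \cref{def:relative-matrix}, with label $b_f(c)$.
\item The terminal indicator is $q_*$.
\item The coefficient symbols are $\mathcal B=G_{v_*}$.
\end{itemize}
With the orbit length $\ell_x$, we have $\ell_x(b_f(c))=L_{f,x}(c)$ by the choice \eqref{eq:minimising-label}. So the automaton length $L_{\mathcal N}$ coincides with $L^{\BS}_x$, and $V,u,K$ become $V_x,u_x,K_{*,x}$.

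The evaluation map $(h,\omega)\mapsto b(\omega)h$ is a bijection onto $H$ by \cref{lem:bass-serre-normal-form}. A reduced loop $(f_1,c_1;\dots;f_n,c_n)$ corresponds to the accepted sequence $(c_1;c_2,\dots,c_n)$ through the states $f_1,\dots,f_n$. The identity coset is excluded on reversals, matching the matrix. Two points must be checked here.
\begin{itemize}
\item Every state reachable from the initial support through allowed transitions and co-reaching a terminal state lies in some reduced based loop. Conversely, every state in $E_*$ is accessible and co-accessible, by cutting such a loop at that state. This gives trimness.
\item Restricting to $E_*$ loses no loops, since every reduced based loop has all its edges in $E_*$ by definition.
\end{itemize}
A mild subtlety is that accepted sequences must end in a terminal state but need not re-enter $v_*$ elsewhere. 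This matches loops because $t(f_n)=v_*$ is exactly $q_*=1$.

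\textbf{Applying the resolvent theorem.} With the dictionary in place, \cref{thm:normal-form-resolvent} gives \eqref{eq:bass-serre-expanded} in $[0,\infty]$ and the equivalence of convergence with finiteness of the coefficients together with $\rho(K_{*,x}(s))<1$. It also gives \eqref{eq:bass-serre-resolvent}. For the abscissa, that theorem yields the maximum of $\delta_{\coef}(x)$ and $\sigma_C$ over cyclic blocks, with $\sigma_C=\sup_F\sigma_{C,F}$.

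\textbf{Main obstacle.} The main work is confirming the hypotheses behind $\sigma_C=\sup_F\sigma_{C,F}$ and the root description. These come from \cref{prop:finite-transition-variation}, which needs $0<\delta_X(\Gamma)<\infty$ and bounded multiplicity.
\begin{itemize}
\item Bounded multiplicity with $Q=1$ comes from unambiguity via fixed entrance and exit paths, exactly as in the proof of \cref{thm:normal-form-resolvent}.
\item Positivity of closed-walk lengths (\cref{lem:positive-cycle-length}) uses finiteness of orbit balls. If only $H$ is considered, this is inherited from $\Gamma$.
\end{itemize}
Finally, I identify $\sigma_{C,F}$ with $\sigma^{\rel}_{C,F}(x)$, which is the definition in the introduction.
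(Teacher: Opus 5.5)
Your proposal is correct and matches the paper's proof. The paper handles the edge-free case directly from \cref{lem:bass-serre-normal-form}. For $E_*\ne\varnothing$ it feeds the unique Bass--Serre normal form into \cref{thm:normal-form-resolvent}, with initial cosets giving $u_x$, allowed transitions giving $K_{*,x}$, terminal selection by $q_*$ and coefficients $V_x$, and then gets \eqref{eq:bass-serre-critical} from \cref{prop:finite-transition-variation} block by block.

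Your explicit trimness and bijection checks supply details the paper leaves implicit. The one point the paper states that you use only tacitly is that $q_*\ne0$ when $E_*\ne\varnothing$, since the last state of any reduced based loop terminates at $v_*$.
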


\begin{proof}
If $E_*=\varnothing$, \cref{lem:bass-serre-normal-form} shows that the edge word $\omega$
is empty for every element of $H$.  Hence $H=G_{v_*}$,
$L_x^{\BS}(g)=d(x,gx)$, and \eqref{eq:bass-serre-edge-free} and the asserted exponent formula
follow directly.

Assume $E_*\ne\varnothing$.  Then $q_*$ is nonzero, because every retained state occurs in a
reduced based loop and the last state of such a loop terminates at $v_*$.  Thus
\cref{thm:normal-form-resolvent} applies to the unique normal form in
\cref{lem:bass-serre-normal-form}.  The first edge contributes $u_x$, each subsequent allowed
transition contributes $K_{*,x}$, the last edge is selected by $q_*$, and the terminal
vertex-group element contributes $V_x$.  The critical-exponent formula follows from
\cref{prop:finite-transition-variation} on each block in $\mathscr C$.
\end{proof}

\subsection{Injectivity of closed paths in strongly connected blocks}

\begin{lemma}
\label{lem:bass-serre-closed-injective}
In every strongly connected component of the state graph containing a directed cycle, based
closed coset-labelled paths have pairwise distinct products in $H$ after multiplication by one
fixed prefix.
\end{lemma}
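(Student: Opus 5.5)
Fix a strongly connected component $C$ of the state graph of $K_{*,x}$ (oriented edges, non-backtracking allowed transitions) containing a directed cycle, and fix a state $v=f\in C$. The approach is to reduce the statement to the uniqueness part of \cref{lem:bass-serre-normal-form}. Trimness supplies a fixed reduced coset-labelled prefix $\pi=(f_1,c_1;\dots;f_m,c_m)$ that starts at $v_*$ and ends in the state $f_m=f$. Similarly, choose a fixed suffix $\kappa$ starting with an allowed transition out of $f$ and ending at an edge terminating at $v_*$. The fixed prefix in the statement will be $b=b(\pi)$. A based closed path $p$ in $C$ at $f$ is a sequence of allowed transitions $f\to g_1\to\cdots\to g_k=f$ with chosen cosets, and its product is $\lambda(p)=b_{g_1}(c'_1)\cdots b_{g_k}(c'_k)$.

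First step: check that the concatenation $\pi p$ is itself a reduced coset-labelled word. This holds because each transition of $p$ is an allowed transition, and allowedness is exactly the local non-backtracking condition used in the definition of reducedness. It also needs $t(f_m)=o(g_1)$, which holds since $f_m=f$ and the first transition of $p$ leaves $f$.

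Second step: show that distinct $p,p'$ with $b\lambda(p)=b\lambda(p')$ cannot occur. Using the induction identities \eqref{eq:actual-bass-serre-edge} and \eqref{eq:actual-bass-serre-terminal}, the word $\pi p$ determines a reduced edge path in $T$ from $x_*=\widetilde v_*$ to $b\lambda(p)\widetilde{t(f)}$; this path is geodesic because it is reduced in a tree. The endpoint is a vertex of the form $g\,\widetilde{t(f)}$ with $g=b\lambda(p)$. If $b\lambda(p)=b\lambda(p')$, the two geodesics share both endpoints, so they coincide. The coordinate recovery in the uniqueness argument of \cref{lem:bass-serre-normal-form} then yields equal lengths, equal edge labels and equal cosets step by step, so $p=p'$. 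This argument does not need the path to close up at $v_*$. Alternatively, one may append the fixed suffix $\kappa$ and the coefficient $1$ to obtain genuine based normal forms and apply uniqueness directly, as in the proof of \cref{thm:normal-form-resolvent}.

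Conclusion and main obstacle: the map $p\mapsto b\lambda(p)$ is injective, so the bounded-multiplicity hypothesis of \cref{thm:critical-exponent-matrix} holds with $Q=1$. The only delicate point is the first step, namely that the prefix–loop junction and each loop-to-loop junction satisfy the omitted-identity-coset rule when an edge is followed by its reverse. This is guaranteed because the matrix entries only count allowed cosets, and the non-backtracking computation in \cref{lem:bass-serre-normal-form} shows that this local rule is exactly tree reducedness.
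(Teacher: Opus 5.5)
Your proposal is correct and follows essentially the same route as the paper. The paper also fixes a trim prefix $\alpha$ ending in the chosen state and observes that $\alpha p$ is a reduced tree path from $\widetilde v_*$ to $b\lambda(p)\widetilde{t(e_0)}$. It then uses uniqueness of tree geodesics together with coset recovery at each vertex to obtain injectivity, that is, $Q=1$. Your alternative via a fixed suffix is a harmless variant of the same idea.
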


\begin{proof}
Fix a state $e_0$ in the component.  Since the state graph has been trimmed, choose once and
for all a reduced based coset-labelled path $\alpha$ from $v_*$ whose final oriented-edge
state is $e_0$.  Let $b\in H$ be the product of its coordinate labels.  If $p$ is a directed
closed path based at $e_0$, append its transitions to $\alpha$.  The identity coset is excluded
on every reverse transition, so the calculation in the proof of
\cref{lem:bass-serre-normal-form} shows that $\alpha p$ determines a reduced path in $T$.
Its initial vertex is $\widetilde v_*$ and its terminal vertex is
$
 b\lambda(p)\widetilde{t(e_0)},
$
where $\lambda(p)$ is the product of the transition labels of $p$.

If $b\lambda(p)=b\lambda(p')$ for two based closed paths, the two reduced paths
$\alpha p$ and $\alpha p'$ have the same initial and terminal vertices.  Uniqueness of the
geodesic in a tree makes their actual edge sequences identical.  Removing the fixed prefix
$\alpha$ and using the uniqueness of the outgoing relative coset at each vertex recovers every
individual transition. Hence $p=p'$.  Thus the map
$p\mapsto b\lambda(p)$ is injective, so the bounded-multiplicity hypothesis of
\cref{thm:critical-exponent-matrix} holds with $Q=1$.
\end{proof}

\begin{theorem}
\label{thm:bass-serre-critical-exponent}
Let $H\le\Gamma$ have a finite graph-of-groups decomposition.  For every strongly connected block $C$ whose support contains a directed closed walk, $K_{C,x}(s)$ is finite entrywise for $s\ge\delta_X(\Gamma)$ and
\begin{equation}\label{eq:bass-serre-critical-exponent}
 \rho(K_{C,x}(s))<1\quad(s>\delta_X(\Gamma)),
 \qquad
 \rho(K_{C,x}(\delta_X(\Gamma)))\le1.
\end{equation}
Every finite truncation has spectral radius at most one at $s=\delta_X(\Gamma)$.
\end{theorem}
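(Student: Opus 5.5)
The plan is to reduce the statement to \cref{cor:finite-state-subgroup} applied to the state graph of oriented edges of $Y$ with the relative-coset transitions of \cref{def:relative-matrix}. Concretely, the finite state set is $E_*$ (or any strongly connected component of the full oriented-edge graph that meets the trimmed states). For each allowed transition $e\to f$ and each admissible coset $c\in G_{o(f)}/G_f\setminus E_{ef}$, we introduce one individual transition symbol with group label $\lambda=b_f(c)\in H$. By \eqref{eq:minimising-label} we have $\ell_x(b_f(c))=L_{f,x}(c)$. Hence the weighted adjacency matrix \eqref{eq:finite-state-matrix} of this labelled multigraph coincides entrywise with $K_x(s)$, and its diagonal block on $C$ is exactly $K_{C,x}(s)$, with finite truncations corresponding to finite transition sets.

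Next we verify the bounded-multiplicity hypothesis. This is precisely \cref{lem:bass-serre-closed-injective}. Fix a state $e_0\in C$ and a trimmed prefix $\alpha$ from $v_*$ ending at $e_0$, with product $b\in H$. Then $p\mapsto b\lambda(p)$ is injective on directed closed walks in $C$ based at $e_0$, so $Q=1$. All labels lie in $H\le\Gamma$, and $0<\delta_X(\Gamma)<\infty$ is the standing assumption, so the Poincaré series of $\Gamma$ converges for $s>\delta_X(\Gamma)$. \Cref{cor:finite-state-subgroup} (equivalently \cref{thm:critical-exponent-matrix}) then yields three conclusions: entrywise finiteness for $s\ge\delta_X(\Gamma)$, the strict inequality $\rho<1$ above $\delta_X(\Gamma)$, the bound $\rho\le1$ at $\delta_X(\Gamma)$, and the finite-truncation bound.

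The step I expect to need the most care is making sure the injectivity lemma genuinely applies to every block $C$ under consideration, not just to blocks reachable via a reduced based loop. If $C$ is a block of the trimmed matrix $K_{*,x}$, the prefix $\alpha$ exists by trimness and there is nothing more to do. If the statement is meant for blocks of the full oriented-edge matrix, then I would replace the based prefix by the identity prefix. In that case the walk starts at the lift $\widetilde{o(e_0)}$ and traverses $\widetilde e_0$ first, and the same tree-geodesic uniqueness argument from \cref{lem:bass-serre-normal-form} shows that distinct closed non-backtracking coset walks give distinct reduced paths from a fixed vertex. Distinct endpoints then follow because endpoints together with reducedness determine the path. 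This again gives $Q=1$ with $b$ equal to the label of the first edge.

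A second minor point is that the non-backtracking exclusion of the identity coset is exactly what makes concatenations of closed walks reduced. That fact is already verified in the proof of \cref{lem:bass-serre-normal-form}.
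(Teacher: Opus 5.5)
Your proposal is correct and is essentially the paper's proof: you realize $K_x(s)$ as the weighted adjacency matrix of the coset-labelled transition system with the minimising labels $b_f(c)$, get $Q=1$ from \cref{lem:bass-serre-closed-injective}, and conclude with \cref{cor:finite-state-subgroup}. You also treat blocks of the full matrix versus the trimmed one, using the one-edge prefix $\widetilde{e_0}$ when no based prefix is available; this is a valid refinement of a point the paper passes over, since its proof uses the full $K_x(s)$ while the injectivity lemma is phrased via trimness.
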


\begin{proof}
With the minimising labels \eqref{eq:minimising-label}, the countable labelled transition
system has weighted adjacency matrix exactly $K_x(s)$.  The injectivity in
\cref{lem:bass-serre-closed-injective} verifies the hypothesis of
\cref{thm:critical-exponent-matrix}; use the subgroup form
\cref{cor:finite-state-subgroup}.
\end{proof}

\Cref{thm:finite-vertex-deletion-main,cor:boundary-dimension-ends} follow from
\cref{thm:finite-vertex-deletion-boundary,thm:word-ball-cover,thm:end-boundary-map,thm:end-ultrametric,cor:word-ball-dimension-ends}.
\Cref{thm:accessible-main,cor:accessibility-extension,cor:accessible-gap,cor:kleinian-classical}
are proved in \cref{sec:small-exponent}.  We now prove
\cref{thm:bass-serre-main,cor:resolvent-dimension-bounds}.

Fix $\beta$.  Apply \cref{thm:bass-serre-resolvent} to
$H_\beta$ with the restricted orbit length.  This gives
\eqref{eq:main-resolvent} and \eqref{eq:main-finite-recovery}.  By
\cref{thm:bass-serre-critical-exponent}, every full block $K_{\beta,C,o}$ is entrywise finite at $s=\delta_X(\Gamma)$
and satisfies the corresponding assertion in \eqref{eq:main-transfer-comparison}.  Finally,
$d(o,ho)\le L^{\BS}_{\beta,o}(h)$ gives
$\delta^{\BS}_{\beta,o}\le\delta_X(H_\beta)\le\delta_X(\Gamma)$.
\section{Bass--Serre transfer matrices and hyperbolic representatives}
\label{sec:main-proof}

For an allowed relative coset $c=rG_f$, every element of the form
\[
 g_{f,c}=rh\tau_f,\qquad h\in G_f,
\]
represents the same coordinate transition.  It need not realise the minimum in
\eqref{eq:relative-length}.

\begin{lemma}
\label{lem:relative-axis-comparison}
If $g_{f,c}=rh\tau_f$ is hyperbolic (equivalently axial), then
\begin{equation}\label{eq:axial-relative-domination}
 L_{f,o}(c)\le d(o,g_{f,c}o)\le a_o(g_{f,c}).
\end{equation}
Consequently, for every $s>0$ we have,
\begin{equation}\label{eq:axial-weight-domination}
 \e^{-sa_o(g_{f,c})}\le\e^{-sL_{f,o}(c)}.
\end{equation}
\end{lemma}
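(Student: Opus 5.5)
The plan is to obtain the two inequalities in \eqref{eq:axial-relative-domination} separately and then exponentiate.

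For the left inequality, we check that $g_{f,c}=rh\tau_f$ with $h\in G_f$ is one of the competitors in the minimum of \cref{def:relative-length}. By that definition, $L_{f,o}(c)=\min_{h'\in G_f}d(o,rh'\tau_fo)$, the minimum being attained by \cref{lem:orbit-basic} and independent of the representative $r$ of $c$. Since $h\in G_f$, the value $d(o,rh\tau_fo)=d(o,g_{f,c}o)$ is one of the quantities being minimized. Hence $L_{f,o}(c)\le d(o,g_{f,c}o)$. The only point to verify is that the representative $r$ used to write $g_{f,c}$ lies in the coset $c$. This holds by hypothesis, since $g_{f,c}$ was chosen "in that coordinate class".

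For the right inequality, the hypothesis that $g_{f,c}$ is axial lets us apply \cref{lem:axis-power} with $n=1$ and $x=o$. This gives
\[
d(o,g_{f,c}o)\le\tau_X(g_{f,c})+2d(o,\Ax(g_{f,c}))=a_o(g_{f,c}).
\]
The underlying reason is the projection argument: project $o$ to the axis at $p$, use $d(p,gp)=\tau_X(g)$, and apply the triangle inequality. That argument is already recorded in \cref{lem:axis-power}, so nothing new is needed here.

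Finally, fix $s>0$. The map $t\mapsto\e^{-st}$ is decreasing, so the chain $L_{f,o}(c)\le a_o(g_{f,c})$ gives \eqref{eq:axial-weight-domination} directly. There is no real obstacle. The only subtle step is the bookkeeping in the first part, namely confirming that any $h\in G_f$ is admissible in the minimum. The same termwise comparison, summed over the finite sets $\mathcal F_{ef}$, will then give the entrywise bound $A^{\mathcal F}\le K^{\mathcal F}$ used later.
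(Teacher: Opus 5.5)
Your proposal is correct and follows essentially the same route as the paper: the left inequality holds because $g_{f,c}=rh\tau_f$ with $h\in G_f$ is one of the competitors in the minimum defining $L_{f,o}(c)$, and the right inequality is \cref{lem:axis-power} with $n=1$. Exponentiating with the decreasing map $t\mapsto\e^{-st}$ then gives the weight comparison.
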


\begin{proof}
The first inequality follows because $L_{f,o}(c)$ is the minimum over $h\in G_f$.  The second
is \cref{lem:axis-power} with $n=1$.  Exponentiating reverses the inequalities.
\end{proof}

For each transition $e\to f$, let $\mathcal F_{ef}$ be a finite set of distinct allowed
relative cosets, and choose exactly one hyperbolic representative $g_{ef,c}$ for every
$c\in\mathcal F_{ef}$.  Summing \eqref{eq:axial-weight-domination} once for each selected
coset proves the entrywise inequality in \eqref{eq:main-transfer-comparison}.  Perron--Frobenius
monotonicity and the finite-transition estimate at $s=\delta_X(\Gamma)$ gives
\[
 \rho(A^{\mathcal F}_{\beta,C,o}(\delta_X(\Gamma)))
 \le\rho(K^{\mathcal F}_{\beta,C,o}(\delta_X(\Gamma)))\le1.
\]
Since $A^{\mathcal F}(s)\le K^{\mathcal F}(s)$ for all $s$, the set on which the exact relative
matrix has spectral radius less than one is contained in the corresponding set for the comparison matrix.  Hence $\sigma^{\ax}\le\sigma^{\rel}$.  Distinctness of the selected cosets is essential:
without it the same relative weight could be counted more than once.  This proves
\eqref{eq:main-transfer-comparison}.

If the action is convex-cobounded, \cref{thm:bourdon-coornaert} identifies $\delta_X(\Gamma)$ with the Hausdorff dimension of the limit set.  If instead
$\delta_X(\Gamma)<1$, the same identity is \eqref{eq:full-limit-dimension}, proved by the
word-ball case of the theorem.  The bound holds for every basepoint $y$, and
\eqref{eq:bourdon-basepoint} makes the boundary dimension basepoint-independent.  Taking
suprema gives \eqref{eq:main-dimension-lower-bound}.  A finite hierarchy is a finite family of
subgroup splittings, so the subgroup theorem applies at every node.  This completes
the proof of \cref{thm:bass-serre-main,cor:resolvent-dimension-bounds}.

\subsection{Spectral-radius equation for finite comparison matrices}

\begin{proposition}
\label{prop:axis-matrix-root}
Let $A^{\mathcal F}(s)$ be a finite comparison matrix.  If its support contains no directed cycle, then
$\sigma^{\ax}_{\mathcal F}=0$.  Otherwise $s\mapsto\rho(A^{\mathcal F}(s))$ is continuous and strictly decreasing on
$[0,\infty)$ and tends to zero as $s\to\infty$.  Thus
\[
 \sigma^{\ax}_{\mathcal F}=0\quad\text{if }\rho(A^{\mathcal F}(0))\le1,
\]
and otherwise $\sigma^{\ax}_{\mathcal F}$ is the unique positive solution of
\begin{equation}\label{eq:axis-matrix-root}
 \rho(A^{\mathcal F}(s))=1.
\end{equation}
\end{proposition}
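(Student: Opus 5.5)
The plan mirrors the finite-truncation argument in \cref{prop:finite-transition-variation}, with the axis quantities $a_o(g)$ playing the role of transition lengths. Write $A(s)=A^{\mathcal F}(s)$. It is a finite nonnegative matrix with fixed support (each selected coset contributes a strictly positive term $\e^{-sa_o(g_{ef,c})}$ for every $s$), and its entries are finite sums of exponentials, so they are continuous and nonincreasing in $s$.

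First I would dispose of the acyclic case. If the support contains no directed cycle, then after a simultaneous permutation $A(s)$ is strictly triangular, hence nilpotent. So $\rho(A(s))=0<1$ for every $s>0$, and the infimum defining $\sigma^{\ax}_{\mathcal F}$ is $0$.

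Now assume a directed cycle exists, and put $A(s)$ in Frobenius normal form. Then $\rho(A(s))$ is the maximum of the spectral radii of the irreducible diagonal blocks containing directed closed walks; acyclic blocks contribute $0$. The key input is positivity: every selected representative is axial, so $a_o(g)\ge\tau_X(g)>0$ for every retained term. Hence every entry in the support is strictly decreasing in $s$, without needing \cref{lem:positive-cycle-length}.

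On each cyclic irreducible block, take $0\le r<t$. Then $A(t)\le A(r)$ entrywise with strict inequality on a nonzero entry and the same support. Strict Perron--Frobenius comparison for irreducible matrices \cite[Chapter~1]{Seneta2006} then gives $\rho(A_C(t))<\rho(A_C(r))$. A maximum of finitely many strictly decreasing continuous functions is strictly decreasing and continuous, and continuity follows from continuity of the spectral radius in fixed finite dimension.

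For the limit at infinity, let $m=\min a_o(g_{ef,c})>0$ over the finitely many selected cosets. Then $A(s)\le \e^{-sm}A(0)$ entrywise, so monotonicity gives $\rho(A(s))\le \e^{-sm}\rho(A(0))\to0$. This bound also gives continuity at $s=0$.

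The root description then follows from the intermediate value theorem. If $\rho(A(0))\le1$, strict decrease gives $\rho(A(s))<1$ for all $s>0$, so $\sigma^{\ax}_{\mathcal F}=0$. If $\rho(A(0))>1$, continuity and strict monotonicity produce exactly one $s_0>0$ with $\rho(A(s_0))=1$. Since $\{s:\rho(A(s))<1\}=(s_0,\infty)$, this gives $\sigma^{\ax}_{\mathcal F}=s_0$.

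The only delicate step is the strict Perron--Frobenius comparison. It requires irreducibility and a strictly smaller matrix with nonzero spectral radius. A cyclic irreducible block has positive Perron root, because \cref{lem:closed-walk} together with the positive weights gives a positive lower bound, so the standard strict monotonicity theorem applies. That is why the argument works block by block rather than on the whole matrix.
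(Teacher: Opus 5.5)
Your proposal is correct and follows essentially the same route as the paper: Frobenius normal form with nilpotent acyclic blocks, positivity $a_o(g)\ge\tau_X(g)>0$ giving strict entrywise decrease on each cyclic irreducible block, strict Perron--Frobenius comparison on those blocks, the maximum over blocks, and the intermediate value theorem for the root. Your bound $\rho(A(s))\le\e^{-sm}\rho(A(0))$ gives the decay at infinity more quantitatively than the paper's entrywise limit. This one-sided bound does not by itself give continuity at $s=0$, but you do not need it there, since continuity of the spectral radius in fixed finite dimension already covers that point.
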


\begin{proof}
Set $A^{\mathcal F}(s)$ in Frobenius normal form.  Every acyclic diagonal block is nilpotent
and contributes spectral radius zero.  Every remaining diagonal block is irreducible and its
support contains a directed cycle.  Each retained number $a_x(g)$ is strictly positive because
$a_x(g)\ge\tau_X(g)>0$. Hence, whenever $0\le r<t$, every positive entry of every strongly connected diagonal block containing a directed closed
walk strictly decreases from parameter $r$ to parameter $t$.  Strict Perron--Frobenius
comparison therefore gives
\[
 \rho(B(t))<\rho(B(r))
\]
for each strongly connected diagonal block $B$ containing a directed closed walk.

The spectral radius of $A^{\mathcal F}(s)$ is the maximum of the radius of its finitely many
strongly connected diagonal blocks containing directed closed walks.  Given $r<t$, choose a block $B$ attaining that maximum at $t$.
Then
\[
 \rho(A^{\mathcal F}(r))\ge\rho(B(r))>
 \rho(B(t))=\rho(A^{\mathcal F}(t)),
\]
so the full spectral radius is strictly decreasing.  It is continuous because the matrix is
finite and its entries are continuous.  Since the transition set is finite and every retained
length is positive, every entry tends to zero as $s\to\infty$, and hence so does the spectral
radius.  If the support contains no directed cycle, the whole matrix is nilpotent for every
$s$.  The formula for the critical parameter and the unique-root assertion now follow from
continuity and strict monotonicity.
\end{proof}

\section{Groups of critical exponent less than one: parabolics and structure}
\label{sec:small-exponent}

For $\zeta\in\partial X$, write $b_\zeta$ for a Busemann function.  If an isometry fixes
$\zeta$, the quantity $b_\zeta(gx)-b_\zeta(x)$ is independent of $x$ and defines its
Busemann character.  For every isometry $g$ fixing $\zeta$,
\[
 d(x,g^n x)\ge |b_\zeta(g^n x)-b_\zeta(x)|
 =n|\chi_\zeta(g)|.
\]
Thus a nonzero Busemann character gives positive stable translation length; the isometry
classification in a proper CAT$(-1)$ space then makes $g$ axial
\cite[Chapter~II.6]{BridsonHaefliger1999}.  Therefore a subgroup fixing $\zeta$ and
containing no axial isometry preserves every horosphere centred at $\zeta$.

\begin{lemma}
\label{lem:horosphere-contraction}
Let $P\le\Isom(X)$ fix $\zeta\in\partial X$ and contain no axial isometry.  Let
$c:[0,\infty)\to X$ be the ray from $o$ to $\zeta$.  For every finite set $T\subset P$ there
is $C_T>0$ such that
\begin{equation}\label{eq:horosphere-contraction}
 d(c(t),u c(t))\le C_T\e^{-t}\qquad(u\in T,\ t\ge0).
\end{equation}
\end{lemma}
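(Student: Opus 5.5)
The plan is to reduce \eqref{eq:horosphere-contraction} to the standard exponential convergence of asymptotic geodesic rays in a CAT$(-1)$ space, normalised by horospheres. Fix $u\in T$. Since $u$ fixes $\zeta$, the ray $u\circ c$ starts at $uo$ and is asymptotic to $c$. By the discussion preceding the lemma, $P$ contains no axial isometry, so the Busemann character $\chi_\zeta(u)$ vanishes. Hence $b_\zeta(uc(t))=b_\zeta(c(t))$ for every $t$: the points $c(t)$ and $uc(t)$ lie on the same horosphere. It therefore suffices to prove the following general estimate. If two rays $c,c'$ end at $\zeta$ and satisfy $b_\zeta(c(t))=b_\zeta(c'(t))$ for all $t$, then
\[
 d(c(t),c'(t))\le C(d(c(0),c'(0)))\,\e^{-t},
\]
where $C(\cdot)$ is a function of the initial distance alone, for instance $C(r)=2\sinh(r/2)$ up to a harmless factor such as $\e^{r}$. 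We then take $C_T=\max_{u\in T}C(d(o,uo))$, which is finite because $T$ is finite.

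To prove the general estimate, I would approximate the ideal configuration by finite triangles. For large $s$, consider the geodesic triangle $\Delta_s$ with vertices $o$, $uo$ and $c(s)$. The side $[uo,c(s)]$ converges, uniformly on compacta, to the ray $u\circ c$ as $s\to\infty$. Compare $\Delta_s$ with its comparison triangle in $\mathbb H^2$. In the limit $s\to\infty$, the comparison triangles converge to a triangle in $\mathbb H^2$ with one ideal vertex and finite side $d(o,uo)$. In $\mathbb H^2$ the computation is explicit. In the upper half-plane with the ideal vertex at $\infty$, two vertical rays at heights normalised to the same horocycle have horocyclic separation proportional to $\e^{-t}$. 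The hyperbolic distance between the points at the same height is then at most $2\operatorname{arcsinh}$ of half that separation, which is $\le\mathrm{const}\cdot\e^{-t}$. The constant depends only on $d(o,uo)$, since the initial horocyclic offset is bounded by the initial distance plus the Busemann height discrepancy of the starting points.

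The CAT$(-1)$ inequality transfers the bound to $X$. Let $c'_s$ denote the point on $[uo,c(s)]$ whose comparison point lies on the comparison horocycle through $\bar c(t)$. Then $d(c(t),c'_s)\le d_{\mathbb H^2}(\bar c(t),\bar c'_s)$. Letting $s\to\infty$, and using continuity of Busemann functions, identifies $c'_s$ in the limit with the point of $u\circ c$ at level $b_\zeta(c(t))$, which is $uc(t)$ because the character vanishes.

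The main obstacle is the bookkeeping in this last step. Comparison triangles match points by distance along sides, not by Busemann level. The two parametrisations of the side from $uo$ differ by an amount bounded by $|b_\zeta(uo)-b_\zeta(o)|$ and by a term that vanishes as $s\to\infty$. Here $|b_\zeta(uo)-b_\zeta(o)|=0$ because the character vanishes, although it is in any case $\le d(o,uo)$. My plan is to parametrise both rays by arclength and exploit the vanishing of the character to show that arclength parameter $t$ on $u\circ c$ corresponds exactly to level $b_\zeta(c(t))$. The residual mismatch from finite $s$ is $o(1)$, and it is absorbed in the limit. An additive error of size $d(o,uo)$ at small $t$ is harmless, since it only enlarges $C_T$.
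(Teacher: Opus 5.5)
Your proposal is correct and follows the paper's argument: the vanishing Busemann character puts $o$ and $uo$ on a common horosphere, and CAT$(-1)$ comparison with an $\mathbb H^2$ triangle whose ideal vertex sits at $\infty$ gives $\sinh\bigl(d(c(t),uc(t))/2\bigr)\le \e^{-t}\sinh\bigl(d(o,uo)/2\bigr)$, so $C_T=2\max_{u\in T}\sinh(d(o,uo)/2)$ works. The only difference is that the paper cites the ideal-triangle comparison inequality directly (Das--Simmons--Urba\'nski, Proposition~4.4.13), whereas you derive it as a limit of the finite triangles $\Delta(o,uo,c(s))$ by matching points at equal distance from $c(s)$; that is a valid self-contained justification.
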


\begin{proof}
Every $u\in P$ has zero Busemann character, so the rays $c$ and $uc$ have the same endpoint
$\zeta$ and are synchronized by the same Busemann parameter.  Apply the CAT$(-1)$ comparison
inequality for ideal triangles, in the precise form of
\cite[Proposition~4.4.13]{DasSimmonsUrbanski2017}, to the ideal triangle
$\Delta(o,uo,\zeta)$.  Normalize its comparison triangle in the upper half-plane model of
$\mathbb H^2$ so that the ideal vertex is $\infty$ and the comparison points for $o$ and $uo$
are $(0,1)$ and $(a,1)$.  Equality of the Busemann parameters is exactly the condition that
these two finite vertices lie on the same horosphere.  Since
\[
 d_{\mathbb H^2}((0,1),(a,1))=2\operatorname{arsinh}(|a|/2)=d(o,uo),
\]
the points at time $t$ on the two comparison rays are $(0,\e^t)$ and $(a,\e^t)$, and hence
\[
 \sinh\!\left(\frac{d_{\mathbb H^2}((0,\e^t),(a,\e^t))}{2}\right)
 =\frac{|a|}{2\e^t}
 =\e^{-t}\sinh\!\left(\frac{d(o,uo)}2\right).
\]
The ideal-triangle CAT$(-1)$ inequality now gives
\[
 \sinh\frac{d(c(t),uc(t))}{2}
 \le \e^{-t}\sinh\frac{d(o,uo)}2.
\]
Since $v\le\sinh v$ for $v\ge0$, one may take
$C_T=2\max_{u\in T}\sinh(d(o,uo)/2)$.
\end{proof}

\begin{proposition}
\label{prop:parabolic-growth}
Let $P\le\Isom(X)$ be a finitely generated subgroup fixing a point of $\partial X$ and
containing no axial isometry.  For every finite symmetric generating set $T$ there is $C_T\ge0$ such that
\begin{equation}\label{eq:parabolic-log-displacement}
 d(o,po)\le2\log(1+|p|_T)+C_T\qquad(p\in P).
\end{equation}
For every $s>\delta_X(P)$ we have,
\begin{equation}\label{eq:parabolic-polynomial-growth}
 \beta_T(n):=\#\{p:|p|_T\le n\}
 \le \e^{sC_T}P_{P,o}(s)(1+n)^{2s}.
\end{equation}
Consequently,
\begin{equation}\label{eq:parabolic-growth-degree}
 \limsup_{n\to\infty}\frac{\log\beta_T(n)}{\log n}
 \le2\delta_X(P),
\end{equation}
and:
\begin{enumerate}[label=\textup{(\roman*)}]
\item if $\delta_X(P)<1$, then $P$ is virtually cyclic,
\item if $\delta_X(P)<1/2$, then $P$ is finite.
\end{enumerate}
\end{proposition}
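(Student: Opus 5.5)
The plan is to convert the exponential horospherical contraction of \cref{lem:horosphere-contraction} into a logarithmic displacement bound. Then the polynomial-growth estimates follow by comparing with the Poincar\'e series of $P$. Finally, Gromov's polynomial growth theorem, together with the classification of groups of polynomial growth degree below two, gives (i) and (ii).

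For \eqref{eq:parabolic-log-displacement}, write $p=u_1\cdots u_n$ with $u_i\in T$ and $n=|p|_T$, and let $c$ be the ray from $o$ to $\zeta$. Every element of $P$ fixes $\zeta$ and has zero Busemann character, so each $u_i$ maps $c(t)$ to the point of the ray $u_ic$ at the same horospherical level. Telescoping through the prefixes $p_j=u_1\cdots u_j$ gives
\[
d(c(t),pc(t))\le\sum_{j=1}^n d\bigl(p_{j-1}c(t),p_{j-1}u_jc(t)\bigr)=\sum_{j}d(c(t),u_jc(t))\le nC_T\e^{-t}.
\]
Since $d(o,c(t))=t=d(po,pc(t))$, we get $d(o,po)\le 2t+nC_T\e^{-t}$. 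Choosing $t=\log(1+n)$ yields $d(o,po)\le2\log(1+n)+C_T$, after enlarging the constant.

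For \eqref{eq:parabolic-polynomial-growth}, each $p$ with $|p|_T\le n$ satisfies $\e^{-sd(o,po)}\ge \e^{-sC_T}(1+n)^{-2s}$. Summing over the ball gives $\beta_T(n)\e^{-sC_T}(1+n)^{-2s}\le P_{P,o}(s)$, and this is finite for $s>\delta_X(P)$. Taking logarithms, dividing by $\log n$, and letting $s\downarrow\delta_X(P)$ gives \eqref{eq:parabolic-growth-degree}. The degenerate case $\delta_X(P)=0$ is handled the same way: then $P_{P,o}(s)<\infty$ for all $s>0$. If $P$ is finite everything is trivial. Otherwise discreteness of $\Gamma$, which is implicit since $P\le\Gamma$ in the application, makes orbit balls finite.

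For (i), $\delta_X(P)<1$ makes the growth at most $Cn^{2s}$ with $2s<2$, so it is polynomial. Gromov's theorem makes $P$ virtually nilpotent, and by Bass--Guivarc'h its growth degree is an integer $d$ with $\beta_T(n)\asymp n^d$. Since $d\le 2s<2$, we get $d\le1$, so $P$ is finite or virtually $\Z$. To avoid invoking Gromov, one can instead use the elementary fact (Justin / van den Dries--Wilkie type) that growth satisfying $\beta(n)\le n+c$ infinitely often, or more generally growth $o(n^2)$, forces virtually cyclic. I expect this group-theoretic citation to be the main obstacle. For (ii), $\delta_X(P)<1/2$ gives $\beta_T(n)=O(n^{2s})$ with $2s<1$, that is $\beta_T(n)=o(n)$. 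An infinite finitely generated group has $\beta_T(n)\ge n+1$, since a geodesic ray from the identity in the Cayley graph meets every sphere. Hence $P$ is finite.
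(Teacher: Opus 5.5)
Your proposal is correct and follows the paper's proof. You telescope the horospherical contraction of \cref{lem:horosphere-contraction} along a geodesic word to get $d(o,po)\le 2t+C_T|p|_T\e^{-t}$, and you bound the word ball by the Poincar\'e series; you then use Gromov's theorem with Bass--Guivarc'h for (i) and the geodesic-ray bound $\beta_T(n)\ge n+1$ for (ii). Your choice $t=\log(1+n)$ instead of the paper's $t=\log n$ covers $p=1$ without enlarging the constant. Your remarks on discreteness are unnecessary, since $P_{P,o}(s)<\infty$ for $s>\delta_X(P)$ by the definition of the abscissa. The suggested way of avoiding Gromov's theorem is also unnecessary.
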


\begin{proof}
Write $p=u_1\cdots u_n$ with $u_j\in T$ and $n=|p|_T$.  By
\cref{lem:horosphere-contraction} and invariance,
\[
 d(c(t),pc(t))
 \le\sum_{j=1}^n d(u_1\cdots u_{j-1}c(t),u_1\cdots u_jc(t))
 \le C_Tn\e^{-t}.
\]
Thus $d(o,po)\le2t+C_Tn\e^{-t}$.  Taking $t=\log n$ for $n\ge1$ proves
\eqref{eq:parabolic-log-displacement}, after enlarging the constant to include $p=1$.
Every element of the word ball $B_T(n)$ then contributes at least
$\e^{-sC_T}(1+n)^{-2s}$ to $P_{P,o}(s)$, proving
\eqref{eq:parabolic-polynomial-growth}.  Taking logarithms, then the limsup, and finally
letting $s\downarrow\delta_X(P)$ proves \eqref{eq:parabolic-growth-degree}.

If $\delta_X(P)<1$, choose $s$ with $\delta_X(P)<s<1$.  The growth is polynomial of degree
strictly less than two.  Gromov's polynomial-growth theorem makes $P$ virtually nilpotent
\cite{Gromov1981}. The Bass--Guivarc'h degree formula then implies that an infinite such group
is virtually cyclic, since every non-virtually-cyclic finitely generated virtually nilpotent
group has growth degree at least two \cite{Bass1972,Guivarch1973}.  If
$\delta_X(P)<1/2$, choose $s<1/2$.  Then $\beta_T(n)=O(n^\alpha)$ with $\alpha<1$, whereas
every infinite locally finite Cayley graph contains a geodesic ray and therefore has at least
$n+1$ vertices in its radius-$n$ ball.  Hence $P$ is finite.
\end{proof}

\begin{corollary}
\label{prop:parabolic-half}
If $p$ is a parabolic isometry of infinite order, then
$\delta_X(\langle p\rangle)\ge1/2$.  The constant is sharp for $z\mapsto z+1$ in
$\mathbb H^2$.
\end{corollary}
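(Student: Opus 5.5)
Two things are claimed: the bound $\delta_X(\langle p\rangle)\ge1/2$, and the fact that $1/2$ is attained by $z\mapsto z+1$ on $\mathbb H^2$. Both will be read off from \cref{prop:parabolic-growth}, together with one explicit computation in the upper half-plane.

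For the inequality, put $P=\langle p\rangle$. It is finitely generated, and it fixes the unique fixed point $\zeta\in\partial X$ of the parabolic $p$. We first check that $P$ contains no axial isometry. Suppose $p^n$ were axial for some $n\ne0$. Then $p^n$ would have positive translation length, so $p$ would have positive stable translation length $\lim_m d(x,p^mx)/m$, and by the classification quoted before \cref{lem:horosphere-contraction} $p$ would itself be axial, a contradiction. Equivalently, the Busemann character of $p$ at $\zeta$ vanishes, hence so does the character of every power.

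So \cref{prop:parabolic-growth} applies to $P$. We use it as follows.
\begin{enumerate}[label=\textup{(\roman*)}]
\item If $\delta_X(P)=\infty$ there is nothing to prove.
\item If $\delta_X(P)<1/2$, part~(ii) of \cref{prop:parabolic-growth} says $P$ is finite, contradicting infinite order.
\item If $P$ is not discrete, then for some $R$ infinitely many $g\in P$ satisfy $d(o,go)\le R$, so $P_{P,o}(s)=\infty$ for every $s$ and $\delta_X(P)=\infty$. This is already covered by~(i).
\end{enumerate}
We do not need the growth-degree formula \eqref{eq:parabolic-growth-degree} as such. If one wants to avoid part~(ii), the same conclusion comes directly from \eqref{eq:parabolic-polynomial-growth}: in the cyclic group, $\beta_T(n)=2n+1$ for $T=\{p^{\pm1}\}$, and \eqref{eq:parabolic-polynomial-growth} then gives $2n+1\le C(1+n)^{2s}$ for every $s>\delta_X(P)$. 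Letting $n\to\infty$ forces $2s\ge1$ for each such $s$, hence $\delta_X(P)\ge1/2$.

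For sharpness, compute for $p(z)=z+1$ with basepoint $o=i$. The standard formula gives
\[
 d(i,i+n)=2\operatorname{arsinh}(|n|/2)=2\log|n|+O(1)\qquad(|n|\to\infty).
\]
Hence $\sum_n \e^{-s d(i,i+n)}$ is comparable to $\sum_{n\ne0}|n|^{-2s}$. This converges exactly when $s>1/2$, so $\delta_{\mathbb H^2}(\langle p\rangle)=1/2$. The group is discrete and $p$ is parabolic of infinite order, so the bound is attained.

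The one step needing care is that no nontrivial power of $p$ is axial, which is what makes \cref{prop:parabolic-growth} applicable. This rests on the standard CAT$(-1)$ classification facts already quoted in this section.
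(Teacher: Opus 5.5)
Your proof is correct and takes essentially the paper's route: both arguments rest on the logarithmic displacement bound $d(o,p^no)\le2\log n+C$ from \cref{prop:parabolic-growth}, and both do the same sharpness computation $d(i,i+n)=2\operatorname{arsinh}(n/2)$. The paper compares the cyclic Poincar\'e series directly with $\sum n^{-2s}$, which also yields divergence at $s=1/2$, whereas you pass through part~(ii) or the counting bound $2n+1\le C(1+n)^{2s}$; your explicit check that no power of $p$ is axial supplies a hypothesis the paper leaves implicit.
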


\begin{proof}
The proof of \cref{prop:parabolic-growth} gives
$d(o,p^n o)\le2\log n+C$.  Thus the cyclic Poincar\'e series dominates a constant multiple of
$\sum n^{-2s}$ and diverges for $s\le1/2$.  In $\mathbb H^2$,
$d(i,i+n)=2\operatorname{arsinh}(n/2)=2\log n+O(1)$.
\end{proof}

\begin{corollary}[Parabolic growth degree and abelian rank]
\label{cor:parabolic-rank}
Let $P$ satisfy the hypotheses of \cref{prop:parabolic-growth}.
If $P$ is virtually nilpotent, then
\begin{equation}\label{eq:parabolic-bg-bound}
 d_{\mathrm{BG}}(P)\le2\delta_X(P),
\end{equation}
where $d_{\mathrm{BG}}(P)$ is its Bass--Guivarc'h degree of polynomial growth.  More generally,
if $P$ contains a virtually abelian subgroup of rank $r$, then
\begin{equation}\label{eq:parabolic-rank-bound}
 r\le2\delta_X(P).
\end{equation}
Both constants are sharp: a rank-$r$ lattice of horospherical translations in
$\mathbb H^{r+1}$ has critical exponent $r/2$.
\end{corollary}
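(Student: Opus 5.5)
The plan is to combine the polynomial growth bound \eqref{eq:parabolic-polynomial-growth} of \cref{prop:parabolic-growth} with lower growth bounds coming from nilpotent group theory. For the first inequality, suppose $P$ is virtually nilpotent with finite symmetric generating set $T$. By the Bass--Guivarc'h theorem \cite{Bass1972,Guivarch1973} there is $c>0$ with $\beta_T(n)\ge c\,n^{d_{\mathrm{BG}}(P)}$ for all $n\ge1$. For every $s>\delta_X(P)$, \eqref{eq:parabolic-polynomial-growth} gives $\beta_T(n)\le \e^{sC_T}P_{P,o}(s)(1+n)^{2s}$, where the constant is finite because $P_{P,o}(s)<\infty$. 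Comparing the two bounds as $n\to\infty$ forces $d_{\mathrm{BG}}(P)\le 2s$. Letting $s\downarrow\delta_X(P)$ proves \eqref{eq:parabolic-bg-bound}; equivalently, one can read the inequality off \eqref{eq:parabolic-growth-degree}, since the growth degree equals $d_{\mathrm{BG}}(P)$.

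For the rank inequality \eqref{eq:parabolic-rank-bound}, let $A\le P$ be virtually abelian of rank $r$ and choose a finite-index free abelian subgroup $A'\cong\Z^r$ of $A$. The subgroup $A'$ is finitely generated, fixes the same boundary point $\zeta$, and contains no axial isometry because $P$ contains none. Hence \cref{prop:parabolic-growth} applies to $A'$ itself, so we do not need $P$ to be finitely generated for this part. The word growth of $\Z^r$ with the standard generators satisfies $\beta(n)\ge c\,n^r$, and the argument of the first paragraph gives $r\le2\delta_X(A')$. Finally $\delta_X(A')\le\delta_X(P)$ by \cref{lem:subgroup-finite-index}(i), which proves \eqref{eq:parabolic-rank-bound}.

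For sharpness, take $\Lambda\cong\Z^r$ acting on the upper half-space model of $\mathbb H^{r+1}$ by translations $z\mapsto z+v$, $v\in\Z^r$, with basepoint $o=(0,1)$. These isometries fix $\infty$ and are not axial, and $d(o,o+v)=2\operatorname{arsinh}(|v|/2)=2\log|v|+O(1)$. The Poincaré series is therefore comparable to $\sum_{v\in\Z^r\setminus\{0\}}|v|^{-2s}$, which converges exactly when $2s>r$. Hence $\delta=r/2$, so equality holds in \eqref{eq:parabolic-rank-bound}, and in \eqref{eq:parabolic-bg-bound} as well since $d_{\mathrm{BG}}(\Z^r)=r$.

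The main point needing care is the Bass--Guivarc'h lower bound $\beta_T(n)\ge c\,n^{d}$. Only the exponent matters, and changing generating sets alters word length by a bounded factor, so the comparison is unaffected. Verifying the hypotheses for $A'$ is immediate.
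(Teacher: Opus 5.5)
Your proposal is correct and follows the paper's proof. The paper likewise obtains \eqref{eq:parabolic-bg-bound} from the growth-degree bound \eqref{eq:parabolic-growth-degree} together with the Bass--Guivarc'h formula, proves \eqref{eq:parabolic-rank-bound} by applying the same inequality to a finite-index $\Z^r$ subgroup and using subgroup monotonicity, and proves sharpness from $d(o,vo)=2\log\|v\|+O(1)$ for horospherical translations in $\mathbb H^{r+1}$; your explicit comparison of the upper and lower growth bounds, and your check that $\Z^r$ itself satisfies the hypotheses of \cref{prop:parabolic-growth}, simply spell out these same steps in more detail.
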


\begin{proof}
The first inequality is \eqref{eq:parabolic-growth-degree} together with the
Bass--Guivarc'h formula.  Apply the same inequality to a finite-index free abelian subgroup
$A\cong\mathbb Z^r$ and use $\delta_X(A)\le\delta_X(P)$ to obtain the second.  In the upper
half-space model of $\mathbb H^{r+1}$, a translation vector $v\in\mathbb Z^r$ satisfies
$d(o,vo)=2\log\|v\|+O(1)$; hence its Poincar\'e series has the same abscissa as
$\sum_{v\ne0}\|v\|^{-2s}$, namely $r/2$.
\end{proof}

\subsection{Accessible groups below one}

\begin{lemma}
\label{lem:accessible-vertex-fg}
Let a finitely generated group be the fundamental group of a finite graph of groups with finite
edge groups.  Then every vertex group is finitely generated.
\end{lemma}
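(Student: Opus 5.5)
The plan is to show that each vertex group $G_v$ is a retract of $H$, or at least a quotient of a finitely generated group, using only finiteness of the edge groups and finiteness of the underlying graph $Y$. The cleanest route goes through a presentation of $H=\pi_1(\mathcal G,Y,T_0)$, where $T_0$ is a maximal tree in $Y$. Serre's presentation \cite[Chapter~I, Section~5.1]{Serre2003} gives $H$ as generated by the vertex groups $G_v$ ($v\in V(Y)$) together with finitely many stable letters $t_e$ ($e\notin T_0$). The relations identify each edge group $G_e$ inside $G_{o(e)}$ and, after conjugation by $t_e$ (or trivially along $T_0$), inside $G_{t(e)}$. Since each $G_e$ is finite and $Y$ is finite, only finitely many relations beyond those of the vertex groups are imposed.

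First, I fix a finite generating set $S$ of $H$. Each $s\in S$ is a finite word in the generators above, so only finitely many elements of $\bigcup_v G_v$ occur in these words. For each $v$, let $A_v\le G_v$ be the subgroup generated by the finitely many elements of $G_v$ that occur, together with the finite images of all incident edge groups $G_e\to G_v$. Each $A_v$ is finitely generated and contains all incident edge groups. The $A_v$, with the same edge groups and the same stable letters, therefore form a sub-graph of groups $\mathcal A$. By the normal form theorem (\cref{lem:bass-serre-normal-form}, or injectivity for sub-graphs of groups containing the edge groups, \cite[Chapter~I, Section~5]{Serre2003}), the natural map $\pi_1(\mathcal A)\to H$ is injective. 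Its image contains $S$, so it is all of $H$.

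It remains to show $A_v=G_v$. Take $g\in G_v$ and write $g$ in $\pi_1(\mathcal A)$ as a reduced word. By uniqueness of reduced forms in $H$ (\cref{lem:bass-serre-normal-form}, applied to the inclusion of Bass--Serre trees), the element $g$, which fixes the vertex $\widetilde v$ of $T$, has empty edge word in the $\mathcal A$-normal form based at $v$. Hence $g\in A_v$. A cleaner way to say this is that the Bass--Serre tree of $\mathcal A$ embeds $H$-equivariantly in $T$ and is $H$-invariant, hence equal to $T$ by connectedness and minimality of the action. The stabilizer of $\widetilde v$ is then computed in both as $A_v=G_v$.

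The main obstacle is this last step: showing the sub-graph of groups recovers the whole vertex stabilizer rather than only a subgroup. I would handle it by the tree comparison. The tree $T_{\mathcal A}$ maps injectively into $T$ because $A_e=G_e$, so the coset sets $A_v/A_e\subseteq G_v/G_e$ inject. The image is a subtree invariant under $\pi_1(\mathcal A)=H$, and it contains the lifts of all edges of $Y$. Every vertex of $T$ is an $H$-translate of a lift, so the image is all of $T$, and the vertex stabilizers coincide. An alternative route uses the retraction $H\to G_v$ available when edge groups are trivial; that does not apply here, which is why I prefer the tree argument.
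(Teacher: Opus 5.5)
Your argument is correct, but it takes a different route from the paper. The paper gives no construction: it notes that finite edge groups are finitely generated and invokes Haglund--Wise \cite[Theorem~1.3]{HaglundWise2021}, which says that a finitely generated fundamental group of a graph of groups with finitely generated edge groups has finitely generated vertex groups.

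You instead give the classical direct proof:
\begin{enumerate}
\item Take the finitely many vertex-group letters occurring in a finite generating set, add the finite incident edge groups, and obtain finitely generated subgroups $A_v\le G_v$ with $A_e=G_e$.
\item Observe that $\mathcal A$-reduced words are $\mathcal G$-reduced, because the edge-group images are the same subsets. Hence $\pi_1(\mathcal A)\to H$ is injective, and it is onto because its image contains $S$.
\item Deduce $A_v=G_v$ from the equivariant tree map $hA_v\mapsto hG_v$, $hG_e\mapsto hG_e$. This map is bijective on edges, locally injective at vertices since $A_v/G_e\hookrightarrow G_v/G_e$, hence injective, and it is visibly surjective.
\end{enumerate}

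Three small points to tidy:
\begin{enumerate}
\item The passage from local injectivity to injectivity is the standard fact that a locally injective morphism of trees sends reduced paths to reduced paths. State it explicitly, since it is where the tree structure is actually used.
\item The appeal to ``minimality of the action'' is unnecessary and not valid in general: the Bass--Serre tree of a non-reduced splitting need not be minimal. Surjectivity is immediate from the formula $hA_v\mapsto hG_v$.
\item The opening ``retract'' framing is never used and can be dropped.
\end{enumerate}

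As for what each approach buys: your proof is self-contained and elementary, needing only Bass--Serre normal forms. It in fact proves the version with finitely generated edge groups over a finite graph, since finiteness of the edge groups is used only to keep $A_v$ finitely generated. The citation is shorter and also covers infinite underlying graphs. Your construction would not apply there directly, because it relies on finitely many incident edges and stable letters. The lemma, however, only needs the finite-graph case.
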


\begin{proof}
Every finite edge group is finitely generated.  The conclusion is therefore exactly
Haglund--Wise \cite[Theorem~1.3]{HaglundWise2021}: if a finitely generated group splits as a
graph of groups with finitely generated edge groups, then every vertex group is finitely
generated.  Their theorem does not require the underlying graph to be finite; the present
finite-graph case is an immediate specialization.
\end{proof}

\begin{proposition}
\label{prop:accessible-virtually-free}
Let $\Gamma\le\Isom(X)$ be finitely generated, discrete, and accessible over finite subgroups.
If $\delta_X(\Gamma)<1$, then $\Gamma$ is virtually free.
\end{proposition}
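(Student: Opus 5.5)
Use the terminal Stallings--Dunwoody decomposition given by accessibility: $\Gamma$ is the fundamental group of a finite graph of groups with finite edge groups in which every vertex group is finite or one-ended. By \cref{lem:accessible-vertex-fg} each vertex group $G_v$ is finitely generated, and $\delta_X(G_v)\le\delta_X(\Gamma)<1$ by \cref{lem:subgroup-finite-index}(i). A finite graph of groups with finite edge groups and finite vertex groups has virtually free fundamental group (Karrass--Pietrowski--Solitar, as recalled in the introduction). It therefore suffices to show that no vertex group is one-ended.

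So suppose $G=G_v$ is finitely generated, infinite, discrete, one-ended, with $\delta_X(G)<1$. There are two cases.

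\textbf{Case 1: $G$ is elementary.} Then $G$ either has a finite orbit in $\partial X$ or has bounded orbits. Bounded orbits are impossible, since $G$ is infinite and discrete. If $G$ fixes one boundary point (after passing to a finite-index subgroup, which is still one-ended and finitely generated) and contains no axial element, \cref{prop:parabolic-growth}(i) makes it virtually cyclic, hence two-ended, a contradiction. Otherwise $G$ contains an axial element and preserves a pair of points. Then $G$ acts properly on a neighbourhood of the axis, so it is virtually cyclic: the kernel of the Busemann character is finite by discreteness, and a discrete translation subgroup of $\mathbb R$ is cyclic. This again contradicts one-endedness.

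\textbf{Case 2: $G$ is non-elementary.} Here $0<\delta_X(G)<1$, since non-elementary discrete groups contain free subgroups and so have positive exponent. Apply \cref{cor:word-ball-dimension-ends} to $G$ itself: it gives $e(G)=\infty$, contradicting one-endedness.

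This case analysis shows every vertex group is finite, so $\Gamma$ is virtually free. I expect the main obstacle to be the elementary case, specifically justifying the trichotomy (bounded, parabolic, or lineal) for elementary discrete groups in a general proper CAT$(-1)$ space, and confirming that a group whose orbit points accumulate at one fixed point without any axial element falls under \cref{prop:parabolic-growth}. I would first handle this through the limit set: an infinite elementary discrete group has limit set of one or two points. If there is one point, it is fixed, and any axial element would produce two limit points; so \cref{prop:parabolic-growth} applies. If there are two points, an index-two subgroup fixes both and the argument above applies.
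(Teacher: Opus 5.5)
Your proposal is correct and follows the paper's proof essentially step for step. The steps match: the terminal Stallings--Dunwoody decomposition, finite generation and $\delta<1$ for the vertex groups, \cref{prop:parabolic-growth} or the lineal structure to exclude one-ended elementary vertex groups, and \cref{cor:word-ball-dimension-ends} to exclude one-ended non-elementary ones. The only differences are cosmetic: the paper treats elementary $\Gamma$ before decomposing, and it proves $\delta_X(V)>0$ for a non-elementary vertex group by an explicit strong-discreteness and ping-pong argument, which you compress into the remark that such groups contain free subgroups.
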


\begin{proof}
The elementary cases are immediate from the classification of discrete elementary CAT$(-1)$
actions and \cref{prop:parabolic-growth}: finite and axial elementary groups are virtually
cyclic, and every finitely generated parabolic elementary group with critical exponent less than one is
virtually cyclic.

Assume $\Gamma$ is non-elementary.  Choose a terminal finite graph-of-groups decomposition
with finite edge groups and vertex groups finite or one-ended.  By
\cref{lem:accessible-vertex-fg}, each vertex group $V$ is finitely generated, and subgroup
monotonicity gives $\delta_X(V)<1$.  If an infinite vertex group $V$ is non-elementary, then its critical exponent is positive.
Indeed, properness of $X$ and discreteness of $V$ imply that every $V$-orbit ball is finite,
so $V$ is strongly discrete in the terminology of
\cite[Definition~5.2.1]{DasSimmonsUrbanski2017}.  Such a group is not focal by
\cite[Proposition~6.4.1]{DasSimmonsUrbanski2017}; since it is non-elementary,
\cite[Proposition~7.3.1]{DasSimmonsUrbanski2017} therefore makes it a group of general type.
Then \cite[Proposition~7.4.7]{DasSimmonsUrbanski2017} supplies axial elements $a,b\in V$ with
disjoint fixed-point sets.  Their north--south dynamics
\cite[Theorem~6.1.10]{DasSimmonsUrbanski2017} and the ping-pong lemma give an integer
$N\ge1$ such that $A=a^N$ and $B=b^N$ generate a free semigroup.  If
\[
 R=\max\{d(o,Ao),d(o,Bo)\},
\]
then its $2^n$ positive words of length $n$ are distinct and move $o$ by at most $nR$.
Consequently
\[
 P_{V,o}(s)\ge\sum_{n\ge0}2^n\e^{-snR},
\]
which diverges for $s\le(\log2)/R$.  Thus $\delta_X(V)\ge(\log2)/R>0$.
The standing hypotheses of \cref{sec:finite-vertex-deletion} therefore apply to $V$, and
\cref{cor:word-ball-dimension-ends} gives $e(V)=\infty$, contradicting that $V$ is one-ended.  If $V$ is elementary, then it is finite, two-ended, or parabolic.  The first
case is not infinite, the second is not one-ended, and the parabolic case is virtually cyclic by
\cref{prop:parabolic-growth}, hence again not one-ended.  Therefore every terminal vertex
group is finite.

Thus $\Gamma$ is the fundamental group of a finite graph of finite groups.  Such groups are
virtually free by Bass--Serre theory \cite{KarrassPietrowskiSolitar1973,Serre2003}.
\end{proof}

\subsection{Geometric finiteness and the peripheral boundary quotient}

\begin{proposition}
\label{prop:accessible-peripheral-quotient}
Assume the hypotheses of \cref{prop:accessible-virtually-free} and that $\Gamma$ is
non-elementary.  Then the action on $\Lambda_\Gamma$ is geometrically finite, has finitely
many conjugacy classes of infinite maximal parabolic subgroups, every such subgroup is
virtually cyclic, and the quotient map in \eqref{eq:main-accessible-quotient} has precisely the
stated fibers.
\end{proposition}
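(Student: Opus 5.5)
Our plan is to derive geometric finiteness from the virtually free structure of \cref{prop:accessible-virtually-free} and the parabolic bound of \cref{prop:parabolic-growth}. We then build the quotient map from the fact that $\Gamma$ is word-hyperbolic and acts on $X$ as a convergence group.

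Pass to a torsion-free finite-index free subgroup $F\le\Gamma$ of rank $r\ge2$. By \cref{lem:subgroup-finite-index}, $\delta_X(F)=\delta_X(\Gamma)<1$, and geometric finiteness and the fibre description transfer between $F$ and $\Gamma$ because $\Lambda_F=\Lambda_\Gamma$ and $\partial F=\partial\Gamma$. Every maximal parabolic subgroup $P$ of $\Gamma$ fixing $\zeta\in\partial X$ contains no axial element, since a discrete group cannot contain both an axial and a parabolic isometry fixing $\zeta$. As a subgroup of a virtually free group, $P$ is either finitely generated or locally finite of unbounded order. The latter case is impossible because finite subgroups of a virtually free group have bounded order. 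So \cref{prop:parabolic-growth}(i) applies, and $P$ is virtually cyclic; in $F$ the parabolic stabilizers are therefore infinite cyclic, generated by parabolic elements $p$.

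Next we show geometric finiteness. We would use the characterisation of Das--Simmons--Urbański: a strongly discrete action is geometrically finite when every limit point is conical or bounded parabolic and there are finitely many parabolic orbits. A parabolic fixed point with virtually cyclic stabilizer is bounded, because $\langle p\rangle$ acts cocompactly on $\Lambda_\Gamma\setminus\{\zeta\}$. We would deduce this from the fact that a cyclic subgroup of a free group is quasiconvex in $\partial F$ and from the continuous extension of the orbit map in \cref{thm:end-boundary-map}.

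The main obstacle is conicality of the remaining limit points together with finiteness of the parabolic conjugacy classes. Our first approach is to use the end map $\Pi_o$ of \cref{thm:end-boundary-map}, taking $q$ with $\delta_X<q<1$. For a free group, $\mathcal E(F)=\partial F$, so $\Pi_o$ is the candidate $q_\Gamma$, and it is equivariant, continuous and surjective. For $\xi\in\partial F$ we would show the following dichotomy. If the geodesic ray to $\xi$ eventually lies in a single coset $g\langle p\rangle$ (a cusp word), then $\Pi_o(\xi)=gp^{\pm\infty}$, and $\Pi_o$ identifies the two ends $g p^{\pm\infty}$ because $p^n o$ converges to the fixed point of $p$ from both sides. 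Otherwise the orbit points along the ray return infinitely often to a bounded neighbourhood of a geodesic ray in $X$, which makes $\Pi_o(\xi)$ conical.

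To make this dichotomy work, we would use the conical/horoball decomposition of the Dirichlet domain in CAT$(-1)$ to separate the two cases. Finiteness of the number of conjugacy classes would come from Bowditch's theorem that a relatively hyperbolic pair $(F,\{P_i\})$ with virtually cyclic peripherals can be chosen from a finite Grushko-type splitting. Concretely, we would extend the finite-edge graph of groups from \cref{prop:accessible-virtually-free} so that each parabolic is conjugate into a vertex group; accessibility then bounds the number of such vertices. Finally, injectivity of $\Pi_o$ off the cusp pairs uses conicality: two distinct ends with the same conical image would give two orbit geodesics with the same endpoint whose Hausdorff distance is unbounded in $F$. This contradicts the quasi-isometric embedding of $F$ on the complement of the horoballs. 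That yields exactly the fibres $g\partial P_i$ in \eqref{eq:main-accessible-quotient}.
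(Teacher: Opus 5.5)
Your plan has a genuine gap at the ray dichotomy. It asserts, without proof, that a ray of $F$ that does not eventually fellow-travel a cusp coset $g\langle p\rangle$ has conical image under $\Pi_o$, and that claim is essentially the whole content of geometric finiteness.

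\emph{The ingredients you use cannot yield it.} A continuous equivariant surjection $\partial F\to\Lambda_\Gamma$, cyclic parabolic stabilizers and quasiconvexity of cyclic subgroups of $F$ are all present for a geometrically infinite free Kleinian group without parabolics, which has a Cannon--Thurston map by work of Mj. That group has non-conical limit points, so the dichotomy fails there. Your argument must therefore use $\delta_X(\Gamma)<1$, or at least the total disconnectedness of $\Lambda_\Gamma$ from \cref{cor:word-ball-dimension-ends}, in an essential way, and the sketch never says where.

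\emph{The tools you propose are circular.} A conical/horoball decomposition of a Dirichlet domain, a quasi-isometric embedding of $F$ into the complement of horoballs, and a relatively hyperbolic structure on $(F,\{P_i\})$ are all available only once the action is known to be geometrically finite with finitely many cusp classes. The boundedness of parabolic points has the same gap. Cocompactness of $\langle p\rangle$ on $\Lambda_\Gamma\setminus\{\zeta\}$ is the definition of a bounded parabolic point, not a reason for it. It follows from quasiconvexity of $\langle p\rangle$ only if you already know $\Pi_o^{-1}(\zeta)=\{p^{\pm\infty}\}$, which again rests on the dichotomy.

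\emph{The finiteness step fails.} Extending the finite-edge splitting so that each parabolic is conjugate into a vertex group is impossible for general cyclic subgroups. In $F_2=\langle a,b\rangle$ the element $[a,b]$ lies in no proper free factor, so it is elliptic in no nontrivial free splitting. That parabolics are (virtually) free factors is an output of the theory, not something accessibility gives you.

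Injectivity of $\Pi_o$ over conical points is true without geometric finiteness, by the convergence-dynamics argument of Jeon--Kapovich--Leininger--Ohshika, but not by the horoball-complement argument you give.

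The paper does not argue ray by ray. It uses that $\Lambda_\Gamma$ is a Cantor set (\cref{cor:word-ball-dimension-ends}), together with Tukia's convergence theorem and minimality, and then cites Bowditch's theorem on convergence actions on Cantor sets. That theorem supplies the finite-edge splitting, with the maximal parabolics as the infinite vertex groups. Finite generation then comes from \cref{lem:accessible-vertex-fg} and virtual cyclicity from \cref{prop:parabolic-growth}. The fibres come from Tran's identification of the Bowditch boundary with $\partial\Gamma$ modulo the translates $g\partial P_i$, composed with the unique equivariant homeomorphism onto $\Lambda_\Gamma$. Bowditch's splitting theorem is, as I recall, usually stated with geometric finiteness as a hypothesis, so check this step carefully on either route.

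A smaller error: the claim that a subgroup of a virtually free group is finitely generated or locally finite is false, since $[F_2,F_2]$ is free of infinite rank. The correct argument is that every finitely generated subgroup of $P$ is virtually cyclic by \cref{prop:parabolic-growth}. Hence $P\cap F$ is a locally cyclic subgroup of a free group, so it is cyclic.
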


\begin{proof}
By \cref{cor:word-ball-dimension-ends}, $\Lambda_\Gamma$ is a Cantor set.  The group
$\Gamma$ is virtually free by \cref{prop:accessible-virtually-free}, hence hyperbolic and
finitely presented.  Tukia's convergence theorem shows that a discrete group of isometries of a proper
Gromov-hyperbolic space acts as a convergence group on its limit set
\cite{Tukia1994}.  Moreover, properness and discreteness make $\Gamma$ strongly discrete;
\cite[Proposition~6.4.1]{DasSimmonsUrbanski2017} rules out focality, and
\cite[Corollary~7.4.3(ii)]{DasSimmonsUrbanski2017} identifies $\Lambda_\Gamma$ as the
unique minimal nonempty closed $\Gamma$-invariant subset.  Thus the action on
$\Lambda_\Gamma$ is a minimal convergence action.  Bowditch's Cantor-action theorem gives a finite graph-of-groups decomposition with finite edge
groups, geometric finiteness, and relative hyperbolicity with respect to the infinite vertex
stabilizers, which are exactly the maximal parabolic subgroups
\cite[Theorem~1.3 and pp.~33--34]{Bowditch2002}.

Those vertex groups are finitely generated by \cref{lem:accessible-vertex-fg}.  A maximal
parabolic stabilizer fixes one boundary point and contains no axial isometry, because axial
isometries are loxodromic for the convergence action.  Therefore
\cref{prop:parabolic-growth} makes every infinite maximal parabolic subgroup virtually
cyclic.

Let $\mathcal P=\{P_1,\dots,P_k\}$ represent their conjugacy classes.  In the virtually free
hyperbolic group $\Gamma$, the groups $P_i$ are quasiconvex and the peripheral collection is
almost malnormal.  Let $D(\partial\Gamma,\mathcal P)$ be the decomposition space obtained by
collapsing every translate $g\partial P_i$ to one point.  The quotient
\[
 \pi_{\mathcal P}:\partial\Gamma\longrightarrow
 D(\partial\Gamma,\mathcal P)
\]
is canonical.  Tran's boundary quotient theorem identifies this decomposition space
$\Gamma$-equivariantly with the Bowditch boundary of $(\Gamma,\mathcal P)$
\cite[Main Theorem]{Tran2013}.

The equivariant homeomorphism from that boundary to the concrete visual limit set
$\Lambda_\Gamma$ is unique.  Indeed, let $\Phi_1,\Phi_2$ be two such homeomorphisms and choose
a loxodromic element $a\in\Gamma$ for the Bowditch action.  Such an element is
nonperipheral and therefore acts axially on $X$.  North--south dynamics and equivariance imply
that both
maps send the attracting fixed point $a^+$ in the Bowditch boundary to the visual attracting
fixed point of $a$.  The same is true for every conjugate of $a$.  The visual action on
$\Lambda_\Gamma$ is minimal, so the orbit of the visual attracting point is dense; because
$\Phi_1$ is a homeomorphism, the orbit of $a^+$ is dense in the Bowditch boundary.  Hence
$\Phi_1$ and $\Phi_2$ agree on a dense set and therefore everywhere.  Denote this unique map
by $\Phi$.  The composition
\[
 q_\Gamma=\Phi\circ\pi_{\mathcal P}
\]
is consequently independent of all accessary choices and is the natural quotient in
\eqref{eq:main-accessible-quotient}.  Since each infinite $P_i$ is two-ended,
$\partial P_i$ has exactly two points, and every other fiber is a singleton.  The same Bowditch
decomposition has finite edge groups and finite or parabolic vertex groups, so its infinite
vertex groups are precisely conjugates of the $P_i$.
\end{proof}

\begin{proof}[Proof of \cref{thm:accessible-main}]
Virtual freeness is \cref{prop:accessible-virtually-free}; for non-elementary groups,
geometric finiteness, the finite peripheral family, and the boundary-fiber statement are
\cref{prop:accessible-peripheral-quotient}.  The elementary cases are explicit: finite groups
have empty boundaries; an axial two-ended group has its ordinary two-point boundary. And a
parabolic virtually cyclic group has one visual limit point, obtained by collapsing its
two-point Gromov boundary.

If $H\le\Gamma$ is finitely generated, then $H$ is virtually free and hence accessible.
Subgroup monotonicity gives $\delta_X(H)<1$, so the preceding argument applies to the induced
action of $H$.  This proves the hereditary assertion.  If $\Gamma$ is torsion-free, all finite
edge and finite vertex groups in the Bowditch decomposition are trivial; hence Bass--Serre
normal form gives $\Gamma\cong F_r*P_1*\cdots*P_k$, and each $P_i\cong\mathbb Z$.

For a non-elementary group, the quotient description shows that $q_\Gamma$ is a homeomorphism
exactly when there is no maximal parabolic subgroup.  Geometric finiteness identifies absence
of parabolic points with the assertion that every limit point is conical.  The compact-type
criterion then identifies this with convex-coboundedness
\cite[Theorem~12.2.7]{DasSimmonsUrbanski2017}.  By the isometry classification
\cite[Chapter~II.6]{BridsonHaefliger1999}, every isometry of the proper CAT$(-1)$ space is
elliptic, parabolic, or hyperbolic/axial.  An elliptic isometry fixes a point $x\in X$.  The
stabilizer $\Isom(X)_x$ is compact: properness of $X$ and a diagonal Arzel\`a--Ascoli argument
make every sequence in the stabilizer subconvergent uniformly on compact sets.  Its
intersection with the discrete group $\Gamma$ is therefore finite.  Hence no infinite-order
element of $\Gamma$ is elliptic, and every infinite-order element is axial or parabolic.
Moreover, every infinite maximal parabolic subgroup has already been proved virtually cyclic,
so every
parabolic point has a stabilizer containing an infinite-order parabolic element.  It follows
that absence of parabolic points is equivalent, under the standing hypothesis $\delta_X(\Gamma)<1$, to every
infinite-order element being hyperbolic/axial.  Under convex-coboundedness
the orbit map is a quasi-isometric embedding
\cite[Theorem~12.2.12]{DasSimmonsUrbanski2017}.

If $\delta_X(\Gamma)<1/2$, an infinite maximal parabolic subgroup would be finitely generated
and would contain no axial isometry, contradicting the finiteness conclusion of
\cref{prop:parabolic-growth}.  Thus there are no parabolic points.  The same argument applies
to every finitely generated subgroup $H$, because $\delta_X(H)\le\delta_X(\Gamma)$.  This
proves \eqref{eq:main-below-half-hereditary}.
\end{proof}

\subsection{Accessibility consequences and the sharp gap}

\begin{proof}[Proof of \cref{cor:accessibility-extension}]
A finitely presented group is accessible over finite subgroups by Dunwoody's theorem
\cite{Dunwoody1985}.  A finitely generated group whose finite subgroups have uniformly
bounded order is accessible by Linnell's theorem \cite{Linnell1983}.  Apply
\cref{thm:accessible-main}.  Torsion-free groups satisfy the second hypothesis with
bound one.
\end{proof}

\begin{proof}[Proof of \cref{cor:accessible-gap}]
The first assertion is the contrapositive of
\cref{prop:accessible-virtually-free}.  Dunwoody's and Linnell's theorems give the two stated
classes.  Finally, if $\delta_X(\Gamma)<1$, every finitely generated maximal parabolic subgroup
is virtually cyclic by \cref{prop:parabolic-growth}. In the geometrically finite conclusion of
\cref{thm:accessible-main}, maximal parabolics are finitely generated.  This proves
the parabolic gap.
\end{proof}

We now remark on strictness and sharpness of  \cref{thm:accessible-main}.
\begin{remark}
\label{rem:strictness-sharpness}
The extension beyond the torsion-free theorem is nontrival.  For example,
$C_2*C_3$ acts properly and cocompactly on its Bass--Serre tree.  A metric tree is
CAT$(-1)$, and multiplying all edge lengths by $R$ divides the critical exponent by $R$.
For large $R$ this gives a group with torsion and critical exponent below one covered by
\cref{cor:accessibility-extension} but not by a torsion-free theorem.  The constant one in
\cref{cor:accessible-gap} is sharp: a cocompact Fuchsian surface group is accessible,
not virtually free, and has critical exponent exactly one.
\end{remark}

\begin{proposition}
\label{prop:small-free}
Let $\Gamma\le\Isom(X)$ be finitely generated, torsion-free, and discrete.  If
$\delta_X(\Gamma)<1$, then $\Gamma$ is a finite-rank free group.
\end{proposition}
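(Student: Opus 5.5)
The plan is to derive \cref{prop:small-free} from the structure theory already established, with Linnell's theorem supplying accessibility. Since $\Gamma$ is torsion-free, its only finite subgroup is trivial, so the orders of finite subgroups are uniformly bounded by one. Linnell's theorem \cite{Linnell1983}, exactly as used in the proof of \cref{cor:accessibility-extension}(ii), then makes $\Gamma$ accessible over finite subgroups. Because $\delta_X(\Gamma)<1$, \cref{prop:accessible-virtually-free} shows that $\Gamma$ is virtually free.

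It remains to pass from virtually free to free, and I would use the torsion-free clause of \cref{thm:accessible-main}. That clause gives $\Gamma\cong F_r*P_1*\cdots*P_k$ with each $P_i\cong\mathbb Z$, which is a free group of rank $r+k$. The rank is finite because $\Gamma$ is finitely generated: in the Bowditch decomposition the graph is finite and every vertex group is finitely generated by \cref{lem:accessible-vertex-fg}. The elementary cases need separate handling, since \cref{prop:accessible-peripheral-quotient} assumed non-elementarity. A finite torsion-free group is trivial, i.e. $F_0$. An infinite elementary discrete group with $\delta<1$ is virtually cyclic, either as an axial two-ended group or by \cref{prop:parabolic-growth} in the parabolic case. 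A torsion-free virtually cyclic group is infinite cyclic, hence $F_1$.

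A second route, independent of the Bowditch machinery, serves as a cross-check. A torsion-free virtually free finitely generated group is free by Stallings' theorem on torsion-free virtually free groups \cite[Chapter~5]{Stallings1971}. More directly, the Karrass--Pietrowski--Solitar characterization realizes it as the fundamental group of a finite graph of finite groups. Torsion-freeness forces all those vertex and edge groups to be trivial, so $\Gamma$ is the fundamental group of a finite graph, a free group of finite rank.

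The only delicate step is making sure the torsion-free specialization really yields trivial finite vertex and edge groups in a finite graph. This holds because every finite subgroup of a torsion-free group is trivial. Once that is observed, Bass--Serre normal form identifies the fundamental group of a finite graph of trivial groups with $\infty$-cyclic vertex groups as $F_r*\mathbb Z^{*k}$, so I expect no genuine obstacle.
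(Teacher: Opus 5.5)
Your proof is correct, but it takes a different route from the paper's.

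\emph{Your route.} You obtain accessibility from Linnell's theorem, apply \cref{prop:accessible-virtually-free} to get virtual freeness, and then descend to freeness. You do this either via the torsion-free clause of \cref{thm:accessible-main} or via Karrass--Pietrowski--Solitar. The first option rests on Bowditch's Cantor-action theorem and Tran's boundary identification, and is established only for non-elementary groups, as you rightly note.

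\emph{The paper's route.} The paper never passes through virtual freeness. After disposing of the elementary case, it takes a Grushko decomposition $\Gamma=F_r*G_1*\cdots*G_m$ with each $G_i$ freely indecomposable and not infinite cyclic. A non-elementary $G_i$ has $0<\delta_X(G_i)<1$ by ping-pong and subgroup monotonicity, hence infinitely many ends by \cref{cor:word-ball-dimension-ends}. Stallings' theorem then splits it over a finite, hence trivial, subgroup, contradicting indecomposability. An elementary $G_i$ would be cyclic by \cref{prop:parabolic-growth}. Thus $m=0$.

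\emph{Comparison.} The geometric input is the same in both arguments. Indeed, if you unfold \cref{prop:accessible-virtually-free} in the torsion-free case, its terminal decomposition has trivial edge groups and is essentially a Grushko decomposition. What differs is the imported algebra. For torsion-free groups, Grushko's theorem already supplies accessibility over finite subgroups, so Linnell's theorem is far heavier than needed. Your final step also needs one more external result: KPS, or Stallings' theorem that finitely generated torsion-free virtually free groups are free, originally \cite{Stallings1968}.

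Your version is shorter given what the paper has already proved, since it makes the proposition a formal consequence of \cref{cor:accessibility-extension}(ii). The paper's version is more self-contained and avoids both Linnell and the Bowditch--Tran machinery.
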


\begin{proof}
If $\Gamma$ is elementary, torsion-freeness and \cref{prop:parabolic-growth} leave only the
trivial group or an infinite cyclic axial or parabolic group.  Assume $\Gamma$ is
non-elementary and take a Grushko decomposition
\[
 \Gamma=F_r*G_1*\cdots*G_m,
\]
where every $G_i$ is finitely generated, freely indecomposable, nontrivial, and not infinite
cyclic \cite[Chapter~I.4]{LyndonSchupp1977}.  If $G_i$ is non-elementary, the independent-axial-element
and ping-pong argument in the proof of \cref{prop:accessible-virtually-free} gives
$\delta_X(G_i)>0$.  Subgroup monotonicity therefore gives
$0<\delta_X(G_i)\le\delta_X(\Gamma)<1$.  Choose $\delta_X(\Gamma)<q<1$.  The Poincar\'e
series of $G_i$ converges at $q$, so \cref{cor:word-ball-dimension-ends} gives infinitely many ends.
Stallings' theorem splits
$G_i$ over a finite subgroup \cite{Stallings1968}. Torsion-freeness makes the edge group
trivial, contradicting free indecomposability.  If $G_i$ is elementary, the first sentence
makes it cyclic, again a contradiction.  Thus $m=0$ and $\Gamma=F_r$.
\end{proof}

\begin{proof}[Proof of \cref{cor:linear-groups}]
By Selberg's lemma, $\Gamma$ has a torsion-free normal subgroup $\Gamma_0$ of finite index
\cite{Selberg1960}.  Every finite subgroup of $\Gamma$ intersects $\Gamma_0$ trivially and
therefore injects into the finite quotient $\Gamma/\Gamma_0$.  The orders of finite subgroups
are thus uniformly bounded.  Apply Linnell's accessibility theorem and
\cref{thm:accessible-main,cor:accessible-gap}.
\end{proof}

\subsection{The Kleinian consequence}

\begin{proof}[Proof of \cref{cor:kleinian-classical}]
A finitely generated Kleinian group is linear over characteristic zero, so
\cref{cor:linear-groups} gives the structural conclusions.  If $\Gamma$ is finite, the trivial subgroup
has finite index, and there is nothing further to prove.  Assume henceforth that $\Gamma$ is infinite.
By Selberg's lemma choose an infinite torsion-free normal subgroup $\Gamma_0$ of finite index
\cite{Selberg1960}.

If there are no parabolic points, the geometrically finite action is convex cocompact, and so is the
action of $\Gamma_0$.  Moreover
$\delta_{\mathbb H^3}(\Gamma_0)=\delta_{\mathbb H^3}(\Gamma)<1$.  If $\Gamma_0$ is elementary, then it is
infinite cyclic and generated by a loxodromic element, hence is a rank-one classical Schottky group.
If $\Gamma_0$ is non-elementary, Hou's theorem makes it a classical Schottky group
\cite[Theorem~1.1]{Hou2023}.  Thus in either case $\Gamma_0$ is a positive-rank classical Schottky
subgroup of finite index.  If $\Gamma$ is nontrivial and torsion-free and the action has no parabolic
point, take $\Gamma_0=\Gamma$.  The no-parabolic assertion below exponent one half follows from
\cref{thm:accessible-main}.
\end{proof}

\section{Free products and inequalities from translation lengths and axes}
\label{sec:free-products}

Let
\[
 H=H_1*\cdots*H_m\le\Gamma,
 \qquad m\ge2,
\]
be an embedded free product with $H_i\ne\{1\}$ for every $i$.  Set
\begin{equation}\label{eq:factor-series}
 Z_i(s)=\sum_{h\in H_i\setminus\{1\}}\e^{-sd(x,hx)}.
\end{equation}
For finite nonempty subsets $F_i\subset H_i\setminus\{1\}$, let
$z_i^F(s)=\sum_{h\in F_i}\e^{-sd(x,hx)}$ and define
\[
 B^F_{ij}(s)=
 \begin{cases}
 z_j^F(s),&i\ne j,\\
 0,&i=j.
 \end{cases}
\]
A transition from factor $i$ to factor $j$ chooses the next reduced syllable in $F_j$.

\begin{lemma}
\label{lem:factor-perron}
Let $z_1,\dots,z_m>0$ and let $B_{ij}=z_j$ for $i\ne j$, $B_{ii}=0$.  Its Perron root
$\lambda$ is the unique positive solution of
\begin{equation}\label{eq:factor-perron-equation}
 \sum_{i=1}^m\frac{z_i}{\lambda+z_i}=1.
\end{equation}
Consequently
\begin{equation}\label{eq:factor-rho-one}
 \rho(B)\le1
 \qquad\mbox{if and only if}\qquad
 \sum_{i=1}^m\frac{z_i}{1+z_i}\le1.
\end{equation}
For $m=2$, $\rho(B)=\sqrt{z_1z_2}$.
\end{lemma}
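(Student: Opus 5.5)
The plan is to compute the characteristic polynomial of $B$ directly, using its rank-one-plus-diagonal structure. Write $B=\one z^T-\operatorname{diag}(z)$, where $z=(z_1,\dots,z_m)^T$ and $\one$ is the all-ones vector; indeed $B_{ij}=z_j-\delta_{ij}z_j$. For $\lambda\notin\{-z_1,\dots,-z_m\}$ we have
\[
 \lambda I-B=(\lambda I+\operatorname{diag}(z))-\one z^T .
\]
The matrix determinant lemma then gives
\[
 \det(\lambda I-B)=\prod_{i=1}^m(\lambda+z_i)\Bigl(1-\sum_{i=1}^m\frac{z_i}{\lambda+z_i}\Bigr).
\]
Hence a number $\lambda>0$ is an eigenvalue exactly when $\phi(\lambda):=\sum_i z_i/(\lambda+z_i)=1$.

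Next I establish existence and uniqueness of the positive root. The function $\phi$ is continuous and strictly decreasing on $(0,\infty)$, with $\phi(0^+)=m\ge2>1$ and $\phi(\lambda)\to0$ as $\lambda\to\infty$. So there is exactly one positive solution $\lambda_*$.

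To identify $\lambda_*$ with $\rho(B)$, I use Perron--Frobenius. For $m\ge2$ all off-diagonal entries are positive, so $B$ is irreducible. Therefore $\rho(B)>0$ and $\rho(B)$ is itself an eigenvalue \cite[Chapter~1]{Seneta2006}. Since $\rho(B)>0$ it is not among the values $-z_i<0$, so the determinant formula applies to it. It is a positive root of $\phi=1$, and uniqueness forces $\rho(B)=\lambda_*$.

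The equivalence \eqref{eq:factor-rho-one} follows from monotonicity. Because $\phi$ is strictly decreasing, $\lambda_*\le1$ holds if and only if $\phi(1)\le\phi(\lambda_*)=1$, which is exactly $\sum_i z_i/(1+z_i)\le1$.

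For $m=2$ I compute directly. The matrix is $B=\begin{pmatrix}0&z_2\\ z_1&0\end{pmatrix}$, whose eigenvalues are $\pm\sqrt{z_1z_2}$. As a check, substituting $\lambda=\sqrt{z_1z_2}$ into \eqref{eq:factor-perron-equation} gives
\[
 \frac{z_1}{\lambda+z_1}+\frac{z_2}{\lambda+z_2}
 =\frac{\sqrt{z_1}}{\sqrt{z_2}+\sqrt{z_1}}+\frac{\sqrt{z_2}}{\sqrt{z_1}+\sqrt{z_2}}=1 .
\]

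The only delicate point is that the determinant identity is valid only away from the poles $\lambda=-z_i$. Restricting attention to positive $\lambda$ avoids those poles, and that restriction is precisely what the Perron root argument above needs.
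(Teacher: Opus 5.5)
Your proof is correct, but it reaches the scalar equation by a different mechanism from the paper's. The paper never computes the characteristic polynomial. It takes the unique positive root $\lambda$ of \eqref{eq:factor-perron-equation} and writes down the strictly positive vector $v_i=(\lambda+z_i)^{-1}$. It then checks directly that $(Bv)_i=\sum_{j\ne i}z_j/(\lambda+z_j)=1-z_i/(\lambda+z_i)=\lambda v_i$. It concludes $\lambda=\rho(B)$ from the standard fact that a nonnegative matrix with a strictly positive eigenvector has that eigenvalue as its spectral radius. You instead factor $\det(\lambda I-B)$ with the matrix determinant lemma, so that the positive eigenvalues are exactly the roots of $\phi=1$, and then use irreducibility to know that $\rho(B)$ is a positive eigenvalue.

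The paper's route is shorter, uses neither determinants nor irreducibility, and produces the Perron eigenvector explicitly. Yours yields slightly more spectral information, for instance that $\lambda_*$ is the only positive eigenvalue of $B$, at the cost of invoking Perron--Frobenius for irreducible matrices. Both arguments need $m\ge2$ for the positive root to exist (for $m=1$ one has $B=0$). You state this explicitly, and it is the standing assumption of the free-product section where the lemma is used.
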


\begin{proof}
The left side of \eqref{eq:factor-perron-equation} is continuous and strictly decreasing from
$m$ to $0$.  For its unique solution, set $v_i=(\lambda+z_i)^{-1}$.  Then
\[
 (Bv)_i=\sum_{j\ne i}\frac{z_j}{\lambda+z_j}
 =1-\frac{z_i}{\lambda+z_i}=\lambda v_i.
\]
The positive eigenvector identifies $\lambda$ as the Perron root.  Evaluating at
$\lambda=1$ proves \eqref{eq:factor-rho-one}; the two-factor formula is immediate.
\end{proof}

\begin{theorem}
\label{thm:factor-series}
For the nontrivial factors fixed above, every $Z_i(\delta_X(\Gamma))$ is finite and
\begin{equation}\label{eq:factor-series-inequality}
 \sum_{i=1}^m
 \frac{Z_i(\delta_X(\Gamma))}{1+Z_i(\delta_X(\Gamma))}
 \le1.
\end{equation}
The same inequality holds for every $s\ge\delta_X(\Gamma)$.
\end{theorem}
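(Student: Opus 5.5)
Encode reduced words of the free product $H$ as walks in a finite labelled transition system and apply \cref{thm:critical-exponent-matrix} (through \cref{cor:finite-state-subgroup}). The states are the factor indices $\{1,\dots,m\}$. For every ordered pair $i\ne j$ and every $h\in H_j\setminus\{1\}$ there is one transition $i\to j$ with label $\lambda=h$; there are no transitions $i\to i$. With orbit length $\ell_x$, the weighted adjacency matrix is exactly
\[
 M_{ij}(s)=Z_j(s)\ (i\ne j),\qquad M_{ii}(s)=0.
\]
For $m\ge2$ the support digraph is the complete digraph without loops, hence strongly connected. It contains the closed walk $1\to2\to1$, so the whole state set is one block $C$ containing a directed closed walk.

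\textbf{Bounded multiplicity.} Fix the base state $v=1$ and the prefix $b=h_1$, where $h_1\in H_1\setminus\{1\}$ is any fixed element. A closed walk based at $1$ consists of syllables $g_1\cdots g_n$ whose factor indices start at some $j_1\ne1$, change at every step, and end with $j_n=1$. Then $h_1g_1\cdots g_n$ is a reduced word in the free product: consecutive syllables lie in different factors, since $j_1\ne1$. Uniqueness of reduced forms in $H_1*\cdots*H_m$, which embeds in $\Gamma$, shows that distinct walks give distinct elements. So the fibre bound holds with $Q=1$, and all labels lie in $H\le\Gamma$. This is the step needing the most care: the prefix must be chosen so that the reduced form is not disturbed at the junction, and that is exactly why $b$ is taken in the factor of the base state.

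\textbf{Finiteness and spectral bound.} \cref{cor:finite-state-subgroup} gives that every entry $Z_j(s)$ is finite for $s\ge\delta_X(\Gamma)$ and that $\rho(M(s))\le1$ there; for $s>\delta_X(\Gamma)$ the inequality is strict. Every $Z_j(s)>0$ because each factor is nontrivial. So \cref{lem:factor-perron} applies with $z_j=Z_j(s)$, and \eqref{eq:factor-rho-one} converts $\rho(M(s))\le1$ into
\[
 \sum_i\frac{Z_i(s)}{1+Z_i(s)}\le1 .
\]
This holds for every $s\ge\delta_X(\Gamma)$, which includes \eqref{eq:factor-series-inequality} at $s=\delta_X(\Gamma)$.

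An alternative check of the critical value: part (ii) of \cref{thm:critical-exponent-matrix} already covers $s=\delta_X(\Gamma)$. One can also argue directly with finite truncations $F_i$. By (iii), the finite matrices $B^F$ satisfy $\rho(B^F(\delta))\le1$, so $\sum_i z_i^F/(1+z_i^F)\le1$. Since $t\mapsto t/(1+t)$ is increasing and continuous, letting $F_i\uparrow H_i\setminus\{1\}$ gives the result by monotone convergence. This route also yields finiteness of each $Z_i(\delta)$ directly: if some $Z_i(\delta)$ were infinite, truncations would push $z_i^F/(1+z_i^F)$ arbitrarily close to $1$, while any other $z_j^F>0$ adds a positive term. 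That would eventually force the sum above $1$ and contradict the bound.
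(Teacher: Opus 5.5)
Your proposal is correct and is essentially the paper's argument. The paper also encodes reduced syllable words by the factor transition system and applies \cref{thm:critical-exponent-matrix} to the finite matrices $B^F$. It converts $\rho(B^F(\delta_X(\Gamma)))\le1$ via \cref{lem:factor-perron}, then recovers \eqref{eq:factor-series-inequality} and the finiteness of each $Z_i(\delta_X(\Gamma))$ by monotone exhaustion and the positive contribution of another factor, exactly as in your alternative route. Your primary route, applying part~(ii) directly to the full matrix, is just a more direct packaging of the same step.

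One minor remark: the prefix $b=h_1$ is harmless but not needed. The syllables of any closed walk already alternate factors, so $b=1$ also gives injectivity with $Q=1$.
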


\begin{proof}
Distinct syllable-labelled reduced paths represent distinct elements of the free product.
Apply \cref{thm:critical-exponent-matrix} to every finite factor matrix and then
\cref{lem:factor-perron}.  Exhaust each factor by finite subsets.  Monotone convergence gives
\eqref{eq:factor-series-inequality}, with the convention $\infty/(1+\infty)=1$.  No one
$Z_i$ can be infinite because another nontrivial factor contributes a positive summand.
Monotonicity in $s$ gives the last assertion.
\end{proof}

\subsection{Exact reduced-word exponent}
Let $L_{\red,x}$ be the sum of orbit lengths of the syllables in the unique reduced word.
Then $d(x,hx)\le L_{\red,x}(h)$.  Its Poincar\'e series has the exact matrix formula
\begin{equation}\label{eq:free-product-resolvent}
 P_{H,L_{\red,x}}(s)
 =1+z(s)^T\sum_{n\ge0}B(s)^n\one,
\end{equation}
whenever the factor series are finite, where $z=(Z_1,\dots,Z_m)^T$.

\begin{theorem}
\label{thm:exact-reduced-exponent}
For the nontrivial factors fixed above, the critical exponent $\delta_{\red,x}$ of
$L_{\red,x}$ is
\begin{equation}\label{eq:exact-reduced-exponent}
 \delta_{\red,x}
 =\inf\left\{s>0:
 \sum_{i=1}^m\frac{Z_i(s)}{1+Z_i(s)}<1\right\},
\end{equation}
with $\infty/(1+\infty)=1$.  Moreover
\begin{equation}\label{eq:finite-factor-recovery}
 \delta_{\red,x}
 =\sup_{F_1,\dots,F_m}
 \inf\left\{s>0:
 \sum_{i=1}^m\frac{Z_{F_i}(s)}{1+Z_{F_i}(s)}<1\right\},
\end{equation}
where the supremum ranges over finite nonempty $F_i\subset H_i\setminus\{1\}$.
Finally $\delta_{\red,x}\le\delta_X(H)\le\delta_X(\Gamma)$, and equality
$\delta_{\red,x}=\delta_X(H)$ holds whenever
$L_{\red,x}(h)-B\le d(x,hx)$ for one constant $B$ and all $h\in H$.
\end{theorem}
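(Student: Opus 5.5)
The plan is to realise the free-product normal form as a trim unambiguous finite automaton and then apply \cref{thm:normal-form-resolvent}. The states are the factor indices $\{1,\dots,m\}$. The initial transition symbols into state $j$ are the elements of $H_j\setminus\{1\}$. The internal symbols from $i$ to $j$ with $i\ne j$ are the elements of $H_j\setminus\{1\}$, and there are none for $i=j$. Every state is terminal, so $q=\one$, and the coefficient set is $\mathcal B=\{1\}$. Uniqueness of reduced words in $H_1*\cdots*H_m$ makes evaluation a bijection onto $H$ and gives $L_{\mathcal N}=L_{\red,x}$. Every state is accessible (each factor is nontrivial) and coaccessible (it is terminal), so the automaton is trim. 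The coefficient series are then $V\equiv1$, $u=z=(Z_j)$ and $K_{ij}=Z_j$ for $i\ne j$, which recovers \eqref{eq:free-product-resolvent}.

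By \cref{thm:normal-form-resolvent}, the series converges exactly when every $Z_j(s)<\infty$ and $\rho(B(s))<1$. For $m\ge2$ the support of $B$ is the complete digraph without loops, which is strongly connected and contains closed walks, so there is a single block. When all $Z_i$ are finite and positive, \cref{lem:factor-perron} applies: strict monotonicity of $\lambda\mapsto\sum z_i/(\lambda+z_i)$ gives $\rho(B)<1$ if and only if $\sum Z_i/(1+Z_i)<1$. If some $Z_i(s)=\infty$, the convention $\infty/(1+\infty)=1$ together with a second positive term makes the sum exceed $1$. This is consistent with divergence, so the set of $s$ where the series converges coincides with the set in \eqref{eq:exact-reduced-exponent}. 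Because that set is an up-ray in $s$ (each $Z_i$ decreases), the two infima agree and \eqref{eq:exact-reduced-exponent} follows.

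For \eqref{eq:finite-factor-recovery} I would redo the argument with $F_i$ in place of $H_i\setminus\{1\}$ and apply \cref{prop:finite-transition-variation}. That proposition needs the bounded-multiplicity hypothesis of \cref{lem:positive-cycle-length}, which holds with $Q=1$ by uniqueness of reduced words after a fixed prefix syllable. Rather than deal with the fact that a truncation here restricts both entries and columns simultaneously, I expect the cleanest route is a direct argument. On one side, $Z_{F_i}\le Z_i$ gives "$\le$". On the other side, if $s<\delta_{\red,x}$, then either some $Z_i(s)=\infty$ or $\sum Z_i/(1+Z_i)\ge1$. The equality case is excluded by strict decrease in $s$, using positivity of $d(x,hx)$ for $h\ne1$ from \cref{lem:positive-cycle-length}. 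Monotone convergence of $Z_{F_i}\uparrow Z_i$, with $t\mapsto t/(1+t)$ increasing and continuous, then yields finite $F_i$ with $\sum Z_{F_i}(s)/(1+Z_{F_i}(s))>1$, and hence a finite critical parameter above $s$.

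Finally, $d(x,hx)\le L_{\red,x}(h)$ together with \cref{thm:two-sided-comparison} (with $a=1$) gives $\delta_{\red,x}\le\delta_X(H)$, and \cref{lem:subgroup-finite-index} gives $\delta_X(H)\le\delta_X(\Gamma)$. Under the bounded-error hypothesis, the same theorem with $a=1,b=B$ gives equality. The main obstacle is the careful handling of the boundary case $\sum=1$ and of infinite $Z_i$ in matching the infima. Strict monotonicity of $Z_i$ in $s$ is the key point there.
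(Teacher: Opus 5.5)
Your proposal is correct and follows the paper's own route: \cref{thm:normal-form-resolvent} together with \cref{lem:factor-perron} gives \eqref{eq:exact-reduced-exponent}, a direct monotone-exhaustion argument that rules out the boundary case $\sum_i Z_i/(1+Z_i)=1$ gives \eqref{eq:finite-factor-recovery}, and \cref{thm:two-sided-comparison} gives the exponent comparisons. One small correction: \cref{lem:positive-cycle-length} does not give $d(x,hx)>0$ for every $h\ne1$, since a torsion syllable may fix $x$ and make some $Z_i$ constant; it only gives that some syllable has positive length, which is exactly what strict decrease of the scalar sum needs and matches the paper's ``at least one positive-length syllable exists''.
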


\begin{proof}
The convergence criterion and resolvent formula in
Theorem~3.8, together with Lemma~7.1, give
\eqref{eq:exact-reduced-exponent}.  For finite recovery, let $\Phi(s)$ be the scalar sum in
\eqref{eq:exact-reduced-exponent}.  If $r$ is smaller than the number defined in \eqref{eq:exact-reduced-exponent}, then
$\Phi(r)>1$. Equality would force the sum below one immediately to the right because at
least one positive-length syllable exists.  Monotone exhaustion supplies finite factor subsets
whose corresponding critical parameter is greater than $r$.  Let $r$ increase to the number
defined in \eqref{eq:exact-reduced-exponent}.  The exponent comparisons are \cref{thm:two-sided-comparison}.
\end{proof}

\subsection{Proof of the free-family inequality from translation lengths and axes}
Let $H_i=\langle a_i\rangle$.  By \cref{lem:axis-power}, for $n\ge1$,
\[
 d(x,a_i^{\pm n}x)\le n\tau_i+2r_i(x).
\]
Therefore
\begin{equation}\label{eq:cyclic-axis-lower}
 Z_i(\delta)
 \ge2\sum_{n\ge1}\e^{-\delta(n\tau_i+2r_i(x))}
 =\frac{2\e^{-2\delta r_i(x)}}{\e^{\delta\tau_i}-1}.
\end{equation}
The function $z\mapsto z/(1+z)$ is increasing.  Substituting
\eqref{eq:cyclic-axis-lower} into \eqref{eq:factor-series-inequality} gives
\eqref{eq:free-family-axis-bound}.

If $\tau_i\le T$ and $r_i(x)\le R$, every summand in
\eqref{eq:free-family-axis-bound} is at least
\[
 \frac{2\e^{-2\delta R}}
 {\e^{\delta T}-1+2\e^{-2\delta R}}.
\]
Hence
\[
 2(k-1)\e^{-2\delta R}\le\e^{\delta T}-1.
\]
After multiplication by $\e^{2\delta R}$,
\[
 \e^{\delta(T+2R)}\ge\e^{2\delta R}+2(k-1)\ge2k-1,
\]
which is \eqref{eq:free-family-uniform-bound}.  Taking an approximating sequence of basepoints proves
the statement with the infimal common-axis radius.  This proves
\cref{cor:free-family-axis-bound}.

\begin{remark}[Relation to Hou's inequality \cite{Hou2001}]
Using only $d(x,a_i^n x)\le n d(x,a_ix)$ in the same proof gives
\[
 \sum_{i=1}^k\frac{1}{1+\e^{\delta d(x,a_ix)}}\le\frac12,
\]
the curvature-free form of Hou's inequality evaluated at the group critical exponent.  The estimate using $a_x(a_i)$ instead retains the
fixed entrance-and-exit term $2d(x,\Ax(a_i))$ only once for the entire power $a_i^n$.
Neither bound uniformly dominates the other for every configuration, but the form using $a_x(a_i)$ is
strictly more informative when translation lengths and axis positions are the available
geometric information.
\end{remark}

\section{Amalgamated products and HNN extensions}
\label{sec:amalgams}

Let
\[
 H=A*_CB\le\Gamma
\]
be a nondegenerate embedded amalgam.  Let $C_A\le A$ and $C_B\le B$ denote the images of
$C$.  Here nondegenerate means that both inclusions are proper: $C_A\ne A$ and $C_B\ne B$.
Define relative orbit lengths
\[
 d_A(aC_A)=\min_{c\in C_A}d(x,acx),
 \qquad
 d_B(bC_B)=\min_{c\in C_B}d(x,bcx),
\]
and relative series
\begin{equation}\label{eq:amalgam-relative-series}
 Z_{A/C}(s)=\sum_{aC_A\ne C_A}\e^{-sd_A(aC_A)},
 \qquad
 Z_{B/C}(s)=\sum_{bC_B\ne C_B}\e^{-sd_B(bC_B)}.
\end{equation}
Choose minimum-length coset representatives.  Every element has a unique alternating normal
form ending in an element of $C$. Let $L_{\mathrm{nf},x}$ be the sum of its relative syllable
lengths and the terminal $C$-length.

\begin{proposition}[Exact amalgam normal-form series]
\label{prop:amalgam-exact}
Set
\[
 P_C(s)=\sum_{c\in C}\e^{-sd(x,cx)}.
\]
The normal-form Poincar\'e series converges exactly when the three coefficient series
$P_C(s)$, $Z_{A/C}(s)$, and $Z_{B/C}(s)$ are finite and
\[
 Z_{A/C}(s)Z_{B/C}(s)<1.
\]
On this domain,
\begin{equation}\label{eq:amalgam-resolvent}
 \sum_{h\in H}\e^{-sL_{\mathrm{nf},x}(h)}
 =P_C(s)
 \frac{(1+Z_{A/C}(s))(1+Z_{B/C}(s))}
 {1-Z_{A/C}(s)Z_{B/C}(s)}.
\end{equation}
Consequently
\begin{equation}\label{eq:amalgam-critical}
 \delta_{\mathrm{nf},x}
 =\max\left\{\delta_X(C),
 \inf\{s>0:Z_{A/C}(s)Z_{B/C}(s)<1\}\right\}.
\end{equation}
The second term in \eqref{eq:amalgam-critical} is the supremum of the corresponding
critical parameters obtained from finite nonempty subsets of the two relative-coset spaces.
\end{proposition}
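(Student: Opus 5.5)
This is the special case of \cref{thm:normal-form-resolvent} for the amalgam viewed as a two-vertex, one-edge graph of groups. I would still write out the two-state automaton explicitly, because the closed formula \eqref{eq:amalgam-resolvent} should come out by hand.

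The normal form I would use is
\[
 h=g_1g_2\cdots g_n c,\qquad c\in C,
\]
where the $g_j$ are fixed minimum-length representatives of nontrivial cosets, alternately from $A/C_A$ and $B/C_B$. Such representatives exist by \cref{lem:orbit-basic}. Uniqueness of this form is the standard amalgam normal form theorem, or equivalently \cref{lem:bass-serre-normal-form} for the segment of groups. The automaton has two states, ``last syllable in $A$'' and ``last syllable in $B$''. The initial vector is $u=(Z_{A/C},Z_{B/C})$ and the terminal indicator is $q=(1,1)$. The transition matrix is
\[
 K=\begin{pmatrix}0&Z_{B/C}\\ Z_{A/C}&0\end{pmatrix},
\]
and the coefficient series is $V=P_C$. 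Every state is accessible directly from the initial transitions and is itself terminal, so the automaton is trim. Nondegeneracy makes both $Z$'s positive, so both states really occur.

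\cref{thm:normal-form-resolvent} then does the main work. The series converges iff $P_C$, $Z_{A/C}$ and $Z_{B/C}$ are finite and $\rho(K)<1$. Here $\rho(K)=\sqrt{Z_{A/C}Z_{B/C}}$, which is also the two-factor case of \cref{lem:factor-perron}. So $\rho(K)<1$ is exactly $Z_{A/C}Z_{B/C}<1$. To get the closed form, compute
\[
 (I-K)^{-1}=\frac{1}{1-Z_{A/C}Z_{B/C}}\begin{pmatrix}1&Z_{B/C}\\ Z_{A/C}&1\end{pmatrix}.
\]
Write $x=Z_{A/C}$ and $y=Z_{B/C}$. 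Then
\[
 1+u^T(I-K)^{-1}q=\frac{(1-xy)+x+xy+y+xy}{1-xy}=\frac{(1+x)(1+y)}{1-xy},
\]
which gives \eqref{eq:amalgam-resolvent}.

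For the abscissa \eqref{eq:amalgam-critical}, the convergence region is the set of $s$ where $P_C$ converges and $xy<1$ with both factors finite. The abscissa of $P_C$ is $\delta_X(C)$. The set where $xy<1$ is an upward-closed ray, because $x$ and $y$ are nonincreasing in $s$. The one subtlety is infinite entries. If $Z_{A/C}(s)=\infty$, then since $Z_{B/C}>0$ the product is $\infty$, not below $1$. So the infimum in \eqref{eq:amalgam-critical} automatically incorporates the finiteness of both $Z$'s. The maximum of the two thresholds is then the abscissa.

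The finite-truncation statement comes from \cref{prop:finite-transition-variation}. Its bounded-multiplicity hypothesis holds with $Q=1$, by the injectivity argument of \cref{thm:normal-form-resolvent}. It identifies the block critical parameter with the supremum over finite transition sets. A finite transition set only uses finitely many cosets in each factor, so this supremum equals the supremum over finite nonempty $F_A\subset A/C_A\setminus\{C_A\}$ and $F_B$. For such a truncation, $\rho=\sqrt{z_A^Fz_B^F}$, so its critical parameter is the root of $z^F_Az^F_B=1$.

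The main obstacle I expect is positivity of closed-walk lengths, which \cref{lem:positive-cycle-length} needs. A closed walk is a syllable pair $(a,b)$ with $d_A+d_B>0$. If both minimal lengths vanished on some pair, powers of $ab$ would give infinitely many distinct elements fixing $x$. That contradicts finiteness of orbit balls, which holds whenever the relevant Poincaré series converges somewhere. If it converges nowhere, the formula holds trivially with value $\infty$.
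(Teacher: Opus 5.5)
Your proposal is correct and is essentially the paper's argument. The paper sums the two alternating geometric series from the unique amalgam normal form, which is your $2\times2$ resolvent computation written as scalars, and gets the finite-subset statement from the monotone-exhaustion argument of \cref{thm:exact-reduced-exponent}, the scalar counterpart of your appeal to \cref{prop:finite-transition-variation}; your explicit check that no pair of nontrivial syllables has zero total relative length is correct and is a detail the paper leaves implicit.
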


\begin{proof}
Unique normal forms give two alternating geometric series.  Their sum is
\eqref{eq:amalgam-resolvent}.  Convergence is equivalent to convergence of the coefficient
series and of the alternating geometric series.  Finite subset determination follows by the same
monotone exhaustion argument as in \cref{thm:exact-reduced-exponent}.
\end{proof}

\begin{theorem}
\label{thm:amalgam-critical-exponent}
At $\delta=\delta_X(\Gamma)$,
\begin{equation}\label{eq:amalgam-product}
 Z_{A/C}(\delta)<\infty,
 \qquad
 Z_{B/C}(\delta)<\infty,
 \qquad
 Z_{A/C}(\delta)Z_{B/C}(\delta)\le1.
\end{equation}
Suppose finite relative-coset subsets $F_A,F_B$ are represented by hyperbolic elements
$g_c$ in their coordinate classes.  Set
\[
 Q_A(s)=\sum_{c\in F_A}\e^{-sa_x(g_c)},
 \qquad
 Q_B(s)=\sum_{c\in F_B}\e^{-sa_x(g_c)}.
\]
Then
\begin{equation}\label{eq:axis-amalgam-product}
 Q_A(\delta)Q_B(\delta)\le1.
\end{equation}
For a convex-cobounded action, the unique root of
$Q_A(s)Q_B(s)=1$, when positive, is a lower bound for
$\dim_H(\Lambda_\Gamma,\rho_x)$.
\end{theorem}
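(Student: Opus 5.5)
The plan is to realise the amalgam $H=A*_CB$ as a graph of groups with one edge and to specialise the general Bass--Serre transfer-matrix results. The underlying graph $Y$ has two vertices $v_A,v_B$, one edge $f$ from $v_A$ to $v_B$, and $\tau_f=\tau_{\bar f}=1$. The oriented-edge states are $f$ and $\bar f$. A transition $f\to\bar f$ chooses a relative coset in $G_{v_B}/G_{\bar f}=B/C_B$, and $\bar f\to f$ chooses a coset in $A/C_A$. In both cases the identity coset is omitted by the non-backtracking rule, and $f\to f$, $\bar f\to\bar f$ are not allowed. By \cref{def:relative-length} with $\tau=1$, the relative lengths are exactly $d_A$ and $d_B$. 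Hence the full matrix is
\[
 K_x(s)=\begin{pmatrix}0&Z_{B/C}(s)\\ Z_{A/C}(s)&0\end{pmatrix}.
\]
Nondegeneracy ($C_A\ne A$, $C_B\ne B$) makes both off-diagonal entries positive, so $\{f,\bar f\}$ is a single strongly connected block containing the directed closed walk $f\to\bar f\to f$. Its spectral radius is $\sqrt{Z_{A/C}Z_{B/C}}$ whenever both entries are finite.

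Next I would apply \cref{thm:bass-serre-critical-exponent}, using \cref{lem:bass-serre-closed-injective} to supply the bounded-multiplicity hypothesis with $Q=1$. This gives that every entry of $K_x(\delta)$ is finite, which is $Z_{A/C}(\delta),Z_{B/C}(\delta)<\infty$, and that $\rho(K_x(\delta))\le1$, which is $Z_{A/C}(\delta)Z_{B/C}(\delta)\le1$. This proves \eqref{eq:amalgam-product}.

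For \eqref{eq:axis-amalgam-product} I would use \cref{lem:relative-axis-comparison}: each selected hyperbolic $g_c$ lies in the coordinate class of $c$, so $e^{-sa_x(g_c)}\le e^{-sL(c)}$. The selected cosets are distinct and non-identity, so summing gives $Q_A(s)\le Z_{A/C}(s)$ and $Q_B(s)\le Z_{B/C}(s)$. The product inequality at $\delta$ then follows from the one already proved. This is the $2\times2$ instance of \eqref{eq:main-transfer-comparison}, with comparison matrix $A^{\mathcal F}$ having off-diagonal entries $Q_B,Q_A$ and spectral radius $\sqrt{Q_AQ_B}$.

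For the dimension statement, $s\mapsto Q_A(s)Q_B(s)$ is continuous and strictly decreasing, since each $a_x(g_c)\ge\tau_X(g_c)>0$, and it tends to $0$. So when $Q_A(0)Q_B(0)=|F_A||F_B|>1$ there is a unique positive root $\sigma$; this is \cref{prop:axis-matrix-root} with the $2\times 2$ matrix. Because $Q_AQ_B>1$ for $s<\sigma$ while $Q_A(\delta)Q_B(\delta)\le1$, we get $\sigma\le\delta$. Under convex-coboundedness, \cref{thm:bourdon-coornaert} gives $\delta=\dim_H(\Lambda_\Gamma,\rho_x)$, which completes that part.

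The main obstacle is the identification step. One must check that the paper's coset and transport conventions, namely left cosets $rG_f$, the lift choices, and $\tau_f=1$, produce exactly $Z_{A/C}$ and $Z_{B/C}$ with the identity coset removed, with no extra transitions and no mislabelled states. Once that is in place, everything else is a direct specialisation of earlier results.
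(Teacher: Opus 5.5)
Your proposal is correct and is essentially the paper's own proof. The paper likewise takes the two oriented-edge states as a strongly connected block whose relative matrix has off-diagonal entries $Z_{B/C}(s)$ and $Z_{A/C}(s)$, applies \cref{thm:bass-serre-critical-exponent}, and then uses \cref{lem:relative-axis-comparison} with matrix monotonicity and \cref{thm:bourdon-coornaert}; you only make explicit what the paper leaves implicit, namely the identification with $\tau_f=1$ and the identity coset omitted on both reverse transitions, and the monotone-root argument giving $\sigma\le\delta$.
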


\begin{proof}
The two oriented-edge states form a strongly connected block with exact relative matrix
\[
 \begin{pmatrix}
 0&Z_{B/C}(s)\\
 Z_{A/C}(s)&0
 \end{pmatrix}.
\]
Apply \cref{thm:bass-serre-critical-exponent}, and its spectral radius is
$\sqrt{Z_{A/C}Z_{B/C}}$.  The comparison-matrix inequality follows from
\cref{lem:relative-axis-comparison} and matrix monotonicity.  The dimension conclusion is
\cref{thm:bourdon-coornaert}.
\end{proof}

\begin{corollary}
\label{cor:bounded-axis-amalgam}
If $F_A$ contains $N_A$ hyperbolic representatives $g$ satisfying $a_x(g)\le R_A$ and $F_B$ contains
$N_B$ hyperbolic representatives $g$ satisfying $a_x(g)\le R_B$, with $N_AN_B>1$, then
\begin{equation}\label{eq:bounded-axis-amalgam}
 \delta_X(\Gamma)(R_A+R_B)\ge\log(N_AN_B).
\end{equation}
For a convex-cobounded action this is a Hausdorff-dimension lower bound.
\end{corollary}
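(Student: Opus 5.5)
We deduce \eqref{eq:bounded-axis-amalgam} from the comparison inequality \eqref{eq:axis-amalgam-product} of \cref{thm:amalgam-critical-exponent}, evaluated at $\delta=\delta_X(\Gamma)$. The only tool needed is termwise monotonicity of $s\mapsto\e^{-sa}$.

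First, for every hyperbolic representative $g$ in $F_A$ we have $a_x(g)\le R_A$ and $\delta\ge0$. Hence $\e^{-\delta a_x(g)}\ge\e^{-\delta R_A}$. Summing over the $N_A$ selected representatives gives
\[
 Q_A(\delta)\ge N_A\e^{-\delta R_A},
\]
and likewise $Q_B(\delta)\ge N_B\e^{-\delta R_B}$. The selected cosets are distinct, so no term is counted twice; this is the distinctness hypothesis already built into the definition of $Q_A,Q_B$. Multiplying the two bounds and invoking \eqref{eq:axis-amalgam-product} yields
\[
 N_AN_B\,\e^{-\delta(R_A+R_B)}\le Q_A(\delta)Q_B(\delta)\le1 .
\]
Taking logarithms gives $\delta(R_A+R_B)\ge\log(N_AN_B)$, which is \eqref{eq:bounded-axis-amalgam}.

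We must also check that the inequality has content. Since $N_AN_B>1$, the right side is positive, so the bound forces $R_A+R_B>0$ and $\delta>0$. This is consistent with $a_x(g)\ge\tau_X(g)>0$. If we only knew $\delta=0$ a priori, the displayed inequality would read $N_AN_B\le1$, a contradiction. So the argument also shows $\delta_X(\Gamma)>0$, and no separate case is needed.

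For the Hausdorff-dimension statement, if the action is convex-cobounded (and non-elementary, which the presence of an amalgam with $N_AN_B>1$ hyperbolic coset representatives supports), then \cref{thm:bourdon-coornaert} gives $\dim_H(\Lambda_\Gamma,\rho_x)=\delta_X(\Gamma)$. Hence $\dim_H(\Lambda_\Gamma,\rho_x)\ge\log(N_AN_B)/(R_A+R_B)$.

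The only point requiring care is that \eqref{eq:axis-amalgam-product} is applied to the finite subsets consisting exactly of the counted representatives, rather than to larger subsets. Any extra terms would only increase $Q_A$ and $Q_B$, so the lower bound survives in either case. Everything else is routine.
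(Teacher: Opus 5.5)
Your proposal is correct and follows the paper's argument: bound $Q_A(\delta)\ge N_A\e^{-\delta R_A}$ and $Q_B(\delta)\ge N_B\e^{-\delta R_B}$ termwise, insert these into \eqref{eq:axis-amalgam-product} at $\delta=\delta_X(\Gamma)$, take logarithms, and invoke \cref{thm:bourdon-coornaert} for the dimension statement. Your side remark deducing $\delta_X(\Gamma)>0$ is harmless but unnecessary, since \eqref{eq:axis-amalgam-product} is derived under the paper's standing hypothesis $0<\delta_X(\Gamma)<\infty$.
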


\begin{proof}
Use $Q_A(\delta)\ge N_A\e^{-\delta R_A}$ and
$Q_B(\delta)\ge N_B\e^{-\delta R_B}$ in
\eqref{eq:axis-amalgam-product}.
\end{proof}

\subsection{HNN extensions}
For a one-edge HNN extension, the quotient graph has one vertex and one unoriented loop.  The
two oriented states give a $2\times2$ relative matrix whose diagonal entries encode repeated
positive or negative stable-letter directions and whose off-diagonal entries encode mixed
transitions.

\begin{corollary}
\label{cor:HNN}
At $s=\delta_X(\Gamma)$, every diagonal entry lying on a one-cycle is finite and at most one.
If both mixed transitions are nonzero, both mixed entries are finite and their product is at
most one.  The same statements hold for every finite choice of hyperbolic representatives, with each exact relative
entry replaced by the corresponding finite sum of weights $\e^{-s a_x(g)}$.
\end{corollary}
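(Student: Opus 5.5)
We would specialize \cref{thm:bass-serre-critical-exponent} to the one-vertex, one-loop graph of groups. Let $H=A*_{C}$ be the embedded HNN extension. The quotient graph $Y$ has one vertex $v$ and one unoriented edge, giving two oriented states $f$ and $\bar f$, which both start and end at $v$. Every transition $e\to f'$ is allowed because $t(e)=v=o(f')$. Two of them are ``repeated direction'' transitions, $f\to f$ and $\bar f\to\bar f$; these are the one-cycles. The two mixed transitions are $f\to\bar f$ and $\bar f\to f$, and these are exactly the immediate reversals, where the identity coset is omitted.

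The relative matrix is therefore
\[
 K(s)=\begin{pmatrix}K_{ff}(s)&K_{f\bar f}(s)\\ K_{\bar f f}(s)&K_{\bar f\bar f}(s)\end{pmatrix}.
\]
Here $K_{ff}(s)=\sum_{c\in G_v/G_f}\e^{-sL_{f,x}(c)}$ and $K_{f\bar f}(s)=\sum_{c\ne G_{\bar f}}\e^{-sL_{\bar f,x}(c)}$, and the other two entries are analogous.

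For the diagonal claim, suppose $K_{ff}$ is nonzero, so that there is a one-cycle at $f$. Then $\{f\}$ is contained in a strongly connected block $C$ containing a directed closed walk. \Cref{thm:bass-serre-critical-exponent} gives that every entry of $K_C(\delta)$ is finite and that $\rho(K_C(\delta))\le1$, where $\delta=\delta_X(\Gamma)$. \Cref{lem:closed-walk}, applied to the length-one closed walk $f\to f$, gives $K_{ff}(\delta)\le\rho(K_C(\delta))\le1$.

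For the mixed claim, suppose both mixed entries are nonzero. Then $f\leftrightarrow\bar f$ is a strongly connected block containing the closed walk $f\to\bar f\to f$. Both entries are finite by the same theorem. \Cref{lem:closed-walk} with $q=2$ gives $K_{f\bar f}(\delta)K_{\bar ff}(\delta)\le\rho^2\le1$.

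Trimness needs a check, since the theorem is stated for the trim matrix $K_*$. Both states lie in $E_*$: each state is an edge from $v_*=v$ to $v$, so any single transition is a reduced based loop. If instead we argue with the full block, we only need the injectivity of \cref{lem:bass-serre-closed-injective}, and its proof works for any reduced prefix.

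For the hyperbolic-representative version, take finite sets of distinct cosets with hyperbolic representatives. \Cref{lem:relative-axis-comparison} gives $0\le A^{\mathcal F}(\delta)\le K^{\mathcal F}(\delta)\le K_C(\delta)$ entrywise. Perron--Frobenius monotonicity then gives $\rho(A^{\mathcal F}(\delta))\le1$. Applying \cref{lem:closed-walk} to $A^{\mathcal F}$, along the one-cycle or along the two-cycle, yields the same entry bound or product bound with the weights $\e^{-sa_x(g)}$.

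I expect the only delicate step to be the bookkeeping of which coset is omitted on the mixed transitions, namely the identity coset of $G_{\bar f}=\tau_f^{-1}G_f\tau_f$. This is handled by \eqref{eq:reverse-edge-stabilizer}, and it matters only for identifying the entries, not for the inequalities.
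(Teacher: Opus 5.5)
Your proposal is correct and follows the paper's own proof. The paper likewise applies the closed-walk inequality \eqref{eq:closed-walk} to the one-cycles and the mixed two-cycle of the $2\times2$ relative matrix, takes finiteness and $\rho\le1$ at $s=\delta_X(\Gamma)$ from \cref{thm:bass-serre-critical-exponent}, and passes to the axial weights via \cref{lem:relative-axis-comparison}. Your extra bookkeeping (both states lie in $E_*$ because each is a one-edge based loop, and the omitted coset on a mixed transition is $G_{\bar f}$) only makes explicit what the paper's one-line proof leaves implicit.
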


\begin{proof}
Apply the cycle-product inequality
\eqref{eq:closed-walk} to the one-cycles and the mixed two-cycle, using
\cref{thm:bass-serre-critical-exponent} and \cref{lem:relative-axis-comparison}.
\end{proof}

\section{Lower bounds for boundary dimension}
\label{sec:boundary-lower-bounds}

We recall the finite nonnegative-matrix facts used in this section.  For a nonnegative matrix
$M$, its support digraph has an arrow $i\to j$ exactly when $M_{ij}>0$.  The strongly
connected components of this digraph are the irreducible diagonal blocks obtained after a
simultaneous permutation of rows and columns puts $M$ in Frobenius normal form
\cite[Theorems~3.2.1 and~3.2.4]{BrualdiRyser1991}.  This form is block upper triangular, so
the spectrum is the union of the spectra of its diagonal blocks and $\rho(M)$ is the maximum
of their spectral radius.  A diagonal block with acyclic support is nilpotent. On the remaining
irreducible blocks we use Perron--Frobenius comparison
\cite[Chapter~1]{Seneta2006}.  These are the Frobenius diagonal blocks referred to below.

\subsection{Matrix and cycle bounds}

\begin{proof}[Proof of \cref{cor:finite-matrix-cycle}]
Set $\delta=\delta_X(\Gamma)$.  For every transition,
\[
 B_{ef}(\delta)=N_{ef}\e^{-\delta R_{ef}}
 \le\sum_{c\in\mathcal F_{ef}}\e^{-\delta a_x(g_{ef,c})}
 =A^{\mathcal F}_{ef}(\delta).
\]
Thus
$\rho(B(\delta))\le\rho(A^{\mathcal F}(\delta))\le1$ by
\cref{thm:bass-serre-main}.  If all $R_{ef}\le R$, then
$B(\delta)\ge\e^{-\delta R}N$, so
$1\ge\e^{-\delta R}\rho(N)$, proving
\eqref{eq:matrix-dimension-bound}.  On a directed cycle,
\cref{lem:closed-walk} gives
\[
 1\ge\prod_jN_j\e^{-\delta R_j},
\]
which is \eqref{eq:cycle-axis-bound}.  The boundary statement follows from \cref{cor:boundary-dimension-ends,thm:bass-serre-main}: use \eqref{eq:full-limit-dimension}
when $\delta_X(\Gamma)<1$, and \cref{thm:bourdon-coornaert} in the convex-cobounded case.
\end{proof}

\subsection{Weighted Hashimoto matrices}
For a finite graph $Y$, the Hashimoto matrix, also called the non-backtracking edge matrix,
is the adjacency matrix on oriented edges in which $f$ may follow $e$ precisely when
$t(e)=o(f)$ and $f\ne\bar e$.  Hashimoto introduced this matrix in the study of graph zeta
functions \cite[Section~1]{Hashimoto1989}.  The weighted matrix below replaces
each permitted unit entry by an exponential weight coming from a selected hyperbolic
relative-coset representative.

Let $Y$ be the finite quotient graph of a splitting.  Its ordinary Hashimoto matrix has
oriented-edge states and entries
\[
 (B_Y)_{ef}=1
 \qquad\mbox{if and only if}\qquad
 t(e)=o(f),\ f\ne\bar e.
\]
Suppose for each allowed transition $e\to f$ we choose one hyperbolic relative-coset representative
$g_{ef}$ and set $R_{ef}=a_x(g_{ef})$.  Define
\[
 H^{\ax}_{Y,x}(s)_{ef}=
 \begin{cases}
 \e^{-sR_{ef}},&t(e)=o(f),\ f\ne\bar e,\\
 0,&\text{otherwise}.
 \end{cases}
\]

\begin{corollary}
\label{cor:hashimoto-axis-bound}
At $s=\delta_X(\Gamma)$,
\begin{equation}\label{eq:hashimoto-axis-bound}
 \rho(H^{\ax}_{Y,x}(\delta_X(\Gamma)))\le1.
\end{equation}
If $R_{ef}\le R$ on every transition belonging to a strongly connected component of $B_Y$ that contains a directed cycle, then
\begin{equation}\label{eq:hashimoto-dimension}
 \delta_X(\Gamma)R\ge\log\rho(B_Y)
\end{equation}
whenever $\rho(B_Y)>1$.  If $\delta_X(\Gamma)<1$, or if the action is convex-cobounded, this bounds
$\dim_H(\Lambda_\Gamma,\rho_x)$ from below.
\end{corollary}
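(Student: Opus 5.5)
The plan is to reduce \cref{cor:hashimoto-axis-bound} to \cref{cor:finite-matrix-cycle} and \cref{thm:bass-serre-main}. Each selected representative $g_{ef}$ lies in an allowed relative coset, so for every allowed transition $e\to f$ we take $\mathcal F_{ef}$ to be that single coset. This is a finite choice of hyperbolic representatives with one coset per transition, and distinctness is trivial. Its comparison matrix $A^{\mathcal F}_{x}(s)$ then coincides entrywise with $H^{\ax}_{Y,x}(s)$.

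The subtlety is the state set. \Cref{thm:bass-serre-main} controls the trim matrix block by block on $C\in\mathscr C$. The Hashimoto matrix lives on all oriented edges of $Y$, so some transitions of $B_Y$ may be forbidden when the relevant coset space $G_{o(f)}/G_f\setminus E_{ef}$ is empty; in that case no representative exists and the entry is genuinely zero in $H^{\ax}$. To handle this, I would put $H^{\ax}_{Y,x}(s)$ in Frobenius normal form. Acyclic blocks are nilpotent and contribute nothing. For each irreducible block $B$ containing a cycle, I would check that its states lie in the trim set $E_*$. A state on a cycle of allowed transitions can be reached from $v_*$ and can return to $v_*$ by a reduced path in $T$: choose the base vertex in that component, or extend by a nontrivial coset to avoid backtracking. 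This is the step where I expect some care, and I would argue it using connectedness of $Y$ and the freedom to change $v_*$, since $\delta_X(\Gamma)$ does not depend on $v_*$.

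Once this is done, $B$ sits inside some $C\in\mathscr C$, and $A^{\mathcal F}_{B}(\delta)\le K^{\mathcal F}_{C}(\delta)$. \Cref{thm:critical-exponent-matrix}(iii) together with Perron--Frobenius monotonicity then give spectral radius at most one. Taking the maximum over blocks proves \eqref{eq:hashimoto-axis-bound}.

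For \eqref{eq:hashimoto-dimension}, write $\rho(B_Y)$ as the maximum over its cyclic Frobenius blocks and pick a block $B'$ attaining it. On the transitions of $B'$ we have $R_{ef}\le R$, so $H^{\ax}(\delta)\ge\e^{-\delta R}B_Y$ entrywise there. Hence
\[
1\ge\rho(H^{\ax}(\delta))\ge\e^{-\delta R}\rho(B')=\e^{-\delta R}\rho(B_Y).
\]
This uses that allowed Hashimoto transitions within $B'$ carry a representative, which is exactly the standing assumption that one $g_{ef}$ is chosen for every allowed transition. Taking logarithms gives the bound.

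The dimension statement follows as in the proof of \cref{cor:finite-matrix-cycle}. When $\delta_X(\Gamma)<1$ we use \eqref{eq:full-limit-dimension}, and in the convex-cobounded case we use \cref{thm:bourdon-coornaert} to replace $\delta_X(\Gamma)$ by $\dim_H(\Lambda_\Gamma,\rho_x)$.
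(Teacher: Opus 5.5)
Your proposal is correct and follows essentially the paper's own proof. You realize $H^{\ax}_{Y,x}$ as the comparison matrix of a one-coset-per-transition finite choice of hyperbolic representatives, obtain $\rho\le1$ at $\delta_X(\Gamma)$ from \cref{thm:bass-serre-main}, and then compare $H^{\ax}_{Y,x}(\delta)\ge\e^{-\delta R}B_Y$ on the cyclic Frobenius blocks via spectral-radius monotonicity. Your extra check that each cyclic block lies in the trim state set (for instance by taking $v_*=o(e)$ for a state $e$ of the block) is a detail the paper leaves implicit. One aside is inaccurate but harmless: for a Hashimoto transition $f\ne\bar e$ the coset space $G_{o(f)}/G_f$ always contains the identity coset, so the ``empty coset space'' case cannot occur, and the only hypothesis that matters is that a hyperbolic representative was chosen for each transition.
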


\begin{proof}
Set $\delta=\delta_X(\Gamma)$.  The matrix $H^{\ax}_{Y,x}$ arises from a finite choice of hyperbolic representatives.  If every selected representative $g$ satisfies $a_x(g)\le R$, then
$H^{\ax}_{Y,x}(\delta)\ge\e^{-\delta R}B_Y$ on each strongly connected block containing a directed cycle.  Apply spectral-radius monotonicity on those blocks and then take the maximum over the Frobenius diagonal blocks.
\end{proof}

The exact relative matrix and the comparison matrix $A^{\mathcal F}_x$ both depend on $x$, whereas $\delta_X(\Gamma)$ and the limit-set dimension do not.

\begin{proposition}
\label{prop:basepoint-stability}
Let $D=d(x,y)$.  For every relative coset $c$ and every hyperbolic representative $g$,
\begin{align}
 |L_{f,x}(c)-L_{f,y}(c)|&\le2D,\label{eq:relative-basepoint-stability}\\
 |a_x(g)-a_y(g)|&\le2D.\label{eq:axis-bound-basepoint-stability}
\end{align}
Consequently, for every fixed finite support and every $s>0$,
\begin{equation}\label{eq:matrix-basepoint-stability}
 \e^{-2sD}K_x^F(s)\le K_y^F(s)\le\e^{2sD}K_x^F(s),
 \qquad
 \e^{-2sD}A_x^{\mathcal F}(s)\le A_y^{\mathcal F}(s)
 \le\e^{2sD}A_x^{\mathcal F}(s).
\end{equation}
In particular, the critical parameter of every finite matrix $A^{\mathcal F}_x$ varies continuously with the basepoint.
\end{proposition}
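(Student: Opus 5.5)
The plan is to derive everything from the orbit-length basepoint estimate of \cref{lem:orbit-basic} and the axis estimate \eqref{eq:axial-basepoint} of \cref{lem:axis-power}. Then I pass to exponentials and to the finite matrices.

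For \eqref{eq:relative-basepoint-stability}, write $c=rG_f$ and consider the displacement $d(z,rh\tau_f z)$ for each fixed $h\in G_f$. By \cref{lem:orbit-basic}, $|d(x,gx)-d(y,gy)|\le2D$ for every group element $g$, so
\[
 d(x,rh\tau_fx)\le d(y,rh\tau_fy)+2D.
\]
Choose $h$ to be the minimiser at $y$ (it exists by \cref{lem:orbit-basic}). This gives $L_{f,x}(c)\le L_{f,y}(c)+2D$. Interchanging $x$ and $y$ gives the reverse bound. There is one point to note: the minimiser $h_f(c)$ and the representative $b_f(c)$ depend on the basepoint, but the coset $c$ and the transport element $\tau_f$ do not. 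Hence the minimum is taken over the same set at both basepoints, and that is all the argument needs. The estimate \eqref{eq:axis-bound-basepoint-stability} is exactly \eqref{eq:axial-basepoint}, which comes from $|d(x,\Ax(g))-d(y,\Ax(g))|\le D$ together with the fact that $\tau_X(g)$ does not depend on the basepoint.

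For \eqref{eq:matrix-basepoint-stability}, fix $s>0$ and a finite support. This means a finite set $F$ of retained transitions for $K^F$, or a finite choice $\mathcal F$ of cosets with fixed hyperbolic representatives for $A^{\mathcal F}$, used at both basepoints. Exponentiating the two estimates termwise gives
\[
 \e^{-2sD}\e^{-sL_{f,x}(c)}\le\e^{-sL_{f,y}(c)}\le\e^{2sD}\e^{-sL_{f,x}(c)},
\]
and the analogous bounds for $\e^{-sa_\cdot(g)}$. Summing over the same finite index set in each entry gives the entrywise sandwich.

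For continuity of $\sigma^{\ax}_{\mathcal F}$, I would use monotonicity of the spectral radius for nonnegative matrices together with homogeneity, $\rho(tM)=t\rho(M)$. These give
\[
 \e^{-2sD}\rho(A_x^{\mathcal F}(s))\le\rho(A_y^{\mathcal F}(s))\le\e^{2sD}\rho(A_x^{\mathcal F}(s)).
\]
By \cref{prop:axis-matrix-root}, if the support is acyclic then $\sigma^{\ax}=0$ at both basepoints, and nothing remains to prove. Otherwise I write $\varphi_x(s)=\rho(A^{\mathcal F}_x(s))$, which is continuous and strictly decreasing. I also use the cruder bound $\rho(A_y(s))=\rho(A_x(s'))$ evaluated with shifted lengths. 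A clean route is to note that every entry weight satisfies $\e^{-s(a_x+2D)}\le\e^{-sa_y}\le\e^{-s(a_x-2D)}$. Write $\sigma=\sigma^{\ax}_{\mathcal F}(x)$ and $\sigma'=\sigma^{\ax}_{\mathcal F}(y)$.

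Suppose $\sigma>0$, so that $\varphi_x(\sigma)=1$. Take $s>\sigma$. Strict decrease gives $\varphi_x(s)<1$, and the sandwich gives $\varphi_y(s)\le\e^{2sD}\varphi_x(s)$, which is below $1$ once $D$ is small. Hence $\sigma'\le s$. Symmetrically, for $s<\sigma$ we have $\varphi_x(s)>1$, so $\varphi_y(s)\ge\e^{-2sD}\varphi_x(s)>1$ for small $D$, and hence $\sigma'\ge s$. Letting $D\to0$ gives $\sigma'\to\sigma$. The case $\sigma=0$ is handled by the upper half of the same argument alone.

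The main obstacle I expect is this continuity step. Everything hinges on the strict monotonicity and root description from \cref{prop:axis-matrix-root}, applied with the thresholds at $1$ and a fixed finite support.
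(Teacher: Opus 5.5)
Your proposal is correct and follows essentially the paper's proof: the same termwise estimates from \cref{lem:orbit-basic} and \eqref{eq:axial-basepoint}, exponentiation and summation over the fixed finite support, and continuity from the root description in \cref{prop:axis-matrix-root}. Your spectral-radius sandwich simply makes explicit the paper's one-line continuity appeal; the step $\rho(A^{\mathcal F}_y(s))>1\Rightarrow\sigma'\ge s$ silently uses monotonicity of $\rho(A^{\mathcal F}_y(\cdot))$, which also comes from that proposition, and for $\sigma=0$ the paper argues differently, noting that $A^{\mathcal F}(0)$ only counts the selected cosets and is basepoint-independent, so the parameter stays exactly zero at every basepoint.
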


\begin{proof}
For every group element $u$,
$|d(x,ux)-d(y,uy)|\le2D$.  Taking minima over the same relative coset gives
\eqref{eq:relative-basepoint-stability}.  Equation
\eqref{eq:axis-bound-basepoint-stability} is \eqref{eq:axial-basepoint}.  Exponentiation and
summation give \eqref{eq:matrix-basepoint-stability}.  Continuity of the finite matrix entries and the root characterisation in
\cref{prop:axis-matrix-root} give continuity whenever the critical parameter is positive.  If
it is zero, then $\rho(A^{\mathcal F}(0))\le1$. This matrix collects only the fixed support
and is independent of the basepoint, so the critical parameter remains zero.  Hence continuity
holds in all cases.
\end{proof}

\begin{remark}
For any chosen family $\mathscr S$ of finite matrices $A^{\mathcal F}_x$,
\[
 \sup_{x\in X}\sup_{(\alpha,C,\mathcal F)\in\mathscr S}
 \sigma^{\ax}_{\alpha,C,\mathcal F}(x)
\]
is a lower bound for the orbit critical exponent, and hence for the Bourdon Hausdorff
dimension whenever $\delta_X(\Gamma)<1$ or the action is convex-cobounded.  This supremum
need not be a group invariant unless the splittings and representatives are chosen
equivariantly or canonically.
\end{remark}

\subsection{Strict dimension gaps for divergence-type factors}

\begin{corollary}
\label{cor:factor-dimension-gap}
Let $H=H_1*\cdots*H_m\le\Gamma$, $m\ge2$, where every $H_i$ is nontrivial.  Assume that
$\Gamma$ acts properly
discontinuously and non-elementarily, and that its action is convex-cobounded.  If the Poincar\'e series of $H_i$
diverges at $\delta_X(H_i)$, then
\begin{equation}\label{eq:factor-exponent-gap}
 \delta_X(H_i)<\delta_X(\Gamma).
\end{equation}
If $H_i$ is non-elementary and its action is convex-cobounded, then
\begin{equation}\label{eq:factor-dimension-gap}
 \dim_H(\Lambda_{H_i},\rho_x)
 <\dim_H(\Lambda_\Gamma,\rho_x).
\end{equation}
\end{corollary}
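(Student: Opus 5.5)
We will derive \eqref{eq:factor-exponent-gap} from the free-product inequality \cref{thm:factor-series}. The argument goes by contradiction. Set $\delta=\delta_X(\Gamma)$ and suppose $\delta_X(H_i)=\delta$; by subgroup monotonicity we have $\delta_X(H_i)\le\delta$, so this is the only alternative to the claim. Since the Poincar\'e series of $H_i$ diverges at its own critical exponent, which equals $\delta$, we get
\[
 Z_i(\delta)=P_{H_i,x}(\delta)-1=\infty .
\]
\cref{thm:factor-series}, however, asserts that $Z_i(\delta)$ is finite for every nontrivial factor. This contradiction gives $\delta_X(H_i)<\delta_X(\Gamma)$.

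The finiteness statement in \cref{thm:factor-series} is the key input, so we recall why it holds. Fix one other factor $H_j$ with $j\ne i$, which exists because $m\ge2$, and choose $h\in H_j\setminus\{1\}$. The two-state matrix with entries $z_i^{F}(\delta)$ and $\e^{-\delta d(x,hx)}$ satisfies $\rho\le1$ for every finite $F\subset H_i\setminus\{1\}$, by \cref{thm:critical-exponent-matrix}; its hypotheses hold because reduced words in a free product are unique, so $Q=1$. By \cref{lem:factor-perron}, this gives
\[
 z_i^F(\delta)\,\e^{-\delta d(x,hx)}\le1 .
\]
Exhausting $H_i$ by finite sets $F$ yields $Z_i(\delta)\le\e^{\delta d(x,hx)}<\infty$. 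This is exactly the step the divergence hypothesis contradicts. It uses only $\delta_X(\Gamma)<\infty$; positivity of $\delta$ follows from non-elementarity by the ping-pong argument in \cref{prop:accessible-virtually-free}, so the standing hypothesis $0<\delta<\infty$ of \cref{thm:critical-exponent-matrix} holds.

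For \eqref{eq:factor-dimension-gap}, we convert exponents to dimensions on each side. Since $\Gamma$ is non-elementary and acts convex-coboundedly, \cref{thm:bourdon-coornaert} gives $\dim_H(\Lambda_\Gamma,\rho_x)=\delta_X(\Gamma)$. If $H_i$ is non-elementary and convex-cobounded, the same theorem applied to $H_i$ gives $\dim_H(\Lambda_{H_i},\rho_x)=\delta_X(H_i)$; its discreteness is inherited from $\Gamma$. It then remains to verify the divergence hypothesis for $H_i$. A convex-cobounded non-elementary discrete group in a CAT$(-1)$ space is of divergence type, by the standard Patterson--Sullivan/Coornaert theory: orbit growth is comparable to $\e^{\delta n}$ (see \cite{DasSimmonsUrbanski2017}). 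With divergence in hand, the first part gives the strict inequality.

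The main obstacle is precisely this divergence-type input. If the intended reading is that divergence at $\delta_X(H_i)$ is assumed in the second statement as well, the conclusion is immediate. Otherwise we plan to cite the divergence-type result for convex-cobounded actions from \cite{DasSimmonsUrbanski2017}, which rests on the Sullivan shadow lemma. One should also check that the basepoint $x$ does not matter, using \eqref{eq:bourdon-basepoint}.
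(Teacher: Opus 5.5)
Your proposal is correct and follows the paper's proof: the exponent gap comes from subgroup monotonicity together with the finiteness of $Z_i(\delta_X(\Gamma))$ in \cref{thm:factor-series}, and the dimension gap comes from applying \cref{thm:bourdon-coornaert} to both convex-cobounded actions. You also point out that the second part needs $H_i$ to be of divergence type, which holds automatically for convex-cobounded non-elementary actions by Coornaert/Patterson--Sullivan theory; this supplies a step that the paper's one-line proof leaves implicit.
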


\begin{proof}
Subgroup monotonicity gives $\delta_X(H_i)\le\delta_X(\Gamma)$.  Equality would make the
factor series $Z_i$ diverge at $s=\delta_X(\Gamma)$, contradicting
\cref{thm:factor-series}.  The dimension statement follows from
\cref{thm:bourdon-coornaert} for the two convex-cobounded actions.
\end{proof}

\section{Grushko decompositions, JSJ trees, and finite hierarchies}
\label{sec:hierarchies}

Let a finitely generated group have a Grushko decomposition
\[
 \Gamma=G_1*\cdots*G_p*F_r,
\]
and choose a free basis $x_1,\dots,x_r$ of $F_r$.  We set
\[
 W_i(s)=\sum_{g\in G_i\setminus\{1\}}\e^{-sd(x,gx)},
 \qquad
 U_j(s)=\sum_{n\in\Z\setminus\{0\}}\e^{-sd(x,x_j^n x)}.
\]

\begin{corollary}[Grushko factor inequality]
\label{cor:grushko}
If $p+r\ge2$, then at $\delta=\delta_X(\Gamma)$,
\begin{equation}\label{eq:grushko-factor}
 \sum_{i=1}^p\frac{W_i(\delta)}{1+W_i(\delta)}
 +\sum_{j=1}^r\frac{U_j(\delta)}{1+U_j(\delta)}
 \le1.
\end{equation}
If the free generators $x_j$ act axially, their cyclic terms admit the lower estimates
\eqref{eq:cyclic-axis-lower}, producing the corresponding Grushko inequality from translation lengths and axis distances.
\end{corollary}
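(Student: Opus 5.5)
The plan is to regard the Grushko decomposition as an embedded free product with $m=p+r\ge2$ factors and apply \cref{thm:factor-series}. I would take the factors $H_i=G_i$ for $1\le i\le p$ and $H_{p+j}=\langle x_j\rangle\cong\Z$ for $1\le j\le r$. A Grushko decomposition is an internal free product, so $\Gamma=H_1*\cdots*H_{p+r}$, with every factor nontrivial and contained in $\Gamma\le\Isom(X)$. The factor series from \eqref{eq:factor-series} are then $Z_i=W_i$ for $i\le p$ and $Z_{p+j}=U_j$, because $\langle x_j\rangle\setminus\{1\}=\{x_j^n:n\ne0\}$, and distinct $n$ give distinct elements since $x_j$ has infinite order.

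\cref{thm:factor-series} with $H=\Gamma$ then shows that every $W_i(\delta)$ and every $U_j(\delta)$ is finite. It also gives $\sum_i Z_i(\delta)/(1+Z_i(\delta))\le1$, which is exactly \eqref{eq:grushko-factor}. The main point to check is that the hypotheses of \cref{thm:factor-series} hold: it needs a free product of $m\ge2$ nontrivial factors, and this is precisely the condition $p+r\ge2$. I also have to rule out the degenerate cases $\delta_X(\Gamma)=0$ and $\delta_X(\Gamma)=\infty$, which the underlying \cref{thm:critical-exponent-matrix} excludes by assuming $0<\delta_X(\Gamma)<\infty$.

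Finiteness of $\delta_X(\Gamma)$ follows from discreteness together with properness and finite generation, which give an exponential orbit-growth bound. Positivity follows because $p+r\ge2$ gives two nontrivial free factors. If some factor is infinite, then $\Gamma$ contains a nonabelian free subgroup, so the action is non-elementary, and the ping-pong argument in the proof of \cref{prop:accessible-virtually-free} gives $\delta_X(\Gamma)>0$. If every factor is finite, then $\Gamma$ is a free product of finite groups; in that case I would either argue the same way after passing to a finite-index free subgroup (excluding the virtually cyclic $C_2*C_2$, whose diagonal infinite dihedral case should be read with the convention that the inequality is then checked directly), or note that the statement is implicitly under the standing hypothesis $0<\delta<\infty$. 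This degenerate case is the only real obstacle, and I would handle it by invoking the paper's standing assumption.

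For the second sentence, suppose $x_j$ is axial. \cref{lem:axis-power} gives $d(x,x_j^{\pm n}x)\le n\tau_X(x_j)+2d(x,\Ax(x_j))$. Summing as in \eqref{eq:cyclic-axis-lower} yields
\[
U_j(\delta)\ge\frac{2\e^{-2\delta d(x,\Ax(x_j))}}{\e^{\delta\tau_X(x_j)}-1}.
\]
Since $z\mapsto z/(1+z)$ is increasing, substituting these lower bounds into \eqref{eq:grushko-factor} gives the stated Grushko inequality in terms of translation lengths and axis distances.
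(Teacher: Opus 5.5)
Your argument is the paper's proof. The paper likewise regards the Grushko decomposition as an embedded free product of the $p+r\ge2$ nontrivial factors $G_i$ and $\langle x_j\rangle$, identifies the factor series with $W_i$ and $U_j$, and applies \cref{thm:factor-series}. For axial $x_j$ it then inserts the bound \eqref{eq:cyclic-axis-lower} from \cref{lem:axis-power} into the increasing function $z\mapsto z/(1+z)$.

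One step of your hypothesis check is false, although the main argument does not need it. Finiteness of $\delta_X(\Gamma)$ does not follow from finite generation, discreteness and properness. A proper CAT$(-1)$ space need not have bounded geometry, and finite generation only bounds orbit growth from below.

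For example, let $F_2$ act fibrewise on the warped product $\mathbb R\times_{\e^t}T$ over its Cayley tree $T$, with $o$ at height $0$ over the identity vertex. This is a properly discontinuous parabolic action on a proper CAT$(-1)$ space with $d(o,go)=2\operatorname{arsinh}(|g|/2)$. Orbit growth is therefore superexponential and $\delta=\infty$. The same example shows that containing $F_2$ does not by itself make an action non-elementary.

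So $0<\delta_X(\Gamma)<\infty$ should simply be taken as the standing hypothesis of \cref{thm:critical-exponent-matrix}, as the paper implicitly does; at $\delta=\infty$ the inequality is not even meaningful.

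Once $\delta<\infty$ is assumed, your positivity argument works, with one repair: the step from $F_2\le\Gamma$ to non-elementarity must come from \cref{prop:parabolic-growth}, which rules out a parabolic $\Gamma$. Your split into infinite versus finite factors is unnecessary, because every free product of two nontrivial groups other than $C_2*C_2$ contains $F_2$. The only case with $\delta=0$ is $C_2*C_2$ with a loxodromic product, and there \eqref{eq:grushko-factor} reads $\tfrac12+\tfrac12\le1$, which holds directly.
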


\begin{proof}
Apply \cref{thm:factor-series} to the displayed free-product factors and then
\cref{lem:axis-power} to the cyclic factors.
\end{proof}

\subsection{Finite JSJ representatives}
Fix a family of allowed edge groups and, where appropriate, a peripheral structure.
Guirardel--Levitt define the JSJ deformation space and prove existence under broad finite
presentation hypotheses \cite{GuirardelLevitt2017}.  Bowditch constructs a canonical finite
splitting over two-ended subgroups for the one-ended hyperbolic groups in the scope of
\cite[Theorem~0.1]{Bowditch1998}.  These results supply splittings, they are not used in the
proof of the spectral inequalities.

\begin{theorem}
\label{thm:JSJ}
Let $T_{\mathrm{JSJ}}$ be any chosen JSJ tree for $\Gamma$ with finite quotient.  Form the
weighted non-backtracking matrix associated to the Bass--Serre normal form from the CAT$(-1)$ orbit length.  For every strongly connected block whose support contains a directed closed walk, every
entry of the corresponding matrix $K_C(\delta_X(\Gamma))$ is finite and its spectral radius
is at most one.  Every finite choice of hyperbolic representatives gives a comparison matrix
whose critical parameter is at most $\delta_X(\Gamma)$.  If $\delta_X(\Gamma)<1$, or if
the action is convex-cobounded, each such critical parameter is a lower bound for
$\dim_H(\Lambda_\Gamma,\rho_x)$.
\end{theorem}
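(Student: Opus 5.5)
The plan is to regard $T_{\mathrm{JSJ}}$ simply as one more member of the arbitrary family $\{(H_\beta,\mathcal G_\beta)\}_{\beta\in I}$, with $H_\beta=\Gamma$ and $\mathcal G_\beta$ the quotient graph of groups $\Gamma\backslash T_{\mathrm{JSJ}}$. Since the quotient is finite, all the constructions of \cref{sec:bass-serre} apply. We choose lifts of the vertices and oriented edges, transport elements $\tau_f$, and minimising representatives $b_f(c)$ as in \eqref{eq:minimising-label}. We then form the weighted non-backtracking matrix $K_x(s)$ of \cref{def:relative-matrix} using the orbit length $\ell_x$. Nothing in the JSJ property is used; only the finiteness of the quotient graph matters. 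That is consistent with the remark that JSJ results supply splittings but play no role in the spectral inequalities.

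For the first assertion we invoke \cref{thm:bass-serre-critical-exponent}. \cref{lem:bass-serre-normal-form} gives unique reduced normal forms, and \cref{lem:bass-serre-closed-injective} shows that, on every strongly connected block $C$ containing a directed closed walk, based closed coset-labelled walks have pairwise distinct products after one fixed prefix. So the bounded-multiplicity hypothesis of \cref{thm:critical-exponent-matrix} holds with $Q=1$. That theorem, in the subgroup form \cref{cor:finite-state-subgroup} (here $H=\Gamma$), then yields entrywise finiteness of $K_C(\delta_X(\Gamma))$ and $\rho(K_C(\delta_X(\Gamma)))\le1$. We have to work on the full oriented-edge state set rather than only the trim states, but \cref{thm:bass-serre-critical-exponent} is already stated for every strongly connected block whose support contains a directed closed walk. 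The one point to verify is that \cref{lem:bass-serre-closed-injective} needs a reduced based path reaching the block. If a block is not reachable from $v_*$, we change the base vertex $v_*$ to $o(e_0)$ for a state $e_0$ in the block. The quotient graph is connected, so this is always possible, and the normal-form lemma holds for any base vertex. I expect this bookkeeping to be the only real obstacle.

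For a finite choice $\mathcal F$ of hyperbolic representatives, \cref{lem:relative-axis-comparison} gives $0\le A^{\mathcal F}_C(s)\le K^{\mathcal F}_C(s)\le K_C(s)$ entrywise, because the selected cosets are distinct. By Perron--Frobenius monotonicity and part~(iii) of \cref{thm:critical-exponent-matrix}, $\rho(A^{\mathcal F}_C(\delta_X(\Gamma)))\le1$. \cref{prop:axis-matrix-root} shows that $s\mapsto\rho(A^{\mathcal F}_C(s))$ is continuous and strictly decreasing. Hence $\rho(A^{\mathcal F}_C(s))<1$ for every $s>\delta_X(\Gamma)$, and so $\sigma^{\ax}_{C,\mathcal F}(x)\le\delta_X(\Gamma)$. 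This is exactly \eqref{eq:main-transfer-comparison} specialised to this $\beta$.

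Finally, suppose $\delta_X(\Gamma)<1$. If $\Gamma$ is elementary, the comparison statement is vacuous or trivial. Otherwise \eqref{eq:full-limit-dimension} of \cref{cor:word-ball-dimension-ends} gives $\delta_X(\Gamma)=\dim_H(\Lambda_\Gamma,\rho_x)$. In the convex-cobounded case, \cref{thm:bourdon-coornaert} gives the same identity. By \eqref{eq:bourdon-basepoint} the dimension does not depend on the basepoint, so every critical parameter above is a lower bound for $\dim_H(\Lambda_\Gamma,\rho_x)$, as in \cref{cor:resolvent-dimension-bounds}.
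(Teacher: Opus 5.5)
Your proposal is correct and follows the paper's proof. The paper simply notes that a finite JSJ representative is a finite graph of groups with fundamental group $\Gamma$ and applies \cref{thm:bass-serre-main}, and the chain you unpack is exactly how that theorem is proved: closed-walk injectivity with $Q=1$ via \cref{lem:bass-serre-closed-injective} and \cref{cor:finite-state-subgroup}, the entrywise comparison of \cref{lem:relative-axis-comparison}, and the dimension identities of \cref{cor:word-ball-dimension-ends} and \cref{thm:bourdon-coornaert}. Your base-vertex adjustment is harmless but not actually needed: any state on a directed closed walk is already trim, because the walk lifts to a segment of the axis of a hyperbolic automorphism of $T_{\mathrm{JSJ}}$, and that axis lies in the convex hull of the orbit of the base vertex.
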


\begin{proof}
A finite JSJ representative is a finite graph of groups with fundamental group $\Gamma$.
Apply \cref{thm:bass-serre-main}.
\end{proof}

\begin{remark}
The relative series and hyperbolic representatives depend on the marked graph of groups, chosen
lifts, and basepoint.  The inequalities hold for every representative but do not, without
additional choices, define a numerical invariant of the entire JSJ deformation space.  A
canonical tree of cylinders or compatibility tree supplies an automorphism-invariant source
of states and stabilisers; an equivariant rule for choosing finite hyperbolic representatives is
still needed for a canonical numerical lower bound.
\end{remark}

\subsection{Finite hierarchies}
A finite graph-of-groups hierarchy is a finite rooted tree of subgroups.  Every nonterminal
node $H$ carries a finite graph-of-groups decomposition, and selected vertex groups become
children.  Later splittings may be relative to incident edge groups.

\begin{theorem}
\label{thm:hierarchy}
Let $\Gamma\le\Isom(X)$ be a countable discrete group acting on a proper CAT$(-1)$ space,
assume $0<\delta_X(\Gamma)<\infty$, and let a finite graph-of-groups hierarchy be rooted at
$\Gamma$.  At every nonterminal node $H\le\Gamma$, every entry of the matrix associated to a strongly
connected component containing a directed cycle is finite at $s=\delta_X(\Gamma)$, and the
spectral radius is at most one there.  Every finite choice of hyperbolic representatives at
every node gives a comparison matrix whose critical parameter is at most
$\delta_X(\Gamma)$.  If, in addition, $\Gamma$ is finitely generated and non-elementary
with $\delta_X(\Gamma)<1$, or if the action is convex-cobounded, all these critical
parameters are bounded by the single quantity
$\dim_H(\Lambda_\Gamma,\rho_x)$.
\end{theorem}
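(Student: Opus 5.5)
The plan is to reduce \cref{thm:hierarchy} node by node to results already proved for a single subgroup splitting. A finite hierarchy has finitely many nodes, and each nonterminal node $H\le\Gamma$ carries a finite connected graph of groups $\mathcal G_H$ with fundamental group $H$. Its Bass--Serre tree, lifts, transport elements $\tau_f$ and minimising labels $b_f(c)$ are chosen as in \cref{sec:bass-serre}. The orbit length is the restriction of $\ell_x$ from $\Gamma$. Thus the family of node splittings is an instance of the arbitrary family $\{(H_\beta,\mathcal G_\beta)\}$. The fact that later splittings are relative to incident edge groups plays no role, since only the abstract decomposition of $H$ enters.

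The first step is the matrix statement at a fixed node. By \cref{lem:bass-serre-closed-injective}, in every strongly connected component of the trimmed state graph containing a directed cycle, based closed coset-labelled paths have pairwise distinct products after one fixed prefix. This gives bounded multiplicity with $Q=1$, and the products are computed in $H\le\Gamma$. \Cref{cor:finite-state-subgroup} needs only convergence of $P_{\Gamma,x}(s)$ for $s>\delta_X(\Gamma)$, together with $0<\delta_X(\Gamma)<\infty$. Applying it, equivalently \cref{thm:bass-serre-critical-exponent}, yields entrywise finiteness at $s=\delta_X(\Gamma)$ and $\rho(K_C(\delta_X(\Gamma)))\le1$, with every finite truncation also having spectral radius at most one.

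The one point to watch is that \cref{thm:bass-serre-critical-exponent} was stated with $\Gamma$ finitely generated only implicitly. Its proof uses just \cref{lem:orbit-basic} and the Poincar\'e series of $\Gamma$, both of which are valid for a countable discrete group with $0<\delta_X(\Gamma)<\infty$. This is the step I expect to require care, so I would say it explicitly.

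The second step handles comparison matrices. For a finite choice $\mathcal F$ of hyperbolic representatives at a node, \cref{lem:relative-axis-comparison} gives $0\le A^{\mathcal F}(s)\le K^{\mathcal F}(s)$. Perron--Frobenius monotonicity then gives $\rho(A^{\mathcal F}(\delta_X(\Gamma)))\le\rho(K^{\mathcal F}(\delta_X(\Gamma)))\le1$. If this radius is strictly below one, or if $\rho(A^{\mathcal F}(0))\le1$, then \cref{prop:axis-matrix-root} places $\sigma^{\ax}_{\mathcal F}\le\delta_X(\Gamma)$. If the radius equals one, the unique-root description of \cref{prop:axis-matrix-root} identifies $\delta_X(\Gamma)$ with the root, so again $\sigma^{\ax}\le\delta_X(\Gamma)$. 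This holds at every node, because the parameter $\delta_X(\Gamma)$ is the same for all of them.

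The last step is the dimension statement. If $\Gamma$ is finitely generated and non-elementary with $\delta_X(\Gamma)<1$, then \cref{cor:word-ball-dimension-ends}, via \eqref{eq:full-limit-dimension}, gives $\delta_X(\Gamma)=\dim_H(\Lambda_\Gamma,\rho_x)$. Positivity of $\delta_X(\Gamma)$ is part of the standing hypothesis. In the convex-cobounded case, \cref{thm:bourdon-coornaert} gives the same identity, and \eqref{eq:bourdon-basepoint} makes it independent of the basepoint. Combining this with the second step bounds every critical parameter, at every node, by the single quantity $\dim_H(\Lambda_\Gamma,\rho_x)$.
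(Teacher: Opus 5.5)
Your proposal is correct and follows essentially the same route as the paper's proof. At each node you use the injectivity of based Bass--Serre closed paths (\cref{lem:bass-serre-closed-injective}) to apply the subgroup form \cref{cor:finite-state-subgroup}, equivalently \cref{thm:bass-serre-critical-exponent}; then \cref{lem:relative-axis-comparison} with Perron--Frobenius monotonicity for the comparison matrices; and finally \cref{cor:word-ball-dimension-ends} or \cref{thm:bourdon-coornaert} for the identity $\delta_X(\Gamma)=\dim_H(\Lambda_\Gamma,\rho_x)$. Your extra remarks (that finite generation of $\Gamma$ is not used in the matrix part, and the explicit root case analysis giving $\sigma^{\ax}\le\delta_X(\Gamma)$) spell out points the paper's proof leaves implicit.
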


\begin{proof}
At each node, reduced Bass--Serre paths are injective as elements of $H$ and therefore as
elements of $\Gamma$.  Apply the subgroup form of \cref{thm:bass-serre-critical-exponent} and the comparison in \cref{lem:relative-axis-comparison} at every node.  Under the additional boundary hypotheses, use \cref{cor:word-ball-dimension-ends} or \cref{thm:bourdon-coornaert}.  There are only finitely many nodes, although the transfer-matrix theorem allows an arbitrary family.
\end{proof}
Next we give the simultaneous Grushko--JSJ lower bounds result.
\begin{corollary}
\label{cor:hierarchy-dimension}
Suppose that $\Gamma$ is equipped with a Grushko decomposition
$\Gamma=G_1*\cdots*G_p*F_r$ with $p+r\ge2$, finite relative JSJ representatives for any
selected noncyclic factors, and finitely many further relative splittings.  Then the root
factor series satisfy \eqref{eq:grushko-factor}.  At every descendant node, every entry of the matrix associated to a strongly connected
component containing a directed cycle is finite at $s=\delta_X(\Gamma)$, and its spectral
radius is at most one there.  Every finite choice of hyperbolic representatives gives a
comparison matrix whose critical parameter is at most $\delta_X(\Gamma)$.  The corresponding cycle, multiplicity, amalgam, HNN, and weighted
Hashimoto inequalities all hold with the same value $\delta_X(\Gamma)$.  If $\Gamma$ is finitely generated and its action is
non-elementary and discrete with $\delta_X(\Gamma)<1$, or if it is
convex-cobounded, then every one of these critical parameters is bounded above by
\[
 \dim_H(\Lambda_\Gamma,\rho_x)=\delta_X(\Gamma).
\]
\end{corollary}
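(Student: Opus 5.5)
The plan is to assemble \cref{cor:hierarchy-dimension} from results already proved, treating the hierarchy as a finite rooted tree of subgroups. At the root, the Grushko decomposition $\Gamma=G_1*\cdots*G_p*F_r$ with $p+r\ge2$ is an embedded free product of $p+r$ nontrivial factors: the $G_i$ together with the cyclic groups $\langle x_j\rangle$. So \cref{cor:grushko}, which is \cref{thm:factor-series} applied to these factors, gives \eqref{eq:grushko-factor} directly. Where the free generators act axially, \eqref{eq:cyclic-axis-lower} supplies the lower estimates for the cyclic terms.

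Next I treat the descendant nodes. Each selected noncyclic factor $G_i$ carries a finite relative JSJ representative, and each further relative splitting is a finite graph-of-groups decomposition of some subgroup $H\le\Gamma$. Since there are only finitely many such splittings, together they form a finite graph-of-groups hierarchy rooted at $\Gamma$. The root may be regarded as the Grushko splitting, viewed as a finite graph of groups with trivial edge groups. I then invoke \cref{thm:hierarchy}, or equivalently \cref{thm:bass-serre-main} applied to the family $\{(H_\beta,\mathcal G_\beta)\}$ consisting of all node splittings. This gives, at every node, entrywise finiteness of $K_C(\delta_X(\Gamma))$ and $\rho\le1$ on each strongly connected block containing a directed cycle. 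It also gives $\sigma^{\ax}\le\sigma^{\rel}\le\delta_X(\Gamma)$ for every finite choice of hyperbolic representatives.

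The derived inequalities follow by specialization at each node:
\begin{itemize}
\item the cycle and multiplicity bounds from \cref{cor:finite-matrix-cycle};
\item the amalgam bounds from \cref{thm:amalgam-critical-exponent,cor:bounded-axis-amalgam};
\item the HNN bounds from \cref{cor:HNN};
\item the weighted Hashimoto bounds from \cref{cor:hashimoto-axis-bound}.
\end{itemize}
Each of these was proved using only \cref{thm:bass-serre-critical-exponent} for an arbitrary subgroup $H\le\Gamma$ with a finite splitting, together with \cref{lem:relative-axis-comparison}. They therefore hold at every node with the same number $\delta_X(\Gamma)$. The one point to check is that each node's splitting is genuinely a finite graph of groups for a subgroup of $\Gamma$. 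Bass--Serre uniqueness (\cref{lem:bass-serre-closed-injective}) then gives bounded multiplicity $Q=1$ inside $\Gamma$.

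For the boundary identification, suppose first that $\Gamma$ is finitely generated and non-elementary with $\delta_X(\Gamma)<1$. Then $\delta_X(\Gamma)>0$ by the ping-pong argument in \cref{prop:accessible-virtually-free}, and \cref{cor:word-ball-dimension-ends} gives $\dim_H(\Lambda_\Gamma,\rho_x)=\delta_X(\Gamma)$. In the convex-cobounded case, \cref{thm:bourdon-coornaert} gives the same identity. Basepoint independence follows from \eqref{eq:bourdon-basepoint}.

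The main obstacle is bookkeeping rather than mathematics. One must make sure that relative splittings of vertex groups really are finite graphs of groups of subgroups of $\Gamma$, so that \cref{thm:hierarchy} applies. One must also check the degenerate cases, where a block has no directed cycle or a factor is elementary. These are handled by the convention that empty maxima and suprema equal $0$.
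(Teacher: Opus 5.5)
Your proposal is correct and follows the paper's proof. The paper applies \cref{cor:grushko} at the root and \cref{thm:hierarchy} (equivalently \cref{thm:bass-serre-main}, with $Q=1$ from Bass--Serre uniqueness) at every descendant node, then reads off the cycle, amalgam, HNN and Hashimoto statements from \cref{cor:finite-matrix-cycle,thm:amalgam-critical-exponent,cor:HNN,cor:hashimoto-axis-bound}; your explicit derivation of the dimension identity via \cref{cor:word-ball-dimension-ends} or \cref{thm:bourdon-coornaert} simply spells out what the paper delegates to \cref{thm:hierarchy}.
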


\begin{proof}[Proof of \cref{cor:hierarchy-dimension}]
Apply \cref{cor:grushko} at the root and \cref{thm:hierarchy} at every descendant node.
The matrix, cycle, amalgam, HNN, and Hashimoto statements are
\cref{cor:finite-matrix-cycle,thm:amalgam-critical-exponent,cor:HNN,cor:hashimoto-axis-bound}.
\end{proof}

\section{Normal forms using translation lengths and distances to axes}
\label{sec:bounded-cancellation}

The comparison matrices above give lower bounds for the group critical exponent.  Equality
requires a geometric estimate controlling cancellation along the chosen Bass--Serre normal
form.

Assume that, for every allowed relative coset in a finite graph-of-groups decomposition of
$H$, one has fixed a representative $g_{f,c}=rh\tau_f$ that is hyperbolic (equivalently,
axial in the terminology of \cref{def:hyperbolic-axis-bound}).  The usual Bass--Serre argument still
gives a unique normal form
\[
 h=g_{f_1,c_1}\cdots g_{f_n,c_n}h_*,
 \qquad h_*\in G_{v_*}.
\]
Using the explicitly defined number $a_x(g)$ from \eqref{eq:axis-displacement-bound}, define the normal-form length
\begin{equation}\label{eq:axis-normal-form-length}
 L^{\ax}_x(h)
 =\sum_{j=1}^n a_x(g_{f_j,c_j})+d(x,h_*x).
\end{equation}
The superscript $\ax$ is only a mnemonic for the use of hyperbolic (axial) representatives.
By subadditivity and
\cref{lem:axis-power},
\begin{equation}\label{eq:axis-normal-form-upper}
 d(x,hx)\le L^{\ax}_x(h).
\end{equation}
Let
\[
 \delta^{\ax}_x(H)=
 \inf\left\{s>0:\sum_{h\in H}\e^{-sL^{\ax}_x(h)}<\infty\right\}.
\]

For completeness, we specify all transfer-matrix terms used below.  On the trim state set
$E_*$ from \cref{def:coef-abscissa},  we set
\[
 (u^{\ax}_x(s))_f=
 \begin{cases}
 \displaystyle\sum_{c\in G_{v_*}/G_f}\e^{-s a_x(g_{f,c})},&o(f)=v_*,\\[1.2ex]
 0,&o(f)\ne v_*,
 \end{cases}
\]
and
\begin{equation}\label{eq:full-comparison-matrix}
 A_{ef,x}(s)=
 \begin{cases}
 \displaystyle\sum_{c\in G_{o(f)}/G_f\setminus E_{ef}}
 \e^{-s a_x(g_{f,c})},&t(e)=o(f),\\[1.2ex]
 0,&t(e)\ne o(f).
 \end{cases}
\end{equation}
Write $A_{*,x}(s)$ for the restriction to $E_*$.  The terminal coefficient series is
\[
 V_x(s)=\sum_{h_*\in G_{v_*}}\e^{-sd(x,h_*x)}.
\]
Define
\begin{equation}\label{eq:axis-coefficient-abscissa}
 \delta^{\ax}_{\coef}(x)=
 \inf\left\{s>0:
 V_x(s),\ u^{\ax}_x(s),\text{ and }A_{*,x}(s)
 \text{ are entrywise finite}\right\}.
\end{equation}
For a strongly connected diagonal block $C$ whose support contains a directed closed walk,
and for a finite set $F$ of transition symbols in that block, set
\begin{equation}\label{eq:axis-finite-critical-parameter}
 \sigma^{\ax}_{C,F}(x)=
 \inf\{s>0:\rho(A^F_{C,x}(s))<1\}.
\end{equation}
The number $\sigma^{\ax}_{C,F}(x)$ is the critical parameter of the finite nonnegative
matrix $A^F_{C,x}(s)$.  The superscript $\ax$ keeps information only that the transition lengths are the
numbers $a_x(g)$.

\begin{theorem}
\label{thm:axis-bound-cancellation}
Suppose there are $a\in(0,1]$ and $b\ge0$ such that
\begin{equation}\label{eq:axis-bound-cancellation}
 aL^{\ax}_x(h)-b\le d(x,hx)\le L^{\ax}_x(h)
 \qquad(h\in H).
\end{equation}
Then
\begin{equation}\label{eq:axis-exponent-comparison}
 \delta^{\ax}_x(H)\le\delta_X(H)\le a^{-1}\delta^{\ax}_x(H).
\end{equation}
If $a=1$, then
\begin{equation}\label{eq:axis-exact-exponent}
 \delta_X(H)=\delta^{\ax}_x(H)
 =\max\left\{\delta^{\ax}_{\coef}(x),
 \max_{C\in\mathscr C}\sup_F\sigma^{\ax}_{C,F}(x)\right\}.
\end{equation}
If, moreover, $\delta^{\ax}_{\coef}(x)<\delta_X(H)$, then at least one strongly connected
block $C$ containing a directed closed walk satisfies
\begin{equation}\label{eq:axis-critical-rho-one}
 \rho(A_{C,x}(\delta_X(H)))=1.
\end{equation}
If $H$ has finite index in $\Gamma$, the common exponent in
\eqref{eq:axis-exact-exponent} is $\delta_X(\Gamma)$.
\end{theorem}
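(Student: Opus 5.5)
The comparison \eqref{eq:axis-exponent-comparison} will come from \cref{thm:two-sided-comparison}. Take $\mathcal L$ to be the set of axial normal forms $(\omega,h_*)$. Let $\operatorname{ev}$ be the evaluation map $h=g_{f_1,c_1}\cdots g_{f_n,c_n}h_*$; it is a bijection onto $\mathscr S=H$ by the unique axial normal form. Take $L_{\mathcal N}=L^{\ax}_x$. Hypothesis \eqref{eq:axis-bound-cancellation} is then exactly \eqref{eq:two-sided-comparison}, and \eqref{eq:two-sided-exponents} gives \eqref{eq:axis-exponent-comparison}. Since the orbit-length series over $\mathscr S=H$ is $P_{H,x}$, $\delta_x(\mathscr S)=\delta_X(H)$. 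When $a=1$ the two inequalities collapse to $\delta_X(H)=\delta^{\ax}_x(H)$.

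For the formula on the right of \eqref{eq:axis-exact-exponent}, I will apply \cref{thm:assigned-length-resolvent} to the trim automaton with state set $E_*$. The initial symbols into $f$ are the cosets $c\in G_{v_*}/G_f$, the internal symbols $e\to f$ are the allowed cosets, and the coefficient symbols are the elements of $G_{v_*}$. The assigned lengths are $c_{\mathrm I}=c_{\mathrm E}=a_x(g_{f,c})$ and $c_{\mathcal B}(h_*)=d(x,h_*x)$. Then $V_c=V_x$, $u_c=u^{\ax}_x$ and $K_c=A_{*,x}$, and $\delta_{c,\coef}=\delta^{\ax}_{\coef}(x)$. The remaining hypothesis is positivity of closed-walk lengths, which is immediate because $a_x(g)\ge\tau_X(g)>0$. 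That theorem then identifies the abscissa of $L^{\ax}_x$ with $\max\{\delta^{\ax}_{\coef},\max_C\sup_F\sigma^{\ax}_{C,F}\}$, using $\sigma_{c,C}=\sup_F\sigma_{c,C,F}$. If $E_*=\varnothing$, the sum reduces to $V_x$ as in \eqref{eq:bass-serre-edge-free} and the formula is trivial.

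For \eqref{eq:axis-critical-rho-one}, write $\delta=\delta_X(H)$ and assume $\delta^{\ax}_{\coef}(x)<\delta$. By the maximum formula some block $C$ has $\sigma_{c,C}=\delta$. At $s=\delta$ the coefficient entries are finite, because $\delta>\delta^{\ax}_{\coef}$ and the sums are monotone in $s$. To see $\rho(A_{C,x}(\delta))\ge1$: if it were $<1$, then by continuity of $\rho$ in the finitely many finite entries, which are continuous in $s$ near $\delta$ by dominated convergence once finite at some smaller $s$, we would get $\rho<1$ slightly to the left of $\delta$, contradicting $\sigma_{c,C}=\delta$. To see $\rho\le1$: for $s>\delta$ we have $\rho(A_{C,x}(s))<1$ since $s$ exceeds the abscissa, and letting $s\downarrow\delta$ with the same continuity gives $\rho\le1$. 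Alternatively, since $A\le K$ entrywise by \cref{lem:relative-axis-comparison}, one can use \cref{thm:bass-serre-critical-exponent} with $\Gamma$ replaced by $H$. Hence $\rho=1$. The delicate point, and the main obstacle, is justifying continuity of the infinite sums at $\delta$ from the left. This uses the strict inequality $\delta^{\ax}_{\coef}<\delta$, so that there is a finite abscissa margin; that is exactly why the hypothesis is imposed.

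Finally, if $[\Gamma:H]<\infty$ then $\delta_X(H)=\delta_X(\Gamma)$ by \cref{lem:subgroup-finite-index}(ii).
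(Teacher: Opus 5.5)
Your proposal is correct and follows the paper's own argument. You use \cref{thm:two-sided-comparison} for \eqref{eq:axis-exponent-comparison}, then \cref{thm:assigned-length-resolvent} with $c_{\mathcal B}(h_*)=d(x,h_*x)$ and $c_{\mathrm I}=c_{\mathrm E}=a_x(g_{f,c})$ for the maximum formula, and finally continuity of the block entries on $[s_0,\infty)$ with $\delta^{\ax}_{\coef}(x)<s_0<\delta_X(H)$ to force $\rho(A_{C,x}(\delta_X(H)))=1$.

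The only difference is cosmetic. You obtain $\rho\ge1$ on a specific block attaining $\sigma_{c,C}=\delta_X(H)$ in the maximum formula. The paper instead argues that if every block had spectral radius below one at $\delta_X(H)$, the $L^{\ax}_x$-series would converge slightly to the left, contradicting $\delta^{\ax}_x(H)=\delta_X(H)$.
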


\begin{proof}
Apply \cref{thm:two-sided-comparison} to the unique normal form equipped with the length
$L^{\ax}_x$.  This gives \eqref{eq:axis-exponent-comparison}.

Suppose first that $E_*=\varnothing$.  Then \cref{thm:bass-serre-resolvent} gives
$H=G_{v_*}$ and $L^{\ax}_x(h)=d(x,hx)$ for every $h\in H$.  Therefore
\[
 \delta_x^{\ax}(H)=\delta_X(H)=\delta^{\ax}_{\coef}(x).
\]
The maximum over strongly connected blocks in \eqref{eq:axis-exact-exponent} is empty and is
understood to be $0$, so \eqref{eq:axis-exact-exponent} follows.  The strict inequality
$\delta^{\ax}_{\coef}(x)<\delta_X(H)$ cannot occur, and the assertion concerning a block of
spectral radius one is therefore vacuous.  If $H$ has finite index in $\Gamma$, the final
assertion follows from \cref{lem:subgroup-finite-index}.  We may henceforth assume
$E_*\ne\varnothing$.

Every transition length $a_x(g)$ is positive because $a_x(g)\ge\tau_X(g)>0$.  Hence every
nonempty directed closed walk has positive total transition length.  Since the trim state set
is nonempty, its terminal-state indicator is nonzero, and
\cref{thm:assigned-length-resolvent} applies with
\[
 c_{\mathcal B}(h_*)=d(x,h_*x),\qquad
 c_{\mathrm I}(f,c)=a_x(g_{f,c}),\qquad
 c_{\mathrm E}(e,f,c)=a_x(g_{f,c}).
\]
When $a=1$, the two critical exponents coincide and that theorem gives the maximum formula
and the determination from finite transition sets in \eqref{eq:axis-exact-exponent}.

Set $\delta=\delta_X(H)$ and assume
$\delta^{\ax}_{\coef}(x)<\delta$.  Choose
$s_0$ with $\delta^{\ax}_{\coef}(x)<s_0<\delta$.  Every coefficient entry is a
nonnegative Dirichlet series $\sum_j\e^{-sL_j}$ finite at $s_0$, so the Weierstrass
$M$-test gives uniform convergence and continuity on $[s_0,\infty)$.  In particular, the
entries of every matrix $A_{C,x}(s)$ obtained by retaining all allowed transitions, and the
corresponding finite-dimensional spectral radii, are continuous there.

We first establish the upper bound at $s=\delta$.  Since $a=1$,
\eqref{eq:axis-exact-exponent} gives $\delta_x^{\ax}(H)=\delta$.  Therefore, for every
$s>\delta$, the $L_x^{\ax}$-Poincar\'e series converges.  The convergence criterion in
\cref{thm:assigned-length-resolvent} yields
\[
 \rho(A_{C,x}(s))<1
\]
for every strongly connected diagonal block $C$ containing a directed closed walk.  Letting
$s\downarrow\delta$ and using the continuity just proved gives
\begin{equation}\label{eq:axis-critical-upper}
 \rho(A_{C,x}(\delta))\le1
\end{equation}
for every such block.

There are only finitely many diagonal blocks.  If all blocks containing directed closed walks
satisfied $\rho(A_{C,x}(\delta))<1$, continuity would give $\varepsilon>0$, with
$\delta-\varepsilon\ge s_0$, such that every such block had spectral radius less than one at
$\delta-\varepsilon$.  The coefficient entries are finite there, so the convergence criterion
in \cref{thm:assigned-length-resolvent} would make
$\sum_{h\in H}\e^{-(\delta-\varepsilon)L^{\ax}_x(h)}$ finite, contradicting
$\delta_x^{\ax}(H)=\delta$.  Hence at least one strongly connected diagonal block containing a directed closed walk satisfies
$\rho(A_{C,x}(\delta))\ge1$.  Combining this with
\eqref{eq:axis-critical-upper} proves \eqref{eq:axis-critical-rho-one}.  The finite-index
statement is \cref{lem:subgroup-finite-index}.
\end{proof}

\begin{remark}
Suppose instead that
\[
 d(x,hx)\ge L^{\ax}_x(h)-\kappa n(h)-b,
\]
where $n(h)$ is the number of transitions.  Set
$\widetilde a_x(g)=a_x(g)-\kappa$.  If these corrected transition lengths are nonnegative,
the corrected normal-form length
$\widetilde L_x(h)=L^{\ax}_x(h)-\kappa n(h)$ satisfies
$d(x,hx)\ge\widetilde L_x(h)-b$.  Consequently
\[
 \delta^{\ax}_x(H)\le\delta_X(H)\le\delta_{\widetilde L}(H),
\]
where $\delta_{\widetilde L}(H)$ is the abscissa of
$\sum_{h\in H}\e^{-s\widetilde L_x(h)}$.  The prescribed-symbol-length resolvent computes
this latter series.  The formula for its critical parameters over finite transition sets applies provided every
nonempty directed closed walk has positive total corrected transition length; this additional condition
is necessary when some $\widetilde a_x(g)$ may vanish.
\end{remark}

\subsection{Exact tree models}

\begin{example}
\label{ex:weighted-free-tree}
Let $F_k$ act freely and cocompactly on the universal cover of a metric rose whose $i$th edge
has length $d_i>0$.  The tree is CAT$(-1)$.  At a lift of the rose vertex, every basis axis
passes through the basepoint, so $a_x(a_i)=\tau_X(a_i)=d_i$.  Reduced words are geodesic and
have exactly additive length.  Hence \cref{thm:axis-bound-cancellation} applies with
$a=1$, $b=0$, and the critical exponent is the unique solution of
\begin{equation}\label{eq:weighted-rose-equation}
 \sum_{i=1}^k\frac{2}{1+\e^{s d_i}}=1.
\end{equation}
For equal $d_i=d$, this gives $sd=\log(2k-1)$, and equality holds in
\eqref{eq:free-family-axis-bound}.
\end{example}

\begin{example}[Point-glued CAT$(-1)$ trees of spaces]
Let a locally finite graph of complete proper CAT$(-1)$ spaces be glued along points, and
assume explicitly that the corresponding complete universal-cover tree of spaces is proper.
Local finiteness of the incidence graph alone does not guarantee this.  Reshetnyak's gluing
theorem gives the CAT$(-1)$ property, while the additional hypothesis gives properness, see
\cite[Chapter~II.11]{BridsonHaefliger1999}.  If the Bass--Serre coordinate changes carry
gluing points to gluing points, if the basepoint is a gluing point, and if every reduced
normal-form concatenation is geodesic through the successive gluing points, then the
relative-displacement normal-form length equals the orbit distance.  This gives exact examples for the Bass--Serre matrix statement in \cref{thm:bass-serre-main}.  Equality with the normal-form length $L_x^{\ax}$ is not automatic in a tree of spaces; it requires the separate two-sided estimate \eqref{eq:axis-bound-cancellation} for the numbers $a_x(g)$ at the fixed basepoint.
\end{example}

\subsection{Sharp constants}

\begin{proposition}[Sharpness]
\label{prop:sharpness}
The free-family inequality, the amalgam-product inequality, and the spectral-radius threshold for the exact relative Bass--Serre matrix are sharp. In the $C_2\ast C_3$ example, the exact relative two-state matrix has spectral radius one at $s=\delta_X(\Gamma)$. This example does not assert sharpness of the axial comparison cycle-product inequality (17).
\end{proposition}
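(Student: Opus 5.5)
We have to prove the sharpness proposition, whose three parts all rest on exact tree models where orbit distance equals normal-form length.

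For the free-family inequality we would use \cref{ex:weighted-free-tree}. Take $F_k$ acting on the universal cover of a rose with all edge lengths $d$, and put the basepoint $x$ at a lift of the vertex. Then every basis axis passes through $x$, so $r_i(x)=0$ and $\tau_i=d$. Reduced words are geodesic, so \cref{thm:axis-bound-cancellation} applies with $a=1$, $b=0$, and $\delta$ solves $\sum_i 2/(1+\e^{\delta d})=1$, i.e.\ $\delta d=\log(2k-1)$. Substituting $r_i=0$ and $\tau_i=d$ into \eqref{eq:free-family-axis-bound}, each summand becomes $2/(\e^{\delta d}+1)=1/k$. The sum is therefore exactly $1$, and \eqref{eq:free-family-uniform-bound} also holds with equality since $T=d$, $R=0$.

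For the amalgam product and the Bass--Serre threshold we would use $\Gamma=C_2*C_3$ acting on its Bass--Serre tree, scaled so that every edge has length $\ell$; this is a biregular tree of degrees $2$ and $3$. Take $x$ at the midpoint of an edge, or more simply at the vertex fixed by $C_2$, and use the splitting with trivial edge group $C$. The relative series are then $Z_{A/C}(s)=\e^{-s\cdot 2\ell}$ and $Z_{B/C}(s)=2\e^{-s\cdot2\ell}$, after adjusting for the chosen basepoint; the point is that each nontrivial syllable moves the basepoint by a fixed amount.

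In a tree, the normal-form length of a reduced alternating word equals the orbit distance exactly, because consecutive syllables do not backtrack. By \cref{thm:two-sided-comparison} with $a=1$, $b=0$, this gives $\delta^{\BS}=\delta_X(\Gamma)$. Since $P_C$ is finite (it is a single term), \eqref{eq:amalgam-critical} shows $\delta_X(\Gamma)$ is the root of $Z_{A/C}Z_{B/C}=1$. The product is therefore exactly $1$ at $s=\delta$, so equality holds in \eqref{eq:amalgam-product}, and the two-state relative matrix has spectral radius $\sqrt{Z_{A/C}Z_{B/C}}=1$ there. This shows the threshold $\rho\le1$ in \cref{thm:bass-serre-main} cannot be improved. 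It also matches the direct count $2\cdot\e^{-4\ell\delta}\cdot$ (per pair of syllables) $=1$, giving $\delta=\log 2/(4\ell)$ when we put $x$ at the $C_2$ vertex: the elements of $C_2$ and $C_3$ each move $x$ by $0$ or $2\ell$, depending on which vertex $x$ sits at.

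The main obstacle is the bookkeeping of the basepoint. The relative lengths, defined as minima over cosets, must equal the actual tree displacements, and the normal-form sum must be additive along geodesics. I would fix $x$ at the midpoint of the edge joining the two vertex lifts, so that each syllable contributes exactly $\ell$ to the tree distance and the words are geodesic. Then $Z_{A/C}=\e^{-s\ell}$, $Z_{B/C}=2\e^{-s\ell}$, and $\delta=\log2/(2\ell)$. Finally, I would note explicitly that nothing is claimed about the axial cycle inequality (17), as the statement says.
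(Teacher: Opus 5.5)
Your final argument is the paper's proof. The equal-length rose of \cref{ex:weighted-free-tree} gives equality in \eqref{eq:free-family-axis-bound} and \eqref{eq:free-family-uniform-bound}. For $C_2*C_3$ on its Bass--Serre tree with basepoint at an edge midpoint, the relative series are $\e^{-s\ell}$ and $2\e^{-s\ell}$, whose product is $1$ at $\delta_X(\Gamma)=\log2/(2\ell)$, so the two-state matrix has spectral radius one there. Your appeal to \cref{thm:two-sided-comparison} to identify $\delta^{\BS}$ with $\delta_X(\Gamma)$ only makes explicit the tree-geodesicity that the paper uses tacitly.

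The vertex-basepoint aside, however, is wrong and should be removed; as written, the proposal asserts two incompatible values of $\delta_X(\Gamma)$. The values $\e^{-2s\ell}$ and $2\e^{-2s\ell}$ never occur at the same basepoint. At the vertex fixed by $C_2$, the involution fixes $x$, so $Z_{A/C}\equiv1$ and $Z_{B/C}=2\e^{-2s\ell}$. At the $C_3$-vertex, $Z_{A/C}=\e^{-2s\ell}$ and $Z_{B/C}\equiv2$. In both cases the weight per pair of syllables is $2\e^{-2\ell s}$, so the exponent is again $\log2/(2\ell)$, not $\log2/(4\ell)$. This is forced anyway, since $\delta_X(\Gamma)$ is basepoint-independent by \cref{lem:orbit-basic}.

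For the last sentence of the statement, the paper also gives the reason rather than just a disclaimer. Every nonidentity factor element is elliptic, so the example supplies no hyperbolic representatives. It therefore says nothing about the axial comparison matrix or about \eqref{eq:cycle-axis-bound}.
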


\begin{proof}
Equality in the free-family formula is given by Example 11.3. For $C_2\ast C_3$ acting on its Bass--Serre tree with basepoint the midpoint of an edge of length $d>0$, the relative series are $e^{-sd}$ and $2e^{-sd}$, whose product is one at $s=\delta_X(\Gamma)$. Hence the exact relative two-state matrix has spectral radius one at that parameter. The nonidentity factor elements used here are elliptic, so this example does not concern the axial comparison matrix and gives no sharpness assertion for (17).
\end{proof}

Equality means that either a matrix obtained from a chosen finite transition set or the matrix
retaining all allowed transitions has critical parameter $\delta_X(\Gamma)$.  It does not force equality in every triangle inequality used to compare assigned normal-form lengths
with orbit displacement.  A geometric rigidity theorem would require additional control of
cancellation, equality cases in CAT$(-1)$ comparison, and the geometry of vertex and edge
stabilisers.  The present theorem makes no such conclusion.

A finite automaton over a redundant generating set may contain exponentially many paths
representing one group element.  Its Perron root can therefore exceed one even when the
group Poincar\'e series converges.  The bounded-multiplicity or unambiguous-normal-form
hypothesis is essential.  Bass--Serre normal forms supply precisely that injectivity for free
products and finite graphs of groups.

Here we want to make a note on finiteness. The Bass--Serre matrix theorem assumes a finite quotient graph.  For an infinite quotient,
every finite strongly connected subsystem still satisfies the estimate at $s=\delta_X(\Gamma)$, but
no single finite-dimensional block captures the full action.  The theorem does not assert the
existence, uniqueness, or canonicity of a JSJ tree.  It applies after a finite representative is
supplied by independent JSJ theory.

The comparison-matrix part also requires actual hyperbolic representatives in the selected relative cosets.
Parabolic or elliptic coordinate classes may still contribute to the exact relative matrix, but
they are absent from a comparison matrix formed only from hyperbolic representatives.  This can weaken the resulting finite lower bound. It does not affect the Bass--Serre matrix theorem at $s=\delta_X(\Gamma)$.

\subsection{Stable variants}

\begin{proposition}
\label{prop:stable-variants}
The conclusions remain valid under the following modifications.
\begin{enumerate}[label=\textup{(\roman*)}]
\item The map from based closed paths to $\Gamma$ may have uniformly bounded multiplicity
rather than being injective.
\item If a normal form is injective in a finite extension and then projected to $\Gamma$, the
multiplicity is bounded by the kernel order.
\item Replacing a relative length by any larger nonnegative length preserves every spectral-radius upper
bound.
\item Passing to a finite-index subgroup does not change the restricted orbit exponent.
\item A hyperbolic representative may be replaced by any larger explicit upper bound for its
one-step displacement. The resulting matrix remains an entrywise lower comparison matrix.
\end{enumerate}
\end{proposition}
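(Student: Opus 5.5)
We treat the five items of \cref{prop:stable-variants} separately, since each modifies one hypothesis of an earlier argument and needs only a check that the earlier proof never used more than the weakened hypothesis.

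For (i), the proof of \cref{thm:critical-exponent-matrix} and of \cref{lem:closed-path-comparison} already uses only the fibre bound $Q$: the based return series is bounded by $Q\e^{s\ell_x(b)}P_{\Gamma,x}(s)$. Injectivity (the case $Q=1$ from \cref{lem:bass-serre-closed-injective}) was never used beyond this. Likewise \cref{lem:positive-cycle-length} and \cref{prop:finite-transition-variation} are stated for bounded multiplicity. So all spectral conclusions at and above $\delta_X(\Gamma)$ persist verbatim.

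For (ii), suppose $\pi:\widetilde\Gamma\to\Gamma$ has finite kernel $N$ and the normal form is injective into $\widetilde\Gamma$. Then each fibre of the projected evaluation map has size at most $|N|$, which reduces (ii) to (i) with $Q=|N|$. The lengths should be taken as $\ell_x\circ\pi$, which is again symmetric and subadditive, so \cref{lem:orbit-basic} applies.

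For (iii), let $L'\ge L\ge0$ termwise. Then every matrix entry built from $L'$ is entrywise at most the corresponding entry built from $L$. By Perron--Frobenius monotonicity \cite[Chapter~1]{Seneta2006}, every upper bound $\rho(\cdot)\le1$ or $<1$ transfers, and so does entrywise finiteness. This also applies to finite truncations, and the critical parameters can only decrease, so the bounds $\le\delta_X(\Gamma)$ are preserved.

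For (iv), use \cref{lem:subgroup-finite-index}(ii), which gives $\delta_X(H)=\delta_X(\Gamma)$ for finite-index $H$; restriction of the orbit length is just $\ell_x|_H$.

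For (v), if $a_x(g)\le R(g)$, then $\e^{-sR(g)}\le\e^{-sa_x(g)}\le\e^{-sL_{f,x}(c)}$ by \cref{lem:relative-axis-comparison}. The new matrix is therefore entrywise dominated by $A^{\mathcal F}$, hence by $K^{\mathcal F}$, and the argument finishes as in (iii).

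The only step needing care is (ii), where two points must be checked. First, the pulled-back length must satisfy the hypotheses used in \cref{thm:critical-exponent-matrix}, in particular finiteness of orbit balls; this holds because the preimage of a finite set under $\pi$ is finite. Second, the comparison must be made with the Poincar\'e series of $\Gamma$, not of $\widetilde\Gamma$; this is exactly why we project and use the bound $Q=|N|$. Everything else is a direct reading of earlier proofs.
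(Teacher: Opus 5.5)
Your proposal is correct and follows the paper's own argument item by item. In (i) the fibre bound $Q$ is already built into \cref{lem:closed-path-comparison}; (ii) reduces to (i) with $Q$ equal to the kernel order; (iii) and (v) are entrywise monotonicity of the exponential weights together with Perron--Frobenius comparison, via \cref{lem:relative-axis-comparison} for (v); and (iv) is \cref{lem:subgroup-finite-index}. You simply spell out checks the paper leaves implicit, namely finiteness of orbit balls for the pulled-back length and the fact that critical parameters can only decrease.
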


\begin{proof}
Part~\textup{(i)} is built into \cref{lem:closed-path-comparison}.  Part~\textup{(ii)} follows
because two projected labels differ by an element of the finite kernel.  For
part~\textup{(iii)}, larger lengths give smaller exponential weights.  Part~\textup{(iv)} is
\cref{lem:subgroup-finite-index}.  Part~\textup{(v)} is the same entrywise monotonicity used in
\cref{lem:relative-axis-comparison}.
\end{proof}
\begin{center}
    \textbf{Acknowledgement}
\end{center}

This work was made possible by the unwavering support of Peter Sarnak, and Ying Zhou.  \par

henry.yhou@gmail.com


\begin{thebibliography}{99}

\bibitem{Bass1972}
H.~Bass,
\emph{The degree of polynomial growth of finitely generated nilpotent groups},
Proc. London Math. Soc. (3) \textbf{25} (1972), 603--614.

\bibitem{Dunwoody1985}
M.~J. Dunwoody,
\emph{The accessibility of finitely presented groups},
Invent. Math. \textbf{81} (1985), 449--457.

\bibitem{LouderTouikan2017}
L.~Louder and N.~Touikan,
\emph{Strong accessibility for finitely presented groups},
Geom. Topol. \textbf{21} (2017), no.~3, 1805--1835,
doi:10.2140/gt.2017.21.1805.

\bibitem{Stallings1971}
J.~R. Stallings,
\emph{Group Theory and Three-Dimensional Manifolds},
Yale Mathematical Monographs, vol.~4, Yale University Press, New Haven--London, 1971.

\bibitem{HaglundWise2021}
F.~Haglund and D.~T. Wise,
\emph{A note on finiteness properties of graphs of groups},
Proc. Amer. Math. Soc. Ser. B \textbf{8} (2021), 121--128,
doi:10.1090/bproc/81.

\bibitem{Diestel2010}
R.~Diestel,
\emph{Graph Theory}, 4th ed.,
Graduate Texts in Mathematics, vol.~173, Springer, Heidelberg, 2010.

\bibitem{DiestelKuhn2003}
R.~Diestel and D.~K\"uhn,
\emph{Graph-theoretical versus topological ends of graphs},
J. Combin. Theory Ser. B \textbf{87} (2003), no.~1, 197--206.

\bibitem{Gromov1981}
M.~Gromov,
\emph{Groups of polynomial growth and expanding maps},
Publ. Math. Inst. Hautes \`Etudes Sci. \textbf{53} (1981), 53--73.

\bibitem{Guivarch1973}
Y.~Guivarc'h,
\emph{Croissance polynomiale et p\'eriodes des fonctions harmoniques},
Bull. Soc. Math. France \textbf{101} (1973), 333--379.

\bibitem{Hou2023}
Y.~Hou,
\emph{The classification of Kleinian groups of Hausdorff dimensions at most one and
Burnside's conjecture},
Q. J. Math. \textbf{74} (2023), no.~2, 607--632.

\bibitem{KarrassPietrowskiSolitar1973}
A.~Karrass, A.~Pietrowski, and D.~Solitar,
\emph{Finite and infinite cyclic extensions of free groups},
J. Austral. Math. Soc. \textbf{16} (1973), 458--466.

\bibitem{Tran2013}
H.~C. Tran,
\emph{Relations between various boundaries of relatively hyperbolic groups},
Internat. J. Algebra Comput. \textbf{23} (2013), no.~7, 1551--1572.

\bibitem{Tukia1994}
P.~Tukia,
\emph{Convergence groups and Gromov's metric hyperbolic spaces},
New Zealand J. Math. \textbf{23} (1994), no.~2, 157--187;
erratum, \textbf{25} (1996), 105--106.

\bibitem{Bowditch1995}
B.~H. Bowditch,
\emph{Geometrical finiteness with variable negative curvature},
Duke Math. J. \textbf{77} (1995), no.~1, 229--274.

\bibitem{Bowditch2002}
B.~H. Bowditch,
\emph{Groups acting on Cantor sets and the end structure of graphs},
Pacific J. Math. \textbf{207} (2002), no.~1, 31--60.

\bibitem{Cavallucci2025}
N.~Cavallucci,
\emph{Bishop--Jones' theorem and the ergodic limit set},
Ergodic Theory Dynam. Systems \textbf{45} (2025), no.~3, 704--718.

\bibitem{DasSimmonsUrbanski2017}
T.~Das, D.~Simmons, and M.~Urba\'nski,
\emph{Geometry and Dynamics in Gromov Hyperbolic Metric Spaces},
Mathematical Surveys and Monographs, vol.~218, American Mathematical Society, 2017.

\bibitem{LiuWang2023}
B.~Liu and S.~Wang,
\emph{Discrete subgroups of small critical exponent},
Geom. Topol. \textbf{27} (2023), no.~6, 2347--2381.


\bibitem{Linnell1983}
P.~A. Linnell,
\emph{On accessibility of groups},
J. Pure Appl. Algebra \textbf{30} (1983), no.~1, 39--46.

\bibitem{Selberg1960}
A.~Selberg,
\emph{On discontinuous groups in higher-dimensional symmetric spaces},
in \emph{Contributions to Function Theory}, Tata Institute of Fundamental Research,
Bombay, 1960, 147--164.

\bibitem{LyndonSchupp1977}
R.~C. Lyndon and P.~E. Schupp,
\emph{Combinatorial Group Theory},
Ergebnisse der Mathematik und ihrer Grenzgebiete, vol.~89, Springer, 1977.

\bibitem{Stallings1968}
J.~R. Stallings,
\emph{On torsion-free groups with infinitely many ends},
Ann. of Math. (2) \textbf{88} (1968), 312--334.



\bibitem{AllenEtAl2011}
D.~Allen, M.~Cream, K.~Finlay, J.~Meier, and R.~Rohatgi,
\emph{Complete growth series and products of groups},
New York J. Math. \textbf{17} (2011), 321--329.

\bibitem{BalacheffMerlin2023}
F.~Balacheff and L.~Merlin,
\emph{A curvature-free $\log(2k-1)$ theorem},
Proc. Amer. Math. Soc. \textbf{151} (2023), no.~6, 2429--2434.

\bibitem{Bourdon1995}
M.~Bourdon,
\emph{Structure conforme au bord et flot g\'eod\'esique d'un espace CAT$(-1)$},
Enseign. Math. (2) \textbf{41} (1995), no.~1--2, 63--102.

\bibitem{Bowditch1998}
B.~H. Bowditch,
\emph{Cut points and canonical splittings of hyperbolic groups},
Acta Math. \textbf{180} (1998), no.~2, 145--186.

\bibitem{BridsonHaefliger1999}
M.~R. Bridson and A.~Haefliger,
\emph{Metric Spaces of Non-Positive Curvature},
Grundlehren der mathematischen Wissenschaften, vol.~319,
Springer, Berlin, 1999.


\bibitem{GuirardelLevitt2017}
V.~Guirardel and G.~Levitt,
\emph{JSJ decompositions of groups},
Ast\'erisque \textbf{395} (2017), vii+165 pp.

\bibitem{HersonskyHubbard1997}
S.~Hersonsky and J.~Hubbard,
\emph{Groups of automorphisms of trees and their limit sets},
Ergodic Theory Dynam. Systems \textbf{17} (1997), no.~4, 869--884.

\bibitem{Hou2001}
Y.~Hou,
\emph{Critical exponent and displacement of negatively curved free groups},
J. Differential Geom. \textbf{57} (2001), no.~1, 173--193.

\bibitem{Seneta2006}
E.~Seneta,
\emph{Non-negative Matrices and Markov Chains},
revised printing of the 2nd ed., Springer Series in Statistics, Springer, New York, 2006.

\bibitem{Serre2003}
J.-P. Serre,
\emph{Trees},
Springer Monographs in Mathematics, Springer, Berlin, 2003.


\bibitem{Hashimoto1989}
K.-i.~Hashimoto,
\emph{Zeta functions of finite graphs and representations of $p$-adic groups},
in \emph{Automorphic Forms and Geometry of Arithmetic Varieties},
Advanced Studies in Pure Mathematics, vol.~15, Kinokuniya, Tokyo, 1989, 211--280.

\bibitem{Heinonen2001}
J.~Heinonen,
\emph{Lectures on Analysis on Metric Spaces},
Universitext, Springer, New York, 2001.

\bibitem{DrosteKuichVogler2009}
M.~Droste, W.~Kuich, and H.~Vogler (eds.),
\emph{Handbook of Weighted Automata},
Monographs in Theoretical Computer Science, Springer, Berlin, 2009.

\bibitem{Mohri2009}
M.~Mohri,
\emph{Weighted automata algorithms},
in M.~Droste, W.~Kuich, and H.~Vogler (eds.),
\emph{Handbook of Weighted Automata}, Springer, Berlin, 2009, 213--254.

\bibitem{BertheRigo2016}
V.~Berth\'e and M.~Rigo,
\emph{Preliminaries},
in V.~Berth\'e and M.~Rigo (eds.),
\emph{Combinatorics, Words and Symbolic Dynamics},
Encyclopedia of Mathematics and its Applications, vol.~159,
Cambridge University Press, Cambridge, 2016, 1--37.

\bibitem{BrualdiRyser1991}
R.~A. Brualdi and H.~J. Ryser,
\emph{Combinatorial Matrix Theory},
Encyclopedia of Mathematics and its Applications, vol.~39,
Cambridge University Press, Cambridge, 1991.

\bibitem{HornJohnson2013}
R.~A. Horn and C.~R. Johnson,
\emph{Matrix Analysis}, 2nd ed.,
Cambridge University Press, Cambridge, 2013.

\bibitem{VereJones1967}
D.~Vere-Jones,
\emph{Ergodic properties of nonnegative matrices. I},
Pacific J. Math. \textbf{22} (1967), no.~2, 361--386.
\bibitem{FollandRealAnalysis}
G.~B. Folland,
\emph{Real Analysis: Modern Techniques and Their Applications},
2nd ed., John Wiley \& Sons, New York, 1999.

\end{thebibliography}
\end{document}